\title{First-order theory of \\ torsion-free Tarski monsters}
\author{
	R\'emi Coulon, 
	Francesco Fournier-Facio,
    and
	Meng-Che ``Turbo'' Ho
}
\begin{document}
\selectlanguage{english}

\maketitle
\begin{abstract}
We develop methods to control the first-order theory of groups arising as certain direct limits of torsion-free hyperbolic groups, answering several questions in the literature. 
We construct simple torsion-free Tarski monsters $\Gamma$ (non-abelian groups whose non-trivial, proper subgroups are infinite cyclic) that are $\exists \forall \exists$-elementarily embedded into $\Gamma \ast \Z$. In particular, such $\Gamma$ have the same two-quantifier theory as $\Gamma*\Z$,
and hence the same positive theory as a non-abelian free group.
All previously known examples of groups with the same positive theory as the free group admit a non-elementary action on a hyperbolic space, while our examples cannot act on a hyperbolic space with a loxodromic element. 
Along the way, we solve the one-quantifier Knight conjecture for random quotients of arbitrary torsion-free, non-elementary, hyperbolic groups in the few-relator model.
\end{abstract}

\tableofcontents

\pagebreak

\section{Introduction}

\counterintro

The \emph{positive theory} of a group $\Gamma$ is the collection of first-order sentences without negations that hold in $\Gamma$. Some important algebraic properties of $\Gamma$ can be expressed by a positive sentence: satisfying a law, surjectivity of a word map, uniform perfection, and uniform simplicity, to name a few.

Building on the work of Merzlyakov \cite{positivefree}, Makanin proved that all non-abelian free groups have the same positive theory \cite{makanin}. Since positive sentences pass to quotients, the positive theory of every group contains the positive theory of a non-abelian free group. Because of this, we say that the positive theory of a group is \emph{trivial} if it coincides with that of a non-abelian free group. This property has been studied for many groups with hyperbolic features: hyperbolic groups \cite{positivehyp1, positivehyp2, andre:vf}, graph products that are not directly decomposable \cite{graphproducts}, many groups acting on trees \cite{positivetrees}, and acylindrically hyperbolic groups \cite{positiveAH}. We refer the reader to \cite{positivetrees} for a comprehensive overview.

Something that is apparent from the list above is that all of these groups admit a non-elementary action on a hyperbolic space, and indeed, it is this action that is used to prove the triviality of their positive theory. 
In particular, they contain non-abelian free subgroups.
We construct examples of a completely different nature.

Recall that, following the terminology of \cite{NL}, a group has \emph{property (NL)} if it cannot act on a hyperbolic space with a loxodromic element.

\begin{theo}
\label{intro:thm:NL}

    There exists a finitely generated, simple, non-amenable group that has trivial positive theory, has property (NL), and has no non-abelian free subgroups.
\end{theo}

\autoref{intro:thm:NL} answers two questions of Casals-Ruiz, Garreta and de la Nuez Gonz{\'a}lez \cite[Question 9.11 and 9.16]{positivetrees}, which asked about the existence of groups without non-abelian free subgroups and trivial positive theory.
The group we construct actually has a number of additional striking properties that are in contrast with previous examples of groups with trivial positive theory. 
Most importantly, it is a simple \emph{torsion-free Tarski monster}, that is, a non-abelian group in which every proper non-trivial subgroup is infinite cyclic. 
Such groups were initially constructed by Olshanskii \cite{noetherian}. This construction has been replicated and refined in many different contexts to achieve exotic properties, see, for example, \cite{lacunary, diversity, dimensions:simple}. 
In particular, these groups have no non-abelian free subgroups. 
(Note that torsion-free Tarski monsters do not always have trivial positive theory: it was pointed out to us by Osin that, combining the constructions of \cite{noetherian} and \cite{verballycomplete}, one can build torsion-free Tarski monsters that are \emph{verbally complete}, i.e., every word map is surjective.)

Non-amenability of our group comes from the fact that it can easily be arranged to have property (T).
The absence of free subgroups implies that our group has no general type action on a hyperbolic space. To prevent other types of actions and prove that it has property (NL), it is enough to ensure that it has no unbounded quasimorphisms \cite{manning}, or equivalently, that the stable commutator length vanishes \cite{bavard, calegari}. We show something much stronger.

\begin{theo}
\label{intro:thm:norms}

    There exists a finitely generated, simple, torsion-free Tarski Monster $\Gamma$ with trivial positive theory such that every non-trivial conjugacy-invariant norm on $\Gamma$ is stably bounded.

    In particular, the stable commutator length on $\Gamma$ vanishes identically, $\Gamma$ has no unbounded quasimorphisms, and the comparison map $H^2_b(\Gamma) \to H^2(\Gamma)$ from bounded to ordinary cohomology in degree $2$ is zero.
\end{theo}

All previously known examples of groups with trivial positive theory have infinite-dimensional kernel for the comparison map in degree $2$. \autoref{intro:thm:norms} partially answers \cite[Question 9.4]{positivetrees}, which speculated about a possible connection between having infinite-dimensional second bounded cohomology and having trivial positive theory. Examples of groups with non-trivial positive theory and infinite-dimensional second bounded cohomology, answering \cite[Question 9.5]{positivetrees}, are given in \cite{Qgroup}.

It was a famous open question whether there exist groups whose commutator length is unbounded but whose stable commutator length vanishes. For example, this appears in \cite[Question 10.4.1]{mimura}. It is also implicit in a question that Monod attributed to Ab{\'e}rt in his ICM proceedings paper \cite[Question E]{monod:invitation}, which was about a specific candidate. The first examples were provided by Muranov \cite{muranov}, and further examples were then given by Mimura \cite{mimura}.

Commutator length (\resp, stable commutator length) is a special case of \emph{$w$-length} (\resp, \emph{stable $w$-length}) where $w \in F_2$ is the commutator word \cite{segal}. This can be naturally generalized to arbitrary words in a free group of any rank.
There are words for which these notions are not interesting: $w$ could be freely equivalent to $1$, or it could define a surjective word map in every group; such words are called \emph{silly} by Segal \cite[Section 3.1]{segal}.

\begin{coro}
\label{intro:cor:words}
    There exists a finitely generated, simple group $\Gamma$ such that for every non-silly word $w$, the $w$-length is unbounded and the stable $w$-length vanishes.
\end{coro}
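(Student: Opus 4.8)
The plan is to read off Corollary~\ref{intro:cor:words} from Theorem~\ref{intro:thm:norms}. Fix a finitely generated simple group $\Gamma$ with trivial positive theory on which every non-trivial conjugacy-invariant norm is stably bounded, and fix a non-silly word $w = w(x_1,\dots,x_k)$. Recall that for $g$ in the verbal subgroup $w(\Gamma)$, the $w$-length $\ell_w(g)$ is the least number of factors in an expression of $g$ as a product of $w$-values and inverses of $w$-values, and that the stable $w$-length is the homogenization $\overline{\ell_w}(g) = \lim_{n\to\infty}\ell_w(g^n)/n$. From $w$ being non-silly (cf.\ \cite[Section 3.1]{segal}) I will use only two consequences: $w$ is not trivial in a free group, and $w(F) \neq F$ for a non-abelian free group $F$ (equivalently, $w$ does not define a surjective word map in every group).

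The first step is to check that $w(\Gamma) = \Gamma$. Since $w$ is not freely trivial, the positive sentence $\forall x_1\cdots\forall x_k\,\big(w(x_1,\dots,x_k)=1\big)$ fails in a non-abelian free group, hence fails in $\Gamma$ because the positive theory of $\Gamma$ is trivial; thus $w(\Gamma)\neq\{1\}$, and since $w(\Gamma)$ is normal and $\Gamma$ is simple, $w(\Gamma)=\Gamma$. Consequently $\ell_w$ is a genuine conjugacy-invariant norm on $\Gamma$: it takes finite values, it is subadditive under products and symmetric under inversion, it vanishes only at the identity, and it is conjugation-invariant because inner automorphisms send $w$-values to $w$-values.

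The second step is to show $\ell_w$ is unbounded on $\Gamma$. For each $N\geq 1$, the statement ``every element has $w$-length at most $N$'' is the positive sentence
\[
\forall g\ \exists (y^{(i)}_j)_{1\le i\le N,\ 1\le j\le k}\ \bigvee_{\varepsilon\in\{\pm1\}^N}\ g=\prod_{i=1}^{N} w\big(y^{(i)}_1,\dots,y^{(i)}_k\big)^{\varepsilon_i},
\]
where shorter products are obtained by padding with factors $w(1,\dots,1)=1$. As $w(F)\neq F$, this sentence is false in a non-abelian free group $F$ (which contains an element that is not even a product of $w$-values), so triviality of the positive theory makes it false in $\Gamma$ for every $N$; that is, $\ell_w$ is unbounded on $\Gamma$.

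Finally, being unbounded, $\ell_w$ is a non-trivial conjugacy-invariant norm on $\Gamma$, so Theorem~\ref{intro:thm:norms} shows that it is stably bounded; since $\overline{\ell_w}(g^n)=n\,\overline{\ell_w}(g)$, a bounded stable norm must be identically zero, so the stable $w$-length vanishes on $\Gamma$. As $\Gamma$ is chosen once and for all and $w$ was an arbitrary non-silly word, this proves the corollary. The only points that need a little care are checking that bounded verbal width is expressible by a positive sentence and unwinding ``non-silly'' into the two facts $w\neq 1$ and $w(F)\neq F$ used above; past those bookkeeping matters, Theorem~\ref{intro:thm:norms} does the real work, so I anticipate no serious obstacle.
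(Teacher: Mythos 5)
Your argument is correct and is essentially the paper's own route: the paper also deduces $w(\Gamma_\infty)=\Gamma_\infty$ from simplicity plus the absence of a law, gets unboundedness of the $w$-length from trivial positive theory by noting that the positive sentence expressing bounded $w$-width fails in a non-abelian free group for non-silly $w$ (\autoref{res: tarski positive} and \autoref{res: tarski verbally parabolic}), and then concludes vanishing of the stable $w$-length from the stable boundedness of all non-trivial conjugacy-invariant norms (\autoref{res: tarski bounded}). Your extra bookkeeping (the disjunction over signs and the padding) only makes the positive sentence slightly more careful than the one displayed in the paper, and the final homogeneity remark is redundant since ``stably bounded'' already means the stabilized norm vanishes.
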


Note that when $w \in F_n \setminus [F_n, F_n]$, the stable $w$-length vanishes on every group \cite[Lemma 2.13]{stablewlength}. However, when $w \in [F_n, F_n] \setminus \{1\}$, the stable $w$-length does not vanish on any acylindrically hyperbolic group \cite{verbalwidth}.

\medskip

There is another aspect in which stronger results hold for acylindrically hyperbolic groups. Andr{\'e} and Fruchter proved that if $\Gamma$ is an acylindrically hyperbolic group with no non-trivial finite normal subgroup, then the two-quantifier theory of $\Gamma$ coincides with the two-quantifier theory of the free product $\Gamma \ast \Z$ \cite{positiveAH}, namely, 
\begin{equation*}
	\Th_{\forall \exists}(\Gamma) = \Th_{\forall \exists}(\Gamma \ast \Z).
\end{equation*}
Thanks to quantifier reduction for positive sentences \cite[Theorem F]{positivetrees}, it is easy to see that this property implies the triviality of the positive theory.

In fact, they prove something slightly stronger. Let $\vec{\gamma}$ be a tuple of elements of $\Gamma$, and denote by $\Th(\Gamma, \vec{\gamma})$ the collection of first-order formulas $\Sigma(\vec{z})$ such that $\vec{z}$ has the same arity as $\vec{\gamma}$ and $\Sigma(\vec{\gamma})$ is true in $\Gamma$. As above, we decorate this notation with the sequence of quantifiers that we want to restrict to. Then for all tuples $\vec{\gamma}$ of elements of $\Gamma$ (acylindrically hyperbolic and with no non-trivial finite normal subgroups) we have:
\begin{equation*}
	\Th_{\exists \forall \exists}(\Gamma, \vec{\gamma}) \subset \Th_{\exists \forall \exists}(\Gamma \ast \Z, \vec{\gamma}),
\end{equation*}
that is, $\Gamma$ is \emph{$\exists \forall \exists$-elementarily embedded} in $\Gamma \ast \Z$.

This property is strictly stronger than the triviality of the positive theory: for example, if $\Gamma$ is a non-solvable Baumslag--Solitar group, then it has trivial positive theory \cite{positivetrees}, yet it is not $\exists \forall \exists$-elementarily embedded into its free product with $\Z$ \cite[Remark 10.7]{positiveAH}. We show that our group enjoys even this stronger property.

\begin{theo}
\label{intro:thm:AH}

    There exists a finitely generated, non-acylindrically hyperbolic group $\Gamma$ that is $\exists \forall \exists$-elementarily embedded into $\Gamma \ast \Z$.
\end{theo}

This answers a question of Andr{\'e} and Fruchter \cite[Question 10.2]{positiveAH}. It also shows that the two-quantifier theory does not detect acylindrical hyperbolicity of finitely generated groups, partially answering a question attributed to Osin: see \cite[Question 1.1]{andre:AH} and \cite[Question 10.3]{positiveAH}. In contrast, the two-quantifier theory does detect hyperbolicity of finitely generated groups, see \cite{positivehyp1, andre:hyp-elementary}.

\medskip

As in previous constructions of torsion-free Tarski monsters, our groups arise as limits of sequences of \emph{small cancellation} quotients $\Gamma_0 \twoheadrightarrow \Gamma_1 \twoheadrightarrow \Gamma_2 \twoheadrightarrow \cdots$ of torsion-free, non-elementary, hyperbolic groups. 
By choosing the relations carefully, we are able to control the first-order theory of the limit in terms of the first-order theory of the groups in the sequence. This is our main result, which covers all of the theorems above.

\begin{theo}[\autoref{res: tarski}, \autoref{res: tarski bounded}]
\label{intro:thm:precise}
    Let $\Gamma$ be a non-elementary, torsion-free, hyperbolic group.
	Then there is a sequence of non-elementary, torsion-free, hyperbolic groups
	\begin{equation*}
		\Gamma = \Gamma_0 \onto \Gamma_1 \onto \Gamma_2 \onto \cdots \onto \Gamma_k \onto \Gamma_{k+1} \onto \cdots
	\end{equation*}
	whose direct limit $\Gamma_\infty$ has the following properties. Let $\pi_k \colon \Gamma \to \Gamma_k : k \in \N \cup \{\infty\}$ denote the projection.
	\begin{enumerate}
		\item \label{enu: tarski - tarski}
		The group $\Gamma_\infty$ is a lacunary hyperbolic, simple, torsion-free Tarski monster. 
		In particular, it has no non-abelian, free subgroups.
        \item \label{intro main enu bounded} Every non-trivial conjugacy-invariant norm on $\Gamma_\infty$ is stably bounded.
		\item \label{intro main enu limit theories}
		For every tuple $\vec{\gamma}$ of elements of $\Gamma$, we have
		\begin{equation*}
			{\Th}_{\forall \exists}(\Gamma_\infty, \pi_\infty(\vec{\gamma})) 
			= \lim\limits_{k \to \infty} {\Th}_{\forall \exists}(\Gamma_k, \pi_k(\vec{\gamma})).
		\end{equation*}
		\item \label{intro main enu embedding}
		The group $\Gamma_\infty$ $\exists\forall\exists$-elementarily embeds in $\Gamma_\infty \ast \Z$.
	\end{enumerate}
\end{theo}

We recall that a group is lacunary hyperbolic if one of its asymptotic cones is an $\R$-tree; this is a property that occurs naturally among certain direct limits of hyperbolic groups \cite{lacunary}.
Item \ref{intro main enu limit theories} implicitly says that the limit exists, namely, all $\forall \exists$-formulas that hold in infinitely many $\Gamma_k$ actually hold in all but finitely many $\Gamma_k$. As we mentioned above, Item \ref{intro main enu embedding} implies that $\Gamma_\infty$ has trivial positive theory (see the proof of \autoref{res: tarski positive}).
So, taking $\Gamma$ to have property (T), the group $\Gamma_\infty$ satisfies the properties from Theorems~\ref{intro:thm:NL},
\ref{intro:thm:norms} and~\ref{intro:thm:AH} at once.

\medskip

Let us illustrate the full strength of \autoref{intro:thm:precise} with two more applications. First, recall that a mixed identity on a group $\Gamma$ is a word $w \in \Gamma \ast F_n$ such that $w(\vec{\gamma}) = 1$ for all $\vec\gamma \in \Gamma^n$ \cite{mixedidentities}. A group is \emph{mixed identity free (MIF)} if the only mixed identity is the trivial one. The group $\Gamma_\infty$ from \autoref{intro:thm:precise} is easily seen to be MIF, because elementary embeddings allow us to deal also with formulas with constants. This property has recently found striking applications in operator algebras \cite{MIFOA}, and it has revealed itself to be especially important for finitely presented simple groups, in the context of the Boone--Higman conjecture \cite{bh:autfn}.

A sufficient condition for a finitely generated simple group to be MIF is that it admits a faithful highly transitive action \cite[Theorem 5.9]{hull:osin}. However, there exist finitely generated simple groups that are MIF despite not admitting a highly transitive action \cite[Theorem 4.13]{confined}. We provide a sharper example (see \cite[Proposition 1.8]{hull:osin} for a related example).

\begin{coro}[\autoref{res: tarski MIF}]
\label{intro:cor:MIF}
    The group $\Gamma_\infty$ from \autoref{intro:thm:precise} is MIF, but does not admit a $2$-transitive action.
\end{coro}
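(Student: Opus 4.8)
The plan is to derive Corollary~\ref{intro:cor:MIF} directly from Theorem~\ref{intro:thm:precise}, by taking $\Gamma$ to be any non-elementary, torsion-free, hyperbolic group and letting $\Gamma_\infty$ be the resulting simple limit group. Being MIF is the easy half: as noted in the excerpt, each $\Gamma_k$ is a torsion-free non-elementary hyperbolic group and hence MIF by \cite{hull:osin}, and one checks that a non-trivial mixed identity $w \in \Gamma_\infty \ast F_n$ would, after writing its (finitely many) $\Gamma_\infty$-coefficients as images of elements of some $\Gamma_k$, descend to a non-trivial mixed identity in $\Gamma_k$ for $k$ large enough — using that the maps $\Gamma_k \onto \Gamma_\infty$ are injective on balls of bounded radius by lacunary hyperbolicity, so that the coefficients remain non-trivial and distinct, and that $w$ being a mixed identity in $\Gamma_\infty$ forces it to be a mixed identity in $\Gamma_k$ for all large $k$. (Alternatively, this is a $\forall$-statement, hence a $\forall\exists$-statement, so the MIF property of $\Gamma_\infty$ follows formally from the limit formula in Theorem~\ref{intro:thm:precise}(3) together with the fact that the $\Gamma_k$ are MIF.)

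The substantive half is ruling out a $2$-transitive action. The key point is that $\Gamma_\infty$ is a torsion-free Tarski monster: every proper non-trivial subgroup is infinite cyclic. Suppose $\Gamma_\infty$ acts $2$-transitively and faithfully (faithfulness is automatic by simplicity, provided the action is non-trivial) on a set $X$ with $|X| \geq 2$. Fix $x \in X$ and let $H = \Stab(x)$. Since the action is transitive and $\Gamma_\infty$ is infinite and simple (hence not itself cyclic), $H$ is a proper subgroup; if $|X| \geq 3$ it is also non-trivial, since a point stabilizer in a $2$-transitive action of an infinite group on an infinite set is infinite, and if $|X| = 2$ the action factors through a quotient of order $2$, contradicting simplicity. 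Hence $H \cong \Z$. Now $2$-transitivity means $H$ acts transitively on $X \setminus \{x\}$, so for $y \neq x$ we get $|X| - 1 = [H : \Stab_H(y)] = [H : H \cap \Stab(y)]$. The two-point stabilizer $H \cap \Stab(y)$ is a subgroup of $\Z$, hence either trivial or of finite index; correspondingly $X$ is infinite or finite.

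The plan is to eliminate both cases. If $X$ is finite, then $\Gamma_\infty$ embeds in the finite symmetric group $\Sym(X)$, contradicting that $\Gamma_\infty$ is infinite. If $X$ is infinite, then the two-point stabilizer $H \cap \Stab(y)$ is trivial, so $H \cong \Z$ acts freely and transitively on $X \setminus \{x\}$; in particular $X \setminus \{x\}$ is in bijection with $\Z$. The obstruction I expect here is showing this configuration is genuinely impossible — it is the combinatorial heart of the argument. One clean way: in a $2$-transitive action, any non-trivial normal subgroup acts transitively; apply this to a carefully chosen element or to the subgroup generated by a "rotation" to produce either torsion or a non-cyclic subgroup inside $\Gamma_\infty$. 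Concretely, pick $1 \neq h \in H$ and $g \in \Gamma_\infty$ with $g \cdot x = y$, $g \cdot y = x$ (possible by $2$-transitivity since we can send the ordered pair $(x,y)$ to $(y,x)$); then $ghg^{-1}$ fixes $y$ but not $x$, so $\langle h, ghg^{-1} \rangle$ is a subgroup of $\Gamma_\infty$ that fixes neither $x$ nor is contained in $H$, and one argues it cannot be cyclic (two distinct maximal cyclic subgroups, namely $H$ and $gHg^{-1}$, both containing it would force it trivial in a torsion-free group, but it is visibly non-trivial) — the only escape is that $\langle h, ghg^{-1}\rangle = \Gamma_\infty$, which one then contradicts using that $\Gamma_\infty$ is not two-generated by elements with such strong stabilizer constraints, or more robustly by noting that $\Gamma_\infty$, being simple and non-cyclic, has trivial center while $H \cong \Z$ is self-centralizing. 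I would take the self-centralizing observation as the linchpin: in a torsion-free Tarski monster, every maximal cyclic subgroup is malnormal and self-centralizing, and a $2$-transitive action forces the point stabilizer to fail malnormality, giving the contradiction. This malnormality-versus-$2$-transitivity clash is, in my estimation, the one step that requires real care rather than routine bookkeeping.
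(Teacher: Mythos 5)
Your reduction in the $2$-transitivity part is correct up to the point where you establish that $X$ is infinite, that $H = \operatorname{Stab}(x) \cong \Z$, and that the two-point stabilizer $H \cap \operatorname{Stab}(y)$ is trivial; you also correctly produce, via $2$-transitivity, an element $g$ with $g \cdot x = y$ and $g \cdot y = x$. But the finish does not work. The claim that ``a $2$-transitive action forces the point stabilizer to fail malnormality'' is false: here $gHg^{-1} = \operatorname{Stab}(y)$, so $H \cap gHg^{-1}$ is exactly the two-point stabilizer, which you have just shown is trivial --- there is no malnormality failure to exploit, and if anything the setup is consistent with $H$ being malnormal. The secondary route, deducing $\langle h, ghg^{-1}\rangle = \Gamma_\infty$ and then hoping for a contradiction, does not close either: the cited facts (trivial center, $H$ self-centralizing) do not contradict that equality. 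The contradiction you need is already sitting in what you wrote: since $g$ interchanges $x$ and $y$, the element $g^2$ fixes both, hence lies in the two-point stabilizer, which is trivial, so $g^2 = 1$; and $g \neq 1$ since it moves $x$, so $g$ is an involution, contradicting that $\Gamma_\infty$ is torsion-free. That torsion-free special case is all the statement needs; the paper itself simply cites \cite{hull:osin} and Mazurov \cite{mazurov}, the latter covering the general statement about infinite cyclic stabilizers with no torsion-freeness hypothesis.

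On the MIF part, your second argument (via the $\forall\exists$-theory) is essentially what the paper does and is correct, modulo the minor slip that ``no non-trivial mixed identity of a fixed shape'' is a $\forall\exists$-sentence, not a $\forall$-sentence. Your first, hands-on argument, however, has a direction error: a mixed identity $w$ for $\Gamma_\infty$ does not lift to a mixed identity for $\Gamma_k$, because for a tuple $\vec\delta$ in $\Gamma_k$ the element $\tilde w(\vec\delta)$ may be a non-trivial element of the kernel of $\Gamma_k \onto \Gamma_\infty$ even though its image $w\bigl(\zeta_k(\vec\delta)\bigr)$ is trivial. Stick with the theory-level argument.
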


As another application, we obtain for $\Gamma_\infty$ a striking property known for the free group \cite{KM:genus}, which is closely related to commutator and stable commutator length.

\begin{coro}[\autoref{res: torsion in ab of product}]
\label{intro:cor:ab}
    For the group $\Gamma_\infty$ from \autoref{intro:thm:precise}, the direct power $\Gamma_\infty^{\N}$ has $2$-torsion in the abelianization.
\end{coro}

As we mentioned above, the group $\Gamma_\infty$, which is the main focus of our applications, is obtained as the direct limit of a sequence $(\Gamma_k)$ of hyperbolic groups.
More precisely, this sequence is built recursively, where each $\Gamma_{k+1}$ is a \emph{small cancellation} quotient of $\Gamma_k$.
Hence, most of our work consists of understanding the solutions of a system of equations with coefficients in the small cancellation quotients $\bar \Gamma$ of an arbitrary torsion-free non-elementary hyperbolic group $\Gamma$.
Quadratic equations in small cancellation quotients of the free group have been investigated, for instance, by Schupp \cite{Schupp:1980aa} and Lysenok \cite{Lysenok:1988tg}.
The next statement can be seen as a vast generalization of Schupp's work.

\begin{theo}[\autoref{res: lifting quotient}]
\label{intro:thm:lifting}
    Let $\Gamma$ be a torsion-free non-elementary hyperbolic group. Let $G$ be a finitely generated group and $H$ an arbitrary group with morphisms $\jmath \colon H \to G$ and $\iota \colon H \to \Gamma$.
	There are parameters $\lambda, \epsilon \in (0,1)$, with the following properties.
    
	Let $\bar \Gamma$ be a tight $C'(\lambda, \epsilon)$ strengthened small cancellation quotient of $\Gamma$ and $\pi \colon \Gamma \onto \bar \Gamma$ the corresponding projection.
	Let $\bar \varphi \colon G \to \bar \Gamma$ be a morphism such that $\bar \varphi \circ \jmath = \pi \circ \iota$.
	Then there is a morphism $\varphi \colon G \to \Gamma$ lifting $\bar \varphi$ such that $\varphi \circ \jmath = \iota$.
\end{theo}

\[\begin{tikzcd}
	& \Gamma \\
	H && {\bar{\Gamma}} \\
	& G
	\arrow["\pi", two heads, from=1-2, to=2-3]
	\arrow["\iota", from=2-1, to=1-2]
	\arrow["\jmath"', from=2-1, to=3-2]
	\arrow["\exists\varphi", dashed, from=3-2, to=1-2]
	\arrow["{\bar{\varphi}}"', from=3-2, to=2-3]
\end{tikzcd}\]

\begin{rema}
\label{rem: discussion torsion}
	Note that we work here with a small cancellation condition that is stronger than the usual one.
	The ``strengthening'' requires that the relations are very long compared to other relevant metric invariants of $\Gamma$.
	Among other things, it allows us to make sure that the quotient $\bar \Gamma$ is non-elementary, unlike some silly small cancellation groups such as $\bar \Gamma = \group{a,b | a,b}$.
	The ``tightness'' (see \autoref{defi tight}) essentially asks that we add only finitely many relations, none of which is not a proper power. 
	This is a crucial assumption.
	Indeed, consider the groups 
	\begin{equation*}
		G = \group{x,y,z \mid x^2y^2z^2} 
		\quad \text{and} \quad 
		\bar \Gamma = \group {a,b \mid (a^2b^2)^{2n+1}}.
	\end{equation*}
	Note that $\bar \Gamma$ is a small cancellation quotient of the free group $F_2$, provided $n$ is sufficiently large.
	On the one hand, it is well-known that any morphism $G \to F_2$ has cyclic image \cite{Lyndon:1959vy}.
	On the other hand, the map 
	\begin{equation*}
		x \mapsto a, \quad y \mapsto b, \quad z \mapsto (a^2b^2)^n
	\end{equation*}
	induces a well-defined epimorphism $G \onto \bar \Gamma$.
	Hence, in contrast with \autoref{intro:thm:lifting}, not every morphism $G \to \bar \Gamma$ comes from a morphism $G \to F_2$.
	Actually, we already encountered here all the difficulties present in the work of Coulon and Sela about equations in Burnside groups \cite{Coulon:2021wg}.

    In the literature, constructions of torsion-free Tarski monsters typically go hand-in-hand with constructions of infinite torsion groups, which are obtained similarly by adding small cancellation relations that are large proper powers. However, torsion groups often have non-trivial positive theory; moreover, if $\Gamma$ is a torsion group, then $\Gamma$ and $\Gamma \ast \Z$ are distinguished by their $\forall \exists$ theory: see \autoref{rema:pthroot}.
\end{rema}

\autoref{intro:thm:lifting} also has another application that does not involve direct limits. It was conjectured by Knight that a first-order sentence holds in the free group if and only if it holds in a few-relator random quotient of the free group with overwhelming probability \cite[Conjecture 1]{kharlampovich:sklinos}. This conjecture was addressed for the more general density model by Kharlampovich--Sklinos \cite{kharlampovich:sklinos, kharlampovich:sklinos2, kharlampovich:sklinos3} and Massalha \cite{massalha, massalha2}. In both cases, one-quantifier sentences are treated separately \cite{kharlampovich:sklinos, massalha}. We are able to treat these (and more generally, one-quantifier \emph{formulas}) for random quotients of arbitrary torsion-free hyperbolic groups.

\begin{theo}[\autoref{res: knight}]
\label{intro:thm:knight}
    Let $\Gamma$ be a torsion-free, non-elementary, hyperbolic group, and let $\vec{\gamma}$ be a tuple of elements of $\Gamma$. Let $k \geq 1$, and let $\Sigma(\vec{z})$ a one-quantifier formula, where $\vec{z}$ has the same arity as $\vec{\gamma}$. Then $\Sigma(\vec{\gamma})$ holds in $\Gamma$ if and only if $\Sigma(\pi(\vec{\gamma}))$ holds in a $k$-relator random quotient $\pi \colon \Gamma \to \bar \Gamma$ with overwhelming probability.
\end{theo}

The only thing we are using about random quotients of $\Gamma$ is that they satisfy an arbitrarily strong small cancellation condition (\autoref{res: theory quotients onequant}). 
Therefore, our approach is suitable for several variations of the few-relator model, but does not apply to the density model, unlike in \cite{kharlampovich:sklinos, massalha}. 
On the other hand, it is crucial for our applications to torsion-free Tarski monsters that our control does not come from \emph{random} relations, but rather from \emph{well-chosen} small cancellation relations.

\medskip

We expect that \autoref{intro:thm:lifting} will find applications beyond the ones presented in this paper, be it to construct other exotic groups with trivial positive theory, or in a more in-depth study of the first-order theory of small cancellation quotients of torsion-free hyperbolic groups.
Moreover, it is likely that many of the techniques in this paper can be extended to more general groups acting on hyperbolic spaces. 
However, as discussed in \autoref{rem: discussion torsion}, generalizing to groups with torsion seems more challenging.

\paragraph*{Acknowledgements.} The authors thank Montserrat Casals-Ruiz, Jonathan Fruchter, Daniel Groves, Vincent Guirardel, Ilya Kazachkov, Nicolas Monod, Denis Osin, Chlo{\'e} Perin, Alessandro Sisto, Henry Wilton, and Matt Zaremsky for useful discussions. They are indebted to Simon Andr{\'e}, who suggested strengthening the results from the first version to prove the statements on $\exists \forall \exists$-elementary embeddings.
Rémi Coulon acknowledges support from the Agence Nationale de la Recherche under the Grant \emph{GoFR} ANR-22-CE40-0004 as well as the Région Bourgogne-Franche-Comté under the grant \emph{ANER 2024 GGD}.
His institute receives support from the EIPHI Graduate School (contract ANR-17-EURE-0002). Francesco Fournier-Facio is supported by the Herchel Smith Postdoctoral Fellowship Fund. Meng-Che Ho is partially supported by the National Science Foundation under Grant No.~DMS-2054558.

\paragraph*{Outline.} We start by reviewing the framework of geometric small cancellation theory in \autoref{sec: sc}. The section culminates in \autoref{res: recap sc}, which lists properties of small cancellation quotients $\Gamma \to \bar \Gamma$. Of particular importance is a version of \autoref{intro:thm:lifting} for morphisms $G \to \bar \Gamma$ with ``small energy''.

The proof of \autoref{intro:thm:lifting} for morphisms with ``big energy'' proceeds by contradiction. If the theorem fails, then one can produce a sequence of quotients $\Gamma \onto \Gamma_k$ with small cancellation constants going to zero, and morphisms $G \to \Gamma_k$ that do not lift. This sequence of morphisms can be used to build an action of $G$ on an $\R$-tree, and then the techniques of Rips and Sela (in particular the ``shortening argument'') apply to reach the desired contradiction. This argument takes up the next three sections: \autoref{sec: graph of groups} contains background on graphs of actions and the Rips--Sela machine for groups acting on $\R$-trees; \autoref{sec: limit} introduces limit groups, and proves a suitable version of the shortening argument that is needed for the proof; finally, \autoref{sec: lifts} uses these results to conclude the proof of \autoref{intro:thm:lifting}.

In \autoref{sec: random} we prove \autoref{intro:thm:knight}. We first review the translation principle between first-order sentences and morphisms, which also serves as a warm-up for the next section. Once this has been established, the result becomes a rather direct application of \autoref{intro:thm:lifting}.

Finally, \autoref{sec: tarski} contains our main applications: \autoref{intro:thm:precise} and its corollaries. We start by explaining how to find small cancellation relations with arbitrarily good constants that allow for the construction of torsion-free Tarski monsters. The groups are then constructed by an iterated sequence of quotients by these relations. Unlike in the previous section, here the relations are not random, and have to be chosen carefully to ensure that all the desiderata from \autoref{intro:thm:precise} are fulfilled. The vanishing of stable conjugacy invariant norms from item \ref{intro main enu bounded} is treated separately in \autoref{subsec: boundedness}.

\paragraph*{Conventions.} We use $\R_+$ to denote the set of non-negative real numbers, and $\R_+^* = \R_+ \setminus \{0\}$. Graphs, and in particular Cayley graphs, are always endowed with a path distance where all edges have the same positive length, although we allow the length of an edge to be different than $1$: this will be important when rescaling a metric (see \autoref{res: recap sc}).

\countermain

%%%%%%%%%%%%%%%%%%%%%%%%%%%%%%%%%%%%%%%%%%%%%%%%%%%%%%%%%%%%%%%%%%%%%%%%%%%%%%%%%%%%%%%%%%%%%%
%%%%%%%%%%%%%%%%%%%%%%%%%%%%%%%%%%%%%%%%%%%%%%%%%%%%%%%%%%%%%%%%%%%%%%%%%%%%%%%%%%%%%%%%%%%%%%
%
\section{Geometric small cancellation theory}
\label{sec: sc}

%
%%%%%%%%%%%%%%%%%%%%%%%%%%%%%%%%%%%%%%%%%%%%%%%%%%%%%%%%%%%%%%%%%%%%%%%%%%%%%%%%%%%%%%%%%%%%%%
%%%%%%%%%%%%%%%%%%%%%%%%%%%%%%%%%%%%%%%%%%%%%%%%%%%%%%%%%%%%%%%%%%%%%%%%%%%%%%%%%%%%%%%%%%%%%%

In this section, we review the framework of small cancellation theory and prove several results about a small cancellation quotient $\Gamma \to \bar \Gamma$, where $\Gamma$ is a torsion-free hyperbolic group. These results are collected in \autoref{res: recap sc}. One of these results is a version of \autoref{intro:thm:lifting} for morphisms $G \to \bar \Gamma$ with ``small energy''; the case of ``big energy'' will be the focus of the next three sections.

%%%%%%%%%%%%%%%%%%%%%%%%%%%%%%%%%%%%%%%%%%%%%%%%%%%%%%%%%%%%%%%%%%%%%%%%%%%%%%%%%%%%%
%
\subsection{Hyperbolic geometry}
%
%%%%%%%%%%%%%%%%%%%%%%%%%%%%%%%%%%%%%%%%%%%%%%%%%%%%%%%%%%%%%%%%%%%%%%%%%%%%%%%%%%%%%
\label{sec: hyperbolic geometry}

We review here some general facts about hyperbolic geometry in the sense of Gromov.
For more details, we refer the reader to Gromov's original article \cite{Gromov:1987tk}, or \cite{Coornaert:1990tj}, \cite{Ghys:1990ki}, and \cite{Bridson:1999ky}.

%-----------------------------------------------------------------
\paragraph{Four point inequality.}
%----------------------------------------------------------------

Let $X$ be a length space.
For every $x,y \in X$, we write $\dist[X] xy$, or simply $\dist xy$, for the distance between $x$ and $y$.
If one exists, we write $\geo xy$ for a geodesic joining $x$ to $y$.
The \emph{Gromov product} of three points $x,y,z \in X$ is
\begin{equation*}
	\gro xyz = \frac 12 \left[ \dist xz + \dist yz - \dist xy\right].
\end{equation*}
Let $\delta \in \R_+$.
For the rest of this section, we always assume that $X$ is \emph{$\delta$-hyperbolic}, i.e., for every $x,y,z,t \in X$,
\begin{equation}
\label{eqn: four point hyp}
	\min\left\{ \gro xyt, \gro yzt \right\} \leq \gro xzt + \delta.
\end{equation}
We denote by $\partial X$ the boundary at infinity of $X$.
The map $(x,y,z) \mapsto \gro xyz$ naturally extends to a map $(X \cup \partial X) \times (X \cup \partial X) \times X \to \R_+ \cup\{\infty\}$.

%-----------------------------------------------------------------
\paragraph{Quasi-convex subsets.}
%-----------------------------------------------------------------
Let $\alpha \in \R_+$.
A subset $Y$ of $X$ is \emph{$\alpha$-quasi-convex} if $d(x,Y) \leq \gro y{y'}x + \alpha$, for every $x \in X$, and $y,y' \in Y$.
We denote by $\distV[Y]$ the length metric on $Y$ induced by the restriction of $\distV[X]$ to $Y$: note that this is \emph{not} the restriction of $\distV[X]$ to $Y \times Y$.
We say that $Y$ is \emph{strongly quasi-convex} if it is $2 \delta$-quasi-convex and for every $y,y' \in Y$,
\begin{equation}
\label{eqn: def strongly qc}
	\dist[X]y{y'} \leq \dist[Y]y{y'} \leq \dist[X]y{y'} + 8\delta.
\end{equation}

%-----------------------------------------------------------------
\paragraph{Isometries.}
%-----------------------------------------------------------------
An isometry $\gamma$ of $X$ is either \emph{elliptic} (its orbits are bounded), \emph{parabolic} (its orbits admit exactly one accumulation point in $\partial X$), or \emph{loxodromic} (its orbits admit exactly two accumulation points in $\partial X$).
In order to measure the action of an isometry $\gamma$ on $X$, we use the \emph{translation length} and the \emph{stable translation length} defined respectively by
\begin{equation*}
	\norm[X] \gamma = \inf_{x \in X}\dist {\gamma x}x
	\quad \text{and} \quad
	\snorm[X] \gamma = \lim_{n \to \infty} \frac 1n \dist{\gamma^nx}x.
\end{equation*}
Note that the definition of $\snorm[X] \gamma$ is independent of the choice of $x$. If there is no ambiguity, we will omit the space $X$ from the notation.
These lengths are related as follows \cite[Chapitre~10, Proposition~6.4]{Coornaert:1990tj}:
\begin{equation}
\label{eqn: regular vs stable length}
	\snorm \gamma\leq \norm \gamma \leq \snorm \gamma+ 8\delta.
\end{equation}
In addition, $\gamma$ is loxodromic if and only if $\snorm \gamma > 0$.
In this case, the accumulation points of $\gamma$ in $\partial X$ are
\begin{equation*}
	\gamma^- = \lim_{n \to \infty} \gamma^{-n}x
	\quad \text{and} \quad
	\gamma^+ = \lim_{n \to \infty} \gamma^nx.
\end{equation*}
They are the only points of $X\cup\partial X$ fixed by $\gamma$.
We write $Q_\gamma$ for the union of all $L$-local $(1, \delta)$-quasi-geodesic joining $\gamma^-$ to $\gamma^+$ with $L > 12\delta$.
The \emph{cylinder} of $\gamma$ is the set
\begin{equation*}
	Y_\gamma = \set{x \in X}{ d(x, Q_\gamma) \leq 20\delta}.
\end{equation*}
It is a $2\delta$-quasi-convex subset of $X$ which can be thought of as the ``smallest'' $\gamma$-invariant quasi-convex subset.
If in addition $X$ is proper and geodesic, then $Y$ is a closed strongly quasi-convex subset \cite[Lemmas~2.31 and 2.32]{Coulon:2014fr}.

\begin{defi}
\label{def: fix}
	Let $U$ be a finite set of isometries of $X$.
	The $\ell^\infty$ and $\ell^1$-\emph{energy} of $U$ are defined respectively as
	\begin{equation*}
		E_\infty(U, X) = \inf_{x \in X} \sup_{\gamma \in U} \dist {\gamma x}x
		\quad \text{and}\quad
		E_1(U, X) = \inf_{x \in X} \sum_{\gamma \in U} \dist {\gamma x}x.
	\end{equation*}
\end{defi}

Although the second definition often does not make sense if $U$ is infinite, the first one works for an infinite $U$ as well.
By convention, the energy of the empty set is zero.
If there is no ambiguity, we simply write $E_\infty(U)$ and $E_1(U)$.
If $\varphi \colon G \to \isom X$ is a morphism and $U$ is a finite subset of $G$, its energies are defined in the same way, that is
\begin{equation*}
	E_\infty(\varphi, U, X) = E_\infty(\varphi(U), X)
	\quad \text{and}\quad
	E_1(\varphi, U, X) = E_1(\varphi(U), X).
\end{equation*}
As above, if there is no ambiguity, we simply denote them by $E_\infty(\varphi, U)$ and $E_1(\varphi, U)$.
Note that the $\ell^\infty$ and $\ell^1$-energy are bi-Lipschitz equivalent as functions of $\varphi$.
More precisely,
\begin{equation*}
	E_\infty(\varphi, U) \leq E_1(\varphi, U) \leq \card U E_\infty(\varphi, U).
\end{equation*}

\begin{rema}
\label{rem: energy invariant by inner automorphisms}
	Note also that if $\alpha$ is an inner automorphism of $G$, then 
	\begin{equation*}
		E_\infty(\varphi\circ \alpha, U) =  E_\infty(\varphi, U)
		\quad \text{and} \quad 
		E_1(\varphi\circ \alpha, U) = E_1(\varphi, U). \qedhere
	\end{equation*}
\end{rema}

\begin{defi}
	Let $U$ be a set of isometries of $X$.
	Given $r \in \R_+$, we define
	\begin{equation}
	\label{eqn: def fix}
		\fix{U,r} = \set{x \in X}{\forall \gamma \in U,\ \dist{\gamma x}x \leq r}.
	\end{equation}
	If $U$ is empty, we adopt the convention that $\fix{U,r} = X$.
\end{defi}

For simplicity, we write $\fix{U}$ for $\fix{U,0}$.
If the set $U = \{\gamma\}$ is reduced to a single isometry, then $E_\infty(U) = \norm \gamma$ and we simply write $\fix{\gamma,r}$ for $\fix{U,r}$.

\begin{lemm}[{\cite[Lemma~2.8]{Coulon:2018vp}}]
\label{res: set of almost fixed points}
	Let $U$ be a set of isometries of $X$.
	If $r > \max\{E_\infty(U), 5\delta\}$, then $\fix{U, r}$ is $8\delta$-quasi-convex.
	Moreover, for every $x \in X \setminus \fix{U,r}$, we have
	\begin{equation}
	\label{eqn: displacement outside fixed set}
		\sup_{\gamma \in U} \dist{\gamma x}x \geq 2d\left(x, \fix{U,r}\right) + r - 10\delta.
	\end{equation}
\end{lemm}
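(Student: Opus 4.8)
The plan is to first note that $\fix{U,r}$ is non-empty, then prove the displacement inequality, and finally deduce the $8\delta$-quasi-convexity from it. Non-emptiness is immediate: since $r>E_\infty(U)$ there is $x_0\in X$ with $\sup_{\gamma\in U}\dist{\gamma x_0}{x_0}<r$, so $x_0\in\fix{U,r}$. Two auxiliary facts are used throughout. First, each $w\mapsto\dist{\gamma w}w$ — hence also $\Phi\colon w\mapsto\sup_{\gamma\in U}\dist{\gamma w}w$ — is $2$-Lipschitz. Second, $\Phi$ is coarsely convex along geodesics: for $w$ on a geodesic $[a,b]$ one has $\dist{\gamma w}w\le\max\{\dist{\gamma a}a,\dist{\gamma b}b\}+O(\delta)$, and hence $\Phi(w)\le\max\{\Phi(a),\Phi(b)\}+O(\delta)$. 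This is the usual thin-quadrilateral argument applied to $a,b,\gamma b,\gamma a$: the point $w$ is $2\delta$-close to one of the other three sides; the two ``vertical'' sides $[a,\gamma a]$, $[b,\gamma b]$ are handled directly, while the side $\gamma[a,b]$ is handled using that the geodesics $[a,b]$ and $\gamma[a,b]$ have $\max\{\dist{\gamma a}a,\dist{\gamma b}b\}$-close endpoints, hence synchronously fellow-travel.

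Next I would establish the displacement inequality for a single isometry $\gamma\in U$: for $x\notin\fix{\gamma,r}$,
\[
\dist{\gamma x}x\ \ge\ 2\,d\big(x,\fix{\gamma,r}\big)+r-O(\delta).
\]
Here $\fix{\gamma,r}$ is $\gamma^{\pm1}$-invariant, and since $r>\norm\gamma$ (as $E_\infty(U)\ge\norm\gamma$) and $r>5\delta$, it is $2\delta$-quasi-convex: for $\gamma$ loxodromic because it is squeezed between two controlled neighbourhoods of the cylinder $Y_\gamma$ (which is $2\delta$-quasi-convex, cf.\ \cite[Lemmas~2.31 and 2.32]{Coulon:2014fr}), and for $\gamma$ elliptic or parabolic by the analogous argument with a quasi-centre, resp.\ a horoball. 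To prove the inequality, let $p$ realise the distance $d(x,\fix{\gamma,r})$ up to $o(1)$; since $p$ is essentially closest and $w\mapsto\dist{\gamma w}w$ is $2$-Lipschitz, a small ball about $p$ lying in $\fix{\gamma,r}$ would contradict this, forcing $\dist{\gamma p}p\ge r-o(1)$. As $p$, $\gamma p$ and $\gamma^{-1}p$ all lie in $\fix{\gamma,r}$ and $p$ is its gate for $x$, the coarse gate property makes the geodesics $[x,\gamma p]$ and $[x,\gamma^{-1}p]$ pass $O(\delta)$-close to $p$; hence the concatenation of $[x,p]$, $[p,\gamma p]$ and $[\gamma p,\gamma x]$ turns by only $O(\delta)$ at $p$ and at $\gamma p$, so it is an $O(\delta)$-local geodesic and stays $O(\delta)$-close to $[x,\gamma x]$. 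Therefore $\dist{\gamma x}x\ge\dist x p+\dist p{\gamma p}+\dist{\gamma p}{\gamma x}-O(\delta)\ge 2\,d(x,\fix{\gamma,r})+r-O(\delta)$.

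To pass to all of $U$, let $x\notin\fix{U,r}=\bigcap_\gamma\fix{\gamma,r}$, let $p$ be its gate in $\fix{U,r}$, and set $\rho=d(x,\fix{U,r})=\dist x p$; as before $\Phi(p)=r$. Travelling from a point $x_0\in\fix{U,r}$ towards $x$ and fellow-travelling $[x_0,x]$ with the geodesics from $x_0$ to the gates of the quasi-convex sets $\fix{\gamma,r}$, one obtains a point $q$ with $\dist x q\le\sup_\gamma d(x,\fix{\gamma,r})$ that is $O(\delta)$-close to every $\fix{\gamma,r}$; passing this back through coarse convexity and the single-isometry inequality then yields $d(x,\fix{U,r})\le\sup_\gamma d(x,\fix{\gamma,r})+O(\delta)$, whereupon the single-isometry inequality applied to a near-optimal $\gamma$ gives $\Phi(x)\ge2\rho+r-10\delta$, the constants being arranged to absorb the errors. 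Finally the quasi-convexity: for $a,b\in\fix{U,r}$ and $w\in[a,b]$, coarse convexity gives $\Phi(w)\le r+O(\delta)$, so if $w\notin\fix{U,r}$ the displacement inequality forces $d(w,\fix{U,r})=O(\delta)$; combined with the standard comparison between $d(x,[a,b])$ and $\gro abx$, this gives $d(x,\fix{U,r})\le\gro abx+8\delta$ for all $x\in X$, i.e.\ $8\delta$-quasi-convexity.

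The step I expect to be the main obstacle is the passage to the whole set $U$: the patching argument produces a point close to each $\fix{\gamma,r}$ individually, and converting this into a point close to $\bigcap_\gamma\fix{\gamma,r}$ with only an $O(\delta)$ loss — without overshooting the prescribed constants $10\delta$ and $8\delta$ — is exactly the delicate point, since in general being near every member of a family of quasi-convex sets does not place one near their intersection. A secondary subtlety is the single-isometry estimate for elliptic and parabolic $\gamma$, where one first has to exhibit $\fix{\gamma,r}$ as uniformly quasi-convex; this is where the hypotheses $r>E_\infty(U)$ and $r>5\delta$ are essential.
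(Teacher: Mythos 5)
The paper takes this lemma on faith (it is cited from \cite{Coulon:2018vp}), so there is no internal proof to compare against; but your route --- reduce to a single isometry $\gamma$, establish the displacement estimate for $\fix{\gamma,r}$, then glue --- differs from the cited argument, and the glue step is a genuine gap, not a technicality. Your patching produces a point $q$ on $[x,x_0]$ that is $O(\delta)$-close to every $\fix{\gamma,r}$, hence satisfies $\sup_{\gamma\in U}\dist{\gamma q}q\le r+O(\delta)$; in other words $q$ lies in the enlarged sublevel set $\fix{U,r+O(\delta)}$. That is strictly weaker than $d(q,\fix{U,r})=O(\delta)$. You propose to bridge the two by ``coarse convexity and the single-isometry inequality'', but neither tool does it: coarse convexity of $\Phi$ only gives \emph{upper} bounds along geodesics and cannot push $q$ into $\fix{U,r}$; and the single-isometry inequality controls $\dist{\gamma q}q$ in terms of $d(q,\fix{\gamma,r})$ --- not $d(q,\fix{U,r})$ --- so it says nothing about where $q$ sits relative to the intersection. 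Getting from a uniformly small $\Phi(q)$ to a small $d(q,\fix{U,r})$ is precisely the content of the displacement inequality being proved; once the single-isometry estimate is granted, the missing claim $d(x,\fix{U,r})\le\sup_{\gamma\in U}d(x,\fix{\gamma,r})+O(\delta)$ is equivalent to the conclusion, so the reduction goes in a circle.

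A more direct argument never isolates a single isometry. Fix $x\notin\fix{U,r}$ and $y\in\fix{U,r}$, and let $m$ be the last point of the geodesic $[y,x]$ with $\sup_{\gamma\in U}\dist{\gamma m}m\le r$; then $m\in\fix{U,r}$, so $\dist xm\ge d(x,\fix{U,r})$, and some $\gamma\in U$ with $\dist{\gamma m}m$ close to $r$ is pinned at the transition. The displacement bound then comes from a four-point / thin-quadrilateral estimate on $x$, $m$, $\gamma m$, $\gamma x$ exploiting that $m$ is a transition point, and the $8\delta$-quasi-convexity follows afterwards from coarse convexity of $\Phi$ along geodesics exactly as you describe in your last step. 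This handles all of $U$ at once, requires no separate treatment of elliptic, parabolic, and loxodromic isometries (your secondary worry), and avoids the intersection issue entirely. Your final deduction of quasi-convexity from the displacement inequality is sound and can be kept, modulo tightening the constants.
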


\begin{defi}
\label{def: thin isom}
	Let $\alpha \in \R_+$.
	\begin{itemize}
		\item An elliptic isometry $\gamma$ of $X$ is \emph{$\alpha$-thin at $x \in X$}, if for every $r \in \R_+$ the set $\fix{\gamma, r}$ is contained in $B(x, r/2 + \alpha)$.
		It is \emph{$\alpha$-thin} if it is $\alpha$-thin at some point of $X$.
		\item A loxodromic isometry $\gamma$ of $X$ is \emph{$\alpha$-thin} if for every $r \in \R_+$ and for every $y \in \fix{\gamma,r}$ we have
		\begin{equation*}
			\gro{\gamma^-}{\gamma^+}y \leq \frac 12 (r - \norm \gamma) + \alpha.
		\end{equation*}
	\end{itemize}
	By convention, parabolic isometries are never thin.
\end{defi}

\begin{rema}
\label{rem: thin isom - loxo}
	It follows from the stability of quasi-geodesics that any loxodromic isometry $\gamma$ is $\alpha$-thin, for some $\alpha$ satisfying 
	\begin{equation*}
		\alpha \leq 100\left(\frac{\delta}{\snorm \gamma} + 1\right)\delta.
	\end{equation*}
	In our context, the thinness provides a way to control the action of an isometry $\gamma$ when we do not have any lower bound for its stable length $\snorm \gamma$.
\end{rema}

%-----------------------------------------------------------------
\paragraph{Group actions.}
%-----------------------------------------------------------------

Let $\Gamma$ be a group acting by isometries on a $\delta$-hyperbolic length space $X$.
We denote by $\Lambda(\Gamma)$ its \emph{limit set}, i.e., the set of accumulation points in $\partial X$ of the $\Gamma$-orbit of a point $x \in X$.
We say that $\Gamma$ is \emph{non-elementary} (\emph{for its action on $X$}) if $\Lambda(\Gamma)$ contains at least three points.
If $\Lambda(\Gamma)$ is empty (\resp, contains exactly one point, or exactly two points), then $\Gamma$ is called \emph{elliptic} (\resp, \emph{parabolic}, or \emph{loxodromic}).

The \emph{elementary closure} of a loxodromic element $\gamma \in \Gamma$ is the largest elementary subgroup of $\Gamma$ containing $\gamma$.

\begin{lemm}
\label{res: parabolic large fixed point set}
	Let $\xi \in \partial X$.
	Let $U$ be a finite subset of $\Gamma$, fixing $\xi$, which consists of non-loxodromic elements.
	Then the set $\fix{U, 40\delta}$ is not bounded.
\end{lemm}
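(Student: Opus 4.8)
The plan is to produce an unbounded subset of $\fix{U, 40\delta}$ explicitly, namely a tail of a geodesic ray aimed at $\xi$. Fix a basepoint $o \in X$ and a geodesic ray $\rho \colon [0, \infty) \to X$ with $\rho(0) = o$ and $\rho(t) \to \xi$ as $t \to \infty$. (If $X$ is merely a length space, I would instead work with a sequence of almost-geodesic segments $[o, x_n]$ for a sequence $x_n \to \xi$; the estimates below are unaffected, and I argue as if $X$ is geodesic.) Since $U$ is finite, it suffices to show: for each $\gamma \in U$ there is $T_\gamma \ge 0$ with $\dist{\gamma\rho(t)}{\rho(t)} \le 40\delta$ for all $t \ge T_\gamma$. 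Granting this, $T := \max_{\gamma \in U} T_\gamma$ is finite, $\rho([T, \infty)) \subseteq \fix{U, 40\delta}$, and this set is unbounded since $\dist{\rho(t)}{o} = t$.

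The decisive input for the displacement bound is that $\gamma$ is not loxodromic, hence $\snorm\gamma = 0$; in particular $\norm\gamma \le 8\delta$ by \eqref{eqn: regular vs stable length}, so $\fix{\gamma, 40\delta}$ is nonempty to begin with, but more is true along the ray. I would use the Busemann cocycle of $\xi$: for $a, b \in X$ set $\beta(a, b) = \limsup_{y \to \xi}\bigl(\dist{a}{y} - \dist{b}{y}\bigr)$. It satisfies $\lvert\beta(a,b)\rvert \le \dist{a}{b}$; by the four-point inequality it is quasi-additive, $\beta(a, c) = \beta(a, b) + \beta(b, c) + O(\delta)$; and it is exactly invariant under $\gamma$ because $\gamma$ fixes $\xi$. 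Iterating gives $\beta(o, \gamma^n o) = n\,\beta(o, \gamma o) + O(n\delta)$, so $\lvert\beta(o, \gamma o)\rvert \le \tfrac1n\dist{o}{\gamma^n o} + O(\delta)$; letting $n \to \infty$ and using $\lim_n \tfrac1n\dist{o}{\gamma^n o} = \snorm\gamma = 0$ yields $\lvert\beta(o, \gamma o)\rvert = O(\delta)$. Thus $\gamma$ shifts the horofunction of $\xi$ by only a bounded amount. Since $\gamma\rho$ is then a geodesic ray from $\gamma o$ to $\gamma\xi = \xi$ whose horofunction is shifted from that of $\rho$ by $O(\delta)$, the standard comparison of two asymptotic rays (via the stability of quasi-geodesics) shows that $\rho$ and $\gamma\rho$, after reparametrizing by an offset of size $O(\delta)$, eventually $O(\delta)$-fellow-travel; reabsorbing the offset (the rays are $1$-Lipschitz) gives $\dist{\gamma\rho(t)}{\rho(t)} = O(\delta)$ for $t$ large, as needed.

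The main obstacle is purely one of bookkeeping: one must chase the constants through the quasi-additivity of $\beta$ and through the asymptotic-ray comparison to verify that the final bound is indeed $\le 40\delta$ (there is ample room, but it requires care with the four-point inequality \eqref{eqn: four point hyp} and quasi-geodesic stability). A secondary point is the passage to a non-geodesic length space: there I would replace $\rho$ by almost-geodesic segments $[o, x_n]$ with $x_n \to \xi$, re-run the Busemann estimate verbatim (again using $\dist{o}{\gamma^n o}/n \to \snorm\gamma = 0$), and conclude that for each large $t$ the point at distance $t$ along $[o, x_n]$ lies in $\fix{U, 40\delta}$ once $n$ is large, which once more produces points of $\fix{U, 40\delta}$ arbitrarily far from $o$.
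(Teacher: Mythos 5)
Your argument is sound, but it takes a different route from the paper, which disposes of the lemma in two lines: since $X$ is only a length space, the paper picks, for any $L$, an $L$-local $(1,10\delta)$-quasi-geodesic ray from a basepoint to $\xi$ (such rays always exist in a length space, unlike genuine geodesic rays) and then quotes an external result, \cite[Lemma~2.11]{Coulon:2018vp}, which says precisely that a non-loxodromic isometry fixing $\xi$ displaces all sufficiently far points of such a ray by at most $40\delta$. What you have written is, in effect, a self-contained re-proof of that quoted lemma: the Busemann quasi-cocycle $\beta$ at $\xi$ is invariant under $\gamma$ and additive up to $O(\delta)$, so iterating and using $\snorm{\gamma}=0$ gives $\abs{\beta(o,\gamma o)}=O(\delta)$, and then the comparison of the two asymptotic rays $\rho$ and $\gamma\rho$ yields $\dist{\gamma\rho(t)}{\rho(t)}=O(\delta)$ for large $t$ (with $T_\gamma$ allowed to depend on $\gamma$, and a final maximum over the finite set $U$). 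This is the standard mechanism underlying the cited lemma, and it buys you independence from the reference at the price of two pieces of work that you flag but do not carry out: (i) the explicit constant-chasing needed to land at the specific threshold $40\delta$ (there is room, but with the four-point definition of $\delta$ and the quasi-geodesic stability constants this is not automatic and must actually be done, or else the statement should be proved with a larger explicit multiple of $\delta$, which would then have to be propagated to the places where $40\delta$ is used); and (ii) the non-geodesic case, where your replacement of $\rho$ by segments $[o,x_n]$ with $x_n\to\xi$ is only sketched — the paper's device of $L$-local $(1,10\delta)$-quasi-geodesic rays handles this cleanly and is what the quoted lemma is formulated for. So: same underlying geometry, but the paper outsources exactly the two technical points your version leaves open.
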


\begin{proof}
	Fix a base point $x \in X$.
	Since $X$ is a length space, for every $L \in \R_+$ there is an $L$-local $(1, 10\delta)$-quasi-geodesic ray joining $x$ to $\xi$.
	The statement now follows from \cite[Lemma~2.11]{Coulon:2018vp}.
\end{proof}

We associate with our group two numerical invariants that will be useful to control the structure of limit groups (see \autoref{sec: actions on R-trees}).

\begin{defi}
\label{def: inj radius}
	The \emph{injectivity radius} for the action of $\Gamma$ on $X$, denoted by $\inj \Gamma X$, is the quantity
	\begin{equation*}
		\inj \Gamma X = \inf \snorm \gamma
	\end{equation*}
	where the infimum runs over all loxodromic isometries $\gamma \in \Gamma$.
\end{defi}

The injectivity radius is positive whenever the action of $\Gamma$ is non-elementary, proper, and co-compact.

\begin{defi}
\label{def: acyl inv}
	Let $r \in \R_+$.
	The \emph{(local) acylindricity parameter at scale $r$}, denoted by $A(\Gamma,X,r)$ is defined as 
	\begin{equation*}
		 A(\Gamma,X,r) = \sup_{U \subset \Gamma} \diam {\fix{U,r}},
	\end{equation*}
	where $U$ runs over all subsets of $\Gamma$ generating a non-elementary subgroup.
\end{defi}

\begin{prop}
\label{res: margulis lemma}
	Suppose that $\Gamma$ is torsion-free and acts properly on $X$.
	For every $r \in \R_+$, we have
	\begin{equation*}
		A(\Gamma,X, r) \leq 4r + A(\Gamma,X,400\delta) + 24\delta.
	\end{equation*}
\end{prop}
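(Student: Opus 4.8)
The statement is a ``Margulis-type'' inequality: it bounds the diameter of an almost-fixed-point set $\fix{U,r}$ for a subset $U$ generating a non-elementary subgroup, in terms of the same quantity at the fixed scale $400\delta$, with a linear error in $r$. The plan is to reduce the scale $r$ down to $400\delta$ by a covering/projection argument. First I would observe that we may assume $r \geq 400\delta$, since otherwise $A(\Gamma, X, r) \leq A(\Gamma, X, 400\delta)$ monotonically and the inequality is trivial. Also, if $r \leq \max\{E_\infty(U), 5\delta\}$ then $\fix{U,r}$ could a priori be large, but here is where non-elementarity and torsion-freeness enter: if $E_\infty(U)$ were large, the set $\fix{U,r}$ would still be controlled because $U$ generates a non-elementary, hence in particular non-parabolic and (by torsion-freeness) containing a loxodromic element, subgroup. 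The contrapositive of \autoref{res: parabolic large fixed point set} is relevant here: a finite set of non-loxodromic elements fixing a point at infinity has unbounded $\fix{U,40\delta}$; conversely, when $U$ generates a non-elementary subgroup, it must contain a loxodromic element, or at least cannot consist entirely of non-loxodromic isometries fixing a common boundary point.

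The heart of the argument is \autoref{res: set of almost fixed points}: when $r > \max\{E_\infty(U), 5\delta\}$, the set $Y := \fix{U,r}$ is $8\delta$-quasi-convex, and for any $x \notin \fix{U,r'}$ with $r' $ a smaller threshold we get the key displacement estimate $\sup_{\gamma \in U}\dist{\gamma x}x \geq 2d(x, \fix{U,r'}) + r' - 10\delta$. The strategy is then: take $x, y \in \fix{U,r}$ realizing (nearly) the diameter. Let $Z = \fix{U, 400\delta}$, which has diameter at most $A(\Gamma, X, 400\delta)$. For any point $p$ on a geodesic $[x,y]$, if $p \notin Z$, then applying \eqref{eqn: displacement outside fixed set} with the scale $400\delta$ gives $\sup_{\gamma \in U}\dist{\gamma p}p \geq 2d(p, Z) + 400\delta - 10\delta$. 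But $p$ lies on a geodesic between two points of the $8\delta$-quasi-convex set $Y = \fix{U,r}$, so $d(p, Y) \leq 8\delta + (\text{small error})$, hence there is $q \in Y$ near $p$ with $\sup_\gamma \dist{\gamma q}q \leq r$; by the triangle inequality $\sup_\gamma \dist{\gamma p}p \leq r + 2\dist pq \leq r + 16\delta + \cdots$. Combining the two gives $2 d(p, Z) \leq r + O(\delta)$, i.e. every point of $[x,y]$ is within roughly $r/2 + O(\delta)$ of $Z$. Since $Z$ has diameter $\leq A(\Gamma, X, 400\delta)$, the geodesic $[x,y]$ has length at most $A(\Gamma, X, 400\delta) + 2\cdot(r/2 + O(\delta)) = A(\Gamma, X, 400\delta) + r + O(\delta)$. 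Tracking the constants carefully — the $8\delta$-quasiconvexity of $Y$, the $40\delta$ or $400\delta$ threshold, the $10\delta$ and $24\delta$ slack — should land exactly on the claimed bound $4r + A(\Gamma, X, 400\delta) + 24\delta$ (the coefficient $4$ rather than $1$ presumably reflecting that one cannot quite apply \eqref{eqn: displacement outside fixed set} with the optimal scale and instead chains two such estimates, or accounts for the length-metric-on-$Y$ subtleties from \eqref{eqn: def strongly qc}).

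The main obstacle I anticipate is the case where $E_\infty(U)$ is large, so that \autoref{res: set of almost fixed points} does not directly apply to $\fix{U,r}$ with small $r$: one must first pass to a point $x_0$ nearly realizing $E_\infty(U)$, recenter, and argue that $\fix{U, 400\delta}$ is still non-empty and controls things — here is exactly where torsion-freeness and non-elementarity of $\langle U \rangle$ are used, via the existence of a loxodromic element (whose thinness, \autoref{rem: thin isom - loxo}, or whose translation axis cylinder $Y_\gamma$ pins down $\fix{U,r}$ along the orthogonal direction). A secondary technical point is the distinction between the ambient metric $\distV[X]$ and the induced length metric on quasi-convex subsets from \eqref{eqn: def strongly qc}, which may be what forces the constant $4$; I would handle this by working throughout with $\distV[X]$ and only invoking strong quasi-convexity of cylinders when necessary. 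Modulo these bookkeeping issues, the argument is a fairly standard ``project onto the fixed set at a fixed scale and bound the linear drift'' computation.
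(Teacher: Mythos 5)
The paper's own proof is a one-line citation to Proposition~3.5 of \cite{Coulon:2018vp}, specialized to the torsion-free case where the third invariant $\nu(\Gamma, X)$ appearing there equals~$1$; there is no argument in the paper for you to match, so the question is simply whether your direct sketch is sound. The easy case works: when $E_\infty(U) < 400\delta$ the set $Z = \fix{U, 400\delta}$ is non-empty, and for any $x, y \in \fix{U,r}$ the displacement estimate \eqref{eqn: displacement outside fixed set} applied at scale $400\delta$ gives $d(x, Z) \leq \tfrac12(r - 390\delta)$ (likewise for $y$), hence $\dist xy \leq r - 390\delta + A(\Gamma, X, 400\delta)$, comfortably within the stated bound. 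You do not even need the detour via midpoints $p \in [x,y]$ and their projections onto $\fix{U,r}$; applying the estimate directly to the endpoints $x, y$ suffices. The coefficient $4$ on $r$ is slack inherited from the general statement with $\nu$, not something your argument should strain to reproduce.

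The genuine gap is the case $400\delta \leq E_\infty(U) \leq r$, where $\fix{U,r}$ is non-empty but $Z = \fix{U, 400\delta}$ is empty. You flag this as the main obstacle, but your proposed resolution --- ``recenter at a point nearly realizing $E_\infty(U)$ and argue that $\fix{U, 400\delta}$ is still non-empty'' --- cannot work: $\fix{U, 400\delta}$ is empty precisely because $E_\infty(U) > 400\delta$, and this is an intrinsic property of $U$ that no change of basepoint alters. The projection target has simply vanished, and the sketch offers nothing to replace it. To close this case one must somehow relate the given $U$ to a \emph{different} non-elementary-generating set $U'$ with $E_\infty(U') < 400\delta$ whose small-scale fixed set controls $\fix{U,r}$ --- for instance via products $\gamma_1^{-1}\gamma_2$, $\gamma_1,\gamma_2 \in U$, which move a point of $\fix{U, E_\infty(U)+\epsilon}$ by only $O(\delta)$ --- but nothing of the sort appears. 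This is where the real content of the cited reference lies, and as written your proposal does not cover it.
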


\begin{proof}
	This is a particular case of \cite[Proposition~3.5]{Coulon:2018vp}.
	Indeed, \cite{Coulon:2018vp}, which handles any group $\Gamma$ acting on a hyperbolic space, involves a third invariant $\nu(\Gamma, X)$.
	However, for a torsion-free group acting properly on a hyperbolic space, we have $\nu(\Gamma, X) = 1$.
\end{proof}

This motivates the next definition.

\begin{defi}
\label{def: acylindricity}
	Assume that $\Gamma$ is torsion-free and acts properly on $X$.
	The \emph{(global) acylindricity parameter}, denoted by $A(\Gamma, X)$, is the smallest non-negative number such that for every $r \in \R_+$,
	\begin{equation*}
		A(\Gamma ,X, r) \leq 4r + A(\Gamma, X).
	\end{equation*}
\end{defi}

Note that \autoref{res: margulis lemma} implies that $A(\Gamma, X) \leq A(\Gamma, X, 400 \delta) + 24\delta$.
Under our assumption, $A(\Gamma, X)$ is always finite, see for instance \cite[Section~6.2]{Coulon:2016if}.

%%%%%%%%%%%%%%%%%%%%%%%%%%%%%%%%%%%%%%%%%%%%%%%%%%%%%%%%%%%%%%%%%%%%%%%%%%%%%%%%%%%%%
%
\subsection{The small cancellation condition}
%
%%%%%%%%%%%%%%%%%%%%%%%%%%%%%%%%%%%%%%%%%%%%%%%%%%%%%%%%%%%%%%%%%%%%%%%%%%%%%%%%%%%%%

Let $\Gamma$ be a torsion-free, non-elementary hyperbolic group and $X$ a Cayley graph of $\Gamma$ (from now on, we will always assume that our hyperbolic groups are torsion-free).
In particular, $X$ is a $\delta$-hyperbolic geodesic space endowed with a proper and co-compact action by isometries of $\Gamma$.
For simplicity, we assume that $\delta > 0$.
Let $\mathcal Q$ be a collection of pairs $(R, Y)$ where $R$ is a subgroup of $\Gamma$ and $Y$ is an $R$-invariant closed strongly quasi-convex subset of $X$.
We say that $\mathcal Q$ is a \emph{relation family} if $\mathcal Q$ is invariant under the action of $\Gamma$ given by $\gamma \cdot (R,Y) = (\gamma R\gamma^{-1},\gamma Y)$, for every $(R,Y) \in \mathcal Q$ and every $\gamma \in \Gamma$.
For such a family, we define two numerical parameters:
\begin{eqnarray*}
	\Delta(\mathcal Q,X) & = & \sup \set{\diam{Y_1^{+3\delta} \cap Y_2^{+3\delta}}}{(R_1,Y_1) \neq (R_2,Y_2) \in \mathcal Q}, \\
	T(\mathcal Q, X) & = & \inf\set{\norm h}{h \in R,\ (R,Y) \in \mathcal Q}.
\end{eqnarray*} 
For readers familiar with the usual small cancellation theory, they respectively play the role of the length of the largest piece and the length of the smallest relation.
Note that if $\Delta(\mathcal Q, X)$ is finite, then $R$ is normal in $\stab Y$ for every $(R,Y) \in\mathcal Q$.
Denote by $K(\mathcal Q)$, or simply $K$, the (normal) subgroup of $\Gamma$ generated by all $R$ where $(R,Y)$ runs over $\mathcal Q$.
The goal of small cancellation theory is to study the quotient $\bar \Gamma = \Gamma / K$, provided $\Delta(\mathcal Q, X) \ll T(\mathcal Q, X)$ and $\delta \ll T (\mathcal Q, X)$.
This condition is formalized by the next definition.

\begin{defi}
\label{defi: small cancellation}
	Let $\lambda, \epsilon \in \R_+^*$.
	\begin{itemize}
		\item The relation family $\mathcal Q$ satisfies the \emph{$C'(\lambda, \epsilon)$ small cancellation condition} if 
		\begin{equation*}
			\Delta(\mathcal Q, X) \leq \lambda T(\mathcal Q, X)
			\quad \text{and} \quad
			\delta \leq \epsilon T (\mathcal Q, X).
		\end{equation*}
		\item The relation family $\mathcal Q$ satisfies the $C'(\lambda, \epsilon)$ \emph{strengthened} small cancellation condition if, in addition to the previous inequalities, we have 
		\begin{equation*}
			\inj \Gamma X \leq \epsilon T(\mathcal Q, X).
		\end{equation*}
		\item A group $\bar \Gamma$ is a \emph{$C'(\lambda, \epsilon)$  (strengthened) small cancellation quotient} of $\Gamma$ if there exists a relation family $\mathcal Q$  satisfying the $C'(\lambda, \epsilon)$ (strengthened) small cancellation condition such that $\bar \Gamma = \Gamma / K(\mathcal Q)$.
	\end{itemize}
\end{defi}

\begin{rema}
    Technically, the definition of a small cancellation quotient depends on $X$. In practice, the choice of $X$ does not make a difference for what we are interested in. Thus, we will suppress $X$ and simply say $\bar \Gamma$ is a \emph{$C'(\lambda, \epsilon)$} small cancellation quotient of $\Gamma$.
\end{rema}

\begin{rema}
	There are cases where $\Delta(\mathcal Q, X)$ and $\delta$ can be zero (or arbitrarily small if we stick with positive hyperbolicity constant).
	Consider, for instance, the free group $\Gamma$  generated by $a$ and $b$ and its Cayley graph $X$ with respect to $\{a,b\}$.
	Let $Y_a$ and $Y_b$ be the respective axes of $a$ and $b$.
	The relation family 
	\begin{equation*}
		\mathcal Q = \set{ \left( \gamma\group u \gamma^{-1}, \gamma Y_u \right)}{\gamma \in \Gamma, u \in \{a, b\}}
	\end{equation*}
	satisfies the $C'(\lambda, \epsilon)$ small cancellation condition, for any $\lambda, \epsilon \in \R_+^*$.
	Nevertheless, the quotient $\Gamma / K(\mathcal Q)$ is trivial, which is not what would be expected from a small cancellation quotient.
	The reason is that the small cancellation condition alone is not enough to impose that the relations of $\mathcal Q$ are very long.
	The strengthened small cancellation condition imposes an additional hypothesis (that is invariant under renormalization) and states that relations are very long compared to other relevant lengths of $\Gamma$ and $X$.
\end{rema}

\begin{defi}
\label{defi tight}
We say that the relation family $\mathcal Q$ is \emph{tight} if: $\mathcal Q / \Gamma$ is finite, the action of $R$ on $Y$ is cobounded, and $R = \stab Y$ for every $(R, Y) \in \mathcal Q$.
Similarly, the small cancellation quotient $\bar \Gamma$ of $\Gamma$ is \emph{tight} if the underlying relation family is tight.
\end{defi}

%------------------------------------------------------------------------------------
%
\subsection{Cone and cone-off}
\label{sec: cone and cone-off}
%
%------------------------------------------------------------------------------------

\paragraph{Cone over a metric space.}
Let $Y$ be a metric space.
The \emph{cone of radius $\rho$ over $Y$}, denoted by $D_\rho(Y)$ or simply $D(Y)$, is the quotient of $Y\times \left[0,\rho\right]$ by the equivalence relation that identifies all the points of the form $(y,0)$.	
The equivalence class of $(y,0)$, denoted by $c$, is called the \emph{apex} or \emph{cone point} of $D(Y)$. 
By abuse of notation, we still write $(y,r)$ for the equivalence class of $(y,r)$.
Given two points $x = (y,r)$ and $x' = (y',r')$ in $D(Y) \setminus\{c\}$, the \emph{angle at $c$ between $x$ and $x'$} is the quantity
\begin{equation*}
	\theta_c(x,x') = \frac {\dist y{y'}}{\sinh \rho}.
\end{equation*}
It defines a pseudo-metric on $D(Y) \setminus\{c\}$.
The cone over $Y$ is endowed with a metric characterized as follows \cite[Chapter I.5, Proposition 5.9]{Bridson:1999ky}.
If $x=(y,r)$ and $x'=(y',r')$ are two points of $D(Y)$, then
\begin{equation}
\label{eqn: sc - metric cone}
	\cosh \dist x{x'} = \cosh r \cosh r' - \sinh r \sinh r' \cos \left(\min\left\{\pi, \theta_c(x,x')  \right\} \right).
\end{equation}
Although the angle $\theta_c$ is not defined when $x$ or $x'$ equals $c$, the above formula still makes sense in this situation, because $\sinh r \sinh r' = 0$.
The embedding $\iota \colon Y \to D(Y)$ sending $y$ to $(y,\rho)$ satisfies
\begin{equation*}
	\dist {\iota(y)}{\iota(y')} = \mu \left(\dist y{y'}\right), \quad \forall y,y' \in Y,
\end{equation*}
where $\mu \colon \R_+ \to \R_+$ is the non-decreasing concave map characterized by 
\begin{equation*}
	\cosh \mu(t) = \cosh^2 \rho - \sinh^2 \rho \cos \left(\min\left\{\pi, \frac {t}{\sinh \rho}\right\} \right), \quad \forall t \in \R_+.
\end{equation*}
In addition, the cone comes with a \emph{radial projection} $p \colon D(Y)\setminus\{c\} \to Y$ mapping $(y,r)$ to $y$.
The next statement provides an estimate of the inverse of $\mu$ on $[0, \pi \sinh \rho]$ that does not depend on $\rho$.

\begin{lemm}
\label{res: invert mu}
	For every $t \in \R_+$, if $\mu(t) < 2\rho$, then $t \leq \pi \sinh(\mu(t)/2)$.
\end{lemm}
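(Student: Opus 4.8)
The plan is to invert the defining relation for $\mu$ by reducing it, via half-angle identities, to an elementary trigonometric inequality. Write $\theta = \min\{\pi,\, t/\sinh\rho\} \in [0,\pi]$, so that $\cosh\mu(t) = \cosh^2\rho - \sinh^2\rho\cos\theta$. First I would dispose of the degenerate range: if $t \geq \pi\sinh\rho$ then $\theta = \pi$ and $\cosh\mu(t) = \cosh^2\rho + \sinh^2\rho = \cosh 2\rho$, hence $\mu(t) = 2\rho$ (injectivity of $\cosh$ on $\R_+$), contradicting the hypothesis $\mu(t) < 2\rho$. So under our hypothesis we may assume $t < \pi\sinh\rho$, and then $\theta = t/\sinh\rho$.

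Next comes the key computation. Subtracting $1$ from both sides of the defining relation gives
\[ \cosh\mu(t) - 1 = \sinh^2\rho - \sinh^2\rho\cos\theta = \sinh^2\rho\,(1-\cos\theta), \]
and applying the identities $\cosh s - 1 = 2\sinh^2(s/2)$ and $1 - \cos\theta = 2\sin^2(\theta/2)$ yields $\sinh^2(\mu(t)/2) = \sinh^2\rho\,\sin^2(\theta/2)$. Since $\theta \in [0,\pi]$ and $\mu(t) \geq 0$, both sides are nonnegative and we may take square roots to obtain
\[ \sinh\!\left(\frac{\mu(t)}{2}\right) = \sinh\rho\,\sin\!\left(\frac{\theta}{2}\right) = \sinh\rho\,\sin\!\left(\frac{t}{2\sinh\rho}\right). \]

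It then remains to show $t \leq \pi\sinh(\mu(t)/2)$, i.e., $t \leq \pi\sinh\rho\,\sin\!\left(t/(2\sinh\rho)\right)$. Setting $x = t/(2\sinh\rho) \in [0,\pi/2)$, this is precisely $2x \leq \pi\sin x$, equivalently Jordan's inequality $\sin x \geq (2/\pi)x$ on $[0,\pi/2]$, which holds because $\sin$ is concave there and so lies above the chord joining $(0,0)$ and $(\pi/2,1)$. This completes the argument. There is no serious obstacle; the only step that takes a moment's thought is spotting the half-angle simplification that collapses the defining relation of $\mu$ into the clean identity $\sinh(\mu(t)/2) = \sinh\rho\,\sin(\theta/2)$, after which everything is routine.
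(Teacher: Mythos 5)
Your proof is correct and follows the same route as the paper: the paper's proof simply observes that $\mu(t)<2\rho$ forces $t<\pi\sinh\rho$ and then declares the rest "a straightforward computation of (hyperbolic) trigonometry," which is exactly the computation you carry out via the half-angle identity $\sinh(\mu(t)/2)=\sinh\rho\,\sin\bigl(t/(2\sinh\rho)\bigr)$ and Jordan's inequality.
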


\begin{proof}
	The condition $\mu(t) < 2\rho$ forces that $t < \pi \sinh \rho$.
	The conclusion is now a straightforward computation of (hyperbolic) trigonometry.
\end{proof}

\paragraph{Cone-off space.}
Let $\rho \in \R_+^*$.
Let $(X, \distV)$ be a $\delta$-hyperbolic graph and $\mathcal Y$ a collection of strongly quasi-convex subsets of $X$.
Given $Y \in \mathcal Y$, we denote by $\distV[Y]$ the length metric on $Y$ induced by the restriction of $\distV$ on $Y$.
We write $D_\rho(Y)$ for the cone of radius $\rho$ over $(Y, \distV[Y])$.

The \emph{cone-off of radius $\rho$ relative to $\mathcal Y$}, denoted by $\dot X_\rho(\mathcal Y)$ or simply $\dot X$ is the space obtained by attaching for every $Y \in \mathcal Y$, the cone $D_\rho(Y)$ on $X$ along $Y$ according to $\iota \colon Y \to D_\rho(Y)$. We denote by $\mathcal C$ the subset of $\dot X$ consisting of all apices of all attached cones.
We write $\distV[\dot X]$ for the largest pseudo-metric on $\dot X$ such that the canonical maps $X \to \dot X$ as well as $D_\rho(Y) \to \dot X$ are $1$-Lipschitz.
It turns out that $\distV[\dot X]$ is a geodesic metric on $\dot X$ \cite[Theorem~5.38]{Dahmani:2017ef}.
The metrics of $X$ and $\dot X$ are related as follows \cite[Lemma~5.8]{Coulon:2014fr}.
For every $x,x' \in X$ we have
\begin{equation}
\label{eqn:  lower bound dist dot X}
	\mu \left(\dist[X] x{x'}\right) \leq \dist[\dot X] x{x'} \leq \dist[X] x{x'}.
\end{equation}
By construction, the embedding $D_\rho(Y) \to \dot X$ is $1$-Lipschitz for every $Y \in \mathcal Y$.
Since we passed to the length metric on $Y$ before building the cones, this map is not an isometry in general.
Nevertheless, we have the following statements.

\begin{lemm}[{\cite[Proposition~A.2]{Coulon:2021wg}}]
\label{res: qi cone in cone-off}
	For every $Y \in \mathcal Y$, the map $D(Y) \to \dot X$ is $(1,8\delta)$-quasi-isometric embedding.
\end{lemm}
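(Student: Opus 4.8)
We already know from \eqref{eqn:  lower bound dist dot X} that the maps involved are $1$-Lipschitz, so only the lower bound on $\dist[\dot X]{}{}$ in terms of $\dist[D(Y)]{}{}$ is at stake. The plan is to fix $Y \in \mathcal Y$ and two points $x = (y,r)$, $x' = (y',r')$ of $D(Y)$, and to compare $\dist[\dot X]{x}{x'}$ with the intrinsic cone distance $\dist[D(Y)]{x}{x'}$ computed via \eqref{eqn: sc - metric cone}. Since $\dist[\dot X]{x}{x'} \leq \dist[D(Y)]{x}{x'}$ is immediate, we need to show $\dist[D(Y)]{x}{x'} \leq \dist[\dot X]{x}{x'} + 8\delta$.

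The key point is to analyse a geodesic $g$ of $\dot X$ joining $x$ to $x'$ and understand how it can leave the cone $D(Y)$. Such a geodesic alternates between arcs inside $D(Y)$ and arcs inside $X$ (or other cones), and each time it exits $D(Y)$ it must do so through the image $\iota(Y) \subset D(Y)$, i.e. at radius $\rho$. First I would reduce to the case where $g$ leaves $D(Y)$ exactly once: one enters $X$ at a point $\iota(u)$ and comes back at $\iota(v)$, with $u, v \in Y$; the sub-arc inside $X$ realises $\dist[\dot X]{\iota(u)}{\iota(v)}$, and by \eqref{eqn:  lower bound dist dot X} this is at least $\mu(\dist[X]{u}{v})$, while inside $D(Y)$ we may replace the entry/exit segments by segments straight to $\iota(u)$ and $\iota(v)$ in the cone without increasing length. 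Then the comparison reduces to: the cone distance in $D(Y)$ between $x$ and $x'$ routed through $\iota(u)$ and $\iota(v)$ is controlled, up to an additive $8\delta$, by $r + r' - 2\rho$ (the radial cost) plus $\mu(\dist[X]{u}{v})$. Here one uses that the cone metric is a genuine length metric, that straight radial segments have length equal to the radius, and that the concave function $\mu$ together with the strong quasi-convexity relation \eqref{eqn: def strongly qc} between $\distV[X]$ and $\distV[Y]$ on $Y$ lets us pass from the $X$-distance of $u,v$ to their $Y$-distance at the cost of an additive $8\delta$. Feeding this back into the angle formula \eqref{eqn: sc - metric cone} for $\dist[D(Y)]{x}{x'}$, and using $\mu(t) \leq t$, yields the bound.

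The main obstacle, and the part requiring care, is the hyperbolic-trigonometry estimate at the level of a single cone: one must show that routing a path in $D(Y)$ from $(y,r)$ to $(y',r')$ through boundary points $\iota(u),\iota(v)$ is, up to $8\delta$, no worse than the direct cone geodesic, and conversely that the $\dot X$-geodesic cannot be shorter than the direct cone geodesic by more than $8\delta$. This is where the concavity of $\mu$ and the precise relationship $\dist[X] y{y'} \leq \dist[Y] y{y'} \leq \dist[X] y{y'} + 8\delta$ from \eqref{eqn: def strongly qc} are essential, together with \autoref{res: invert mu} to control $\mu^{-1}$ uniformly in $\rho$. Once the single-exit case is settled, an induction on the number of times $g$ leaves $D(Y)$ — noting that each return to $D(Y)$ only helps, since consecutive excursions can be merged by the triangle inequality in $D(Y)$ — completes the argument; in fact the cleanest route is to invoke the cited \cite[Proposition~A.2]{Coulon:2021wg} directly, our task being to check that the strong quasi-convexity hypothesis in force here matches the one needed there.
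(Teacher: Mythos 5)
The paper offers no proof of this lemma: the statement is quoted verbatim from Proposition~A.2 of \cite{Coulon:2021wg}, which is set up in exactly the same framework (cone-off over a family of strongly quasi-convex subsets of a $\delta$-hyperbolic graph, same meaning of $\delta$), so the citation closes the matter. Your final sentence --- invoke that proposition after matching the strong-quasi-convexity hypothesis --- is the correct move and what the paper actually does.

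As for the direct argument you sketch, the overall strategy is right (trace a $\dot X$-geodesic between $x, x' \in D(Y)$, locate where it leaves $D(Y)$, and compare via $\mu$ and the strong-quasi-convexity inequality $\dist[Y] y{y'} \leq \dist[X] y{y'} + 8\delta$), but the reduction to a single excursion as you phrase it does not produce the right constant. ``Merging consecutive excursions by the triangle inequality in $D(Y)$'' suggests bounding $\dist[D(Y)]{\iota(u_1)}{\iota(v_k)}$ by a chain of cone-distances across each excursion and each intermediate arc; every excursion then costs a separate $8\delta$ from strong quasi-convexity, yielding $8k\delta$ after $k$ excursions. The device that actually works is to collapse \emph{everything between the first exit $\iota(u)$ and the last re-entry $\iota(v)$} in a single stroke: that whole middle portion, regardless of how many times it re-enters and leaves $D(Y)$, has $\dot X$-length at least $\dist[\dot X] uv \geq \mu(\dist[X] uv)$, while the cone geodesic from $\iota(u)$ to $\iota(v)$ has $D(Y)$-length $\mu(\dist[Y] uv) \leq \mu(\dist[X] uv) + 8\delta$, using that $\mu$ is concave with $\mu(0) = 0$. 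The $8\delta$ is thus paid exactly once, independently of the number of excursions. You also need, and gloss over, that the $D(Y)$-length and $\dot X$-length of the initial and final sub-arcs \emph{inside} $D(Y)$ agree --- this holds because length is a local quantity and the inclusion is locally isometric away from $\iota(Y)$, cf.~\autoref{res: isom embedding cone in cone-off}.
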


\begin{lemm}[{\cite[Lemma~5.7]{Coulon:2014fr}}]
\label{res: isom embedding cone in cone-off}
	Let $Y \in \mathcal Y$ and $x \in D(Y)$. Denote by $d(x, Y)$ the distance between $x$ and $\iota(Y)$ computed with the metric of $D(Y)$.
	For every $x' \in \dot X$, if $\dist[\dot X] x{x'} < d(x,Y)$, then $x' \in D(Y)$ and $\dist[\dot X] x{x'} = \dist[D(Y)] x{x'}$.
\end{lemm}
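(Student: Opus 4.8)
The plan is to use the explicit ``gluing/chain'' description of $\distV[\dot X]$ and to show that, under the hypothesis $\dist[\dot X]{x}{x'}<d(x,Y)$, no chain of near-minimal length joining $x$ to $x'$ can leave the cone $D(Y)$.

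Concretely, $\dot X$ is the quotient of $X\sqcup\coprod_{Y\in\mathcal Y}D_\rho(Y)$ by the identifications $\iota(y)\sim y$ for $y\in Y$, so the only points of a cone $D_\rho(Y)$ that are glued to anything outside that cone are the points of its base $\iota(Y)$; hence a point of $D(Y)\setminus\iota(Y)$ (for instance the apex, or any point at positive $D(Y)$-distance from $\iota(Y)$) lies in no piece of the gluing other than $D(Y)$. Since $\distV[\dot X]$ is the \emph{largest} pseudo-metric making all canonical maps $1$-Lipschitz, a routine check (or \cite[Chapter~I.5]{Bridson:1999ky}) identifies it with the chain metric: $\dist[\dot X]{p}{q}$ is the infimum of $\sum_{i=0}^{n-1}\ell_i$ over finite chains $p=p_0,\dots,p_n=q$ in which each consecutive pair $p_i,p_{i+1}$ lies in a common piece $Z_i\in\{X\}\cup\{D(Y):Y\in\mathcal Y\}$ and $\ell_i$ is the distance from $p_i$ to $p_{i+1}$ measured inside $Z_i$. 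Indeed the chain metric makes the inclusions $1$-Lipschitz via trivial two-term chains, and dominates any competitor by applying the triangle inequality to a chain; in particular, restricting to $D(Y)$ gives at once the easy inequality $\dist[\dot X]{a}{b}\le\dist[D(Y)]{a}{b}$ for $a,b\in D(Y)$.

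For the main estimate I may assume $x\notin\iota(Y)$, since otherwise $d(x,Y)=0$ and the statement is vacuous; recall that $d(x,Y)=\dist[D(Y)]{x}{\iota(Y)}$ by definition. I choose a chain $x=p_0,\dots,p_n=x'$ with total length $\ell:=\sum_i\ell_i<d(x,Y)$, which is possible because $\dist[\dot X]{x}{x'}$, the infimum of such lengths, is strictly smaller than $d(x,Y)$. I then claim, by induction on $k$, that $p_k\in D(Y)$ and $Z_k=D(Y)$ for every $k$: since $p_0=x\in D(Y)\setminus\iota(Y)$ lies in no other piece we get $Z_0=D(Y)$ and $p_1\in D(Y)$; and if $p_0,\dots,p_k\in D(Y)$ with $Z_0=\cdots=Z_{k-1}=D(Y)$, the triangle inequality inside $D(Y)$ gives
\[
\dist[D(Y)]{x}{p_k}\ \le\ \sum_{i=0}^{k-1}\ell_i\ \le\ \ell\ <\ d(x,Y)\ =\ \dist[D(Y)]{x}{\iota(Y)},
\]
so $p_k\notin\iota(Y)$, hence $p_k$ lies in no piece other than $D(Y)$, which forces $Z_k=D(Y)$ and $p_{k+1}\in D(Y)$. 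In particular $x'=p_n\in D(Y)$ and $\dist[D(Y)]{x}{x'}\le\ell$; letting $\ell$ decrease to $\dist[\dot X]{x}{x'}$ over admissible chains, and combining with the easy inequality above, yields $\dist[\dot X]{x}{x'}=\dist[D(Y)]{x}{x'}$.

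The only delicate point is the bookkeeping in the middle: keeping track of which points of $\dot X$ belong to which pieces of the gluing, and confirming that the largest $1$-Lipschitz pseudo-metric is indeed the chain metric. One could alternatively realize $\dist[\dot X]{x}{x'}$ by an actual geodesic of $\dot X$ (available by \cite[Theorem~5.38]{Dahmani:2017ef}) and argue that it stays in $D(Y)$; but then one would still have to rule out that a $\dot X$-geodesic lying inside $D(Y)$ is shorter than the corresponding $D(Y)$-geodesic, which reduces back to the chain argument, so the route above seems cleaner.
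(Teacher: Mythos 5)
The paper gives no proof of this lemma---it is imported verbatim from \cite[Lemma~5.7]{Coulon:2014fr}---so there is no internal argument to compare against; judged on its own, your proof is correct. The identification of the cone-off distance with the chain (quotient) pseudo-metric is the standard fact from \cite[Chapter~I.5]{Bridson:1999ky}, and your induction showing that every chain issued from $x$ of total length less than $d(x,Y)$ stays in $D(Y)$ (its intermediate points never reach $\iota(Y)$, the only locus where $D(Y)$ is glued to the rest of $\dot X$, so each segment is forced to be measured in $D(Y)$) is exactly the standard way such statements are established, including your correct handling of the vacuous case $x\in\iota(Y)$.
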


In particular, if $c$ stands for the apex of $D(Y)$, then the map $D(Y) \to \dot X$ induces a bijection from $D(Y) \setminus \iota(Y)$ onto $B(c, \rho)$.
Recall that we defined a notion of angle $\theta_c$ at $c$ in $D(Y)$.
The previous observation allows us to extend this notion to $\dot X$.

\begin{defi}
	Let $c \in \mathcal C$.
	Let $\nu$ and $\nu'$ be two non-degenerate geodesics starting at $c$.
	The \emph{angle at $c$ between $\nu$ and $\nu'$} is the quantity
	\begin{equation*}
		\angle_c(\nu, \nu') = \lim_{t \to 0^+} \theta_c\left(\nu(t), \nu'(t)\right)
	\end{equation*}
\end{defi}

Note that any geodesic $\nu \colon [0, \ell] \to D(Y)$ starting at $c$ has the form $\nu(t) = (y,t)$, for some $y \in Y$.
Hence, in the above definition, the quantity $\theta_c\left(\nu(t), \nu'(t)\right)$ is constant on a neighborhood of zero, and the limit exists.

%------------------------------------------------------------------------------------
%
\subsection{The small cancellation theorem}
%
%------------------------------------------------------------------------------------

\paragraph{Metric spaces.}
In this section, $\Gamma$ is still a non-elementary, torsion-free, hyperbolic group and $X$ is a Cayley graph of $\Gamma$.
Let $\mathcal Q$ be a relation family.
For simplicity, we let $K = K(\mathcal Q)$.

We fix a parameter $\rho \in \R_+^*$.
Its value will be made precise later (see \autoref{res: small cancellation}).
We make an abuse of notation and write  $\dot X_\rho(\mathcal Q)$ or simply $\dot X$ for the cone-off of radius $\rho$ over $X$ relative to the collection
\begin{equation*}
	\mathcal Y = \set{Y}{(R,Y) \in \mathcal Q}.
\end{equation*}
Recall that $\mathcal Q$ is $\Gamma$-invariant.
Hence, the action of $\Gamma$ on $X$ naturally extends to an action by isometries on $\dot X$: for every $\gamma \in \Gamma$, $(R,Y) \in \mathcal Q$, and $x = (y,r) \in D(Y)$, we define $\gamma x$ to be the point of $D(\gamma Y)$ given by $\gamma x = (\gamma y,r)$. 
The \emph{radial projection} $p \colon \dot X \setminus \mathcal C \to X$ is the $\Gamma$-equivariant map whose restriction to $X$ is the identity and whose restriction to any punctured cone $D(Y) \setminus \{c\}$ is the radial projection defined above.

\bigskip
The \emph{quotient space}, denoted by $\bar X_\rho(\mathcal Q)$ or simply $\bar X$, is the quotient of $\dot X_\rho(\mathcal Q)$ by the action of the normal subgroup $K$.
The metric of $\dot X$ induces a pseudo-metric on $\bar X$.
The group $\bar \Gamma$ naturally acts by isometries on $\bar X$ so that the canonical projection $f\colon \dot X \to \bar X$ is a $1$-Lipschitz, $\Gamma$-equivariant map.
If $x$ is a point in $X$, we write $\bar x = f(x)$ for its image in $\bar X$.
Similarly, we denote by $\bar {\mathcal C}$ the image of $\mathcal C$ in $\bar X$.
It is a $\bar \Gamma$-invariant subset of $\bar X$.
Moreover, for every distinct $\bar c, \bar c' \in \bar {\mathcal C}$, we have $\dist{\bar c}{\bar c'} \geq 2 \rho$.

\medskip
The next statement is a combination of Propositions~6.4 and 6.7, Corollary~3.12, and Proposition~3.15 in \cite{Coulon:2014fr}.

\begin{theo}
\label{res: small cancellation}
	There exist $\delta_0, \delta_1, \Delta_0, \rho_0 \in \R_+^*$, with the following properties.
	Let $\Gamma$ be a non-elementary, torsion-free, hyperbolic group and $X$ a $\delta$-hyperbolic graph endowed with a proper and co-compact action by isometries of $\Gamma$.
	Let $\mathcal Q$ be a relation family and set $\bar \Gamma = \Gamma / K(\mathcal Q)$.
	Assume that $\rho \geq \rho_0$.
	If $\delta \leq \delta_0$, $\Delta(\mathcal Q,X) \leq \Delta_0$ and $T(\mathcal Q,X) \geq 10\pi \sinh \rho$, then the following hold.
	\begin{enumerate}
		\item \label{enu: small cancellation - hyp cone-off}
		The cone-off space $\dot X = \dot X_\rho(\mathcal Q)$ is geodesic and $\dot \delta$-hyperbolic with $\dot \delta \leq \delta_1$.
		\item \label{enu: small cancellation - hyp}
		The quotient space $\bar X = \bar X_\rho(\mathcal Q)$ is geodesic and $\bar \delta$-hyperbolic with $\bar \delta \leq \delta_1$.
		\item \label{enu: small cancellation - local embedding}
		Let $(R,Y) \in \mathcal Q$.
		Let $\bar c$ be the image in $\bar X$ of the apex $c$ of $D(Y)$.
		The projection $\pi \colon \Gamma \onto \bar \Gamma$ induces an isomorphism from $\stab Y/R$ onto $\stab{\bar c}$.
		\item \label{enu: small cancellation - local isom}
		For every $r \in (0, \rho/20]$, and $x \in \dot X$, if $d(x, \mathcal C) \geq 2r$, then the map $f \colon \dot X \to \bar X$ induces an isometry from $B(x,r)$ onto $B(\bar x, r)$.
		\item \label{enu: small cancellation - translation kernel}
		For every $x \in \dot X$, and $\gamma \in K(\mathcal Q) \setminus\{1\}$, we have $\dist[\dot X]{\gamma x}x \geq \min \{2r, \rho/5\}$, where $r = d(x, \mathcal C)$.
		In particular, $K(\mathcal Q)$ acts freely on $\dot X \setminus \mathcal C$ and the map $f \colon \dot X \to \bar X$ induces a covering map $\dot X \setminus \mathcal C \to \bar X \setminus \bar{\mathcal C}$.
	\end{enumerate}
\end{theo}

\begin{rema}\label{rem: order of magnitude}
	Note that the constants $\delta_0$ and $\Delta_0$ (\resp, $\rho_0$) can be chosen arbitrarily small (\resp, large).
	From now on, we will always assume that $\rho_0 > 10^{20}(\delta_1+1)$ whereas $\delta_0, \Delta_0 < 10^{-10}\delta_1$.
	In particular, for every $\rho \geq \rho_0$ and $r \in \intval 0{10\delta_1}$, we have
	\begin{equation}
	\label{eqn: rho for comparing projection}
		\frac{\sinh \rho}{\sinh(\rho - r)} \leq 2 e^r.
	\end{equation}
	These estimates are absolutely not optimal.
	We chose them very generously to ensure that all the inequalities that we need later are satisfied.
	What really matters is their orders of magnitude, recalled below
	\begin{equation*}
		\max\left\{\delta_0, \Delta_0\right\} \ll \delta_1  \ll \rho_0 \ll \pi \sinh \rho_0. \qedhere
	\end{equation*}
\end{rema}
	
	From now on, we assume that $X$, $\Gamma$, and $\mathcal Q$ are as in \autoref{res: small cancellation}.
	In particular, $\dot X$ and $\bar X$ are respectively $\dot\delta$- and $\bar \delta$-hyperbolic.
	Up to increasing one constant or the other, we can actually assume that $\dot \delta = \bar \delta$.
	Nevertheless, we still keep two distinct notations to remember which space we are working in.

%%%%%%%%%%%%%%%%%%%%%%%%%%%%%%%%%%%%%%%%%%%%%%%%%%%%%%%%%%%%%%%%%%%%%%%%%%%%%%%%%%%%%
%
\subsection{Lifting properties}
%
%%%%%%%%%%%%%%%%%%%%%%%%%%%%%%%%%%%%%%%%%%%%%%%%%%%%%%%%%%%%%%%%%%%%%%%%%%%%%%%%%%%%%

\begin{lemm}
\label{res: subset far from apices}
	Let $r \in \R_+$.
	Let $\bar \gamma \in \bar \Gamma$ such that the set $\bar Z = \fix{\bar \gamma, r}$ is non-empty.
	If $\bar c \in \bar{\mathcal C}$ satisfies $d(\bar c, \bar Z) < \rho - r/2$, then $\bar \gamma$ fixes $\bar c$.
	In particular,  if $\mathcal Q$ is tight and $\bar \gamma \neq 1$, then  $d(\bar c, \bar Z) \geq \rho - r/2$, for every $\bar c \in \bar{\mathcal Q}$.
\end{lemm}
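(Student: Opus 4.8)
The plan is to work in the cone-off space $\dot X$ and lift the situation there. Since the statement concerns almost-fixed points of $\bar\gamma$ and apices, I would first choose a lift $\gamma \in \Gamma$ of $\bar\gamma$ and a point $x \in \dot X \setminus \mathcal C$ whose image $\bar x \in \bar X$ lies in $\bar Z = \fix{\bar\gamma, r}$, so that $\dist[\bar X]{\bar\gamma \bar x}{\bar x} \leq r$. Similarly pick an apex $c \in \mathcal C$ lifting $\bar c$, chosen so that $\dist[\bar X]{\bar c}{\bar x} = d(\bar c, \bar Z)$ (or arbitrarily close to it); by hypothesis this is $< \rho - r/2$. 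The key point is to transfer these inequalities up to $\dot X$: using that $f \colon \dot X \to \bar X$ is $1$-Lipschitz and $\Gamma$-equivariant, together with \autoref{res: small cancellation}\ref{enu: small cancellation - local isom} (which says $f$ is an isometry on balls of radius up to $\rho/20$ around points far from $\mathcal C$) or, more to the point, part \ref{enu: small cancellation - translation kernel} (which controls how far a non-trivial element of $K$ can move a point near $\mathcal C$), I want to find a lift so that $\dist[\dot X]{\gamma x}{x}$ is still small and $\dist[\dot X]{c}{x} < \rho - r/2$.

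The main step is then a pigeonhole/triangle-inequality argument at the apex $c$. If $\gamma$ does \emph{not} fix $c$, then $\gamma c$ is a different apex, and by the last sentence before \autoref{res: subset far from apices} (or the construction of $\bar X$) distinct apices are at distance at least $2\rho$ from one another in $\bar X$; the same separation holds in $\dot X$ for apices not identified by $K$. Now I estimate $\dist[\dot X]{\gamma c}{c}$ from above using $x$ as a pivot:
\[
\dist[\dot X]{\gamma c}{c} \leq \dist[\dot X]{\gamma c}{\gamma x} + \dist[\dot X]{\gamma x}{x} + \dist[\dot X]{x}{c} = \dist[\dot X]{c}{x} + \dist[\dot X]{\gamma x}{x} + \dist[\dot X]{x}{c},
\]
using equivariance for the first term. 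With $\dist[\dot X]{c}{x} < \rho - r/2$ and $\dist[\dot X]{\gamma x}{x} \leq r$ (or $\leq r$ plus a controlled error from the lifting), the right-hand side is strictly less than $2\rho - r/2 + r = 2\rho + r/2$. This is not quite below $2\rho$, so I need to be slightly more careful: either sharpen the estimate on $\dist[\dot X]{c}{x}$ using that geodesics from $x$ into the cone $D(Y)$ realize the distance and that $d(x, \mathcal C) \leq \rho$ always, or note that if $x$ lies in the cone $D(Y)$ with apex $c$ then $\dist[\dot X]{c}{x} \leq \rho$ and in fact the relevant quantity to bound is $d(c, \bar Z)$ directly, giving $\dist[\dot X]{\gamma c}{c} \leq 2d(\bar c, \bar Z) + r < 2(\rho - r/2) + r = 2\rho$, the desired contradiction. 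So $\gamma$ — and hence $\bar\gamma$ — fixes $c$, i.e. $\bar\gamma$ fixes $\bar c$.

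For the ``in particular'' clause: assume $\mathcal Q$ is tight and $\bar\gamma \neq 1$. If some apex $\bar c \in \bar{\mathcal C}$ had $d(\bar c, \bar Z) < \rho - r/2$, the first part would give that $\bar\gamma$ fixes $\bar c$. But by \autoref{res: small cancellation}\ref{enu: small cancellation - local embedding}, the stabilizer $\stab{\bar c}$ in $\bar\Gamma$ is isomorphic to $\stab Y / R$, and tightness (\autoref{defi tight}) forces $R = \stab Y$, so $\stab{\bar c}$ is trivial. Thus $\bar\gamma$ fixing $\bar c$ would force $\bar\gamma = 1$, contradicting our assumption. Hence $d(\bar c, \bar Z) \geq \rho - r/2$ for every $\bar c \in \bar{\mathcal C}$ (the notation $\bar{\mathcal Q}$ in the statement should read $\bar{\mathcal C}$).

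\textbf{Main obstacle.} The delicate part is the bookkeeping of the lift from $\bar X$ to $\dot X$: I need to ensure that when I pass from $\bar x$ to a lift $x$ and from $\bar c$ to a lift $c$, the inequality $\dist{\gamma x}{x} \leq r$ survives (a priori $\dist[\bar X]{\bar\gamma\bar x}{\bar x} \leq r$ only gives a lift $\gamma' \in \gamma K$ with $\dist[\dot X]{\gamma' x}{x} \leq r$, and I must argue this $\gamma'$ can be taken to be $\gamma$ itself, or that replacing $\bar\gamma$ by such a lift does not change whether $\bar c$ is fixed), and that the chosen lift $c$ of $\bar c$ is the one realizing the distance to the chosen lift of $\bar Z$. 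This is where \autoref{res: small cancellation}\ref{enu: small cancellation - translation kernel}, giving a definite lower bound $\min\{2d(x,\mathcal C), \rho/5\}$ on the displacement under non-trivial elements of $K$, does the work: it rules out the ``bad'' lifts and pins down the correct one. Everything else is triangle inequalities and the apex-separation $2\rho$.
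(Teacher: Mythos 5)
The key inequality you eventually reach --- $\dist{\bar\gamma\bar c}{\bar c} \leq 2\,d(\bar c, \bar Z) + r < 2(\rho - r/2) + r = 2\rho$, which contradicts the $2\rho$-separation of cone points unless $\bar\gamma$ fixes $\bar c$ --- is exactly the paper's argument, and your handling of the tight case via \autoref{res: small cancellation}\ref{enu: small cancellation - local embedding} matches the paper as well. However, the paper runs this entirely in $\bar X$, with no lifting: pick $\bar z \in \bar Z$ with $\dist{\bar c}{\bar z} < \rho - r/2$, apply the triangle inequality $\dist{\bar\gamma\bar c}{\bar c} \leq 2\dist{\bar c}{\bar z} + \dist{\bar\gamma\bar z}{\bar z} < 2\rho$, and use that $\bar{\mathcal C}$ is $\bar\Gamma$-invariant and that distinct points of $\bar{\mathcal C}$ are at distance at least $2\rho$ (stated just before the lemma). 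Your insistence on lifting to $\dot X$ is where the writeup falls short of a proof: the $2\rho$-separation of apices is only recorded in the paper for $\bar X$, so your appeal to "the same separation in $\dot X$" would need its own argument (it is true, but via \autoref{res: isom embedding cone in cone-off}, not by citation); and the "which lift of $\bar\gamma$" issue that you name as the main obstacle is not resolved by \autoref{res: small cancellation}\ref{enu: small cancellation - translation kernel} --- the correct resolution is simply that no preferred lift is needed, since the conclusion ``$\bar\gamma$ fixes $\bar c$'' lives in the quotient, so any lift $\gamma' \in \gamma K(\mathcal Q)$ with $\gamma' c = c$ already gives it. There is also a small arithmetic slip ("$2\rho + r/2$") in your first estimate; the correct count $2(\rho - r/2) + r = 2\rho$ is the one you use afterwards.

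In short: your central idea coincides with the paper's, but the lifting scaffolding is unnecessary and, as written, leaves genuine gaps. Drop it and carry out the three-line triangle-inequality argument directly in $\bar X$; the ``in particular'' clause then follows as you say from tightness and the triviality of cone-point stabilizers.
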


\begin{proof}
	This statement is often implicitly used in \cite{Coulon:2014fr, Coulon:2016if}. Compare also with \cite[Lemma~5.9.3]{Delzant:2008tu}.
	For the benefit of the reader, we review the argument here.
	Suppose that  $d(\bar c, \bar Z) < \rho - r/2$ for some $\bar c \in \bar{\mathcal C}$.
	In particular, there is a $\bar z \in \bar Z$ such that $\dist{\bar c}{\bar z} < \rho - r/2$.
	The triangle inequality yields
	\begin{equation*}
		\dist{\bar \gamma \bar c}{\bar c} \leq 2 \dist {\bar c}{\bar z} + \dist{\bar \gamma \bar z}{\bar z} < 2\rho.
	\end{equation*}
	However, two distinct cone points are at a distance of at least $2\rho$ from each other.
	Hence, $\bar \gamma$ fixes $\bar c$, which completes the proof of the first assertion.
	If $\mathcal Q$ is tight, then the stabilizer of any cone point is trivial -- see \autoref{res: small cancellation}~\ref{enu: small cancellation - local embedding} -- and the second assertion follows.
\end{proof}

\autoref{res: small cancellation}~\ref{enu: small cancellation - local isom} can be used to lift small scale pictures.
More generally, any quasi-convex subset of $\bar X$ avoiding the cone points lifts isometrically in $\dot X$.
This is the purpose of the next statement, which is a combination of Lemmas~4.17 and 4.18 of \cite{Coulon:2018vp}.
See also \cite[Proposition~3.21]{Coulon:2014fr}.

\begin{lemm}
\label{res: sc - lifting quasi-convex}
	Let $\bar Z$ be a subset of $\bar X$ such that for every $\bar z, \bar z' \in \bar Z$, and every $\bar c \in \bar{\mathcal C}$, we have $\gro{\bar z}{\bar z'}{\bar c} > 13 \bar \delta$.
	Let $\bar z_0$ be a point of $\bar Z$ and $z_0$ a pre-image of $\bar z_0$ in $\dot X$.
	Then there is a unique subset $Z$ of $\dot X$ containing $z_0$ such that the map $\dot X \to \bar X$ induces an isometry from $Z$ onto $\bar Z$.
	Moreover, the following holds.
	\begin{enumerate}
		\item \label{enu: sc - lifting quasi-convex - prestab}
		For every $z_1, z_2 \in Z$, and every $\bar \gamma \in \bar \Gamma$, if $\bar \gamma \bar z_1 = \bar z_2$, then there exists a unique pre-image $\gamma \in \Gamma$ of $\bar \gamma$ such that $\gamma z_1 = z_2$.
		Moreover, for every $z,z' \in Z$, if $\bar \gamma \bar z = \bar z'$, then $\gamma z = z'$.
		\item \label{enu: sc - lifting quasi-convex - stab}
		The projection $\Gamma \onto \bar \Gamma$ induces an isomorphism from $\stab Z$ onto $\stab{\bar Z}$.
	\end{enumerate}
\end{lemm}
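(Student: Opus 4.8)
The plan is to construct $Z$ by lifting to $\dot X$, from the base point $z_0$, the geodesics of $\bar X$ issuing from $\bar z_0$, using the local isometry of \autoref{res: small cancellation}~\ref{enu: small cancellation - local isom} together with the freeness of the $K$-action off the cone points from \autoref{res: small cancellation}~\ref{enu: small cancellation - translation kernel}. The first step is to convert the hypothesis on Gromov products into a genuine distance bound: in the $\bar\delta$-hyperbolic space $\bar X$, the quantity $\gro{\bar z}{\bar z'}{\bar c}$ estimates $d(\bar c, [\bar z, \bar z'])$ up to an additive $O(\bar\delta)$, so the assumption $\gro{\bar z}{\bar z'}{\bar c} > 13\bar\delta$ for all $\bar z, \bar z' \in \bar Z$ and all $\bar c \in \bar{\mathcal C}$ forces every geodesic of $\bar X$ with endpoints in $\bar Z$ — and hence, up to a further $O(\bar\delta)$, the thin fillings of geodesic triangles with vertices in $\bar Z$ — to stay at distance a definite multiple of $\bar\delta$ from $\bar{\mathcal C}$; in particular $\bar Z$ is disjoint from $\bar{\mathcal C}$. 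Since $f \colon \dot X \to \bar X$ is $1$-Lipschitz with $f(\mathcal C) = \bar{\mathcal C}$, the preimages of these sets stay equally far from $\mathcal C$ in $\dot X$, and because $\bar\delta \leq \delta_1 \ll \rho$ by \autoref{rem: order of magnitude}, this is comfortably within the range where \autoref{res: small cancellation}~\ref{enu: small cancellation - local isom} provides, around each such point $x$, a ball on which $f$ restricts to an isometry onto the corresponding ball of $\bar X$.

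For $\bar z \in \bar Z$, I would then lift a geodesic $\bar\sigma$ from $\bar z_0$ to $\bar z$, starting at $z_0$, by a finite chain of overlapping balls on which $f$ is an isometry; let $z$ denote the endpoint of the lifted path and set $Z = \{ z : \bar z \in \bar Z \}$. The point $z$ does not depend on the subdivision, nor on $\bar\sigma$: two geodesics of $\bar X$ with the same endpoints are homotopic rel endpoints through paths remaining in a common $O(\bar\delta)$-neighbourhood of them, which avoids $\bar{\mathcal C}$ by the estimate above, so a standard monodromy argument shows that path-lifting is constant along this homotopy. That $f$ restricts to an isometry from $Z$ onto $\bar Z$ is automatic in one direction, as $f$ is $1$-Lipschitz, and in the other it follows by lifting $[\bar z, \bar z']$ starting at $z$: the triangle $\bar z_0 \bar z \bar z'$ is $\bar\delta$-thin and, by the hypothesis applied to its vertices, has an $O(\bar\delta)$-neighbourhood avoiding $\bar{\mathcal C}$, so the same monodromy argument shows that this lift ends at $z'$, producing a path of length $\dist{\bar z}{\bar z'}$ from $z$ to $z'$ in $\dot X$. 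Uniqueness of $Z$ is then clear: any lift of $\bar Z$ through $z_0$ on which $f$ is an isometry contains the (unique) geodesic lifts from $z_0$, hence coincides with $Z$.

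For item~\ref{enu: sc - lifting quasi-convex - prestab}, given $z_1, z_2 \in Z$ and $\bar \gamma \in \bar\Gamma$ with $\bar\gamma \bar z_1 = \bar z_2$, I would pick any preimage $\gamma_0 \in \Gamma$ of $\bar\gamma$; then $\gamma_0 z_1$ and $z_2$ lie in the same fibre of $f$, which is a $K$-orbit since $\bar X = \dot X / K$, so $z_2 = k\gamma_0 z_1$ for some $k \in K$, and $\gamma := k\gamma_0$ works. A second preimage $\gamma'$ of $\bar\gamma$ with $\gamma' z_1 = z_2$ satisfies $\gamma'\gamma^{-1} \in K$ and fixes $z_2 \in Z \subseteq \dot X \setminus \mathcal C$, hence equals the identity by \autoref{res: small cancellation}~\ref{enu: small cancellation - translation kernel}; this gives uniqueness of $\gamma$. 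For the last assertion of item~\ref{enu: sc - lifting quasi-convex - prestab}, I would transport by $\gamma$ the lift of $[\bar z_1, \bar z]$ starting at $z_1$, which, by the monodromy argument applied to the thin triangle $\bar z_0 \bar z_1 \bar z$, ends at $z$; since $f$ is $\Gamma$-equivariant and $\bar\gamma$ maps $[\bar z_1, \bar z]$ to $[\bar z_2, \bar z']$, the transported path is the lift of $[\bar z_2, \bar z']$ starting at $z_2$, which by the same argument (now with triangle $\bar z_0 \bar z_2 \bar z'$) ends at $z'$; hence $\gamma z = z'$. For item~\ref{enu: sc - lifting quasi-convex - stab}, the map $\stab Z \to \stab{\bar Z}$, $\gamma \mapsto \bar\gamma$, is well defined since $f(\gamma Z) = \bar\gamma \bar Z$; it is injective because an element $\gamma \in K$ stabilising $Z$ fixes $Z$ pointwise (as $f$ is injective on $Z$ and $\overline{\gamma z} = \bar z$), hence is trivial by freeness of the $K$-action off $\mathcal C$; and it is surjective because, given $\bar\gamma \in \stab{\bar Z}$, item~\ref{enu: sc - lifting quasi-convex - prestab} produces a preimage $\gamma$ with $\gamma z = z'$ whenever $\bar z, \bar z' \in \bar Z$ and $\bar\gamma\bar z = \bar z'$, and since $\bar\gamma$ and $\bar\gamma^{-1}$ stabilise $\bar Z$ this applies to all of $Z$, giving $\gamma Z \subseteq Z$ and $\gamma^{-1} Z \subseteq Z$, whence $\gamma \in \stab Z$.

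The step I expect to be the main obstacle is not conceptual but bookkeeping: one must track how the $13\bar\delta$ threshold on Gromov products is eroded by the successive $O(\bar\delta)$ losses — passing from $\bar X$ to $\dot X$, from a geodesic to a neighbourhood of it, and from a single bigon to the thin-triangle fillings invoked in the monodromy arguments — and verify that it always stays comfortably above the $2r$ required by \autoref{res: small cancellation}~\ref{enu: small cancellation - local isom}. The order-of-magnitude gap $\bar\delta \leq \delta_1 \ll \rho$ of \autoref{rem: order of magnitude} leaves ample slack, and these estimates are precisely what is packaged in \cite[Lemmas~4.17 and~4.18]{Coulon:2018vp}, to which a full writeup would ultimately defer.
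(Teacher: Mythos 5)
The central step of your argument does not hold as stated. You reduce everything to the claim that two geodesics of $\bar X$ with the same endpoints, staying at distance $>13\bar\delta$ from $\bar{\mathcal C}$, are \emph{homotopic rel endpoints} inside $\bar X \setminus \bar{\mathcal C}$ (through paths in an $O(\bar\delta)$-neighbourhood), and you then invoke homotopy lifting for the covering $\dot X\setminus\mathcal C \to \bar X\setminus\bar{\mathcal C}$. But away from the cone points these spaces are (quotients of) a graph: $X$ is a Cayley graph, and the punctured cones $D(Y)\setminus\{c\}$ deformation retract onto $Y\subset X$, so $\dot X\setminus\mathcal C$ is homotopy equivalent to the graph $X$. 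In a graph, two distinct close geodesics with common endpoints are essentially never homotopic rel endpoints (think of the two sides of a square in a grid-like Cayley graph), and small neighbourhoods are not simply connected; the same is true in $\bar X\setminus\bar{\mathcal C}$. What your lifting argument actually needs is strictly weaker — that the bigon has \emph{trivial monodromy} for the covering — and this is not a consequence of thinness or closeness: it is genuinely a small cancellation statement. The correct input is \autoref{res: small cancellation}\ref{enu: small cancellation - translation kernel} (a non-trivial element of $K$ moves $x$ by at least $\min\{2d(x,\mathcal C),\rho/5\}$) or the Greendlinger Lemma (\autoref{res: greendlinger lemma in the cone-off}): for instance, one subdivides the bigon into loops of length $<26\bar\delta$ based at points of the geodesics, hence at distance $>13\bar\delta$ from the apices, so each has trivial monodromy; or one compares two preimages of the same point directly, as the paper does later in \autoref{res: embedding very small sector}. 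This gap propagates: the ``moreover'' part of item~\ref{enu: sc - lifting quasi-convex - prestab} and the uniqueness of $Z$ both rest on the same homotopy claim, and the uniqueness argument is additionally circular as phrased, since $Z$ is only a point set and ``contains the geodesic lifts'' must be replaced by the metric characterization $\dist[\dot X]{z_0}{z}=\dist[\bar X]{\bar z_0}{\bar z}$ together with a Greendlinger-type uniqueness of such a preimage.

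For comparison, the paper offers no proof of this lemma at all: it quotes it from \cite[Lemmas~4.17 and 4.18]{Coulon:2018vp} (see also \cite[Proposition~3.21]{Coulon:2014fr}), where the arguments are metric, of the Greendlinger type sketched above, not covering-space-theoretic. The parts of your proposal that do work are the peripheral ones: the reduction of the Gromov-product hypothesis to a distance bound from $\bar{\mathcal C}$, the chain-of-balls path lifting via \autoref{res: small cancellation}\ref{enu: small cancellation - local isom}, and the group-theoretic bookkeeping in items~\ref{enu: sc - lifting quasi-convex - prestab} and \ref{enu: sc - lifting quasi-convex - stab} (fibres are $K$-orbits, and freeness of the $K$-action off $\mathcal C$ gives uniqueness of $\gamma$). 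But until the homotopy step is replaced by a discrete monodromy or Greendlinger argument, the proposal is not a proof; the piece you chose to argue yourself, rather than defer to \cite{Coulon:2018vp}, is exactly the piece that fails.
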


\begin{rema}
	\autoref{res: sc - lifting quasi-convex} holds, in particular, if $\bar Z$ is an $\alpha$-quasi-convex subset of $\bar X$ such that $d(\bar c, \bar Z) > \alpha + 13 \bar \delta$, for every $\bar c \in \bar {\mathcal C}$.
\end{rema}

\begin{lemm}
\label{res: sc - lifting isometries}
	Assume that $\mathcal Q$ is tight.
	Let $U_0$ be a subset of $\Gamma$ such that $U_0 \cap K \subset \{1\}$ and let $\bar U_0$ be its image in $\bar \Gamma$.
	Let $\bar U$ be a non-empty subset of $\bar \Gamma$ containing $\bar U_0$.
	Assume that 
	\begin{equation*}
		E_\infty(U_0, \dot X) < \frac \rho{100}
		\quad \text{and}\quad
		E_\infty(\bar U, \bar X) <  \frac \rho{100}.
	\end{equation*}
	Then there exists a subset $U$ of $\Gamma$ containing $U_0$ with the following properties.
	\begin{enumerate}
		\item \label{enu: sc - lifting isometries - bij}
		The epimorphism $\pi \colon \Gamma \onto \bar \Gamma$ induces a bijection from $U$ onto $\bar U$.
		\item \label{enu: sc - lifting isometries - product}
		For every $\gamma,\gamma' \in U$, if $\pi(\gamma\gamma')$ belongs to $\bar U$, then $\gamma\gamma' \in U$.
		\item \label{enu: sc - lifting isometries - energy}
		If $U_1$ is a subset of $U$ and $\bar U_1$ its image in $\bar \Gamma$, then $E_\infty(U_1, \dot X) = E_\infty(\bar U_1, \bar X)$ and $E_\infty(U_1, X) \leq \pi \sinh E_\infty(\bar U_1, \bar X)$.
	\end{enumerate}
\end{lemm}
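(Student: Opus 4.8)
The plan is to build the lift $U$ one element at a time, using \autoref{res: sc - lifting quasi-convex} applied to a suitable ``fixed point set'' inside $\bar X$. First I would fix a base point: let $\bar Z$ be the almost-fixed-point set $\fix{\bar U, r}$ in $\bar X$ for a suitable radius $r$ with $E_\infty(\bar U, \bar X) < r < \rho/100$; by \autoref{res: set of almost fixed points} (applied in $\bar X$, with $\bar \delta$ in place of $\delta$) this set is non-empty and $8\bar\delta$-quasi-convex. Then I would check that $\bar Z$ stays far from the cone points: using the hypothesis $E_\infty(\bar U, \bar X) < \rho/100$ together with \autoref{res: subset far from apices} (or a direct argument as in its proof: any $\bar c \in \bar{\mathcal C}$ with $d(\bar c, \bar Z)$ small would be fixed by every element of $\bar U$, but in the tight case cone-point stabilizers are trivial by \autoref{res: small cancellation}~\ref{enu: small cancellation - local embedding}, forcing $\bar U = \{1\}$, a degenerate case handled separately), I get $d(\bar c, \bar Z) > 8\bar\delta + 13\bar\delta$ for all $\bar c$, which is exactly the hypothesis needed to apply \autoref{res: sc - lifting quasi-convex}.

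Next, I need to choose the pre-image $z_0$ of a base point $\bar z_0 \in \bar Z$ compatibly with $U_0$. Here the hypothesis $E_\infty(U_0, \dot X) < \rho/100$ enters: I would pick $z_0 \in \dot X$ to (almost) realize this energy, so that $z_0$ lies in $\fix{U_0, r'}$ for a small $r'$, hence is far from $\mathcal C$; then set $\bar z_0 = f(z_0)$, which lies in $\fix{\bar U_0, r'} \subseteq \fix{\bar U, r'}$. After possibly enlarging $r$ slightly, $\bar z_0 \in \bar Z$. Now \autoref{res: sc - lifting quasi-convex} produces a unique lift $Z \subseteq \dot X$ of $\bar Z$ through $z_0$, isometric to $\bar Z$, together with the key lifting property for group elements (item~\ref{enu: sc - lifting quasi-convex - prestab}). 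I would then \emph{define} $U$ as follows: for each $\bar u \in \bar U$, since $\bar u \bar z_0 \in \bar Z$ (as $E_\infty(\bar U, \bar X) < r$ so $\bar z_0$ is almost fixed, hence $\bar u \bar z_0$ is close to $\bar z_0$ — strictly speaking I should take $\bar z_0$ to lie in $\fix{\bar U}$, the exact fixed set, which is non-empty and quasi-convex by the same lemma since $r$ can be taken above the energy), item~\ref{enu: sc - lifting quasi-convex - prestab} gives a unique pre-image $u \in \Gamma$ of $\bar u$ with $u z_0 = z_0$, and I let $U$ be the set of these $u$. The uniqueness in item~\ref{enu: sc - lifting quasi-convex - prestab} immediately gives the bijection in~\ref{enu: sc - lifting isometries - bij}, and shows $U_0 \subseteq U$ because the elements of $U_0$ already stabilize $z_0$ (up to having chosen $z_0$ in $\fix{U_0, \dot X}$; if $U_0 \not\subseteq \fix{U_0}$ one argues with the ``moreover'' clause of item~\ref{enu: sc - lifting quasi-convex - prestab} and the hypothesis $U_0 \cap K \subseteq \{1\}$ to see the chosen pre-image agrees with the given element).

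For~\ref{enu: sc - lifting isometries - product}: if $\gamma, \gamma' \in U$ with $\pi(\gamma\gamma') \in \bar U$, then $\gamma \gamma'$ and the designated lift $u$ of $\pi(\gamma\gamma')$ are both pre-images of the same element fixing $z_0$ (since $\gamma z_0 = z_0 = \gamma' z_0$), so they coincide by the uniqueness in item~\ref{enu: sc - lifting quasi-convex - prestab}; hence $\gamma\gamma' = u \in U$. For~\ref{enu: sc - lifting isometries - energy}, the equality $E_\infty(U_1, \dot X) = E_\infty(\bar U_1, \bar X)$ follows from \autoref{res: small cancellation}~\ref{enu: small cancellation - local isom}: near any point far from $\mathcal C$, $f$ is a local isometry, and one checks (using that the relevant energies are $< \rho/100$, so all displacements stay within a ball where $f$ is isometric, as in \autoref{res: set of almost fixed points}) that energy-minimizing configurations downstairs lift to energy-minimizing configurations upstairs and vice versa; the inequality $E_\infty(U_1, X) \leq \pi \sinh E_\infty(\bar U_1, \bar X)$ then comes from comparing the metric on $\dot X$ with that on $X$ via \autoref{res: invert mu}, precisely as in the cone-metric estimate (a point realizing small $\dot X$-energy lies far from apices, hence in the $X$-part, and $\mu(\dist[X]\cdot\cdot) \leq \dist[\dot X]\cdot\cdot < 2\rho$ lets us invert $\mu$). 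The main obstacle I anticipate is the bookkeeping around the base point: making sure a \emph{single} choice of $z_0$ simultaneously (i) lies far from $\mathcal C$, (ii) is genuinely fixed (not just almost fixed) by the given subset $U_0$ so that the canonical lifts of $U_0$-elements are the original ones, and (iii) lies in $\fix{\bar U}$ so that every $\bar u \in \bar U$ maps $\bar z_0$ back into $Z$ — reconciling this with the fact that we only control $E_\infty(U_0, \dot X)$ and $E_\infty(\bar U, \bar X)$, not exact fixed sets, is the delicate point, and is presumably why the hypotheses are phrased with the constant $\rho/100$ rather than $0$.
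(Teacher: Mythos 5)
Your overall strategy (lift a quasi-convex almost-fixed set via \autoref{res: sc - lifting quasi-convex} and read off the lifts of group elements from the uniqueness of pre-images) is the same as the paper's, but the execution breaks down exactly at the point you flag as delicate, and in two concrete ways. First, the inclusion you use to place the base point is backwards: since $\bar U_0 \subset \bar U$, one has $\fix{\bar U, r'} \subset \fix{\bar U_0, r'}$ and \emph{not} $\fix{\bar U_0, r'} \subset \fix{\bar U, r'}$, so the image $\bar z_0$ of a point $z_0$ that is $r'$-moved by $U_0$ need not be anywhere near $\bar Z = \fix{\bar U, r}$; ``enlarging $r$ slightly'' cannot repair this, because $d(\bar z_0, \fix{\bar U, r})$ is completely uncontrolled (think of $U_0 = \{1\}$, where $z_0$ is arbitrary) and any enlargement comparable to that distance would destroy the bound $r < \rho/100$. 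This is precisely why the paper does \emph{not} take $\bar Z = \fix{\bar U, r}$: when $U_0$ contains a non-trivial element it takes $\bar Z = \fix{\bar U_0, 3E}$, which contains both $f(z_0)$ and $\fix{\bar U, 3E}$, lifts it through $z_0$, and then works with a \emph{second} point $z \in Z$ lying over some $\bar z \in \fix{\bar U, E}$ to define the lifts of the elements of $\bar U$.

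Second, your definition of $U$ requires exact fixed points: to invoke item~\ref{enu: sc - lifting quasi-convex - prestab} in the form ``unique $u$ with $u z_0 = z_0$'' you need $\bar u \bar z_0 = \bar z_0$, i.e.\ $\bar z_0 \in \fix{\bar U}$, and likewise you ask for $z_0$ genuinely fixed by $U_0$. But the action of $\bar \Gamma$ on $\bar X$ is proper and $\bar \Gamma$ is torsion-free, so every non-trivial element acts without fixed points and $\fix{\bar U}$ is empty as soon as $\bar U \neq \{1\}$; \autoref{res: set of almost fixed points} only gives non-emptiness of $\fix{\cdot, r}$ for $r$ strictly above the energy, never of the exact fixed set. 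Consequently the bijection in \ref{enu: sc - lifting isometries - bij}, the inclusion $U_0 \subset U$, and the closure property \ref{enu: sc - lifting isometries - product} cannot be proved by saying ``both elements fix $z_0$''. The working substitute, which is what the paper does, is quantitative: the lift $\gamma$ of $\bar u$ is the unique pre-image sending $z$ to the lift in $Z$ of the (distinct) point $\bar u \bar z \in \bar Z$, and then any two candidate lifts differ by an element of $K$ that moves a point far from $\mathcal C$ by at most $2E$ or $3E$, hence is trivial by \autoref{res: small cancellation}\ref{enu: small cancellation - translation kernel} (displacement at least $\rho/5$); this same mechanism, not fixed-point uniqueness, gives $U_0 \subset U$ and item \ref{enu: sc - lifting isometries - product}. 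Your sketch of item \ref{enu: sc - lifting isometries - energy} is closer to the mark (1-Lipschitz gives one inequality, and the radial projection plus \autoref{res: invert mu} gives the bound in $X$), but the equality $E_\infty(U_1, \dot X) = E_\infty(\bar U_1, \bar X)$ again requires lifting a fresh set $\fix{\bar U_1, 3E}$ through the already-chosen point $z$, so that near-optimal base points downstairs transfer to the already-fixed lifts in $U_1$, rather than a purely local-isometry argument.
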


\begin{proof}
	Without loss of generality, we can assume that $U_0$ contains at least the trivial element.
	The existence of a subset $U$ satisfying \ref{enu: sc - lifting isometries - bij} and \ref{enu: sc - lifting isometries - product} is obvious if $\bar U$ is reduced to the trivial element.
	Indeed, in this case, it follows from our assumptions that $U_0$ is reduced to the trivial element, and we can choose $U = \{1\}$.
	In this case, all the energies vanish.
	
	From now on, we will assume that $\bar U$ contains a non-trivial element. 
	We fix $E \in (0, \rho/100)$ such that 
	\begin{equation*}
		E > \max \left\{ E_\infty(U_0, \dot X), E_\infty(\bar U, \bar X), 7\bar \delta \right\}.
	\end{equation*}

	In particular, the subset $Z_0 = \fix{U_0, E}$ of $\dot X$ is non-empty.
	We now build a subset $\bar Z$ of $\bar X$ such that 
	\begin{enumerate}
		\item $\bar Z$ intersects the image of $Z_0$ in $\bar X$.
		\item $\bar Z$ contains $\fix{\bar U, 3E}$.
		\item $\bar Z$ is $10\bar \delta$-quasi-convex and $d(\bar c, \bar Z) \geq \rho - 3E/2$, for every $\bar c \in \bar{\mathcal C}$.
	\end{enumerate}
	If $U_0$ is reduced to the trivial element, we simply take $\bar Z = \fix{\bar U, 3E}$.
	If $U_0$ contains a non-trivial element, say $\gamma_0$, it follows from our assumption that its image $\bar \gamma_0$ in $\bar \Gamma$ is not trivial, and we take  $\bar Z = \fix{\bar U_0, 3E}$.
	In both cases, the conclusions follow from \autoref{res: subset far from apices} and the fact that a set of almost fixed points is $10\bar \delta$-quasi-convex (\autoref{res: set of almost fixed points}).
	
	Pick a point $z_0 \in Z_0$ whose image $\bar z_0$ in $\bar X$ belongs to $\bar Z$.
	We apply \autoref{res: sc - lifting quasi-convex} with the set $\bar Z$ and the point $z_0$.
	In particular, there is a subset $Z \subset \dot X$ containing $z_0$ such that the map $\dot X \to \bar X$ induces an isometry from $Z$ onto $\bar Z$.
	Let $\bar z$ be a point in $\fix{\bar U, E}$ (hence in $\bar Z)$ and $z$ its pre-image in $Z$.
	For every $\bar \gamma \in \bar U$, there is a unique $\gamma \in \Gamma$ such that $\dist[\dot X]{\gamma z}{z} = \dist[\bar X]{\bar \gamma \bar z}{\bar z}$.
	We write $U$ for the set of all elements obtained in this way.

	Let $\gamma \in U_0$ and $\gamma'$ the pre-image in $U$ of $\bar \gamma$.
	Note that
	\begin{equation*}
		\dist[\dot X]{\gamma' z_0}{z_0} \leq \dist[\bar X]{\bar \gamma \bar z_0}{\bar z_0} \leq \dist[\dot X]{\gamma z_0}{z_0} \leq  E \leq \frac \rho{100}.
	\end{equation*}
	The first equality follows from \autoref{res: sc - lifting quasi-convex}, while the second one follows from the fact that the map $\dot X \to \bar X$ is $1$-Lipschitz.
	In particular, $\dist[\dot X]{\gamma z_0}{\gamma' z_0} \leq 2E$.
	By \autoref{res: small cancellation}\ref{enu: small cancellation - translation kernel}, since $z_0$ is far from $\mathcal C$, every element in $K \setminus \{1\}$ moves $z_0$ by a distance of at least $\rho /5$.
	Consequently, $\gamma = \gamma'$.
	In other words, $U_0$ is contained in $U$.
	Reasoning in the same way with the point $z$, we see that the projection $\Gamma \onto \bar \Gamma$ induces a bijection from $U$ onto $\bar U$.
	
	Let us move to the second part of the statement.
	Let $\gamma, \gamma' \in U$.
	Assume that $\pi(\gamma\gamma')$ belongs to $\bar U$.
	By construction, $\pi(\gamma\gamma')$ has a unique pre-image in $U$, say $\gamma_1$.
	Hence, $\gamma_1^{-1}\gamma\gamma'$ is an element of $K$ which moves $z$ by at most $3E$.
	By \autoref{res: small cancellation}\ref{enu: small cancellation - translation kernel}, $\gamma_1^{-1}\gamma\gamma'$ is trivial, hence $\gamma\gamma'$ belongs to $U$.
	
	Let $U_1$ be a subset of $U$ and $\bar U_1$ its image in $\bar \Gamma$ (which is a subset of $\bar U$).
	Without loss of generality, we can assume that $\bar U_1$ is not contained in $\{1\}$.
	Otherwise, so is $U_1$, and all the energies vanish.
	Since the map $\dot X \to \bar X$ is $1$-Lipschitz, we always have $E_\infty(\bar U_1, \bar X) \leq E_\infty(U_1, \dot X)$.
	
	Let $\bar Z_1 = \fix{\bar U_1, 3E}$.
	Note that $\bar Z_1$ contains $\fix{\bar U, E}$ hence $\bar z$.
	It may not intersect $\bar Z_0$, though.
	As a set of almost fixed points, $\bar Z_1$ is $10 \bar \delta$-quasi-convex.
	Moreover, $d(\bar c, \bar Z_1) \geq \rho - 3E/2$, for every $\bar c \in \bar{\mathcal C}$ (\autoref{res: subset far from apices}).
	Therefore, we can also apply \autoref{res: sc - lifting quasi-convex} with the set $\bar Z_1$ and the point $z$.
	It provides a subset $Z_1 \subset \dot X$ such that the projection $\dot X \to \bar X$ induces an isometry from $Z_1$ onto $\bar Z_1$.
	An argument using \autoref{res: sc - lifting quasi-convex}, similar to the previous ones, shows that for every $x,x' \in Z_1$, and every $\gamma \in U_1$, if $\bar \gamma \bar x = \bar x'$, then $\gamma x = x'$, and thus $\dist[\dot X] {\gamma x}x = \dist[\bar X]{\bar \gamma \bar x}{\bar x}$.

	Note that $E_\infty(\bar U_1, \bar X) \leq E_\infty(\bar U, \bar X)$.
	Fix $E_1 \in \R$ such that $E_\infty(\bar U_1, \bar X) < E_1 < E$.
	Choose a point $\bar x$ in $\fix{\bar U_1, E_1} \subset \bar Z_1$.
	Denote by $x \in Z_1$ the pre-image of $\bar x$.
	Observe that 
	\begin{equation*}
		E_\infty(U_1, \dot X) \leq \sup_{\gamma \in U_1} \dist[\dot X]{\gamma x}x \leq \sup_{\gamma \in U_1} \dist[\bar X]{\bar \gamma \bar x}{\bar x} \leq E_1.
	\end{equation*}
	According to \autoref{res: subset far from apices}, $d(\bar x, \bar {\mathcal C}) \geq \rho - E_1/2$, hence $d(x, \mathcal C) \geq \rho  - E_1/2$.
	Let $y \in X$ be the radial projection of $x$ onto $X$ so that $\dist[\dot X] xy \leq E_1/2$.
	Note that $\bar U_1 \bar x$ is contained in $\bar Z_1$.
	It follows from the previous discussion and the triangle inequality that for every $\gamma \in U_1$,
	\begin{equation*}
		\dist[\dot X] {\gamma y}y \leq \dist[\dot X]{\gamma x}x + E_1 \leq \dist[\bar X]{\bar \gamma \bar x}{\bar x} + E_1 \leq 2E_1.
	\end{equation*}
	Combined with \eqref{eqn:  lower bound dist dot X}, we get
	\begin{equation*}
		\mu\left( \dist[\bar X]{\gamma y}y \right) \leq 2E_1, \quad \forall \gamma \in U_1.
	\end{equation*}
	Recall that $\mu$ is non-increasing and continuous.
	Since the above discussion applies for every $E_1$ sufficiently close to $E_\infty(\bar U_1, \bar X)$, we obtain $E_\infty(U_1, \dot X) \leq E_\infty(\bar U_1, \bar X)$ and  $\mu\left( E_\infty(U_1, X)\right) \leq 2 E_\infty(\bar U_1, \bar X)$.
	The conclusion now follows from \autoref{res: invert mu}.
\end{proof}

We can now prove the first version of \autoref{intro:thm:lifting}, for morphisms with small energy. We will see other versions of the same statement later, and the case of big energy will be treated separately in \autoref{sec: lifts}.

\begin{prop}
\label{res: sc - lifting morphism - prelim}
	Assume that $\mathcal Q$ is tight.
	Let $G = \group{U \mid R}$ be a finitely presented group.
	Denote by $\ell$ the length of the longest relation in $R$ (seen as words over $U$).
	Let $H$ be the subgroup of $G$ generated by a subset $U_0$ of $U$.
	Let $\iota \colon H \to \Gamma$ be a morphism.
	Let $\bar \varphi \colon G \to \bar \Gamma$ be a morphism whose restriction to $H$ coincides with $\pi \circ \iota$.
	Assume that  $\iota(U_0) \cap K \subset \{1\}$ and 
	\begin{equation*}
		E_\infty( \iota, U_0, \dot X ) < \frac \rho{100}
		\quad \text{and}\quad
		E_\infty(\bar \varphi, U, \bar X) < \frac \rho{100\ell}.
	\end{equation*}
	Then there exists a morphism $\varphi \colon G \to \Gamma$ such that $\bar \varphi = \pi \circ \varphi$ and $\varphi$ restricted to $H$ coincides with $\iota$.
	Moreover, $E_\infty(\varphi, U, X)\leq \pi \sinh E_\infty(\bar \varphi, U, \bar X)$.
\end{prop}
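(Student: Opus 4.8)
The plan is to reduce the statement to the lifting of energy-bounded isometry sets provided by \autoref{res: sc - lifting isometries}, applied to a suitable finite syllable set. Let $W$ be the finite set of words over $U$ consisting of the empty word, every element of $U \cup U^{-1}$ (as a length-one word), and every prefix of every relation in $R$; every $w \in W$ has length at most $\ell$. We apply \autoref{res: sc - lifting isometries} with the set $\iota(U_0) \cup \{1\} \subseteq \Gamma$ in the role of its $U_0$, and the finite set $\bar V := \bar\varphi(W) \subseteq \bar\Gamma$ in the role of its $\bar U$; note that $\bar V$ is non-empty and contains $\bar\varphi(U_0) \cup \{1\}$, the image of $\iota(U_0) \cup \{1\}$, because $U_0 \subseteq U \subseteq W$. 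The kernel condition reads $(\iota(U_0) \cup \{1\}) \cap K \subseteq \{1\}$, which holds since $\iota(U_0) \cap K \subseteq \{1\}$. As for the energy conditions, $E_\infty(\iota(U_0) \cup \{1\}, \dot X) = E_\infty(\iota, U_0, \dot X) < \rho/100$ by hypothesis, while for $\bar V$ we pick $\bar x_0 \in \bar X$ with $\sup_{u \in U} \dist{\bar\varphi(u)\bar x_0}{\bar x_0} < \rho/(100\ell)$ and estimate, for $w = t_1 \cdots t_k \in W$ with $t_i \in U \cup U^{-1}$ and $k \leq \ell$,
\[
\dist{\bar\varphi(w)\bar x_0}{\bar x_0} \;\leq\; \sum_{i=1}^{k} \dist{\bar\varphi(t_i)\bar x_0}{\bar x_0} \;<\; \ell \cdot \frac{\rho}{100\ell} \;=\; \frac{\rho}{100},
\]
where we used that an isometry and its inverse displace $\bar x_0$ by the same amount; hence $E_\infty(\bar V, \bar X) < \rho/100$.

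\autoref{res: sc - lifting isometries} then produces a set $V \subseteq \Gamma$ containing $\iota(U_0) \cup \{1\}$ such that $\pi$ restricts to a bijection $V \to \bar V$, such that $\gamma\gamma' \in V$ whenever $\gamma,\gamma' \in V$ and $\pi(\gamma\gamma') \in \bar V$, and satisfying the energy estimate in item~\ref{enu: sc - lifting isometries - energy}. For each $u \in U$, let $\varphi(u)$ be the unique element of $V$ with $\pi(\varphi(u)) = \bar\varphi(u)$ (it exists and is unique since $\bar\varphi(u) \in \bar V$ and $\pi|_V$ is a bijection onto $\bar V$), and extend $\varphi$ to a morphism on the free group $F(U)$. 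The key point is the claim that $\varphi(w) \in V$ and $\pi(\varphi(w)) = \bar\varphi(w)$ for every $w \in W$, proved by induction on the length of $w$. It is clear for the empty word. For a one-letter word $t$: if $t \in U$ this is the definition of $\varphi$; if $t = u^{-1}$ with $u \in U$, let $\eta \in V$ be the lift of $\bar\varphi(u)^{-1} = \bar\varphi(u^{-1}) \in \bar V$, note that $\eta\varphi(u) \in V$ by the product property (its image is $1 \in \bar V$) and that it projects to $1$, hence equals $1$ by injectivity of $\pi|_V$, so $\varphi(u^{-1}) = \varphi(u)^{-1} = \eta \in V$. For longer $w$, it is a prefix $w = w't$ of a relation with $w'$ a shorter prefix (hence in $W$) and $t$ a letter; by induction $\varphi(w'), \varphi(t) \in V$ have the required projections, so $\pi(\varphi(w')\varphi(t)) = \bar\varphi(w) \in \bar V$, and the product property gives $\varphi(w) = \varphi(w')\varphi(t) \in V$.

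Applying the claim to a full relation $r \in R$ gives $\varphi(r) \in V$ with $\pi(\varphi(r)) = \bar\varphi(r) = 1$, whence $\varphi(r) = 1$ by injectivity of $\pi|_V$ (using $1 \in V$). Thus $\varphi$ descends to a morphism $\varphi \colon G \to \Gamma$; it satisfies $\pi \circ \varphi = \bar\varphi$ because the two morphisms agree on the generating set $U$, and $\varphi|_H = \iota$ because for $u \in U_0$ both $\varphi(u)$ and $\iota(u)$ lie in $V$ (here we use $\iota(U_0) \subseteq V$) and project to $\bar\varphi(u)$, hence coincide. Finally, the bound $E_\infty(\varphi, U, X) \leq \pi \sinh E_\infty(\bar\varphi, U, \bar X)$ is item~\ref{enu: sc - lifting isometries - energy} of \autoref{res: sc - lifting isometries} applied to $\varphi(U) \subseteq V$, whose image in $\bar\Gamma$ is $\bar\varphi(U)$. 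I expect the only point requiring care to be the bookkeeping that makes $\bar V$ simultaneously small enough in energy (which forces the factor $\ell$ in the hypothesis on $\bar\varphi$) and large enough that $V$ is closed under all the products and inverses needed for $\varphi$ to descend to $G$; there is no deeper obstacle here, as the analytic content is already packaged into \autoref{res: sc - lifting isometries}.
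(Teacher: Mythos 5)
Your proof is correct and follows essentially the same route as the paper's: both apply \autoref{res: sc - lifting isometries} to a finite set of bounded-energy elements obtained from short words, and both then observe that the resulting lift of generators sends relations to $1$ by induction on word length via items~\ref{enu: sc - lifting isometries - bij} and~\ref{enu: sc - lifting isometries - product}. The only cosmetic difference is the choice of lifting set — you take the image of prefixes of relations together with the letters, whereas the paper takes the image of the whole ball of radius $\ell$ in $G$ — but the energy estimate, the product/injectivity bookkeeping, and the final energy bound via item~\ref{enu: sc - lifting isometries - energy} are the same.
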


\[\begin{tikzcd}
	& \Gamma \\
	H && {\bar{\Gamma}} \\
	& G
	\arrow["\pi", two heads, from=1-2, to=2-3]
	\arrow["\iota", from=2-1, to=1-2]
	\arrow[hook, from=2-1, to=3-2]
	\arrow["\exists\varphi", dashed, from=3-2, to=1-2]
	\arrow["{\bar{\varphi}}"', from=3-2, to=2-3]
\end{tikzcd}\]

\begin{proof}
	By definition of the $\ell^\infty$-energy, there exist $x_0 \in \dot X$ and $\bar x \in \bar X$ such that
	\begin{equation}
	\label{eqn: sc - lifting morphism}
		\dist[\dot X]{\iota(g) x_0}{x_0} \leq \frac \rho{100}, \quad \forall g \in U_0, 
		\quad \text{and}\quad
		\dist{\bar \varphi(g)\bar x}{\bar x} \leq \frac \rho{100\ell}, \quad \forall \bar g \in \bar U.
	\end{equation}
	Set $W_0 = \iota(U_0) \cup\{1\}$.
	Denote by $\bar W$ the image under $\bar \varphi$ of all elements of $G$ whose word length with respect to $U$ is at most $\ell$ and observe that $\pi$ maps $W_0$ to $\bar W$. 
	It follows from \eqref{eqn: sc - lifting morphism} that
	\begin{equation*}
		\dist[\dot X]{\gamma x_0}{x_0} \leq \frac \rho{100}, \quad \forall \gamma \in W_0, 
		\quad \text{and}\quad
		\dist{\bar \gamma \bar x}{\bar x} \leq \frac \rho{100}, \quad \forall \bar \gamma \in \bar W.
	\end{equation*}
	By \autoref{res: sc - lifting isometries}, there exists a finite subset $W \subset \Gamma$ containing $W_0$ such that  
	\begin{enumerate}
		\item \label{enu: sc - lifting morphism - isom}
		the epimorphism $\pi \colon \Gamma \onto \bar \Gamma$ induces a bijection from $W$ onto $\bar W$;
		\item \label{enu: sc - lifting morphism - morphism}
		for every $\gamma, \gamma' \in W$, if $\pi(\gamma\gamma')$ belongs to $\bar W$, then $\gamma\gamma' \in W$.
	\end{enumerate}
	Let $F$ be the free group generated by $U$, and $q \colon F \onto G$ be the canonical projection associated to the presentation of $G$.
	We define a morphism $\tilde \varphi \colon F \to \Gamma$ by sending $g \in U$ to the unique pre-image of $\bar \varphi(g)$ in $W$.
	Hence, $\pi \circ \tilde \varphi = \bar \varphi \circ q$.
	It follows from \ref{enu: sc - lifting morphism - morphism} that for every $g \in F$ whose length (for the word metric with respect to $U$) is at most $\ell$, the element $\tilde \varphi(g)$ is the unique pre-image of $\bar \varphi \circ q(g)$ in $W$.
	In particular, $\tilde\varphi(g) = 1$ for every $g \in R$.
	Thus, $\tilde \varphi$ factors through the projection $q$.
	We denote by $\varphi \colon G \to \Gamma$ the resulting morphism.
	By construction, it satisfies $\pi \circ \varphi = \bar \varphi$.
	Moreover, $\varphi$ restricted to $H$ coincides with $\iota$.
	The relation between energies follows from \autoref{res: sc - lifting isometries}~\ref{enu: sc - lifting isometries - energy}. 
\end{proof}

The next statement uses equational noetherianity to remove the assumption that $G$ is finitely presented.
Recall that a group $\Gamma$ is \emph{equationally noetherian} if for every finitely generated group $G$, there is a finitely presented group $\hat G$ and a projection $q \colon \hat G \to G$ such that every morphism $\hat G \to \Gamma$ factors through $q$.

\[\begin{tikzcd}
	{\hat{G}} \\
	G & \Gamma
	\arrow["q"', two heads, from=1-1, to=2-1]
	\arrow["\varphi", from=1-1, to=2-2]
	\arrow["{\exists\bar{\varphi}}"', dashed, from=2-1, to=2-2]
\end{tikzcd}\]

\begin{coro}
\label{res: sc - lifting morphism - prelim en}
	Assume that $\mathcal Q$ is tight.
	Let $G$ be a group generated by a finite set $U$.
	There is $\xi \in (0,1)$ which only depends on $G$, $U$, and $\Gamma$ with the following property.
	Let $H$ be a subgroup of $G$, generated by a subset $U_0$ of $U$.
	Let $\iota \colon H \to \Gamma$ be a morphism.
	Let $\bar \varphi \colon G \to \bar \Gamma$ be a morphism whose restriction to $H$ coincides with $\pi \circ \iota$.
	Assume that  $\iota(U_0) \cap K \subset \{1\}$ and 
	\begin{equation*}
		E_\infty( \iota, U_0, \dot X ) < \frac \rho{100}
		\quad \text{and}\quad
		E_\infty(\bar \varphi, U, \bar X) <\xi \rho.
	\end{equation*}
	Then there exists a morphism $\varphi \colon G \to \Gamma$ such that $\bar \varphi = \pi \circ \varphi$ and $\varphi$ restricted to $H$ coincides with $\iota$.
	Moreover, $E_\infty(\varphi, U, X)\leq \pi \sinh E_\infty(\bar \varphi, U, \bar X)$.
\end{coro}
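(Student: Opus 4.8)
The plan is to reduce \autoref{res: sc - lifting morphism - prelim en} to the finitely presented case already handled in \autoref{res: sc - lifting morphism - prelim}, using the fact that torsion-free hyperbolic groups are equationally noetherian \cite{reinfeldt:weidmann}. First I would invoke equational noetherianity of $\Gamma$ to obtain a finitely presented group $\hat G = \group{U \mid R}$ together with an epimorphism $q \colon \hat G \onto G$ (we may take the generating set of $\hat G$ to be the same set $U$, adding relations as needed) such that every morphism $\hat G \to \Gamma$ factors through $q$. Let $\ell$ be the length of the longest relation in $R$; this depends only on $\hat G$, hence only on $G$, $U$, and $\Gamma$, so we may set $\xi = 1/(100\ell) \in (0,1)$, which is the constant promised in the statement.

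Next I would transport the data along $q$. Since $U_0 \subset U$ generates $H \leq G$, let $\hat H$ be the subgroup of $\hat G$ generated by $U_0$; then $q$ restricts to an epimorphism $\hat H \onto H$. Composing, we get morphisms $\hat \iota = \iota \circ q|_{\hat H} \colon \hat H \to \Gamma$ and $\hat{\bar\varphi} = \bar\varphi \circ q \colon \hat G \to \bar \Gamma$, and by construction $\hat{\bar\varphi}|_{\hat H} = \pi \circ \hat\iota$. Because $q$ maps $U_0$ to $U_0$, we have $\hat\iota(U_0) = \iota(U_0)$, so the hypothesis $\iota(U_0) \cap K \subset \{1\}$ gives $\hat\iota(U_0) \cap K \subset \{1\}$; likewise $E_\infty(\hat\iota, U_0, \dot X) = E_\infty(\iota, U_0, \dot X) < \rho/100$, since energies only see the images of the generators. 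Similarly $\hat{\bar\varphi}(U) = \bar\varphi(U)$, so $E_\infty(\hat{\bar\varphi}, U, \bar X) = E_\infty(\bar\varphi, U, \bar X) < \xi \rho = \rho/(100\ell)$. Thus all hypotheses of \autoref{res: sc - lifting morphism - prelim} are met for the presentation $\hat G = \group{U \mid R}$, and it produces a morphism $\hat\varphi \colon \hat G \to \Gamma$ with $\pi \circ \hat\varphi = \hat{\bar\varphi}$, with $\hat\varphi|_{\hat H} = \hat\iota$, and with $E_\infty(\hat\varphi, U, X) \leq \pi \sinh E_\infty(\hat{\bar\varphi}, U, \bar X) = \pi \sinh E_\infty(\bar\varphi, U, \bar X)$.

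Finally I would push $\hat\varphi$ down through $q$. By the defining property of $\hat G$ from equational noetherianity, the morphism $\hat\varphi \colon \hat G \to \Gamma$ factors through $q$: there is a morphism $\varphi \colon G \to \Gamma$ with $\varphi \circ q = \hat\varphi$. Then $\pi \circ \varphi \circ q = \pi \circ \hat\varphi = \bar\varphi \circ q$, and since $q$ is surjective this yields $\pi \circ \varphi = \bar\varphi$. For the restriction to $H$: any $h \in H$ equals $q(\hat h)$ for some $\hat h \in \hat H$, and then $\varphi(h) = \varphi(q(\hat h)) = \hat\varphi(\hat h) = \hat\iota(\hat h) = \iota(q(\hat h)) = \iota(h)$, so $\varphi|_H = \iota$. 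The energy bound is immediate since $\varphi$ and $\hat\varphi$ agree on $U$: $E_\infty(\varphi, U, X) = E_\infty(\hat\varphi, U, X) \leq \pi \sinh E_\infty(\bar\varphi, U, \bar X)$.

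The only genuinely delicate point is making sure the finitely presented group supplied by equational noetherianity can be taken with generating set exactly $U$ (so that the subgroup $\hat H$ generated by $U_0$ really maps onto $H$ and the energies match up verbatim); this is standard — one takes $\hat G = F(U) / N$ where $N$ is generated by finitely many relators chosen from the kernel of $F(U) \onto G$ so that every morphism $F(U) \to \Gamma$ killing $\ker(F(U) \onto G)$ already kills $N$ — but it should be stated carefully. Everything else is bookkeeping: the constant $\xi$ is extracted from $\ell$, and energies are unchanged under the factorizations because they depend only on the images of $U$ and $U_0$.
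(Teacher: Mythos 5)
Your proposal is correct and follows essentially the same route as the paper's own proof of \autoref{res: sc - lifting morphism - prelim en}: invoke equational noetherianity of $\Gamma$ to get a finitely presented cover $q \colon \hat G \onto G$ with generating set mapping onto $U$, set $\xi = 1/(100\ell)$ from the longest relator, transport $\iota$ and $\bar\varphi$ along $q$, apply \autoref{res: sc - lifting morphism - prelim}, and then descend through $q$ using the factorization property. The ``delicate point'' you flag (arranging the cover's generators to map exactly onto $U$, and a subset onto $U_0$) is handled in the paper by the same standard adjustment, so nothing is missing.
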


\begin{proof}
	Recall that every hyperbolic group is equationally noetherian \cite{positivehyp1, Weidmann:2019ue}.
	Hence, there is a finitely presented cover $q \colon \hat G \to G$ such that every morphism $\hat G \to \Gamma$ factors through $q$.
	Without loss of generality, we can assume that $\hat G$ is generated by a finite set $\hat U$ whose image in $G$ is exactly $U$.
	Denote by $\ell$ the length of the longest relation (with respect to the word metric induced by $\hat U$) defining $\hat G$ and set $\xi = 1/100\ell$.
	Note that this data only depends on $G$, $U$, and $\Gamma$.
	
	We choose a subset  $\hat U_0$ of $\hat U$ whose image in $G$ is $U_0$.
	Let $\hat H$ be the subgroup of $\hat G$ generated by $\hat U_0$ and $\hat \iota \colon \hat H \to \Gamma$ be the morphism $\iota$ composed with $q$ restricted to $\hat H$.
	Note that $E_\infty(\hat \iota, \hat U_0, \dot X)  = E_\infty(\iota, U_0, \dot X)$ is less than $\rho / 100$.
	For simplicity, we let $\hat \varphi = \bar \varphi \circ q$.
	In particular, $\hat \varphi$ coincides with $\hat \iota$ on $\hat H$.
	Moreover,
	\begin{equation*}
		E_\infty(\hat \varphi, \hat U, \bar X)  = E_\infty(\bar \varphi, U, \bar X)
	\end{equation*}
	is less than $\xi\rho$.
	Applying \autoref{res: sc - lifting morphism - prelim} with $\hat \iota$ and $\hat \varphi$, we see that there is a morphism $\tilde \varphi \colon \hat G \to \Gamma$ that coincides with $\hat \iota$ on $\hat H$ and such that $\pi \circ \tilde \varphi = \hat \varphi$, while $E_\infty(\tilde \varphi, \hat U, X) \leq \pi \sinh E_\infty(\hat \varphi, \hat U, \bar X)$.
	It follows from our choice of $\hat G$ that $\tilde \varphi$ factors through $q$.
	We denote by $\varphi \colon G \to \Gamma$ the resulting morphism.
	Unwrapping all definitions, we observe that $\varphi$ coincides with $\iota$ on $H$.
	Moreover, $\pi \circ \varphi = \bar \varphi$ and $E_\infty(\varphi, U, X) = E_\infty(\tilde \varphi, \hat U, X)$.
	Hence the result.
\end{proof}

%%%%%%%%%%%%%%%%%%%%%%%%%%%%%%%%%%%%%%%%%%%%%%%%%%%%%%%%%%%%%%%%%%%%%%%%%%%%%%%%%%%%%
%
\subsection{The Greendlinger Lemma revisited}
%
%%%%%%%%%%%%%%%%%%%%%%%%%%%%%%%%%%%%%%%%%%%%%%%%%%%%%%%%%%%%%%%%%%%%%%%%%%%%%%%%%%%%%

The hypotheses on $\iota \colon H \to \Gamma$ in \autoref{res: sc - lifting morphism - prelim en} are stated using the metric of $\dot X$: one requires that $E_\infty(\iota, U_0, \dot X) < \rho/100$.
Since the embedding $X \to \dot X$ is $1$-Lipschitz, the conclusion of  \autoref{res: sc - lifting morphism - prelim en} holds whenever $E_\infty(\iota, U_0, X) < \rho/100$.
This assumption is very crude, though.
The reason is that the embedding $X \into \dot X$ is heavily distorted: for every $(R, Y) \in \mathcal Q$, the image of $Y$ in $\dot X$ is bounded!
The goal of this section is to prove that \autoref{res: sc - lifting morphism - prelim en} still holds with a weaker assumption on $E_\infty(\iota, U_0, X)$, see \autoref{res: sc - lifting morphism}.

\medskip
The Greendlinger Lemma is a key tool in small cancellation theory.
In the geometric point of view that we adopt, it can be conveniently rephrased in terms of angles at apices.
We start with a few useful facts on angles.
The auxiliary parameter
\begin{equation*}
	\vartheta = \frac {\pi \sinh(99 \rho / 100)}{\sinh \rho}
\end{equation*}
will be somehow the angle counterpart of the hyperbolicity constant.
Note that $\vartheta$ converges to zero as $\rho$ tends to infinity.
If $c$ is the apex of $D(Y)$ for some $(R, Y) \in \mathcal Q$, we denote by $\Omega_c$ the quantity
\begin{equation*}
	\Omega_c = \frac 1 {\sinh \rho} \inf_{h \in R \setminus \{1\}} \norm[X]h.
\end{equation*}
It follows from the small cancellation assumption that $\Omega_c \geq 10 \pi$.
In particular, it is much larger than $\vartheta$.

\begin{lemm}
\label{res: small angle at cone point}
	Let $r \in(0,\rho)$.
	Let $c \in \mathcal C$. Let $x,x' \in \dot X \setminus \{c\}$.
	Let $\nu$ and $\nu'$ be two geodesics joining $c$ to $x$ and $x'$ respectively.
	If $\gro x{x'}c \geq r$ (where the Gromov product is computed in $\dot X$) then 
	\begin{equation*}
		\angle_c(\nu, \nu') \leq \frac {\pi\sinh(10\dot \delta)}{\sinh r}.
	\end{equation*}
\end{lemm}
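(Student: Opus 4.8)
The plan is to reduce the estimate in $\dot X$ to a direct computation inside the single cone $D(Y)$ where $c$ is the apex, using \autoref{res: isom embedding cone in cone-off}. First I would observe that we may assume $r \le 10\dot\delta$ is false is the wrong reduction; rather, since $r \in (0,\rho)$ and we want a bound that is only interesting when $r$ is not too large, the key point is that near $c$ everything is controlled by the metric of $D(Y)$. Concretely, pick $t > 0$ small (smaller than $\min\{d(x,Y), d(x',Y), \text{(distance from }x,x'\text{ to }c)\}$ after rescaling along the geodesics $\nu,\nu'$), and set $x_t = \nu(t)$, $x_t' = \nu'(t)$. By \autoref{res: isom embedding cone in cone-off}, since $\dist[\dot X]{c}{x_t} = t < d(x_t, Y)$ for $t$ small, the point $x_t$ lies in $D(Y)$ and its $\dot X$-distance to any nearby point agrees with the $D(Y)$-distance; the same holds for $x_t'$. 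Hence $\nu|_{[0,t]}$ and $\nu'|_{[0,t]}$ are geodesics of $D(Y)$ emanating from $c$, so $x_t = (y, t)$ and $x_t' = (y', t)$ for some $y, y' \in Y$, and $\angle_c(\nu,\nu') = \theta_c(x_t, x_t') = \dist[Y]{y}{y'} / \sinh\rho$, a quantity independent of $t$.

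The next step is to relate $\gro{x}{x'}{c}$ computed in $\dot X$ to $\gro{x_t}{x_t'}{c}$, and then the latter to $\dist[Y]{y}{y'}$. Since $x_t$ lies on the geodesic $[c,x]$ at distance $t$ from $c$ (and similarly $x_t'$), a standard hyperbolicity computation in the $\dot\delta$-hyperbolic space $\dot X$ gives $\gro{x_t}{x_t'}{c} \ge \min\{t, \gro{x}{x'}{c}\} - O(\dot\delta) \ge \min\{t, r\} - O(\dot\delta)$; choosing $t$ with $10\dot\delta < t$ and $t \le r$ (possible after suitable rescaling of the parametrisations, as the angle does not depend on $t$) we get $\gro{x_t}{x_t'}{c} \ge t - O(\dot\delta)$, and one wants the cleaner bound $\gro{x_t}{x_t'}{c} \ge r - c_0\dot\delta$ for an explicit constant. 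Then inside $D(Y)$, using formula \eqref{eqn: sc - metric cone} for the metric of the cone together with $\dist[\dot X]{c}{x_t}=\dist[\dot X]{c}{x_t'}=t$, the Gromov product $\gro{x_t}{x_t'}{c}$ translates into an inequality on $\cosh\dist[\dot X]{x_t}{x_t'}$, hence on $\cos\theta_c(x_t,x_t')$, hence on $\theta_c(x_t,x_t') = \angle_c(\nu,\nu')$. Carrying the hyperbolic trigonometry through — expressing $\cosh(\text{chord length})$ via $\cosh^2 t - \sinh^2 t\cos\theta$ and comparing with the Gromov-product inequality $\dist{x_t}{x_t'} \le 2t - 2r + 2c_0\dot\delta$ — yields $\cos\theta_c \ge 1 - (\text{something like }\sinh^2(c_0\dot\delta)/\sinh^2 t)$, and a routine convexity estimate ($1-\cos\theta \le \theta^2/2$ together with $\sinh$ monotonicity) produces $\angle_c(\nu,\nu') \le \pi\sinh(10\dot\delta)/\sinh r$ once the constants are tracked. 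The appearance of $\pi$ and the precise coefficient $10$ come from being generous with the hyperbolicity slack, exactly as in \autoref{rem: order of magnitude}.

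The main obstacle I expect is bookkeeping the hyperbolicity constants in the passage from $\gro{x}{x'}{c}$ to the chord length $\dist[\dot X]{x_t}{x_t'}$: one must be careful that the inequality $\gro{x_t}{x_t'}{c}\ge r$ survives with only an additive $O(\dot\delta)$ loss (using that $x_t,x_t'$ lie on geodesics to $x,x'$ and the four-point inequality \eqref{eqn: four point hyp}), and then that the conversion to the angle via \eqref{eqn: sc - metric cone} does not lose a multiplicative factor that would break the clean statement. A secondary subtlety is legitimising the choice of $t$: since $x$ or $x'$ might itself be close to $c$ (within distance $t_0$ for small $t_0$), one should either first note that if $\min\{\dist{c}{x},\dist{c}{x'}\}$ is small the geodesics $\nu,\nu'$ still lie in $D(Y)$ by \autoref{res: isom embedding cone in cone-off} and the angle is read off directly, or reparametrise so that $t$ is taken below that threshold; in all cases the angle $\angle_c(\nu,\nu')$ is the same number $\dist[Y]{y}{y'}/\sinh\rho$, so no generality is lost. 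Everything else is the hyperbolic-trigonometric computation alluded to in \autoref{res: invert mu}.
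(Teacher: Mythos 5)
Your plan follows the same skeleton as the paper's proof: compare the two geodesics at distance $r$ from $c$, use hyperbolicity of $\dot X$ to make the chord of length $O(\dot\delta)$, observe that these comparison points sit in $D(Y)$ with radial coordinates $(y,r)$, $(y',r)$, and convert the chord bound into an angle bound by hyperbolic trigonometry in the cone. The only difference is that the paper quotes this last trigonometric step (the estimate $\dist[Y]y{y'} \leq \frac{\pi\sinh\rho}{\sinh r}\sinh\bigl(\dist[D(Y)]z{z'}/2\bigr)$ from \cite{Coulon:2018ac}) together with the $(1,8\dot\delta)$-quasi-isometric embedding of \autoref{res: qi cone in cone-off}, whereas you propose to redo it by hand from \eqref{eqn: sc - metric cone}. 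That is fine in principle, but as written your argument does not quite close, for three concrete reasons.

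First, the parameter $t$. Since $\gro{x_t}{x_t'}{c} \leq \dist[\dot X]c{x_t} = t$, the bound $\gro{x_t}{x_t'}{c} \geq r - c_0\dot\delta$ you ask for is unattainable unless $t \geq r - c_0\dot\delta$, and a computation at radius $t < r$ only yields $\angle_c(\nu,\nu') \leq \pi\sinh(O(\dot\delta))/\sinh t$, which is weaker than the statement because $\sinh t < \sinh r$. So the ``pick $t$ small'' framing must be dropped: take $t = r$ exactly, which is legitimate because $\gro x{x'}c \geq r$ forces $\dist[\dot X]cx, \dist[\dot X]c{x'} \geq r$ (so your ``secondary subtlety'' about $x$ or $x'$ being close to $c$ never arises), and then hyperbolicity gives $\dist[\dot X]{\nu(r)}{\nu'(r)} \leq 4\dot\delta$ directly, as in the paper. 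Second, \eqref{eqn: sc - metric cone} computes the metric of $D(Y)$, not of $\dot X$; identifying the two chords via \autoref{res: isom embedding cone in cone-off} requires the chord to be shorter than the distance to $Y$, which can fail when $r$ is within a few $\dot\delta$ of $\rho$. Use \autoref{res: qi cone in cone-off} instead: it costs an additive $8\dot\delta$, giving $\dist[D(Y)]z{z'} \leq 12\dot\delta$, and after halving this is exactly why the constant $\sinh(10\dot\delta)$ in the statement suffices. Third, the elementary estimate: from \eqref{eqn: sc - metric cone} one gets $\sinh\bigl(\dist[D(Y)]z{z'}/2\bigr) = \sinh r \, \sin(\theta/2)$ when $\theta \leq \pi$, and to pass from this to an upper bound on $\theta$ you need the concavity inequality $\sin(\theta/2) \geq \theta/\pi$ on $[0,\pi]$ (this is where the factor $\pi$ comes from), not $1-\cos\theta \leq \theta^2/2$, which goes in the wrong direction. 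Finally, say a word about the truncation $\min\{\pi,\theta\}$ in \eqref{eqn: sc - metric cone}, i.e.\ why the regime $\theta > \pi$ causes no trouble; in the paper this is hidden inside the cited lemma of \cite{Coulon:2018ac}, but in a hand computation it must be addressed explicitly.
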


\begin{proof}
	Let $(R, Y) \in \mathcal Q$ be such that $c$ is the apex of $D(Y)$.
	We write $\nu \colon \intval 0\ell \to \dot X$ and $\nu' \colon \intval 0{\ell'} \to \dot X$ for the parametrization by arc length of $\nu$ and $\nu'$.
	Let $z = \nu(r)$ and $z' = \nu'(r)$.
	The hypothesis that $\gro x{x'}c \geq r$ implies that $\dist[\dot X]z{z'} \leq 4\dot \delta$, see for instance \cite[Lemma~2.2(3)]{Coulon:2014fr}.
	By construction, $z$ and $z'$ belong to $D(Y)$.
	More precisely, they can be written as $z = (y,r)$ and $z' = (y',r)$ where $y,y' \in Y$.
	According to \cite[Lemma~2.2]{Coulon:2018ac}, we have
	\begin{equation*}
		\dist[Y] y{y'} \leq \frac {\pi \sinh \rho}{\sinh r} \sinh\left(\frac {\dist[D(Y)] z{z'}}2\right).
	\end{equation*}
	However, the embedding of $D(Y) \into \dot X$ is a $(1, 8\dot \delta)$-quasi-isometric embedding (\autoref{res: qi cone in cone-off}).
	Therefore 
	\begin{equation*}
		\angle_c(\nu, \nu') 
		\leq \frac {\dist[Y] y{y'}}{\sinh \rho} 
		\leq \frac {\pi\sinh(10\dot \delta)}{\sinh r}. \qedhere
	\end{equation*}
\end{proof}

\begin{rema}
\label{rem: small angle at cone point}
    When applying this lemma later, we combine it with the orders of magnitude discussed in \autoref{rem: order of magnitude}. More precisely, we bound the Gromov product $\gro x{x'}c$ to be at least, say, $\gro x{x'}c \geq \rho/50$ (for instance, by showing that $x$ and $x'$ are close to each other while far away from $c$). Together with $10\dot \delta \ll \rho$ from \autoref{rem: order of magnitude}, this gives that 
 	\begin{equation*}
		\angle_c(\nu, \nu') \leq \frac {\pi\sinh(10\dot \delta)}{\sinh \rho/50} \leq \frac{\pi\sinh(99\rho/100)}{2\sinh\rho}  \leq\frac\vartheta2.\qedhere
	\end{equation*}
\end{rema}

\begin{lemm}
\label{res: large angle at cone point}
	Let $c \in \mathcal C$. Let $x,x' \in \dot X \setminus \{c\}$.
	Let $\nu$ and $\nu'$ be two geodesics joining $c$ to $x$ and $x'$ respectively.
	If $\angle_c(\nu, \nu') \geq \pi + \vartheta$, then any geodesic from $x$ to $x'$ passes through $c$.
\end{lemm}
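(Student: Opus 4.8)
The plan is to argue by contradiction. Suppose $\sigma\colon[0,\ell]\to\dot X$ is a geodesic from $x$ to $x'$ whose image does not contain $c$, and fix $(R,Y)\in\mathcal Q$ such that $c$ is the apex of $D(Y)$. The idea is to push the whole configuration into the single cone $D(Y)$ near its apex, where the statement reduces to elementary cone geometry; the auxiliary angle $\vartheta$ will be exactly the slack needed to pass between the metric of $\dot X$ and the genuine cone metric.

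First I would check that $\sigma$ passes very close to $c$. If $d(c,\sigma)\geq\rho/10$, then the classical thin–triangle estimate $\gro x{x'}c\geq d(c,\sigma)-4\dot\delta$ together with the orders of magnitude of \autoref{rem: order of magnitude} gives $\gro x{x'}c\geq\rho/20$, and then \autoref{res: small angle at cone point} forces $\angle_c(\nu,\nu')\leq\pi\sinh(10\dot\delta)/\sinh(\rho/20)\leq\vartheta/2$ (exactly the computation of \autoref{rem: small angle at cone point}), contradicting $\angle_c(\nu,\nu')\geq\pi+\vartheta$. Hence $d(c,\sigma)<\rho/10$; pick $m\in\sigma$ with $s:=d(c,m)=d(c,\sigma)\in(0,\rho/10)$. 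By \autoref{res: isom embedding cone in cone-off} applied at $m$, the ball $B(m,\rho-s)$ is contained in $D(Y)$ and there the metric of $\dot X$ agrees with that of $D(Y)$; in particular every geodesic issued from $c$ is radial inside $D(Y)$ near $c$.

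Next I would set $a=\min\{\rho/10,\dist[\dot X]xm\}$ and $b=\min\{\rho/10,\dist[\dot X]{x'}m\}$, and let $q_1,q_2\in\sigma$ be at distance $a$, resp. $b$, from $m$ towards $x$, resp. $x'$. The subsegment $\sigma'$ of $\sigma$ from $q_1$ to $q_2$ has length $a+b\leq\rho/5<\rho-s$, so $\sigma'\subset B(m,\rho-s)\subset D(Y)$ and $\sigma'$ is a geodesic of the cone. Write $q_i=(y_i,r_i)$, so $r_i=\dist c{q_i}\in[s,s+\rho/10)\subset(0,\rho)$, and let $\mu_i$ be the unique radial geodesic from $c$ to $q_i$. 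If $q_1=x$ then $\dist cx\leq s+a<\rho$, so the geodesic from $c$ to $x$ is the unique radial one and $\mu_1=\nu$, whence $\angle_c(\nu,\mu_1)=0$; otherwise $a=\rho/10$ and a one–line triangle inequality (using $r_1\geq s$ and $\dist xc\geq\dist xm-s$) gives $\gro x{q_1}c\geq a/2=\rho/20$, so \autoref{res: small angle at cone point} again yields $\angle_c(\nu,\mu_1)\leq\vartheta/2$; symmetrically $\angle_c(\mu_2,\nu')\leq\vartheta/2$. Since $\angle_c$ is inherited from the pseudo-metric $\theta_c$ on the radial directions at $c$, it satisfies the triangle inequality and $\angle_c(\mu_1,\mu_2)=\dist[Y]{y_1}{y_2}/\sinh\rho=\theta_c(q_1,q_2)$. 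Therefore
\[
\pi+\vartheta\;\leq\;\angle_c(\nu,\nu')\;\leq\;\angle_c(\nu,\mu_1)+\angle_c(\mu_1,\mu_2)+\angle_c(\mu_2,\nu')\;\leq\;\vartheta+\theta_c(q_1,q_2),
\]
so $\theta_c(q_1,q_2)\geq\pi$.

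Finally I would invoke the elementary fact that, in a cone, a geodesic between two points subtending an angle $\geq\pi$ at the apex must pass through the apex: if $\tau\colon[0,r_1+r_2]\to D(Y)$ is a geodesic from $q_1$ to $q_2$ (the length is $r_1+r_2$ by \eqref{eqn: sc - metric cone}, since $\cos\pi=-1$), then $w:=\tau(r_1)$ satisfies $\dist{q_1}w=r_1$ and $\dist{q_2}w=r_2$; if $w=(y_w,h)\neq c$, formula \eqref{eqn: sc - metric cone} forces $\cos(\min\{\pi,\dist[Y]{y_1}{y_w}/\sinh\rho\})=\cosh r_1(\cosh h-1)/(\sinh r_1\sinh h)>0$, so $\dist[Y]{y_1}{y_w}<(\pi/2)\sinh\rho$, and likewise $\dist[Y]{y_w}{y_2}<(\pi/2)\sinh\rho$, whence $\dist[Y]{y_1}{y_2}<\pi\sinh\rho$, contradicting $\theta_c(q_1,q_2)\geq\pi$; so $w=c\in\tau$ (this is in any case standard cone geometry, cf. \cite[Ch.~I.5]{Bridson:1999ky}). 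Applied to $\sigma'$ this gives $c\in\sigma'\subset\sigma$, contradicting $d(c,\sigma)=s>0$, which finishes the argument. The main obstacle here is not conceptual but organisational: one must make sure the short piece of $\sigma$ around $m$ genuinely lies in the one cone $D(Y)$, that the geodesics issued from $c$ are radial there, and that the error terms from \autoref{res: small angle at cone point} assemble so that $\vartheta$ is exactly what converts ``angle $<\pi$ inside the cone'' into ``angle $<\pi+\vartheta$ in $\dot X$''; the degenerate cases where $x$ or $x'$ is the nearest point of $\sigma$ to $c$, or lies within $\rho$ of $c$, need a word but dissolve immediately since geodesics landing inside $D(Y)$ are unique and radial.
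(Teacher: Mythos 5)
Your proof is correct and reaches the right conclusion, but the geometric configuration you set up is genuinely different from the one in the paper.

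The paper's argument first parametrizes $\nu$ and $\nu'$ by arc length, truncates both at distance $\min\{\ell,\rho/3\}$ from $c$ to get $x_0=\nu(\ell_0)$ and $x'_0=\nu'(\ell'_0)$, and proves a preliminary claim: if $\angle_c(\nu,\nu')\geq\pi$, then the concatenation $\nu_0^{-1}\nu'_0$ is the \emph{unique} geodesic from $x_0$ to $x'_0$ (using the cone formula \eqref{eqn: sc - metric cone} plus the isometry between the balls of radius $\rho/2$ around $c$ in $D(Y)$ and in $\dot X$). Then, in the general case $\angle_c(\nu,\nu')\geq\pi+\vartheta$, it observes that $\nu^{-1}\nu'$ is a $\rho/3$-local geodesic through $c$, so $\gro x{x'}c\leq 2\dot\delta$; consequently $x_0,x'_0$ are $4\dot\delta$-close to points $x_1,x'_1$ of $\sigma$ (by stability of geodesics), and \autoref{res: small angle at cone point} gives $\angle_c(\nu_1,\nu'_1)\geq\pi$ for geodesics $\nu_1,\nu'_1$ joining $c$ to $x_1,x'_1$. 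The preliminary claim then places $c$ on the subsegment of $\sigma$ between $x_1$ and $x'_1$.

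You instead argue by contradiction and truncate $\sigma$, not $\nu$ and $\nu'$: you first use $d(c,\sigma)\leq\gro x{x'}c+4\dot\delta$ together with \autoref{res: small angle at cone point} to force $\sigma$ within $\rho/10$ of $c$, take the closest point $m$, pick $q_1,q_2\in\sigma$ on either side of $m$, push the subsegment $\sigma'$ into the single cone $D(Y)$, and run the angle comparison via radial geodesics $\mu_1,\mu_2$ to $q_1,q_2$. The role of $\vartheta$ is the same --- absorbing the two applications of \autoref{res: small angle at cone point} --- but you spend it on the step $\nu\rightsquigarrow\mu_1$, $\nu'\rightsquigarrow\mu_2$ rather than $\nu_0\rightsquigarrow\nu_1$, $\nu'_0\rightsquigarrow\nu'_1$. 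Your final step makes the cone geometry fully explicit (the computation showing $w=(y_w,h)\neq c$ at arc length $r_1$ would force $\dist[Y]{y_1}{y_2}<\pi\sinh\rho$), whereas the paper packages this into its preliminary uniqueness claim. Both routes rest on the same two ingredients --- \autoref{res: small angle at cone point} and \eqref{eqn: sc - metric cone}. Your route buys a slightly more self-contained cone computation and avoids invoking uniqueness of geodesics in $D(Y)$ as a black box; the paper's route avoids the degenerate-case bookkeeping and the closest-point argument. One small precision worth adding in your writeup: the claim that ``the metric of $\dot X$ agrees with that of $D(Y)$'' on $B(m,\rho-s)$ is slightly stronger than what \autoref{res: isom embedding cone in cone-off} literally gives (it only controls distances \emph{from} $m$); what you actually need, and what does hold, is that for any two points $p,q$ of $\sigma'$ (which lie in $B(c,\rho/5)$) the lemma applied at $p$ gives $\dist[\dot X]pq=\dist[D(Y)]pq$, so $\sigma'$ is indeed a geodesic of $D(Y)$.
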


\begin{proof}
	Let $(R, Y)\in \mathcal Q$ be such that $c$ is the apex of the cone $D(Y)$.
	We write $\nu \colon \intval 0\ell \to \dot X$ and $\nu' \colon \intval 0{\ell'} \to \dot X$ for the parametrization by arc length of $\nu$ and $\nu'$.
	Let $\ell_0 = \min \{\ell, \rho/3\}$ and $\ell'_0 = \min \{\ell', \rho/3\}$.
	Denote by $\nu_0$ and $\nu'_0$ the restrictions of $\nu$ and $\nu'$ to $\intval 0{\ell_0}$ and $\intval 0{\ell'_0}$ respectively.
	
	We first claim that if $\angle_c(\nu, \nu') \geq \pi$, then the path obtained by concatenating $\nu_0^{-1}$ and $\nu'_0$ is the unique geodesic joining its endpoints. 
	In particular, it goes through $c$.
	Let $x_0 = \nu(\ell_0)$ and $x'_0 = \nu'(\ell_0)$.
	They are two points of $D(Y)$ which can also be written $x_0 = (y_0, \ell_0)$ and $x'_0 = (y'_0, \ell'_0)$.
	By definition 
	\begin{equation*}
		\dist[Y] {y_0}{y'_0} = \angle_c(\nu, \nu') \sinh \rho.
	\end{equation*}
	Thus, if $\angle_c(\nu, \nu') \geq \pi$, we get $\dist[Y] {y_0}{y'_0} \geq \pi \sinh \rho$.
	Using the metric of $D(Y)$ -- see \eqref{eqn: sc - metric cone} -- we observe that there is a unique geodesic in $D(Y)$ joining $x_0$ to $x'_0$, namely the one connecting radially $x_0$ to $c$ and then $c$ to $x'_0$, i.e., the concatenation $\nu_0^{-1} \nu'_0$.
	Since the embedding $D(Y) \into \dot X$ induces an isometry between the balls of radius $\rho/2$ centered at $\bar c$, which are $2\dot \delta$-quasi-convex in $X$, the path $\nu_0^{-1} \nu'_0$ is also the unique geodesic of $\dot X$ from $x_0$ to $x'_0$, which completes the proof of our claim.
	
	We now focus on the general case and assume that $\angle_c(\nu, \nu') \geq \pi + \vartheta$.
	In particular, the concatenation $\nu^{-1} \nu'$ is a $\rho/3$-local geodesic of $\dot X$ going through $c$.
	Therefore, $\gro x{x'}c \leq 2\dot \delta$ (where the Gromov product is computed in $\dot X$).
	Let $\sigma$ be a geodesic from $x$ to $x'$.
	We denote by $x_1$ and $x'_1$ the point on $\sigma$ such that $\dist[\dot X] x{x_0} = \dist[\dot X] x{x_1}$ and $\dist[\dot X] x{x'_0} = \dist[\dot X] x{x'_1}$.
	In particular, $\dist[\dot X]{x_0}{x_1} \leq 4 \dot \delta$ and $\dist[\dot X]{x'_0}{x'_1} \leq 4 \dot \delta$, see for instance \cite[Lemma~2.2(3)]{Coulon:2014fr}.
	Observe also that $x_1 = x_0$ (\resp, $x'_1 = x'_0$) whenever $\ell \leq \rho/3$ (\resp, $\ell' \leq \rho/3$).
	
	We write $\nu_1$ and $\nu'_1$ for geodesics joining $c$ to $x_1$ and $x'_1$ respectively.
	If $x_1 = x_0$ (\resp, $x'_1 = x'_0$) we simply choose $\nu_1 = \nu_0$ (\resp, $\nu'_1 = \nu'_0$).
	It follows from \autoref{res: small angle at cone point} (see also \autoref{rem: small angle at cone point}) that $\angle_c(\nu_0, \nu_1)$ is equal to zero (if $\nu_0 = \nu_1$) or is most $\vartheta/2$.
	The same holds for $\angle_c(\nu'_0, \nu'_1)$.
	Consequently, $\angle_c(\nu_1, \nu'_1) \geq \pi$.
	According to our previous claim, the concatenation $\nu_1^{-1} \nu'_1$ is the unique geodesic of $\dot X$ from $x_1$ to $x'_1$.
	Moreover, it passes through $c$.
	However, $x_1$ and $x'_1$ lie on the geodesic $\sigma$ from $x$ to $x'$, hence $c$ belongs to $\sigma$.
\end{proof}

\begin{lemm}
\label{res: angle vs distX}
	Let $c \in \mathcal C$.
	Let $x,x' \in X$.
	If $\nu$ and $\nu'$ are geodesics of $\dot X$ joining $c$ to $x$ and $x'$ respectively, then
	\begin{equation*}	
		\angle_c( \nu, \nu') \leq \frac {\dist[X] x{x'}}{\sinh \rho} + \vartheta.
	\end{equation*}
\end{lemm}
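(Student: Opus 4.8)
Fix $(R,Y)\in\mathcal Q$ so that $c$ is the apex of $D(Y)$. I first reduce the statement. By \autoref{res: isom embedding cone in cone-off} the only points of $\dot X$ at distance less than $\rho$ from $c$ lie in $D(Y)$, and a geodesic issued from the apex of a cone is radial; hence, since $x,x'\in X$, each of $\nu,\nu'$ agrees with a radial segment of $D(Y)$ on its initial subsegment of length $\rho$. Write $y=\nu(\rho)$ and $y'=\nu'(\rho)$: these are points of $Y$, and $\nu$ restricted to $[\rho,\dist[\dot X]cx]$ is a geodesic of $\dot X$ from $y$ to $x$ (and similarly for $\nu'$). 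By the definitions of $\angle_c$ and of $\theta_c$ one has $\angle_c(\nu,\nu')=\dist[Y] y{y'}/\sinh\rho$, so it suffices to prove
\[
	\dist[Y] y{y'}\le\dist[X] x{x'}+\pi\sinh(99\rho/100).
\]

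When $\dist[X] x{x'}$ is not too large I argue inside $\dot X$. In the $\dot\delta$-hyperbolic triangle $(c,x,x')$ the points $y,y'$ both lie at distance $\rho$ from $c$ along the sides $[c,x]$, $[c,x']$; a standard thin-triangle estimate then gives $\dist[\dot X] y{y'}\le\dist[\dot X] x{x'}+8\dot\delta\le\dist[X] x{x'}+8\dot\delta$ (and $\dist[\dot X] y{y'}\le 4\dot\delta$ in the degenerate case $\gro x{x'}c\ge\rho$). Since $D(Y)\hookrightarrow\dot X$ is a $(1,8\dot\delta)$-quasi-isometric embedding (\autoref{res: qi cone in cone-off}) and $\dist[D(Y)] y{y'}=\mu(\dist[Y] y{y'})$ by \eqref{eqn: sc - metric cone}, this yields $\mu(\dist[Y] y{y'})\le\dist[X] x{x'}+16\dot\delta$. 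If $\dist[X] x{x'}+16\dot\delta<198\rho/100$, then $\mu(\dist[Y] y{y'})<2\rho$, and \autoref{res: invert mu} gives $\dist[Y] y{y'}\le\pi\sinh\bigl((\dist[X] x{x'}+16\dot\delta)/2\bigr)\le\pi\sinh(99\rho/100)$, which is at most $\dist[X] x{x'}+\pi\sinh(99\rho/100)$; here I use $8\dot\delta\ll\rho$ from \autoref{rem: order of magnitude}.

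It remains to handle the case $\dist[X] x{x'}\ge 196\rho/100$, and there I pass to the metric of $X$. By strong quasi-convexity of $Y$, $\dist[Y] y{y'}\le\dist[X] y{y'}+8\delta$, so it is enough to bound $\dist[X] y{y'}$. Let $\hat y,\hat y'$ be nearest-point projections of $x,x'$ onto $Y$ in $X$; the classical estimate for projections onto a strongly quasi-convex subset of a $\delta$-hyperbolic space gives $\dist[X]{\hat y}{\hat y'}\le\dist[X] x{x'}+O(\delta)$. The remaining point is to compare the cone-exit point $y=\nu(\rho)$ — which is the point of $Y$ nearest to $x$ for the metric of $\dot X$ — with $\hat y$: the $\dot X$-geodesic $[x,y]$ has $\dot X$-length $\dist[\dot X] xY$, and its radial projection onto $X$ is a path from $x$ to $y$ whose $X$-length is controlled, each traversed cone contributing (by \autoref{res: invert mu}) at most $\pi\sinh$ of half its own $\dot X$-length; by super-additivity of $\sinh$ this gives $\dist[X] xy\le\dist[X] xY+\pi\sinh(\dist[\dot X] xY/2)+O(\delta)$ as long as no traversed cone is entered all the way to its apex, and combined with the projection inequality $\dist[X] xy\ge\dist[X] xY+\dist[X] y{\hat y}-O(\delta)$ this bounds $\dist[X] y{\hat y}\le\pi\sinh(\dist[\dot X] xY/2)+O(\delta)$. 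A case split on the size of $\dist[\dot X] xY$ — using \autoref{res: small cancellation} and $\Delta(\mathcal Q,X)\le\Delta_0$ to bound overlaps of distinct cones, hence to see that no cone is entered up to its apex unless $\dist[\dot X] xY$ is comparable to $2\rho$, in which case $[x,y]$ essentially coincides with an $X$-geodesic and $y$ with $\hat y$ — keeps $\dist[X] y{\hat y}$ strictly below $\frac12\pi\sinh(99\rho/100)$, since $\rho\ge\rho_0$ dwarfs all the other metric constants; the same applies to $\dist[X] y'{\hat y'}$, and assembling the three pieces finishes this case.

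The main obstacle is precisely this last step: comparing the nearest-point projection of $x$ onto $Y$ computed in $\dot X$ (where $\nu$ leaves the cone) with the one computed in $X$. This comparison is delicate because the inclusion $X\hookrightarrow\dot X$ is badly distorted exactly along $Y$, so the discrepancy is not $O(\delta)$ but $\rho$-dependent, and one has to track the finitely many cones traversed by $[x,y]$ carefully — converting each cone's $\dot X$-length into an $X$-length via \autoref{res: invert mu} — in order to see that the accumulated error stays under $\pi\sinh(99\rho/100)$. The "small" and "large" regimes for $\dist[X] x{x'}$ are genuinely different phenomena, and both are needed (compare \autoref{res: small angle at cone point}, which is the analogous bound with $\dist[\dot X] x{x'}$ in place of $\dist[X] x{x'}$).
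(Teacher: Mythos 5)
Your reduction and your first regime are fine: identifying $\angle_c(\nu,\nu')$ with $\dist[Y]{\nu(\rho)}{\nu'(\rho)}/\sinh\rho$ and, when $\dist[X]x{x'}$ is at most of order $2\rho$, passing through $\dot X$, the $(1,8\dot\delta)$-quasi-isometric embedding of $D(Y)$ (\autoref{res: qi cone in cone-off}) and \autoref{res: invert mu} is a correct and essentially self-contained argument. But the second regime --- which is the one that matters, since the lemma is applied (e.g.\ in \autoref{res: small diam yields small sectors}) with $\dist[X]x{x'}$ of order $T(\mathcal Q,X)\gg\sinh\rho$ --- has a genuine gap. Everything hinges on comparing the cone-exit point $y=\nu(\rho)$ (the $\dot X$-nearest point of $Y$ to $x$) with the $X$-nearest-point projection $\hat y$ of $x$ onto $Y$. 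The paper does this by quoting \cite[Proposition~2.16]{Coulon:2018ac}, which gives the uniform, $\rho$-independent estimate $\gro x{\hat y}{y}\leq \frac{\pi\sinh\rho}{\sinh(\rho-10\dot\delta)}\sinh(10\dot\delta)\leq\pi e^{20\dot\delta}$ computed in $X$, hence $\dist[X]y{\hat y}$ bounded by a small constant for \emph{every} $x$, with no case distinction. Your replacement only yields $\dist[X]y{\hat y}\lesssim\pi\sinh\bigl(\dist[\dot X]xY/2\bigr)$, which is useless as soon as $\dist[\dot X]xY$ exceeds roughly $2\rho$; and the case split you invoke to cover that regime is asserted, not proved. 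In particular the claim that ``no traversed cone is entered up to its apex unless $\dist[\dot X]xY$ is comparable to $2\rho$'' has no justification (when $x$ is $X$-far from $Y$, the $\dot X$-geodesic $[x,y]$ will in general dive deep into, and possibly through, several other cones --- that is precisely what the cone-off is designed to allow), and the conclusion that in that situation ``$[x,y]$ essentially coincides with an $X$-geodesic and $y$ with $\hat y$'' is a non sequitur. Since each near-apex crossing alone can contribute up to $\pi\sinh\rho$ to the radially projected length, your budget of $\pi\sinh(99\rho/100)$ is exceeded by a single such cone, so the missing cases cannot be dismissed.

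There is also a secondary quantitative issue in the same step: your cone-by-cone accounting picks up an additive error of order $\dot\delta$ inside each traversed cone (from \autoref{res: qi cone in cone-off} and the thin-triangle estimates), and the number of traversed cones is not bounded independently of $\dist[\dot X]xY$, so the total error is not $O(\delta)$ as written. Both problems disappear if, instead of reconstructing the projection comparison by hand, you import a ready-made statement comparing projections onto $Y$ taken in $X$ and in $\dot X$ (this is exactly \cite[Proposition~2.16]{Coulon:2018ac}, used together with \eqref{eqn: rho for comparing projection}): with that input, $\dist[X]{\nu(\rho)}{\nu'(\rho)}\leq\dist[X]x{x'}+2\pi e^{20\dot\delta}+O(\delta)$ in one stroke, strong quasi-convexity of $Y$ converts this to a bound on $\dist[Y]{\nu(\rho)}{\nu'(\rho)}$, and the lemma follows uniformly, with no split between small and large $\dist[X]x{x'}$.
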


\begin{proof}
	Let $(R, Y)\in \mathcal Q$ be such that $c$ is the apex of the cone $D(Y)$.
	Let $y$ and $y'$ be respective projections of $x$ and $x'$ onto $Y$.
	In particular, $\dist[X] y{y'} \leq \dist[X] x{x'} + 10\delta$, see for instance \cite[Lemma~2.12]{Coulon:2014fr}.
	By construction, $z = \nu(\rho)$ and $z' = \nu'(\rho)$ are respective projections of $x$ and $x'$ on the closed ball $\bar B(c, \rho)$ for the metric of $\dot X$.
	They also belong to $Y$.
	For simplicity, we write $P$ and $\dot P$ for the Gromov product $\gro xyz$ computed in $X$ and $\dot X$ respectively.
	In particular, $\dist[X]yz \leq P + 5\delta$, while $\dot P \leq 5 \dot \delta$, see again \cite[Lemma~2.12]{Coulon:2014fr}.
	We claim that 
	\begin{equation*}
		P \leq \frac{\pi \sinh \rho}{\sinh (\rho - 10\dot \delta)} \sinh(10\dot \delta)
		\leq \pi e^{20\dot \delta}.
	\end{equation*}
	Indeed, the first inequality is an application of \cite[Proposition~2.16]{Coulon:2018ac} and the second one follows from our choice of $\rho_0$, see \eqref{eqn: rho for comparing projection}.
	The same analysis works for $y'$ and $z'$.
	Combining the previous discussion with the triangle inequality, we get
	\begin{equation*}
		\dist[X] z{z'} \leq 
		\dist[X]y{y'} + 2P + 10\delta
		\leq \dist[X]x{x'} + 2\pi e^{20\dot \delta} + 20\delta.
	\end{equation*}
	However, $Y$ is strongly quasi-convex, hence
	\begin{equation*}
		\dist[Y] z{z'} \leq \dist[X]x{x'} + 2\pi e^{20\dot \delta} + 28\delta.
	\end{equation*}
	Recall that $z$ and $z'$ lie on $\nu$ and $\nu'$ respectively.
	Hence,
	\begin{equation*}
		\angle_c(\nu, \nu')
		\leq \frac{\dist[Y] z{z'}}{\sinh \rho}
		\leq \frac{\dist[X] x{x'}}{\sinh \rho} + \frac {2\pi e^{20\dot \delta} + 28\delta}{\sinh \rho}
		\leq \frac{\dist[X] x{x'}}{\sinh \rho} + \vartheta. \qedhere
	\end{equation*}
\end{proof}

\begin{lemm}
\label{res: displacement vs angle}
	Let $(R,Y) \in \mathcal Q$ and $c$ be the apex of $D(Y)$.
	Let $x \in \dot X \setminus \{c\}$.
	Let $\nu$ be a geodesic from $c$ to $x$.
	For every $h \in R \setminus \{1\}$, we have $\angle_c(h\nu, \nu) \geq \Omega_c$.
\end{lemm}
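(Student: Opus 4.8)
The plan is to reduce the whole computation to the single cone $D(Y)$, where the angle at $c$ is given by an explicit formula and the action of $R$ is transparent. First I would record how $h$ acts near $c$: since $h \in R \subseteq \stab Y$, the isometry $h$ of $\dot X$ preserves the cone $D(Y)$, fixes its apex $c$, and acts on it by $(y,r) \mapsto (hy, r)$. In particular $h\nu$ is again a geodesic of $\dot X$ issuing from $c$, so that $\angle_c(h\nu, \nu)$ is well defined.

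Next I would identify the germ of $\nu$ at $c$. By \autoref{res: isom embedding cone in cone-off} (applied with $d(c,Y) = \rho$), the ball $B(c,\rho)$ of $\dot X$ is contained in $D(Y)$ and carries the metric of $D(Y)$ there; hence the initial portion of $\nu$ is a geodesic of $D(Y)$ starting at $c$, and is therefore radial. Thus there is $y \in Y$ with $\nu(t) = (y,t)$ for all small $t > 0$, and then $h\nu(t) = (hy, t)$ with $hy \in Y$.

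Finally I would unwind the definition of $\angle_c$. Since $\theta_c(\nu(t), h\nu(t))$ is constant on a neighborhood of $0$ and equals $\dist[Y]{hy}{y}/\sinh\rho$ (the formula for $\theta_c$ not involving the radial coordinate), we get
\[
	\angle_c(h\nu,\nu) = \frac{\dist[Y]{hy}{y}}{\sinh\rho} \geq \frac{\dist[X]{hy}{y}}{\sinh\rho} \geq \frac{\norm[X] h}{\sinh\rho} \geq \Omega_c,
\]
where the first inequality uses that $\distV[Y]$ dominates the restriction of $\distV[X]$ (see \eqref{eqn: def strongly qc}), the second uses that $\norm[X] h$ is the infimum of $\dist[X]{hz}{z}$ over $z \in X$, and the last uses $h \in R \setminus \{1\}$ together with the definition of $\Omega_c$. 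There is no real obstacle here; the only point requiring a little care is that $\angle_c$ is a purely local invariant at $c$, so that the computation can legitimately be carried out inside $D(Y)$, where the embedding into $\dot X$ is undistorted and geodesics from the apex are radial.
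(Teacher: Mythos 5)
Your argument is correct and is essentially the paper's proof: identify the germ of $\nu$ at $c$ as radial in $D(Y)$, so that $\angle_c(h\nu,\nu) = \dist[Y]{hy}{y}/\sinh\rho$, and then chain the inequalities $\dist[Y]{hy}{y} \geq \dist[X]{hy}{y} \geq \norm[X]h \geq \Omega_c \sinh\rho$. The extra care you take in justifying that the computation localizes to $D(Y)$ (via \autoref{res: isom embedding cone in cone-off}) is exactly what the paper leaves implicit, so there is nothing to change.
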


\begin{proof}
	Let $r \in (0, \rho)$ be in the domain of the definition of $\nu$.
	In particular, it belongs to $D(Y)$, hence it can be written as $\nu(r) = (y,r)$ for some $y \in Y$.
	By definition, $\angle_c(h\nu, \nu) = \dist[Y]{hy}y / \sinh\rho$.
	However, it follows from the construction that 
	\begin{equation*}
		\dist[Y] {hy}y \geq \dist[X] {hy}y \geq \norm[X] h \geq \Omega_c \sinh\rho. \qedhere
	\end{equation*}
\end{proof}

In order to study the kernel $K(\mathcal Q)$, Dahmani, Guirardel, and Osin studied in \cite{Dahmani:2017ef} its action on $\dot X$ using the formalism of \emph{rotation families}.
Their work provides an analog of the Greendlinger Lemma.

\begin{prop}[Greendlinger Lemma, {\cite[Lemma~5.1.6]{Dahmani:2017ef}}]
\label{res: greendlinger lemma in the cone-off}
	Let $x \in  \dot X$.
	Let $g \in K\setminus\{1\}$.
	There exists $(R, Y) \in \mathcal Q$ such that $c$ lies on a geodesic between $x$ and $gx$ (in $\dot X$), where $c$ is the apex of the cone $D(Y)$.
	Moreover, one of the following holds.
	\begin{enumerate}
		\item $g \in R$ and $\dist xc \leq \rho/10$, 
		\item There exist two points $p,q \in \dot X$ and $h \in H \setminus\{1\}$ such that $p$ (\resp, $q$) lies on a geodesic from $c$ to $x$ (\resp, $gx$) at distance $\rho/20$ from $c$ and $\dist {hq}p \leq 100 \dot \delta$.
	\end{enumerate}
\end{prop}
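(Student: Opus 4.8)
The plan is to recognise the statement as the Greendlinger lemma for the action of the kernel $K = K(\mathcal Q)$ on the cone-off $\dot X$, translated into the geometric language of cones and apices, and then to quote \cite[Lemma~5.1.6]{Dahmani:2017ef} after a conversion of constants.

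First I would set up the rotating family. The set of apices is $\mathcal C$; to an apex $c$, say the apex of $D(Y)$ for some $(R,Y) \in \mathcal Q$, one attaches the subgroup $R_c = R$, which fixes $c$, and these subgroups generate $K$. The very-rotating axioms are then checked as follows. The space $\dot X$ is $\dot\delta$-hyperbolic with $\dot\delta \le \delta_1$ by \autoref{res: small cancellation}\ref{enu: small cancellation - hyp cone-off}. Distinct apices of $\mathcal C$ lie at distance at least $2\rho$ in $\dot X$: this follows from \autoref{res: isom embedding cone in cone-off}, since the punctured cones are disjoint and a length-$<2\rho$ path between two apices would have a midpoint lying in both of the corresponding $\rho$-balls. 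Finally the subgroups $R_c$ rotate by a large angle around $c$, which is exactly \autoref{res: displacement vs angle}: for $h \in R \setminus \{1\}$ and a geodesic $\nu$ issuing from $c$ one has $\angle_c(h\nu,\nu) \ge \Omega_c \ge 10\pi$. Combined with \autoref{res: large angle at cone point} and the orders of magnitude $\vartheta \ll 1$, $\dot\delta \ll \rho$ of \autoref{rem: order of magnitude}, this forces the broken paths through the apices that appear in the analysis to be genuine local geodesics — the small-cancellation behaviour underlying the very-rotating condition. This verification is essentially that of \cite{Coulon:2014fr, Dahmani:2017ef}; the angle lemmas proved just above repackage it in a form suited to the constants we want.

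Granting this, \cite[Lemma~5.1.6]{Dahmani:2017ef} applied to $g \in K \setminus \{1\}$ and $x \in \dot X$ produces an apex $c \in \mathcal C$ lying on a geodesic from $x$ to $gx$ — using \autoref{res: large angle at cone point} once more to reroute $[x,gx]$ through $c$ if needed — together with the dichotomy, in its original formulation: either $g \in R_c$, or there are points $p$ on a geodesic $[c,x]$, $q$ on a geodesic $[c,gx]$ at a fixed small radius from $c$, and $h \in R_c \setminus \{1\}$ with $\dist{hq}{p}$ bounded by a universal multiple of $\dot\delta$. It remains to massage this into the stated form. If $g \in R_c$ and $\dist{x}{c} \le \rho/10$, we are in case (1). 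If $g \in R_c$ but $\dist{x}{c} \ge \rho/20$, choose $p$ on $[c,x]$ with $\dist{p}{c} = \rho/20$; then $q := gp$ lies on $g[c,x] = [c,gx]$ at distance $\rho/20$ from $c$, and $h := g^{-1} \in R_c \setminus \{1\}$ gives $hq = p$, so case (2) holds with room to spare. If $g$ is not conjugate into a single $R_c$, the lemma already gives case (2) at radius $O(\dot\delta)$ from $c$; I would then slide $p$ and $q$ outward along their geodesics to radius exactly $\rho/20$ (possible because $c$ separates $x$ from $gx$ along the broken geodesic) and bound the growth of $\dist{hq}{p}$ using \autoref{res: small angle at cone point} and \autoref{res: angle vs distX}, which give that along a geodesic issuing from an apex the angular distance and the $\dot X$-distance are comparable up to the error $\vartheta$; this yields $\dist{hq}{p} \le 100\dot\delta$.

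The hard part is this last, purely metric, bookkeeping: reconciling the DGO normalisation, where the relevant radii and fellow-travelling constants are universal multiples of the ambient hyperbolicity constant, with the cone-off normalisation used here, where apices are $2\rho$-separated and $\rho \gg \dot\delta$, and in particular controlling how much the fellow-travelling constant grows when $p$ and $q$ are slid out to radius $\rho/20$. The angle lemmas established just before the statement are precisely what makes this slide harmless.
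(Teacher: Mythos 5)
Your proposal diverges from the paper's proof at the final step, and that step has a genuine gap. The paper's entire argument is the remark following the statement: the numerical parameters in \cite[Lemma~5.1.6]{Dahmani:2017ef} are given as multiples of the ambient hyperbolicity constant $\dot\delta$, whereas here they are given in terms of $\rho$; however the \emph{proof} of that lemma works verbatim once the present constants $\rho/10$, $\rho/20$, $100\dot\delta$ are substituted. You instead quote DGO's \emph{statement} as is --- obtaining $p$, $q$ at radius of the order $\dot\delta$ from $c$ with $\dist{hq}p$ of the order $\dot\delta$ --- and then try to ``slide $p$ and $q$ outward to radius $\rho/20$'' while keeping $\dist{hq}p \le 100\dot\delta$. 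That post-processing is not sound.

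In a cone $D(Y)$, two points at the same radius $r$ from the apex subtending an angle $\theta$ lie at distance roughly $\sinh(r)\,\theta$ (read off \eqref{eqn: sc - metric cone} for $\theta$ small). If $p$ and $hq$ sit at radius $O(\dot\delta)$ from $c$ with $\dist{hq}p = O(\dot\delta)$, the induced angle between the rays $[c,x]$ and $[c,hgx]$ is controlled only up to $O(\dot\delta)/\sinh(O(\dot\delta)) = O(1)$; in particular it need not be small. Sliding out to radius $\rho/20$ then multiplies the separation by $\sinh(\rho/20)/\sinh(O(\dot\delta))$, which under the hierarchy $\dot\delta \ll \rho$ of \autoref{rem: order of magnitude} is astronomically larger than $100\dot\delta$. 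Neither \autoref{res: small angle at cone point} nor \autoref{res: angle vs distX} rescues this: both bound an \emph{angle} from a distance or a Gromov product and require that input quantity to already be favourable; what you would need is the converse implication --- a tiny angle deduced from the small DGO distance at tiny radius --- and that implication is false. The bound $\dist{hq}p \le 100\dot\delta$ at radius $\rho/20$ encodes that the two rays at $c$ meet at an \emph{extremely} small angle, and this information lives inside DGO's proof, not in their statement after the rescaling you propose. Re-running DGO's proof with the substituted constants, as the paper does, is the correct and much shorter route; the sliding argument is not a substitute for it.
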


\begin{rema}
	The statement given in \cite{Dahmani:2017ef} has slightly different numerical parameters (given in terms of $\dot \delta$ instead of $\rho$). 
	However, the proof works verbatim for our statement.
\end{rema}

\medskip
The notion of angle that we defined provides a way to reformulate the Greendlinger lemma in a concise way.

\begin{coro}[Greendlinger Lemma revisited]
\label{res: greendlinger lemma with angle}
	Let $\bar x \in \bar X$.
	Let
	\begin{displaymath}
		\epsilon(\bar x) =  2\min \left\{ \rho/30, d\left(\bar x, \bar {\mathcal C}\setminus\{\bar x\}\right) \right\}.
	\end{displaymath}
	Let $x,x' \in \dot X$ be two distinct preimages of $\bar x$.
	There exists $(R,Y) \in \mathcal Q$ and $h \in R \setminus\{1\}$ satisfying the following properties.
	The apex $c$ of the cone $D(Y)$ is distinct from $x$ and $x'$.
	Moreover, there are geodesics $\nu$ and $\nu'$ of $\dot X$ joining $c$ to $x$ and $x'$ respectively, such that 
	\begin{enumerate}
		\item $\angle_c(h \nu,\nu') \leq \vartheta$.
		\item $\dist[\dot X]{hx}{x'} \leq \dist[\dot X]x{x'} - \epsilon(\bar x)$.
	\end{enumerate}
\end{coro}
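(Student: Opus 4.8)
The plan is to obtain this reformulation directly from the Greendlinger Lemma in the cone-off, \autoref{res: greendlinger lemma in the cone-off}. Since $\bar X = \dot X / K$ as a set, the two distinct preimages $x, x'$ of $\bar x$ lie in a common $K$-orbit, so $x' = g x$ for some $g \in K \setminus \{1\}$. I would apply \autoref{res: greendlinger lemma in the cone-off} to the point $x$ and this element $g$: it produces $(R, Y) \in \mathcal Q$ whose apex $c$ lies on a geodesic of $\dot X$ from $x$ to $g x = x'$, so that
\begin{equation*}
	\dist[\dot X]{x}{x'} = \dist[\dot X]{x}{c} + \dist[\dot X]{c}{x'},
\end{equation*}
and it puts us in one of its two cases. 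In the second case $c \neq x$ and $c \neq x'$ is immediate (since $\dist[\dot X]{c}{x}, \dist[\dot X]{c}{x'} \geq \rho/20$), and in the first case it will come out of the argument below.

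\emph{First case: $g \in R$ and $\dist[\dot X]{x}{c} \leq \rho/10$.} Here I take $h := g \in R \setminus \{1\}$, choose any geodesic $\nu$ from $c$ to $x$, and set $\nu' := g\nu$; since $g \in R \subseteq \stab Y$ fixes the apex $c$, this $\nu'$ is a geodesic from $c$ to $g x = x'$, and $\angle_c(h\nu, \nu') = \angle_c(g\nu, g\nu) = 0 \leq \vartheta$ while $\dist[\dot X]{hx}{x'} = 0$. It remains to check $\dist[\dot X]{x}{x'} \geq \epsilon(\bar x)$. First, $\bar x \notin \bar{\mathcal C}$: otherwise $x$ would be a cone point of $\dot X$, hence (by \autoref{res: isom embedding cone in cone-off}, as $\dist[\dot X]{x}{c} < \rho$) a cone point lying in $D(Y)$, forcing $x = c$ and then $g x = x = x'$, a contradiction; in particular $x \neq c$, whence also $x' = g x \neq g c = c$. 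Consequently $d(\bar x, \bar{\mathcal C} \setminus \{\bar x\}) = d(\bar x, \bar{\mathcal C})$, and since $f \colon \dot X \to \bar X$ is $1$-Lipschitz with $f(\mathcal C) = \bar{\mathcal C}$ we have $d(x, \mathcal C) \geq d(\bar x, \bar{\mathcal C})$. Now \autoref{res: small cancellation}\ref{enu: small cancellation - translation kernel} gives
\begin{equation*}
	\dist[\dot X]{x}{x'} = \dist[\dot X]{g x}{x} \geq \min\left\{ 2 d(x, \mathcal C),\ \rho/5 \right\} \geq \min\left\{ 2 d(\bar x, \bar{\mathcal C} \setminus \{\bar x\}),\ \rho/5 \right\} \geq \epsilon(\bar x),
\end{equation*}
the last step using $\epsilon(\bar x) \leq \rho/15 < \rho/5$.

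\emph{Second case.} Now \autoref{res: greendlinger lemma in the cone-off} provides points $p, q \in \dot X$ and an element $h_0 \in R \setminus \{1\}$, together with a geodesic $\nu$ from $c$ to $x$ through $p$ and a geodesic $\nu'$ from $c$ to $x'$ through $q$, with $\dist[\dot X]{c}{p} = \dist[\dot X]{c}{q} = \rho/20$ (so $c \neq x,x'$) and $\dist[\dot X]{h_0 q}{p} \leq 100\dot\delta$. I take $h := h_0^{-1} \in R \setminus \{1\}$ and keep $\nu, \nu'$. For the displacement, using $\dist[\dot X]{h_0^{-1} p}{q} = \dist[\dot X]{p}{h_0 q} \leq 100\dot\delta$, $\dist[\dot X]{x}{p} = \dist[\dot X]{c}{x} - \rho/20$ and $\dist[\dot X]{q}{x'} = \dist[\dot X]{c}{x'} - \rho/20$, the triangle inequality yields
\begin{equation*}
	\dist[\dot X]{hx}{x'} \leq \dist[\dot X]{x}{p} + \dist[\dot X]{h_0^{-1} p}{q} + \dist[\dot X]{q}{x'} \leq \dist[\dot X]{x}{x'} - \rho/10 + 100\dot\delta \leq \dist[\dot X]{x}{x'} - \epsilon(\bar x),
\end{equation*}
the last inequality because $\rho/10 - 100\dot\delta \geq \rho/15 \geq \epsilon(\bar x)$ by the orders of magnitude of \autoref{rem: order of magnitude}. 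For the angle, $h\nu = h_0^{-1}\nu$ is a geodesic from $c$ to $h_0^{-1}x$ whose initial subsegment of length $\rho/20$ ends at $h_0^{-1}p$, while that of $\nu'$ ends at $q$; as $\dist[\dot X]{h_0^{-1} p}{q} \leq 100\dot\delta$ one has $\gro{h_0^{-1}p}{q}{c} \geq \rho/20 - 50\dot\delta \geq \rho/50$, so applying \autoref{res: small angle at cone point} to these two subsegments, together with the estimate of \autoref{rem: small angle at cone point}, gives $\angle_c(h\nu, \nu') \leq \vartheta/2 \leq \vartheta$.

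The step I expect to require the most care is the inequality $\dist[\dot X]{x}{x'} \geq \epsilon(\bar x)$ in the first case: one has to exclude $\bar x \in \bar{\mathcal C}$ and carefully pass between the distance from $x$ to the cone points of $\dot X$ and the distance from $\bar x$ to $\bar{\mathcal C}$. Everything else is the triangle inequality together with the angle estimates already recorded in \autoref{res: small angle at cone point} and \autoref{rem: small angle at cone point}; in particular, in the second case the shortening is read off directly from the Greendlinger configuration and does not use the angle bound.
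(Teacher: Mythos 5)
Your proof is correct and follows essentially the same route as the paper: apply the cone-off Greendlinger Lemma (\autoref{res: greendlinger lemma in the cone-off}) to $g \in K\setminus\{1\}$ with $x' = gx$, and treat its two cases via the angle estimate of \autoref{res: small angle at cone point} and the triangle inequality. Your case-1 verification that $\dist[\dot X]x{x'} \geq \epsilon(\bar x)$ (excluding $\bar x \in \bar{\mathcal C}$ and invoking \autoref{res: small cancellation}\ref{enu: small cancellation - translation kernel}) is a correct, slightly more explicit rendering of the step the paper states tersely.
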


\begin{proof}
	In this proof, all the distances are measured either in $\dot X$ or $\bar X$.
	By assumption, there exists $g \in K\setminus\{1\}$ such that $x' = gx$.
	Let $(R, Y)$ be the pair of $\mathcal Q$ given by  \autoref{res: greendlinger lemma in the cone-off}.
	In particular, the apex $c$ of $D(Y)$ is distinct from $x$ and $x'$ and $\gro x{x'}c = 0$.
	We now follow the dichotomy provided by \autoref{res: greendlinger lemma in the cone-off}.
	Assume first that $g \in R$ and $\dist xc \leq \rho/10$. 
	We let $h = g$.
	We choose for $\nu$ any geodesic of $\dot X$ from $c$ to $x$ and let $\nu' = h\nu$.
	Hence, $\angle_c (h\nu, \nu')  = 0$.
	Note also that $2\dist xc \geq 2\epsilon(\bar x)$, hence $\dist{hx}{x'} = 0 \leq \dist x{x'} - \epsilon(\bar x)$.

	Let us now focus on the second case given by \autoref{res: greendlinger lemma in the cone-off}.
	There exist two points $p,p' \in \dot X$ and $h \in R \setminus\{1\}$ with the following properties.
	The point $p$ (\resp, $p'$) lies on a geodesic $\nu$ (\resp, $\nu'$) joining $c$ to $x$ (\resp, $x'$) at distance $\rho/20$ from $c$.
	Moreover, $\dist {hp}{p'} \leq 100 \dot \delta$.
	Hence, $\angle_c(h\nu, \nu') \leq \vartheta$ (\autoref{res: small angle at cone point}).
	Recall that $c$ lies on a geodesic between $x$ and $x'$ (\autoref{res: greendlinger lemma in the cone-off}).
	Hence, the triangle inequality yields
	\begin{align*}
		\dist{hx}{x'} 
		\leq \dist {hx}{hp} + \dist {hp}{p'} + \dist{p'}{x'}
		& \leq \dist xc + \dist c{x'} - \rho/10 + 100 \dot \delta \\
		& \leq \dist x{x'} - \epsilon(\bar x),
	\end{align*}
	which completes the proof of the second case.
\end{proof}

\begin{defi}
	Let $\omega \in \R_+$.
	Let $Z$ be a subset of $\dot X$.
	Let $(R,Y) \in \mathcal Q$ and $c$ be the apex of $D(Y)$.
	We say that $Z$ has \emph{$\omega$-small sector at $c$} if for every $z,z' \in Z \setminus \{c\}$,  for every geodesic $\nu$ and $\nu'$ joining $c$ to $z$ and $z'$ respectively,
	\begin{equation*}
		\angle_c(h\nu, \nu') \geq \pi + \omega, \quad \forall h \in H \setminus\{1\}.
	\end{equation*}	
	The set $Z$ has \emph{$\omega$-small sectors} if it has $\omega$-small sectors at every cone point $c \in \mathcal C$.
\end{defi}

\begin{lemm}
\label{res: embedding very small sector}
	Let $Z$ be a subset of $\dot X$.
	If $Z$ has $2\vartheta$-small sectors, then the map $\dot X \to \bar X$ induces an isometry from $Z$ onto its image.
\end{lemm}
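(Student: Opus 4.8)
The strategy is to use that $f\colon \dot X \to \bar X$ is the quotient of $\dot X$ by the action of $K = K(\mathcal Q)$, hence a $1$-Lipschitz map; so it suffices to show that $f$ is injective on $Z$ and that it does not decrease distances between points of $Z$. For injectivity, suppose $z \ne z'$ are points of $Z$ with $\bar z = \bar z'$. Feeding the two distinct preimages $z, z'$ of $\bar z$ into \autoref{res: greendlinger lemma with angle} produces $(R, Y) \in \mathcal Q$ with apex $c \notin \{z, z'\}$, an element $h \in R \setminus \{1\}$, and geodesics $\nu, \nu'$ joining $c$ to $z$ and $z'$ with $\angle_c(h\nu, \nu') \le \vartheta$. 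Since $z, z' \in Z \setminus \{c\}$, this directly contradicts the $2\vartheta$-small sector condition at $c$, which forces $\angle_c(h\nu, \nu') \ge \pi + 2\vartheta$. Hence $f$ is injective on $Z$.

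For the distance estimate, fix $z, z' \in Z$. Recall that $\dist[\bar X]{\bar z}{\bar z'} = \inf_{g \in K} \dist[\dot X]{z}{gz'}$ (a standard fact about quotient pseudo-metrics by group actions), and that the orbit $Kz'$ is uniformly discrete (by \autoref{res: small cancellation}\ref{enu: small cancellation - translation kernel} when $z' \notin \mathcal C$, and because distinct apices are uniformly separated in $\dot X$ when $z' \in \mathcal C$), so this infimum is realised by some $w_* = g_* z'$. Among all $g_* \in K$ for which $g_* z'$ realises it, I would choose one minimising the ``loop length'' $\dist[\dot X]{z'}{g_* z'}$, and argue by induction on this quantity. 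If $g_* = 1$ we are done. Otherwise, apply \autoref{res: greendlinger lemma in the cone-off} to $g_* \in K \setminus \{1\}$ and the point $z'$: it yields $(R, Y) \in \mathcal Q$ whose apex $c$ lies on a geodesic of $\dot X$ from $z'$ to $w_*$, together with the usual dichotomy. (One checks $c \ne z'$; the cases where $z$ or $z'$ coincides with a cone point are disposed of directly and cause no trouble.)

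In the first alternative of the dichotomy, $g_* \in R$ and $\dist[\dot X]{z'}{c} \le \rho/10$. Taking geodesics $\nu = [c, z]$ and $\nu' = [c, z']$, the $2\vartheta$-small sector condition at $c$ applied with $h = g_*^{-1} \in R \setminus \{1\}$ gives $\angle_c(g_*^{-1}\nu, \nu') \ge \pi + 2\vartheta > \pi + \vartheta$, so by \autoref{res: large angle at cone point} every geodesic from $g_*^{-1}z$ to $z'$ passes through $c$. Hence, using that $g_*$ fixes $c$,
\[ \dist[\bar X]{\bar z}{\bar z'} = \dist[\dot X]{z}{g_* z'} = \dist[\dot X]{g_*^{-1}z}{z'} = \dist[\dot X]{z}{c} + \dist[\dot X]{c}{z'} \ge \dist[\dot X]{z}{z'}, \]
which together with the $1$-Lipschitz bound gives $\dist[\dot X]{z}{z'} = \dist[\bar X]{\bar z}{\bar z'}$, as wanted.

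The heart of the matter is the second alternative, and I expect it to be the main obstacle. There one finds $p$ on $[c, z']$ and $q$ on $[c, w_*]$ at distance $\rho/20$ from $c$, and $h \in R \setminus \{1\}$ with $\dist[\dot X]{hq}{p} \le 100\dot\delta$; since $h$ fixes $c$, folding the $w_*$-leg by $h$ produces a point $hg_*z'$ with $\dist[\dot X]{z'}{hg_*z'} \le \dist[\dot X]{z'}{w_*} - \rho/10 + 100\dot\delta$, strictly smaller by \autoref{rem: order of magnitude}. To contradict the minimality of the loop length, one needs $hg_*z'$ to still realise $\inf_{w \in Kz'}\dist[\dot X]{z}{w}$, which is not automatic, since the fold takes place on the side of $c$ away from $z$. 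The way out is to invoke the small sector hypothesis a second time: the fellow-travelling above yields $\angle_c(h[c,w_*], [c,z']) \le \vartheta$ by \autoref{res: small angle at cone point} (as in \autoref{rem: small angle at cone point}), while the small sector condition at $c$ for the pair $(z, z')$ with this same $h$ gives $\angle_c(h[c,z], [c,z']) \ge \pi + 2\vartheta$; the triangle inequality for angles then gives $\angle_c([c,z],[c,w_*]) = \angle_c(h[c,z], h[c,w_*]) \ge \pi + \vartheta$, so by \autoref{res: large angle at cone point} every geodesic from $z$ to $w_*$ passes through $c$. Consequently $\dist[\dot X]{z}{hg_*z'} \le \dist[\dot X]{z}{c} + \dist[\dot X]{c}{hg_*z'} = \dist[\dot X]{z}{c} + \dist[\dot X]{c}{w_*} = \dist[\dot X]{z}{w_*}$, and the reverse inequality holds since $hg_*z' \in Kz'$; so $hg_*z'$ is a new minimiser with strictly smaller loop length. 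As loop lengths are non-negative and each such step decreases them by at least $\rho/10 - 100\dot\delta$, the induction terminates, landing in the first alternative or at $g_* = 1$, which completes the proof.
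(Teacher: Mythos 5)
Your argument is correct, and it reaches the conclusion by a genuinely different bookkeeping than the paper. The paper also works with a distance-realizing preimage, but instead of descending along the $K$-orbit it makes a single extremal choice: among all preimages $z'_2$ of $\bar z_2$ realizing $\dist[\bar X]{\bar z_1}{\bar z_2}$, it picks one that (almost) minimizes $\dist[\dot X]{z_2}{z'_2}$, applies the packaged \autoref{res: greendlinger lemma with angle} to the two preimages $z_2, z'_2$ of the single point $\bar z_2$ (whose quantitative decrement $\epsilon(\bar x)$ exists precisely to beat the slack in the almost-minimality), and derives a one-step contradiction: if $c$ lies on a geodesic from $z_1$ to $z'_2$, the fold produces a closer realizing preimage, contradicting minimality; if not, \autoref{res: large angle at cone point} plus the angle triangle inequality contradicts the $2\vartheta$-small sector hypothesis. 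You instead run the raw \autoref{res: greendlinger lemma in the cone-off} iteratively on the loop $g_*$, invoke the small-sector hypothesis in \emph{both} branches (in your first alternative it replaces the paper's minimality argument outright; in the second it is what certifies that the folded point is still a minimizer, since $c$ is forced onto a geodesic from $z$ to $w_*$), and terminate via the definite decrease $\rho/10 - 100\dot\delta$ of the loop length. What your route buys is that no secondary (almost-)minimization is needed and the quantitative $\epsilon(\bar x)$ of the revisited Greendlinger lemma is not used; what the paper's route buys is a shorter, non-iterative contradiction and fewer appeals to the small-sector hypothesis.

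Two small imprecisions, neither fatal. First, uniform discreteness of the orbit $Kz'$ does not by itself give attainment of $\inf_{g}\dist[\dot X]{z}{gz'}$, because $\dot X$ is not proper (balls around an apex over an unbounded $Y$ are not totally bounded); the attainment is true, and it is exactly what the paper extracts from \autoref{res: small cancellation}\ref{enu: small cancellation - translation kernel} at the corresponding step of its own proof — alternatively, your descent works verbatim starting from an $\eta$-almost minimizer, since each fold keeps the distance to $z$ within the same bound, and one lets $\eta \to 0$ at the end. Second, ``induction on the loop length'' is not literally available for a real-valued quantity; but you repair this yourself with the definite-decrease termination, so only the initial framing (choosing a loop-length-minimizing minimizer) should be dropped.
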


\begin{proof}
	In this proof, all distances are measured either in $\dot X$ or $\bar X$.
	Any subset of $Z$ has $2\vartheta$-small sectors.
	Hence, it suffices to prove the statement when $Z$ is reduced to two points, say $z_1$ and $z_2$.
	Let $\epsilon = \epsilon(\bar z_2)$ be the parameter given by \autoref{res: greendlinger lemma with angle}.
	According to \autoref{res: small cancellation}\ref{enu: small cancellation - translation kernel}, there is a pre-image $z'_2$ of $\bar z_2$ such that $\dist{z_1}{z'_2} = \dist{\bar z_1}{\bar z_2}$.
	Denote by $Z'_2$ the set of all such pre-images.
	We now choose $z'_2 \in Z'_2$ such that 
	\begin{equation}
	\label{enu: embedding very small sector - minimality}
		 \dist{z_2}{z'_2} < \dist{z_2}{z''_2} + \epsilon, \quad \forall z''_2 \in Z'_2
	\end{equation}
	It suffices to prove that $z_2 = z'_2$.
	Suppose, on the contrary, that it is not the case.
	According to \autoref{res: greendlinger lemma with angle}, we can find a cone point $c \in \mathcal C$, a non-trivial element $h \in K \cap \stab c$ and geodesics $\nu_2$ and $\nu_2'$ joining $c$ to $z_2$ and $z'_2$ such that $\angle_c(\nu_2, h\nu_2') \leq \vartheta$ and $\dist{z_2}{hz'_2} \leq \dist{z_2}{z'_2} - \epsilon$.
	We assume first that $c$ lies on a geodesic from $z_1$ to $z'_2$.
	We estimate:
	\begin{align*}
		\dist{\bar z_1}{\bar z_2}
		\leq \dist{z_1}{hz'_2}
		& \leq \dist{z_1}c + \dist c{hz'_2} \\
	 	& \leq \dist{z_1}c + \dist c{z'_2}
		\leq \dist{z_1}{z'_2}
		\leq \dist{\bar z_1}{\bar z_2}.
	\end{align*}
    
    For the first inequality, we used that $h \in K$, for the second one the triangle inequality, and for the third one that $h$ fixes $c$.
	In particular, $\dist{z_1}{hz'_2} =  \dist{\bar z_1}{\bar z_2}$.
	Consequently, $hz'_2$ belongs to $Z'_2$ but is closer to $z_2$ than $z'_2$.
	This contradicts (\ref{enu: embedding very small sector - minimality}).
	Suppose now that $c$ does not lie on a geodesic from $z_1$ to $z'_2$.
	In particular, $c$ and $z_1$ are distinct.
	Moreover, if $\nu_1$ stands for a geodesic from $c$ to $z_1$, then $\angle_c(\nu_1, \nu'_2) < \pi + \vartheta$ (\autoref{res: large angle at cone point}).
	Using the triangle inequality for angles, we have
	\begin{equation*}
		\angle_c(\nu_1, h^{-1}\nu_2)
		\leq \angle_c(\nu_1, \nu'_2) + \angle_c(\nu'_2, h^{-1}\nu_2)
		< \pi + 2\vartheta.
	\end{equation*}
	This contradicts the fact that $\{z_1,z_2\}$ has $2 \vartheta$-small sectors and completes the proof of the statement.
\end{proof}

\begin{lemm}
\label{res: small diam yields small sectors}
	Let $\omega \in \R_+$.
	Let $Z$ be a subset of $X$.
	Suppose that the diameter of $Z$ (measured in $X$) is at most $T(\mathcal Q, X) - (\pi + \omega) \sinh \rho$. Then $Z$ (seen as a subset of $\dot X$) has $(\omega- \vartheta)$-small sectors.
\end{lemm}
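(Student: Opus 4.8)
The plan is to verify the $(\omega-\vartheta)$-small sector condition at every cone point directly from the two angle lemmas just established. So fix a pair $(R,Y) \in \mathcal Q$ with apex $c$, choose $z,z' \in Z \setminus\{c\}$, geodesics $\nu$ and $\nu'$ of $\dot X$ joining $c$ to $z$ and $z'$ respectively, and $h \in R \setminus\{1\}$; the goal is the lower bound $\angle_c(h\nu,\nu') \geq \pi + (\omega - \vartheta)$.

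First I would observe that the germs at $c$ of the three geodesics $\nu$, $\nu'$ and $h\nu$ all lie in the single cone $D(Y)$: this uses that $h$ stabilizes $Y$ and fixes $c$, so the initial radial segment of $h\nu$ is $t \mapsto (hy,t) \in D(Y)$. Consequently the angle at $c$ between any two of these geodesics equals $1/\sinh\rho$ times the $(Y,\distV[Y])$-distance between their feet on $Y$, so the triangle inequality for $\angle_c$ follows from that for $\distV[Y]$ (equivalently, from the fact that $\theta_c$ is a pseudo-metric). This gives $\angle_c(h\nu,\nu) \leq \angle_c(h\nu,\nu') + \angle_c(\nu',\nu)$, that is, $\angle_c(h\nu,\nu') \geq \angle_c(h\nu,\nu) - \angle_c(\nu,\nu')$.

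Next I would bound the two terms on the right. For the first, \autoref{res: displacement vs angle} yields $\angle_c(h\nu,\nu) \geq \Omega_c$, and by the definitions of $\Omega_c$ and of $T(\mathcal Q,X)$ one has $\Omega_c \sinh\rho = \inf_{h' \in R\setminus\{1\}} \norm[X]{h'} \geq T(\mathcal Q,X)$. For the second, since $z,z' \in X$, \autoref{res: angle vs distX} gives $\angle_c(\nu,\nu') \leq \dist[X] z{z'}/\sinh\rho + \vartheta \leq \diam Z/\sinh\rho + \vartheta$, where $\diam Z$ is measured in $X$. Combining these, $\angle_c(h\nu,\nu') \geq (T(\mathcal Q,X) - \diam Z)/\sinh\rho - \vartheta$, and the hypothesis $\diam Z \leq T(\mathcal Q,X) - (\pi+\omega)\sinh\rho$ turns this into $\angle_c(h\nu,\nu') \geq (\pi+\omega) - \vartheta$. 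Since $c$, $z$, $z'$, $\nu$, $\nu'$, $h$ were arbitrary, $Z$ has $(\omega-\vartheta)$-small sectors.

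I do not expect a genuine obstacle here: once \autoref{res: displacement vs angle} and \autoref{res: angle vs distX} are in hand the argument is a short chain of inequalities. The only points deserving a moment of care are the bookkeeping for the triangle inequality for $\angle_c$ — checking that all the relevant geodesics emanate into the same cone $D(Y)$, so that $\angle_c$ is genuinely governed by the pseudo-metric $\theta_c$ — and the trivial remark that $\inf_{h'\in R\setminus\{1\}}\norm[X]{h'} \geq T(\mathcal Q,X)$ straight from the definition of $T$.
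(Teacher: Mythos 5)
Your proposal is correct and follows essentially the same route as the paper's proof: both combine \autoref{res: angle vs distX} (to bound $\angle_c(\nu,\nu')$ by $\diam Z/\sinh\rho + \vartheta$), \autoref{res: displacement vs angle} (giving $\angle_c(h\nu,\nu)\geq\Omega_c$), the observation that $\Omega_c \sinh\rho \geq T(\mathcal Q,X)$, and the triangle inequality for angles at the apex. The only difference is that you spell out why the angle triangle inequality is legitimate (all germs lie in the same cone $D(Y)$), a point the paper leaves implicit.
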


\begin{proof}
	Let $(R,Y) \in \mathcal Q$ and $c$ be the apex of $D(Y)$.
	Let $z, z' \in X$.
	Let $\nu$ and $\nu'$ be geodesics joining $c$ to $z$ and $z'$ respectively. \autoref{res: angle vs distX} yields
    \[\angle_c(\nu, \nu') \leq \frac{d(x, x')}{\sinh \rho} + \vartheta \leq \frac{T(\mathcal Q, X)}{\sinh \rho} - (\pi + \omega) + \vartheta \leq \Omega_c - (\pi + \omega - \vartheta).\]
	Combined with \autoref{res: displacement vs angle}, we get for every $h \in R \setminus \{1\}$, 
	\begin{equation*}
		\angle_c(h\nu, \nu') \geq \angle_c(h\nu,\nu) - \angle_c(\nu, \nu') \geq \pi + \omega - \vartheta. \qedhere
	\end{equation*}
\end{proof}

The next statement strengthens \autoref{res: small cancellation}\ref{enu: small cancellation - translation kernel}.

\begin{lemm}
\label{res: proto greendlinger X}
	For every $\gamma \in K\setminus\{1\}$, we have
	\begin{equation*}
		\norm[X]\gamma \geq T(\mathcal Q, X) - (\pi +3\vartheta) \sinh \rho.
	\end{equation*}
\end{lemm}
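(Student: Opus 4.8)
\emph{Proof plan.} I will prove the contrapositive pointwise: the claim is that whenever $\gamma \in K(\mathcal Q)$ and $x \in X$ satisfy $\dist[X]{\gamma x}{x} \leq T(\mathcal Q, X) - (\pi + 3\vartheta)\sinh\rho$, then $\gamma = 1$. Since $\norm[X]\gamma = \inf_{x \in X} \dist[X]{\gamma x}{x}$, this is enough: it shows that for $\gamma \in K \setminus \{1\}$ every $x$ satisfies $\dist[X]{\gamma x}{x} > T(\mathcal Q,X) - (\pi + 3\vartheta)\sinh\rho$, and taking the infimum gives $\norm[X]\gamma \geq T(\mathcal Q,X) - (\pi + 3\vartheta)\sinh\rho$.

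So fix such $\gamma$ and $x$ and consider the two-point set $Z = \{x, \gamma x\} \subseteq X$. Its diameter, measured in $X$, is exactly $\dist[X]{\gamma x}{x}$, which by hypothesis is at most $T(\mathcal Q, X) - (\pi + 3\vartheta)\sinh\rho$. Applying \autoref{res: small diam yields small sectors} with $\omega = 3\vartheta$, the set $Z$, viewed inside $\dot X$, has $(3\vartheta - \vartheta) = 2\vartheta$-small sectors (note that $x$ and $\gamma x$ are vertices of $X$, hence distinct from every apex, so there is nothing to check at the cone points). By \autoref{res: embedding very small sector}, the canonical projection $\dot X \to \bar X$ then restricts to an isometry from $Z$ onto its image. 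Now the composition $X \hookrightarrow \dot X \to \bar X$ is equivariant with respect to $\Gamma \onto \bar\Gamma = \Gamma/K$, and $\gamma \in K$ maps to the identity of $\bar\Gamma$; hence $x$ and $\gamma x$ have the same image $\bar x$ in $\bar X$. Combined with the isometry just obtained, this gives $\dist[\dot X]{\gamma x}{x} = \dist[\bar X]{\bar x}{\bar x} = 0$, i.e.\ $\gamma x = x$ as points of $\dot X$. But $x \in X \subseteq \dot X \setminus \mathcal C$, and by \autoref{res: small cancellation}\ref{enu: small cancellation - translation kernel} the group $K(\mathcal Q)$ acts freely on $\dot X \setminus \mathcal C$. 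Therefore $\gamma = 1$, as desired.

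There is no genuinely hard step here: the statement is a direct assembly of the preceding lemmas, and it is exactly the announced strengthening of \autoref{res: small cancellation}\ref{enu: small cancellation - translation kernel}. The only points that require a little care are the constant bookkeeping --- one needs $2\vartheta$-small sectors to invoke \autoref{res: embedding very small sector}, which is why \autoref{res: small diam yields small sectors} is applied with $\omega = 3\vartheta$, and this is precisely the source of the factor $(\pi + 3\vartheta)$ in the statement --- together with the observation that $x$ and $\gamma x$, being vertices of $X$, never coincide with a cone point, so that both \autoref{res: small diam yields small sectors} and the freeness of the $K$-action apply without modification.
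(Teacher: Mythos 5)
Your proof is correct and follows essentially the same route as the paper's: both argue that if $\dist[X]{\gamma x}{x} \leq T(\mathcal Q,X) - (\pi+3\vartheta)\sinh\rho$ then $\{x,\gamma x\}$ has $2\vartheta$-small sectors by \autoref{res: small diam yields small sectors} (with $\omega = 3\vartheta$), hence projects isometrically to $\bar X$ by \autoref{res: embedding very small sector}, forcing $\gamma x = x$ and thus $\gamma = 1$ by the freeness of the $K$-action on $\dot X \setminus \mathcal C$ from \autoref{res: small cancellation}\ref{enu: small cancellation - translation kernel}. The paper phrases this as a proof by contradiction rather than a pointwise contrapositive, but the content is identical.
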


\begin{proof}
	Suppose that $\norm[X]\gamma < T(\mathcal Q, X) - (\pi +3\vartheta) \sinh \rho$.
	It follows from \autoref{res: small diam yields small sectors} that there is $x \in X$ such that $\{x, \gamma x\}$ has $2\vartheta$-small sectors.
	Since $\gamma$ belongs to $K$, we get from \autoref{res: embedding very small sector} that $\gamma x = x$.
	Hence, $\gamma = 1$, by \autoref{res: small cancellation}\ref{enu: small cancellation - translation kernel}.
\end{proof}

\begin{prop}
\label{res: same small energy}
	Assume that $\mathcal Q$ is tight.
	Let $U \subset \Gamma$ be a non-empty subset of $\Gamma$ and let $\bar U$ be its image in $\bar \Gamma$.
	Assume that 
	\begin{equation*}
		E_\infty(U, X) < \frac 13 \left[ T(\mathcal Q, X) - (\pi + 5\vartheta) \sinh \rho\right]
		\quad \text{and} \quad
		E_\infty(\bar U, \bar X) < \frac\rho{100}.
	\end{equation*}
	Then $U \cap K \subset \{1\}$.
	Moreover, $E_\infty(U, \dot X) = E_\infty(\bar U, \bar X)$.
\end{prop}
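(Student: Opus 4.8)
First I would dispatch $U \cap K \subseteq \{1\}$ directly from \autoref{res: proto greendlinger X}. If $\gamma \in U \cap K$ and $\gamma \neq 1$, then on one hand $\norm[X]\gamma = E_\infty(\{\gamma\}, X) \leq E_\infty(U, X) < \frac 13[T(\mathcal Q,X) - (\pi+5\vartheta)\sinh\rho]$, and on the other \autoref{res: proto greendlinger X} gives $\norm[X]\gamma \geq T(\mathcal Q,X) - (\pi+3\vartheta)\sinh\rho$. Since $T(\mathcal Q,X) \geq 10\pi\sinh\rho$ and $\vartheta < \pi$ (see \autoref{rem: order of magnitude}), a short computation shows $\frac 13[T - (\pi+5\vartheta)\sinh\rho] < T - (\pi+3\vartheta)\sinh\rho$, a contradiction. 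Hence $U \cap K \subseteq \{1\}$; and if $U \subseteq \{1\}$ every energy in sight vanishes, so from now on I fix a non-trivial $\gamma_1 \in U$, which then has $\bar\gamma_1 \neq 1$.

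For the energy equality, $E_\infty(\bar U, \bar X) \leq E_\infty(U, \dot X)$ is immediate since $f\colon \dot X \to \bar X$ is $\Gamma$-equivariant and $1$-Lipschitz. The reverse inequality rests on one observation: for any $x \in X$ with $\sup_{\gamma\in U}\dist[X]{\gamma x}x < \frac 13[T - (\pi+5\vartheta)\sinh\rho]$, the set $Z = Ux\cup\{x\}$ has $X$-diameter at most $2\sup_{\gamma\in U}\dist[X]{\gamma x}x \leq T - (\pi+3\vartheta)\sinh\rho$, so by \autoref{res: small diam yields small sectors} it has $2\vartheta$-small sectors; then \autoref{res: embedding very small sector} makes $f$ restrict to an isometry $Z \to \bar Z = \bar U\bar x\cup\{\bar x\}$, so that $\dist[\dot X]{\gamma x}x = \dist[\bar X]{\bar\gamma\bar x}{\bar x}$ for all $\gamma \in U$ and therefore $E_\infty(U,\dot X) \leq \sup_{\gamma\in U}\dist[\dot X]{\gamma x}x = \sup_{\bar\gamma\in\bar U}\dist[\bar X]{\bar\gamma\bar x}{\bar x}$. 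It thus remains to produce such an $x$ whose image $\bar x$ almost realizes $E_\infty(\bar U,\bar X)$, and this is where the hypothesis $E_\infty(\bar U,\bar X) < \rho/100$ and the tightness of $\mathcal Q$ enter.

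To do this I would fix $\epsilon > 0$ and pick $\bar w \in \bar X$ with $\sup_{\bar\gamma\in\bar U}\dist[\bar X]{\bar\gamma\bar w}{\bar w} < E_\infty(\bar U,\bar X) + \epsilon < \rho/100$. By tightness the cone points of $\bar X$ have trivial stabilizer (\autoref{res: small cancellation}\ref{enu: small cancellation - local embedding}), so \autoref{res: subset far from apices} applied to $\bar\gamma_1$ puts $\bar w$ far from $\bar{\mathcal C}$; and since no non-trivial element of $\bar\Gamma$ stabilizes a cone of $\bar X$, a point nearly fixed by $\bar U$ lying in a cone interior can be pushed radially out without increasing $\sup_{\bar\gamma}\dist[\bar X]{\bar\gamma\,\cdot}{\,\cdot}$, so one may assume $\bar w = \bar x$ for some $x \in X$ (a point of $\bar X$ far enough from $\bar{\mathcal C}$ is the image of a point of $X$). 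Transporting $\{\bar x\}\cup\bar U\bar x$ back along $\dot X \to \bar X$ with \autoref{res: sc - lifting quasi-convex}: being far from the cone points and of diameter $< \rho/50$, a $\dot X$-geodesic between two of its points cannot dip into a cone, so the $\dot X$- and $X$-metrics coincide on it; this gives $\sup_{\gamma\in U}\dist[X]{\gamma x}x < \frac 13[T - (\pi+5\vartheta)\sinh\rho]$, so the observation above applies at $x$, while $\sup_{\bar\gamma\in\bar U}\dist[\bar X]{\bar\gamma\bar x}{\bar x} < E_\infty(\bar U,\bar X) + \epsilon$. Letting $\epsilon \to 0$ yields $E_\infty(U,\dot X) \leq E_\infty(\bar U,\bar X)$, completing the proof.

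I expect the bookkeeping in the last step to be the real difficulty: while transporting configurations between $X$, $\dot X$ and $\bar X$ one repeatedly lifts elements of $\bar U$ to $\Gamma$ via \autoref{res: sc - lifting quasi-convex}, and one must check that these lifts really are the given elements of $U$, rather than some other lift of $\bar U$. This is exactly where the already-established $U \cap K \subseteq \{1\}$ and the largeness of $\norm[X]\gamma$ for $\gamma \in K\setminus\{1\}$ (\autoref{res: proto greendlinger X}) are used: two lifts of one element of $\bar U$ differ by an element of $K$, which if non-trivial moves every point of $X$ by at least $T(\mathcal Q,X) - (\pi+3\vartheta)\sinh\rho$ — far beyond what the energy bounds permit.
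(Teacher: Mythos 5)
Your first half ($U \cap K \subseteq \{1\}$ via \autoref{res: proto greendlinger X}) and your ``observation'' are correct and match the paper: a point $x\in X$ of small $X$-energy has $Ux$ of small $X$-diameter, hence $2\vartheta$-small sectors by \autoref{res: small diam yields small sectors}, and $\dot X \to \bar X$ is isometric on $Ux$ by \autoref{res: embedding very small sector}. The divergence is in how you close the loop, and there is a genuine gap there.

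Your observation gives $E_\infty(U,\dot X) \leq \sup_{\gamma}\dist[\bar X]{\bar\gamma\bar x}{\bar x}$ where $x$ near-minimizes $X$-energy — but the image $\bar x$ of an $X$-energy minimizer is generically far from realizing $E_\infty(\bar U, \bar X)$, so this alone does not finish. Your plan to lift a near-optimal $\bar w \in \bar X$ back to $X$ has several problems: (i) radial projection towards $X$ \emph{increases} distances (the cone metric is smaller near the apex), so pushing $\bar w$ out does not preserve small displacement; (ii) a point of $\bar X$ far from $\bar{\mathcal C}$ is usually \emph{not} the image of a point of $X$ — it is the image of a point in the cone interior; (iii) the $\dot X$- and $X$-metrics do \emph{not} coincide on a set just because it avoids the cone points — one only has $\mu(\dist[X]{\cdot}{\cdot}) \leq \dist[\dot X]{\cdot}{\cdot} \leq \dist[X]{\cdot}{\cdot}$, and for $\dist[X]$ on the order of $\sinh\rho$ these bounds are far apart. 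Most seriously, (iv) your patch for the lift-identification problem does not close: \autoref{res: sc - lifting quasi-convex} produces, at the lifted point $z$, some $\gamma' \in \Gamma$ with $\pi(\gamma')=\bar\gamma$ and $\dist[\dot X]{\gamma' z}{z}$ small, while the given $\gamma \in U$ has small displacement at a \emph{different} point $x_1$ (the $X$-energy minimizer). To apply \autoref{res: proto greendlinger X} to $\kappa = \gamma'\gamma^{-1} \in K$ you would need both displacements at a \emph{common} base point, and that is exactly what you do not have; without it you cannot conclude $\kappa = 1$, so the lift $\gamma'$ produced by \autoref{res: sc - lifting quasi-convex} stubbornly refuses to be the $\gamma$ you want.

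The paper never lifts $\bar w$ ``cold''. It starts from the $X$-energy minimizer $x$, lets $\bar z$ be a \emph{projection} of $\bar x$ onto $\fix{\bar U,\bar E}$, notes that $\bar z$ lies $O(\bar\delta)$-close to the image $\bar\sigma$ of a $\dot X$-geodesic $\sigma$ from $x$ to $\gamma_0 x$ (both endpoints in $X$, $\gamma_0 \in U$), and then takes the preimage $z$ of $\bar z$ that is close to $\sigma$ itself. Because $z$ stays anchored to the $\dot X$-picture already built from the original $U$, an angle computation shows $\{z,\gamma z\}$ has $2\vartheta$-small sectors for the \emph{original} $\gamma \in U$, so $\dist[\dot X]{z}{\gamma z}=\dist[\bar X]{\bar z}{\bar\gamma\bar z}\leq\bar E<\rho/100$. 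Once $E_\infty(U,\dot X)<\rho/100$ is in hand, a final application of \autoref{res: sc - lifting isometries}\ref{enu: sc - lifting isometries - energy} with $U_0=U$ yields $E_\infty(U,\dot X)=E_\infty(\bar U,\bar X)$. This anchoring-to-a-geodesic device is the mechanism missing from your proposal; without it the lift-identification problem is not resolvable by the Greendlinger lower bound alone.
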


\begin{proof}
	Without loss of generality, we can assume that $U$ contains the identity.
	According to \autoref{res: proto greendlinger X}, $U \cap K = \{1\}$.
	Hence, we can assume that $\bar U$ contains a non-trivial element.
	Otherwise, all energies vanish, and the statement holds.
	We now fix $E, \bar E \in \R_+$ such that 
	\begin{equation*}
		E_\infty(U, X)< E < \frac 13 \left[ T(\mathcal Q, X) - (\pi + 5\vartheta) \sinh \rho\right]
	\end{equation*}
	and
	\begin{equation*}
		\max \{E_\infty(\bar U, \bar X) , 7\bar \delta\} < \bar E < \rho/100.
	\end{equation*}
	Let $x$ be a point in $\fix{U, E} \subset X$.
	Denote by $\bar x$ its image in $\bar X$.
	Observe that the set $Ux$ has diameter (measured in $X$) at most $T(\mathcal Q, X) - (\pi + 5\vartheta)\sinh \rho$.
	Hence, it has $2\vartheta$-small sectors (\autoref{res: small diam yields small sectors}).
	In particular, the map $\dot X \to \bar X$ induces an isometry from $Ux$ onto its image (\autoref{res: embedding very small sector}). 

	We write $\bar Z$ for the set $\fix{\bar U, \bar E} \subset \bar X$ and choose a projection $\bar z$ of $\bar x$ onto $\bar Z$.
	We are going to prove that there is a pre-image $z \in \dot X$ of $\bar z$ such that for every $\gamma \in U$, we have  $\dist[\dot X] z{\gamma z} = \dist[\bar X]{\bar z}{\bar \gamma \bar z}$.
	If $\bar z = \bar x$ we can simply take $z = x$.
	Thus, we can assume without loss of generality that $\bar x$ does not belong to $\bar Z$.
	
	According to \autoref{res: set of almost fixed points}, there is $\gamma_0 \in U$ such that $\gro {\bar x}{\bar \gamma_0 \bar x}{\bar z} \leq 5\bar \delta$.
	Fix a geodesic $\sigma$ of $\dot X$ from $x$ to $\gamma_0 x$.
	Since $\dist[\dot X]{\gamma_0 x}x = \dist[\bar X]{\bar \gamma_0 \bar x}{\bar x}$, the image $\bar \sigma$ of $\sigma$ in $\bar X$ is a geodesic from $\bar x$ to $\bar \gamma_0 \bar x$.
	Moreover, the point $\bar z$ is $9\bar \delta$-close to $\bar \sigma$, see for instance \cite[Chapitre~3, Lemme~2.7]{Coornaert:1990tj}.
	We choose for $z \in \dot X$, a pre-image of $\bar z$ that is $9\dot \delta$-close to $\sigma$ (remember that we assumed that $\dot \delta = \bar \delta$).
	
	We now claim that for every $c \in \mathcal C$, if $\nu$, $\nu_1$ and $\nu_2$ are geodesics joining $c$ to $z$, $x$, and $\gamma_0x$ respectively, then either $\angle_c(\nu, \nu_1) \leq \vartheta$ or $\angle_c(\nu, \nu_2) \leq \vartheta$.
	Using the hyperbolicity of $\dot X$, we have
	\begin{equation}
	\label{eqn: same small energy}
		\min\left\{ \gro xcz, \gro {\gamma_0x}cz \right\} 
		\leq \gro x{\gamma_0x}z + \dot \delta 
		\leq d(z, \sigma) + \dot \delta
		\leq 10 \dot \delta
	\end{equation}
	(where all Gromov products are computed in $\dot X$).
	Suppose first that $\gro xcz \leq 10\dot \delta$.
	Since $\bar U$ contains a non-identity element, $d(\bar c, \bar Z) \geq \rho - \bar E/2$, for every $\bar c \in \bar{\mathcal C}$ (\autoref{res: subset far from apices}).
	In particular, $\dist[\dot X] cz \geq \rho - \bar E/2$ and thus
	\begin{equation*}
		\gro xzc \geq \dist[\dot X] cz - \gro xcz \geq 9\rho / 10.
	\end{equation*}
	It follows from \autoref{res: small angle at cone point} that $\angle_c(\nu, \nu_1) \leq \vartheta$.
	If the minimum in \eqref{eqn: same small energy} is achieved by $\gro {\gamma_0x}cz$, we prove in the same way that  $\angle_c(\nu, \nu_2) \leq \vartheta$, which completes the proof of our claim.
	
	Consider now any element $\gamma \in U$.
	The set $\{ x, \gamma_0x , \gamma x, \gamma \gamma_0 x\}$ has diameter at most $T(\mathcal Q, X) - (\pi + 5\vartheta) \sinh \rho$, hence it has $4\vartheta$-small sectors (\autoref{res: small diam yields small sectors}).
	It follows from our previous claim that $\{z, \gamma z\}$ has $2\vartheta$-small sectors.
	Consequently, $\dist[\dot X] z{\gamma z} = \dist[\bar X]{\bar z}{\bar \gamma \bar z}$ (\autoref{res: embedding very small sector}) as we announced.
	This proves in particular that $E_\infty(U, \dot X) \leq E < \rho /100$.
	We already observed that $U \cap K \subset \{1\}$.
	\autoref{res: sc - lifting isometries} applied with $U_0 = U$ and $\bar U$ shows that $E_\infty(U, \dot X) = E_\infty(\bar U, \bar X)$.
\end{proof}

\begin{prop}
\label{res: sc - lifting morphism}
	Assume that $\mathcal Q$ is tight.
	Let $G$ be a group generated by a finite set $U$.
	There is $\xi \in (0,1)$ which only depends on $G$, $U$, and $\Gamma$ with the following property.
	Let $H$ be a subgroup of $G$, generated by a subset $U_0$ of $U$.
	Let $\iota \colon H \to \Gamma$ be a morphism.
	Let $\bar \varphi \colon G \to \bar \Gamma$ be a morphism whose restriction to $H$ coincides with $\pi \circ \iota$.
	Assume that  
	\begin{equation*}
		E_\infty( \iota, U_0, X ) < \frac 13 \left[ T(\mathcal Q, X) - (\pi+ 5\vartheta) \sinh \rho\right]
		\quad \text{and}\quad
		E_\infty(\bar \varphi, U, \bar X) < \xi \rho.
	\end{equation*}
	Then there exists a morphism $\varphi \colon G \to \Gamma$ such that $\bar \varphi = \pi \circ \varphi$ and $\varphi$ restricted to $H$ coincides with $\iota$.
	Moreover, $E_\infty(\varphi, U, X)\leq \pi \sinh E_\infty(\bar \varphi, U, \bar X)$.
\end{prop}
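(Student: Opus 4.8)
The plan is to deduce this from \autoref{res: sc - lifting morphism - prelim en} by upgrading the hypothesis on $\iota$. That corollary controls $\iota$ through its energy $E_\infty(\iota, U_0, \dot X)$ measured in the cone-off $\dot X$, whereas here we are only handed a bound on $E_\infty(\iota, U_0, X)$ measured in $X$ itself. The bridge between the two is \autoref{res: same small energy}. The naive estimate via the $1$-Lipschitz map $X \hookrightarrow \dot X$ is useless here, precisely because that map is badly distorted on the subsets $Y \in \mathcal Y$ --- which is the whole point of this section.

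First I would fix the constant $\xi' \in (0,1)$ provided by \autoref{res: sc - lifting morphism - prelim en} for the data $G$, $U$, $\Gamma$, and set $\xi = \min\{\xi', 1/100\}$; this still depends only on $G$, $U$, and $\Gamma$, as required. If $U_0$ is empty then $\iota$ and $H$ are trivial and the statement is just \autoref{res: sc - lifting morphism - prelim en}, so assume $U_0 \neq \emptyset$. Set $U_1 = \iota(U_0) \subseteq \Gamma$ and let $\bar U_1 = \pi(U_1) = \bar\varphi(U_0) \subseteq \bar\varphi(U)$ be its image in $\bar\Gamma$.

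Next I would apply \autoref{res: same small energy} to the pair $(U_1, \bar U_1)$. Its first hypothesis is exactly the assumed bound $E_\infty(\iota, U_0, X) < \frac13\left[T(\mathcal Q, X) - (\pi + 5\vartheta)\sinh\rho\right]$, and its second holds since $E_\infty(\bar U_1, \bar X) = E_\infty(\bar\varphi, U_0, \bar X) \leq E_\infty(\bar\varphi, U, \bar X) < \xi\rho \leq \rho/100$. The proposition then gives $\iota(U_0) \cap K \subseteq \{1\}$ together with
\[
E_\infty(\iota, U_0, \dot X) = E_\infty(\bar U_1, \bar X) \leq E_\infty(\bar\varphi, U, \bar X) < \rho/100 .
\]

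With these two facts in hand, and since $E_\infty(\bar\varphi, U, \bar X) < \xi\rho \leq \xi'\rho$, all the hypotheses of \autoref{res: sc - lifting morphism - prelim en} are satisfied. That corollary then produces the morphism $\varphi \colon G \to \Gamma$ with $\pi \circ \varphi = \bar\varphi$, with $\varphi$ restricting to $\iota$ on $H$, and with $E_\infty(\varphi, U, X) \leq \pi\sinh E_\infty(\bar\varphi, U, \bar X)$, which is exactly the conclusion. Essentially all of the content is hidden inside \autoref{res: same small energy} (and, beneath it, the revisited Greendlinger Lemma \autoref{res: greendlinger lemma with angle}); once that is available the present argument is pure bookkeeping about the constant $\xi$, and I anticipate no real obstacle beyond making sure the various energy inequalities line up.
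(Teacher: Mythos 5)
Your proposal is correct and follows the same route as the paper, whose proof is precisely the combination of \autoref{res: same small energy} applied to $\iota(U_0)$ with \autoref{res: sc - lifting morphism - prelim en}. The only extra bookkeeping you add (taking $\xi = \min\{\xi',1/100\}$ and treating $U_0 = \emptyset$ separately) is harmless and consistent with the paper's argument.
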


\begin{proof}
	The statement is a direct combination of \autoref{res: same small energy} applied with $\iota(U_0)$ and \autoref{res: sc - lifting morphism - prelim en}.
\end{proof}

%%%%%%%%%%%%%%%%%%%%%%%%%%%%%%%%%%%%%%%%%%%%%%%%%%%%%%%%%%%%%%%%%%%%%%%%%%%%%%%%%%%%%
%
\subsection{Isometries of $\bar X$}
%
%%%%%%%%%%%%%%%%%%%%%%%%%%%%%%%%%%%%%%%%%%%%%%%%%%%%%%%%%%%%%%%%%%%%%%%%%%%%%%%%%%%%%

We now review the properties of isometries of $\bar X$ and elementary subgroups of $\bar \Gamma$.
Recall that $X$ is assumed to be a Cayley graph of $\Gamma$, so that the action of $\Gamma$ on $X$ is proper and co-compact.
Moreover, $\Gamma$ is torsion-free.
In this section, we also assume that $\mathcal Q$ is tight.

We say that an elementary subgroup $\bar E$ of $\bar \Gamma$ (for its action on $\bar X$) \emph{lifts} if there is an elementary subgroup $E$ of $\Gamma$ (for its action on $X$) such that the projection $\pi \colon \Gamma \onto \bar \Gamma$ induces an isomorphism from $E$ onto $\bar E$.

\begin{lemm}[{\cite[Proposition~5.18]{Coulon:2016if}}]
\label{res: lifting elliptic subgroups}
	Every elliptic subgroup of $\bar \Gamma$ lifts to an elliptic subgroup of $\Gamma$.
	Hence $\bar \Gamma$ is torsion-free.
\end{lemm}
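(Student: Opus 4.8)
The plan is to lift $\bar E$ directly, as a set, via \autoref{res: sc - lifting isometries}; the only preliminary needed is that an elliptic subgroup of $\bar\Gamma$ automatically has very small $\ell^\infty$-energy on $\bar X$. The second assertion is then a formality: a torsion element $\bar g\in\bar\Gamma$ generates a finite, hence elliptic, subgroup; by the first assertion it lifts to an elliptic subgroup $E\le\Gamma$ of the same order; since $\Gamma$ acts properly on $X$, the group $E$ is finite, and $\Gamma$ being torsion-free forces $E=\{1\}$, whence $\bar g=1$.

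So let $\bar E\le\bar\Gamma$ be elliptic; we may assume $\bar E\ne\{1\}$. First I would establish $E_\infty(\bar E,\bar X)=O(\bar\delta)$. Since $\bar X$ is geodesic and $\bar\delta$-hyperbolic and $\bar E$ is elliptic, every $\bar E$-orbit is bounded; a standard quasi-center argument --- the set of points that realize, up to an additive $\epsilon$, the circumradius of a bounded subset of a $\delta$-hyperbolic space has diameter $O(\delta+\epsilon)$, and $\bar E$ permutes this set --- then gives the estimate, which by \autoref{rem: order of magnitude} is far below $\rho/100$. Note that no finite generation of $\bar E$ is needed, since $E_\infty$ is defined for arbitrary subsets.

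Next I would apply \autoref{res: sc - lifting isometries} with $U_0=\{1\}$ and $\bar U=\bar E$: the hypotheses $E_\infty(U_0,\dot X)=0<\rho/100$ and $E_\infty(\bar U,\bar X)<\rho/100$ hold, producing a subset $E\subseteq\Gamma$ with $1\in E$, on which $\pi$ restricts to a bijection onto $\bar E$ and which is closed under the partial product of \autoref{res: sc - lifting isometries}~\ref{enu: sc - lifting isometries - product}. Because $\bar E$ is a subgroup, $\pi(\gamma\gamma')\in\bar E$ for all $\gamma,\gamma'\in E$, so $E$ is closed under multiplication; moreover, for $\gamma\in E$ the unique $\gamma'\in E$ with $\pi(\gamma')=\pi(\gamma)^{-1}$ satisfies $\pi(\gamma\gamma')=1$, hence $\gamma\gamma'\in E$ and then $\gamma\gamma'=1$ by injectivity of $\pi|_E$ and $1\in E$; so $\gamma^{-1}\in E$. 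Thus $E$ is a subgroup and $\pi|_E\colon E\to\bar E$ is an isomorphism, i.e.\ $\bar E$ lifts. Finally, $E$ is elliptic in $X$: by \autoref{res: sc - lifting isometries}~\ref{enu: sc - lifting isometries - energy} we have $E_\infty(E,X)\le\pi\sinh E_\infty(\bar E,\bar X)<\infty$, so some $E$-orbit in $X$ is bounded, and $X$ is proper.

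I expect the a priori energy bound to be the most delicate point: one must be sure that an elliptic subgroup of $\bar\Gamma$ genuinely has $O(\bar\delta)$-bounded orbits on the non-proper space $\bar X$. An alternative that sidesteps energy estimates is to take $\bar Z=\fix{\bar E,r}$ for a scale $r$ with $E_\infty(\bar E,\bar X)<r\ll\rho$ (nonempty and $8\bar\delta$-quasi-convex by \autoref{res: set of almost fixed points}): \autoref{res: subset far from apices}, in the tight case, shows that $d(\bar c,\bar Z)\ge\rho-r/2$ for every cone point $\bar c$, so \autoref{res: sc - lifting quasi-convex} lifts $\bar Z$ isometrically into $\dot X$ and \autoref{res: sc - lifting quasi-convex}~\ref{enu: sc - lifting quasi-convex - stab} yields the lift of $\bar E\le\stab{\bar Z}$. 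This route, however, still requires knowing that a suitable small $r$ exists, hence the same control on the energy.
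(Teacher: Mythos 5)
Your argument is essentially correct, but note that the paper offers no proof of this lemma at all: it is imported wholesale from \cite[Proposition~5.18]{Coulon:2016if}, where it is proved directly from the geometry of the cone-off. What you do instead is rederive it internally from the set-lifting machinery: applying \autoref{res: sc - lifting isometries} with $U_0=\{1\}$ and $\bar U=\bar E$, the bijectivity and partial-product properties do force the lifted set $E$ to be a subgroup mapped isomorphically onto $\bar E$ (your inverse argument via $\pi(\gamma\gamma')=1$ and injectivity is exactly right), and \autoref{res: sc - lifting isometries}~\ref{enu: sc - lifting isometries - energy} bounds $E_\infty(E,X)$, so $E$ has a bounded orbit and is elliptic; torsion-freeness of $\bar\Gamma$ then follows since a finite subgroup of the torsion-free group $\Gamma$ is trivial (you do not even need ellipticity of $E$ for that step). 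Two points deserve a sentence in a written-up version. First, your a priori bound $E_\infty(\bar E,\bar X)=O(\bar\delta)$ rests on ``elliptic $\Rightarrow$ bounded orbits'': with the paper's definition of elliptic via an empty limit set, the cleanest justification is that $\bar X$ is proper, which follows from \autoref{res: proper co-compact action quotient} together with the Hopf--Rinow theorem (exactly as invoked later in the proof of \autoref{res: recap sc}); after that, the quasi-centre argument indeed gives displacement of order $\bar\delta\ll\rho$ by \autoref{rem: order of magnitude}, and no finite generation of $\bar E$ is needed since $E_\infty$ is defined for arbitrary sets. Second, your use of tightness is legitimate because it is a standing hypothesis of the subsection in which the lemma is stated. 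The trade-off between the two routes: the citation covers the result in the generality of \cite{Coulon:2016if} and keeps the exposition short, whereas your derivation is self-contained within the paper's toolkit and makes transparent that the lemma is a special case of \autoref{res: sc - lifting isometries} (your alternative via \autoref{res: set of almost fixed points}, \autoref{res: subset far from apices}, and \autoref{res: sc - lifting quasi-convex}~\ref{enu: sc - lifting quasi-convex - stab} is closer in spirit to the cited proof, and, as you say, still needs the same energy control).
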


\begin{lemm}[{\cite[Proposition~5.26]{Coulon:2016if}}]
\label{res: sc - lifting loxo sbgp}
	Every loxodromic subgroup of $\bar \Gamma$ lifts to a loxodromic subgroup of $\Gamma$.
\end{lemm}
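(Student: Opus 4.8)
The plan is to leverage the fact that $\bar\Gamma$ is torsion-free, which collapses the statement to the lift of a single loxodromic element; once the reduction to the cyclic case is made, producing the lift is essentially automatic, requiring no argument specific to the cone-off geometry beyond what is already recorded above.

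First I would reduce to the cyclic case. A loxodromic subgroup $\bar E \leq \bar\Gamma$ has limit set with exactly two points and so contains a loxodromic element $\bar g$. Since $\bar\Gamma$ is torsion-free by \autoref{res: lifting elliptic subgroups}, and the action of $\bar\Gamma$ on $\bar X$ enjoys the acylindricity-type control coming from the small cancellation machinery (an analog of \autoref{res: margulis lemma}, established in \cite{Coulon:2016if}), the subgroup $\bar E$ is virtually cyclic, hence infinite cyclic. I may therefore write $\bar E = \langle \bar g\rangle$ with $\bar g$ a generator; being loxodromic, $\bar g$ satisfies $\snorm[\bar X]{\bar g} > 0$, so it has infinite order in $\bar\Gamma$.

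Next I would lift $\bar g$ by hand and check that this suffices. Choose any $g \in \Gamma$ with $\pi(g) = \bar g$. As $\bar g \neq 1$, the element $g$ is non-trivial, hence of infinite order in the torsion-free hyperbolic group $\Gamma$; since $X$ is a Cayley graph of $\Gamma$, this forces $g$ to be loxodromic on $X$. Thus $E := \langle g\rangle$ is an infinite cyclic — in particular elementary, loxodromic — subgroup of $\Gamma$, and $\pi(E) = \langle \bar g\rangle = \bar E$. Finally, $\pi$ restricts to an isomorphism $E \to \bar E$: it is onto by construction, and if $g^{n}$ lies in $\ker(\pi|_E) = \langle g\rangle \cap K(\mathcal Q)$ then $\bar g^{\,n} = \pi(g^{n}) = 1$, forcing $n = 0$ because $\bar g$ has infinite order. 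Hence $E$ is a loxodromic subgroup of $\Gamma$ lifting $\bar E$, as required.

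The one step that is not purely formal is the reduction in the second paragraph, namely that a loxodromic subgroup of $\bar\Gamma$ is cyclic: this is where the (quantitative) acylindricity of the $\bar\Gamma$-action on $\bar X$ from the small cancellation theory is invoked, rather than any new input. In a non-acylindrical setting one could not rule out an infinitely generated loxodromic subgroup, and the argument above would only lift a generator of a cyclic subgroup. Granting that ingredient, everything else is a one-line consequence of torsion-freeness of $\Gamma$ and of $\bar\Gamma$; this is also why, in a framework allowing torsion (as in \cite{Coulon:2016if}), the analogous statement genuinely requires work, whereas here it does not.
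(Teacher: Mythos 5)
Your proof is correct. The paper gives no argument for this lemma: it is a bare citation of \cite[Proposition~5.26]{Coulon:2016if}, which is established there in a framework that permits torsion, where a loxodromic subgroup of $\bar\Gamma$ need only be finite-by-cyclic, and the lift must respect that structure---this genuinely exercises the cone-off geometry. You observe that in the torsion-free setting of the present paper the statement is essentially formal: $\bar\Gamma$ is torsion-free (\autoref{res: lifting elliptic subgroups}) and acts properly and co-compactly on $\bar X$ (\autoref{res: proper co-compact action quotient}), so a loxodromic subgroup $\bar E$ is infinite cyclic; its generator $\bar g$ has infinite order, any preimage $g \in \Gamma$ is non-trivial and hence loxodromic on the Cayley graph $X$, and $\pi$ restricts to an isomorphism from $E = \langle g\rangle$ onto $\bar E$ since $\bar g$ has infinite order. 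Your route buys brevity and self-containedness at the price of generality; the cited result is what one would need if $\Gamma$ carried torsion. One small stylistic point: you first pick a loxodromic $\bar g \in \bar E$ and later re-use the name $\bar g$ for a generator of $\bar E$; these may differ, so it is cleaner to take $\bar g$ to be a generator from the start (automatically loxodromic, since $\bar E$ is).
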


\begin{lemm}[{\cite[Lemma~A.18]{Coulon:2021wg}}]
\label{res: approx - thin loxodromic}
	Assume that there exists $\beta \in \R_+$ such that every loxodromic element of $\Gamma$ is $\beta$-thin (for its action on $X$).
	Then every loxodromic element of $\bar \Gamma$ is $\bar \beta$-thin, where $\bar \beta = \beta + 10\pi \sinh (100\bar \delta)$.
\end{lemm}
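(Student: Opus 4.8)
The statement is a quotient-level version of a statement about $\Gamma$ acting on $X$, and the natural strategy is to transport the hypothesis through the local isometry $f\colon \dot X \to \bar X$ and the quasi-isometric embedding $X \hookrightarrow \dot X$, using the lifting machinery developed above. First I would recall that, by \autoref{res: sc - lifting loxo sbgp}, a loxodromic element $\bar g \in \bar\Gamma$ lifts to a loxodromic element; more precisely the relevant fixed-point/cylinder data for $\bar g$ in $\bar X$ can be matched with data for a loxodromic $g \in \Gamma$ in $X$. The thinness of a loxodromic isometry is, by \autoref{def: thin isom}, an inequality relating $\gro{\gamma^-}{\gamma^+}{y}$ to $\tfrac12(r-\norm\gamma)$ for points $y \in \fix{\gamma,r}$; so the plan is: (i) take $r \in \R_+$ and $\bar y \in \fix{\bar g, r}$ in $\bar X$; (ii) lift $\bar y$ (and a piece of the axis/quasi-geodesic of $\bar g$ through it) to $\dot X$ via the local-embedding statements \autoref{res: small cancellation}\ref{enu: small cancellation - local isom} and \autoref{res: sc - lifting quasi-convex}, staying far from the cone points because almost-fixed-point sets avoid $\bar{\mathcal C}$ (\autoref{res: subset far from apices}); (iii) project radially from $\dot X$ down to $X$, controlling the distortion with \eqref{eqn: def lower bound dist dot X} and \autoref{res: invert mu}; (iv) apply the hypothesis that the resulting loxodromic isometry of $X$ is $\beta$-thin; (v) push the resulting inequality back up, absorbing the accumulated errors into the additive term $10\pi\sinh(100\bar\delta)$.

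The bookkeeping of constants is where the $10\pi\sinh(100\bar\delta)$ comes from: passing between the metrics of $X$ and $\dot X$ introduces a multiplicative factor $\sinh\rho$ on the ``small'' scales via $\mu$, but since we only ever work at scales comparable to $\bar\delta \ll \rho$, \autoref{res: invert mu} converts this into an additive error of order $\pi\sinh(\bar\delta)$ up to constants; the factor $100$ and the coefficient $10\pi$ are generous bounds accounting for the quasi-isometry defect $8\dot\delta$ in \autoref{res: qi cone in cone-off}, the $\dot\delta = \bar\delta$ normalization, and the various $\delta$-fellow-traveling estimates when comparing Gromov products in the two spaces. One has to be slightly careful that $\norm[\bar X]{\bar g}$ and $\norm[X]{g}$ (and likewise the stable lengths) are related only up to such errors as well, so the term $\tfrac12(r - \norm{\bar g})$ on the $\bar X$ side matches $\tfrac12(r' - \norm{g})$ on the $X$ side only after renaming $r$ and again shifting by an additive constant of the same order.

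The main obstacle is item (ii)–(iii): ensuring that the relevant points (the almost-fixed point $\bar y$, and two points $p^-, p^+$ on a local quasi-geodesic representing $\bar g^-, \bar g^+$) can be simultaneously lifted to $\dot X$ in a metrically faithful way. The almost-fixed-point set $\fix{\bar g, r}$ is quasi-convex (\autoref{res: set of almost fixed points}) and far from $\bar{\mathcal C}$ (\autoref{res: subset far from apices}), so \autoref{res: sc - lifting quasi-convex} lifts it isometrically; but the cylinder $Q_{\bar g}$ of $\bar g$ may in principle pass near a cone point, so one cannot naively lift all of it. The fix is to only lift a bounded neighborhood of $\bar y$ large enough to compute $\gro{\bar g^-}{\bar g^+}{\bar y}$ up to error $\bar\delta$ — this Gromov product is realized within distance $O(\bar\delta)$ of $\bar y$ by hyperbolicity — and to invoke the fact (from \cite{Coulon:2021wg}, Lemma A.18, whose proof we are following) that the stable axis of a loxodromic in $\bar X$ also avoids the cone points by a definite amount unless the element is conjugate into a cone stabilizer, which is impossible by tightness (\autoref{res: small cancellation}\ref{enu: small cancellation - local embedding}). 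With this local picture in hand, the inequality transports and the proof concludes.
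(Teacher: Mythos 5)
The paper does not actually prove this lemma: it is imported verbatim from \cite[Lemma A.18]{Coulon:2021wg}, so there is no internal argument to compare yours with; judged on its own, your sketch has the right flavour (lift a configuration into $\dot X$ away from the apices, compare with $X$, use $\beta$-thinness there, pay errors of size $\pi\sinh(O(\bar\delta))$), but as written it contains a genuine gap of scale. Thinness is a statement for \emph{every} $r$: the quantities $r$, $\norm[\bar X]{\bar g}$ and $\gro{\bar g^-}{\bar g^+}{\bar y}$ are in no way bounded by $\bar\delta$, nor even by $\rho$, whereas every comparison tool you invoke (\autoref{res: invert mu}, \autoref{res: sc - lifting quasi-convex}, the energy estimates of \autoref{res: sc - lifting isometries}) yields additive errors of order $\pi\sinh(O(\bar\delta))$ only for configurations whose size in $\dot X$ is itself of order $\bar\delta$ (at the very least $\ll\rho$). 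In particular your assertion that $\norm[X]{g}$ and $\norm[\bar X]{\bar g}$ ``are related up to such errors'' is false in general: a badly chosen lift changes the translation length by roughly $T(\mathcal Q, X)$, and even for the lift produced by \autoref{res: sc - lifting isometries} the only available estimate is $\norm[X]{g} \leq \pi\sinh \norm[\bar X]{\bar g}$, which is an additive error of order $\sinh(100\bar\delta)$ only when $\norm[\bar X]{\bar g}$ is itself of order $\bar\delta$. The missing step is a reduction to that small-scale regime: one must first dispose of loxodromic elements with $\snorm{\bar g}$ large compared to $\bar\delta$ (where \autoref{rem: thin isom - loxo} already gives a uniform constant) and of points whose displacement is large compared to $\bar\delta$ (where displacement grows linearly with the distance to the cylinder, with error $O(\bar\delta)$), and only then lift the remaining bounded picture. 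Without this reduction, the final step ``push the inequality back up and absorb the errors into $10\pi\sinh(100\bar\delta)$'' does not go through.

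A second problem is the auxiliary fact you rely on in step (ii)--(iii), namely that the axis of a loxodromic element of $\bar\Gamma$ keeps a definite distance from the cone points: you justify it by appealing to the proof of \cite[Lemma A.18]{Coulon:2021wg}, i.e.\ to the very statement being proved, which is circular, and the fact is moreover false without restriction --- nothing prevents the axis of a loxodromic with large translation length from entering the balls $B(\bar c, \rho)$ or passing through an apex. What is true, and what the argument actually needs, is that in the small-displacement regime the set $\fix{\bar g, r}$ (which then contains the relevant part of the axis) satisfies $d(\bar c, \fix{\bar g,r}) \geq \rho - r/2$ by \autoref{res: subset far from apices} together with tightness; this should replace the circular citation. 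Finally, the comparison of $\gro{\bar g^-}{\bar g^+}{\bar y}$ (computed in $\bar X$ with respect to the fixed points of $\bar g$) with the Gromov product of the lift $g$ in $X$ is precisely the delicate point and is not addressed: knowing that the image of a point of the axis of $g$ has small displacement under $\bar g$ does not place it near the axis of $\bar g$ --- that implication is the content of thinness itself --- so this identification requires an actual argument rather than constant bookkeeping.
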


\begin{prop}[{\cite[Proposition~6.9]{Coulon:2014fr}}]
\label{res: proper co-compact action quotient}
	The action of $\bar \Gamma$ on $\bar X$ is proper and co-compact.
\end{prop}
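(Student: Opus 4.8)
This is \cite[Proposition~6.9]{Coulon:2014fr}. The plan is to split the statement into cocompactness, which is essentially formal, and properness of the action, where the small cancellation geometry is used.

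For cocompactness I would transfer a fundamental domain through the quotient. The canonical map $f\colon \dot X \onto \bar X$ is equivariant for the action of $\Gamma$ on $\dot X$ and for the induced action of $\bar\Gamma = \Gamma/K$ on $\bar X$. Since $X$ is a Cayley graph, $\Gamma$ acts cocompactly on $X$; because $\mathcal Q/\Gamma$ is finite, each $R = \stab Y$ acts coboundedly on $Y$ (tightness, \autoref{defi tight}), and all cones have the common radius $\rho$, the action of $\Gamma$ on $\dot X$ is cocompact as well, and pushing a compact fundamental domain through $f$ yields cocompactness of $\bar\Gamma$ on $\bar X$. Along the way one records that $\bar X$ is a proper geodesic space: it is geodesic by \autoref{res: small cancellation}\ref{enu: small cancellation - hyp}, away from $\bar{\mathcal C}$ it is locally isometric to the locally finite graph $X$ by \autoref{res: small cancellation}\ref{enu: small cancellation - local isom}, and near a point of $\bar{\mathcal C}$ it is modelled on a cone over $Y/R$, which is compact precisely because $\mathcal Q$ is tight (so that $Y$ is a closed, hence proper, subset of $X$ carrying a cobounded action of $R$). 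In particular closed balls of $\bar X$ are compact.

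For properness of the action I would fix a vertex $x_0$ of $X$, viewed as a point of $\dot X$, with image $\bar x_0$ in $\bar X$, and first prove that the orbit $\bar\Gamma \cdot \bar x_0$ is uniformly discrete, i.e.\ that there is $\epsilon_0 > 0$ with $\dist[\bar X]{\bar g \bar x_0}{\bar x_0} \ge \epsilon_0$ for every $\bar g \in \bar\Gamma \setminus \{1\}$. Since $\bar X$ is the quotient of $\dot X$ by the isometric action of $K$, the quotient distance satisfies $\dist[\bar X]{\bar g\bar x_0}{\bar x_0} = \inf\set{\dist[\dot X]{h x_0}{x_0}}{h \in \Gamma,\ \pi(h) = \bar g}$, where $\pi\colon \Gamma \onto \bar\Gamma$ is the projection. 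Every lift $h$ of a non-trivial $\bar g$ is itself non-trivial, hence moves the vertex $x_0$ (the action of $\Gamma$ on the vertices of its Cayley graph being free), so $\dist[X]{h x_0}{x_0} \ge \ell_0$, where $\ell_0 > 0$ is the minimal edge length of $X$; combining this with the comparison $\mu\big(\dist[X]{h x_0}{x_0}\big) \le \dist[\dot X]{h x_0}{x_0}$ between the metrics of $X$ and $\dot X$ and the positivity of $\mu$ away from $0$ gives $\dist[\bar X]{\bar g\bar x_0}{\bar x_0} \ge \mu(\ell_0) =: \epsilon_0 > 0$. (One could equally phrase this via \autoref{res: invert mu}; and \autoref{res: isom embedding cone in cone-off} together with \autoref{res: qi cone in cone-off} shows that $x_0$ lies at distance at least $\rho - 8\delta$ from $\mathcal C$, so that $\bar x_0$ is far from $\bar{\mathcal C}$ and the local picture near $\bar x_0$ is transparent.)

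It then remains to run a soft argument. For $r > 0$, if $\bar g \ne \bar g'$ both lie in $S_r := \set{\bar h \in \bar\Gamma}{\dist[\bar X]{\bar h\bar x_0}{\bar x_0} \le r}$, then $\dist[\bar X]{\bar g\bar x_0}{\bar g'\bar x_0} = \dist[\bar X]{(\bar g')^{-1}\bar g\bar x_0}{\bar x_0} \ge \epsilon_0$, so $\set{\bar h\bar x_0}{\bar h \in S_r}$ is an $\epsilon_0$-separated subset of the compact ball $\bar B(\bar x_0, r)$, hence finite; since $\bar x_0$ has trivial stabilizer (the case $r = 0$ of uniform discreteness), the map $\bar h \mapsto \bar h\bar x_0$ is injective on $S_r$, so $S_r$ is finite. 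As the action is cocompact, metric properness at the single basepoint $\bar x_0$ propagates to all points, giving the statement. The main obstacle, modest as it is, is the uniform discreteness step, which is where the local geometry of the small cancellation quotient is used (the comparison through $\mu$ and the placement of the vertices of $X$ relative to the cone apices); knowing that $\bar X$ is proper, which uses tightness of $\mathcal Q$ in an essential way, is the other ingredient that is not purely formal.
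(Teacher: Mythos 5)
The paper does not prove this statement at all: it is imported verbatim as \cite[Proposition~6.9]{Coulon:2014fr}, so there is no internal argument to compare against. Your reconstruction follows the natural route and is essentially sound: cocompactness by pushing a compact fundamental set of $\dot X$ through $f$ (using that $\mathcal Q/\Gamma$ is finite and each $R$ acts coboundedly on $Y$), and properness of the action via uniform discreteness of the orbit of a vertex, obtained by lifting: the quotient metric satisfies $\dist[\bar X]{\bar g\bar x_0}{\bar x_0}=\inf_{\pi(h)=\bar g}\dist[\dot X]{hx_0}{x_0}$, every lift of $\bar g\neq 1$ moves the vertex $x_0$ by at least the edge length in $X$, and \eqref{eqn:  lower bound dist dot X} converts this into the uniform bound $\mu(\ell_0)>0$ in $\dot X$. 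That is exactly the kind of argument that makes the cited result true, and it correctly isolates where tightness enters.

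Two points need tightening. First, "away from $\bar{\mathcal C}$ it is locally isometric to the locally finite graph $X$" is not what \autoref{res: small cancellation}\ref{enu: small cancellation - local isom} gives: small balls off $\bar{\mathcal C}$ are isometric to balls in $\dot X\setminus\mathcal C$, which contains cone points off the graph; fortunately $\dot X$ is locally compact away from $\mathcal C$ (cones over closed, hence proper, subsets of $X$), so the conclusion survives. Second, and more seriously, "in particular closed balls of $\bar X$ are compact" does not follow from local compactness alone; you need completeness of $\bar X$ (inherited from $\dot X$ since the quotient by the isometric $K$-action is a genuine metric space) together with Hopf--Rinow, or a direct packing argument. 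This is not a pedantic point: $\dot X$ itself is \emph{not} locally compact at an apex when $Y$ is unbounded (points $(y_n,r/2)$ with $d_Y(y_n,y_m)\to\infty$ are pairwise at distance $r$), so compactness of small balls around $\bar c$ really does rest on the identification $K\cap\stab Y=R$ from \autoref{res: small cancellation}\ref{enu: small cancellation - local embedding} plus the cobounded action of $R$ on $Y$, i.e.\ on tightness, as you indicate; also note that closedness of $Y$ is part of the definition of a relation family, not of tightness. With these repairs your argument is a correct, self-contained substitute for the external citation.
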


\begin{prop}
\label{res: approx - A inv}
	The acylindricity $A(\bar \Gamma, \bar X)$ satisfies
	\begin{equation*}
		A(\bar \Gamma, \bar X) \leq A(\Gamma, X) + 5\pi \sinh( 1000\bar \delta).
	\end{equation*}
\end{prop}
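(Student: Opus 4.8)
The plan is to bound the acylindricity parameter of $\bar\Gamma$ acting on $\bar X$ via its definition through the local invariants $A(\bar\Gamma,\bar X,r)$, transporting control from $\Gamma$ acting on $X$. By \autoref{res: margulis lemma} (applied in $\bar\Gamma$, which is torsion-free by \autoref{res: lifting elliptic subgroups} and acts properly on $\bar X$ by \autoref{res: proper co-compact action quotient}), it suffices to control $A(\bar\Gamma,\bar X, 400\bar\delta)$; that is, we must bound $\diam{\fix{\bar U, 400\bar\delta}}$ uniformly over all subsets $\bar U\subset\bar\Gamma$ generating a non-elementary subgroup. So fix such a $\bar U$ and a non-empty subset $\bar Z=\fix{\bar U, 400\bar\delta}$ of $\bar X$; we may assume $\diam{\bar Z}$ is large, in particular that $\bar U$ contains a non-trivial element.

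First I would use \autoref{res: subset far from apices} to see that $d(\bar c,\bar Z)\geq\rho-200\bar\delta$ for every apex $\bar c\in\bar{\mathcal C}$, and that $\bar Z$ is $10\bar\delta$-quasi-convex (as a set of almost-fixed points, by \autoref{res: set of almost fixed points}). Since $\rho-200\bar\delta$ is much larger than $10\bar\delta+13\bar\delta$ (by the orders of magnitude in \autoref{rem: order of magnitude}), the remark after \autoref{res: sc - lifting quasi-convex} applies: fixing a basepoint $\bar z_0\in\bar Z$ and a preimage $z_0\in\dot X$, there is a subset $Z\subset\dot X$ containing $z_0$ on which $\dot X\to\bar X$ restricts to an isometry, and moreover for every $\gamma\in\bar U$ (using \ref{enu: sc - lifting quasi-convex - prestab}) there is a preimage of $\gamma$ in $\Gamma$ fixing $Z$ pointwise up to the $400\bar\delta$-displacement — more precisely, for $\bar z,\bar z'\in\bar Z$ and $\bar\gamma\in\bar U$ with $\bar\gamma\bar z=\bar z'$, a unique preimage $\gamma$ has $\gamma z=z'$. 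In particular, letting $U$ be the set of these lifted elements (one per element of $\bar U$), each $\gamma\in U$ moves every point of $Z$ by at most $400\bar\delta$ in $\dot X$. By \autoref{res: sc - lifting quasi-convex}~\ref{enu: sc - lifting quasi-convex - stab}, $U$ generates a group whose image in $\bar\Gamma$ is $\langle\bar U\rangle$, which is non-elementary; hence $\langle U\rangle$ is non-elementary in $\Gamma$ (an elliptic or loxodromic subgroup of $\Gamma$ could not surject onto a non-elementary group, as the kernel $K$ acts freely on $\dot X\setminus\mathcal C$ and the quotient action is the one on $\bar X$).

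Next I would transfer from $\dot X$ to $X$. For $z\in Z$, let $y=p(z)\in X$ be its radial projection, at distance at most $\rho/2$ — actually, since $d(z,\mathcal C)\geq\rho-200\bar\delta$, we may and should choose $Z$ so that its points sit at bounded distance from $X$; in any case, for $z\in Z$ we get $d_{\dot X}(z,y)$ controlled, and then $d_{\dot X}(\gamma y,y)\leq 400\bar\delta+2\cdot(\text{that bound})$. Using the lower bound $\mu(d_X(\gamma y,y))\leq d_{\dot X}(\gamma y,y)$ from \eqref{eqn: lower bound dist dot X} together with \autoref{res: invert mu} (valid since the right-hand side is $\ll 2\rho$), this yields $d_X(\gamma y,y)\leq\pi\sinh(500\bar\delta)$, say — i.e., every point $y$ obtained as a radial projection of a point of $Z$ lies in $\fix{U, \pi\sinh(500\bar\delta)}$ in $X$. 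Since $\langle U\rangle$ is non-elementary, this set has diameter at most $A(\Gamma,X,\pi\sinh(500\bar\delta))\leq 4\pi\sinh(500\bar\delta)+A(\Gamma,X)$ by \autoref{def: acylindricity}. Finally, because $\dot X\to\bar X$ is an isometry on $Z$ and the radial projection moves points of $Z$ a bounded amount (in $\dot X$, hence its image moves a bounded amount in $\bar X$), $\diam{\bar Z}\leq\diam_X\{\text{projections}\}+O(\bar\delta)$. Combining everything: $A(\bar\Gamma,\bar X,400\bar\delta)\leq A(\Gamma,X)+C\pi\sinh(1000\bar\delta)$ for an explicit small constant $C$, and then \autoref{res: margulis lemma} upgrades this to $A(\bar\Gamma,\bar X)\leq A(\Gamma,X)+5\pi\sinh(1000\bar\delta)$ after absorbing the linear-in-$r$ terms and the additive $24\bar\delta$ into the $\sinh$ term (using $\sinh(1000\bar\delta)\gg\bar\delta$).

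The main obstacle I expect is the bookkeeping of the energy/displacement constants through the three metrics $X$, $\dot X$, $\bar X$: one must verify that the displacement $400\bar\delta$ in $\bar X$, after lifting to $\dot X$ and then projecting radially to $X$, stays in the regime where \autoref{res: invert mu} applies (i.e., the resulting $\mu$-value is below $2\rho$), and that the comparison $\mu$ does not blow up — this is exactly where the orders of magnitude $\bar\delta\ll\rho$ from \autoref{rem: order of magnitude} are essential, and where one must be careful that the final constant is $5\pi\sinh(1000\bar\delta)$ and not something worse. The non-elementarity transfer (that a non-elementary subgroup of $\bar\Gamma$ lifts to, or is the image of, a non-elementary subgroup of $\Gamma$) is the other point requiring care, but it follows cleanly from \autoref{res: sc - lifting quasi-convex}~\ref{enu: sc - lifting quasi-convex - stab} combined with \autoref{res: lifting elliptic subgroups} and \autoref{res: sc - lifting loxo sbgp}.
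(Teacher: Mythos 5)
Your overall route is the right one, and it is essentially the argument of the references the paper itself cites for this proposition (the paper's own ``proof'' is only a citation): reduce via \autoref{res: margulis lemma} to bounding the diameter of $\fix{\bar U, 400\bar\delta}$, lift this set isometrically to $\dot X$ using that it stays at distance about $\rho$ from the apices (\autoref{res: subset far from apices}, \autoref{res: sc - lifting quasi-convex}), push to $X$ by the radial projection together with \eqref{eqn:  lower bound dist dot X} and \autoref{res: invert mu}, and invoke the acylindricity of $(\Gamma, X)$; your constants do land comfortably inside $5\pi\sinh(1000\bar\delta)$. However, there is a genuine gap in the step where you lift the elements of $\bar U$. \autoref{res: sc - lifting quasi-convex}~\ref{enu: sc - lifting quasi-convex - prestab} only produces a lift of $\bar\gamma$ from a pair of points $\bar z, \bar z'$ \emph{both lying in the lifted set} with $\bar\gamma \bar z = \bar z'$. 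But $\bar Z = \fix{\bar U, 400\bar\delta}$ is in general not invariant under $\bar U$: an element $\bar\gamma \in \bar U$ moves points of $\bar Z$ by at most $400\bar\delta$, yet $\bar\gamma \bar z$ need not lie in $\bar Z$, so there may be no pair to which the lemma applies; and even if some pair exists, the lemma gives no control of $\gamma w$ for the other points $w \in Z$, so your assertion that each $\gamma \in U$ moves \emph{every} point of $Z$ by at most $400\bar\delta$ in $\dot X$ (i.e., that $d_{\dot X}(\gamma z, z)$ equals the $\bar X$-displacement rather than merely dominating it, the projection being $1$-Lipschitz in the wrong direction for you) is not justified as written.

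The repair is exactly the ``$3E$'' device used in the proof of \autoref{res: sc - lifting isometries}: lift the enlarged set $\fix{\bar U, 1200\bar\delta}$ (still quasi-convex by \autoref{res: set of almost fixed points} and at distance at least $\rho - 600\bar\delta$ from $\bar{\mathcal C}$ by \autoref{res: subset far from apices}); the triangle inequality shows $\bar\gamma \bar z \in \fix{\bar U, 1200\bar\delta}$ whenever $\bar z \in \bar Z$ and $\bar\gamma \in \bar U$, and then \autoref{res: sc - lifting quasi-convex}~\ref{enu: sc - lifting quasi-convex - prestab}, including its ``moreover'' clause, yields one lift $\gamma$ of each $\bar\gamma$ with $d_{\dot X}(\gamma z, z) = d_{\bar X}(\bar\gamma \bar z, \bar z) \leq 400\bar\delta$ for every lift $z$ of a point of $\bar Z$; note that quoting \autoref{res: sc - lifting isometries} as a black box would not suffice either, since it only controls the energy at one well-chosen basepoint, whereas your diameter bound needs the displacement at the projections of all points of $\bar Z$. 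Two smaller inaccuracies: \autoref{res: sc - lifting quasi-convex}~\ref{enu: sc - lifting quasi-convex - stab} concerns the stabilizers of $Z$ and $\bar Z$ and is not what gives $\pi(\langle U\rangle) = \langle \bar U\rangle$ (that is immediate from $\pi(U) = \bar U$); and the clean reason $\langle U \rangle$ is non-elementary is that elementary subgroups of the torsion-free hyperbolic group $\Gamma$ are cyclic, images of cyclic groups are cyclic, and cyclic subgroups of $\bar\Gamma$, which acts properly on $\bar X$, are elementary --- not the remark about $K$ acting freely on $\dot X \setminus \mathcal C$. With these repairs your computation ($d_{\dot X}(\gamma y, y) \leq 800\bar\delta$, hence $d_X(\gamma y, y) \leq \pi\sinh(400\bar\delta)$, hence the diameter of $\bar Z$ is at most $A(\Gamma, X) + 4\pi\sinh(400\bar\delta) + 400\bar\delta$, plus the $24\bar\delta$ from \autoref{res: margulis lemma}) does give the stated bound.
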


\begin{proof}
	Although the definition of $A(\Gamma, X)$ is slightly different, the proof works verbatim as in \cite[Proposition~5.30]{Coulon:2014fr}.
	See also \cite[Proposition~4.47]{Coulon:2018vp}.
\end{proof}

%%%%%%%%%%%%%%%%%%%%%%%%%%%%%%%%%%%%%%%%%%%%%%%%%%%%%%%%%%%%%%%%%%%%%%%%%%%%%%%%%%%%%
%
\subsection{A rescaling procedure}
%
%%%%%%%%%%%%%%%%%%%%%%%%%%%%%%%%%%%%%%%%%%%%%%%%%%%%%%%%%%%%%%%%%%%%%%%%%%%%%%%%%%%%%

We now sum up the previous study. This will involve a rescaling procedure, so let us remind our convention that graphs are endowed with a path metric where all edges have the same positive length, but not necessarily length $1$.

\begin{theo}
\label{res: recap sc}
	Let $\Gamma$ be a non-elementary, torsion-free, hyperbolic group and $X$ a Cayley graph of $\Gamma$.
	Then there are $\lambda_0, \epsilon_0, \delta \in \R_+^*$, together with two maps
	\begin{equation*}
		a \colon (0,1) \times (0, 1)  \to \R_+^*
		\quad \text{and} \quad
		\rho \colon (0, 1) \times (0, 1)  \to \R_+^*
	\end{equation*}
	such that 
	\begin{equation*}
		\lim_{\lambda, \epsilon \to 0}\rho(\lambda, \epsilon) = \infty
		\quad \text{and} \quad
		\lim_{\lambda, \epsilon \to 0} \frac {a(\lambda, \epsilon)}{\sinh \rho(\lambda, \epsilon)} = \infty,
	\end{equation*}
	satisfying the following properties.

	Let $\lambda \in (0, \lambda_0)$ and $\epsilon \in (0, \epsilon_0)$.
	Let $\mathcal Q$ be a tight relation family satisfying the $C'(\lambda, \epsilon)$ strengthened small cancellation condition.
	Let $\pi \colon \Gamma \onto \bar \Gamma$ be the projection onto the associated quotient.
	Then there exists a $\delta$-hyperbolic, geodesic space $\bar X$ endowed with an action by isometries of $\bar \Gamma$, together with a $\pi$-equivariant map $f \colon X \to \bar X$ such that the following hold.
	\begin{enumerate}
		\item \label{enu: recap sc - tf + action} 
		The group $\bar \Gamma$ is torsion-free.
		Its action on $\bar X$ is proper, co-compact, and non-elementary.		
		\item \label{enu: recap sc - acyl} 
		Every non-trivial element of $\bar \Gamma$ is $\delta$-thin. Moreover, $A(\bar \Gamma, \bar X) \leq \delta$.
		\item \label{enu: recap sc - lip} 
		The map $f \colon X \to \bar X$ is $\kappa$-Lipschitz, where
		\begin{equation*}
			\kappa = \frac{a(\lambda, \epsilon)}{T(\mathcal Q, X)}.
		\end{equation*}
		\item \label{enu: recap sc - injectivity} 
		For every $x \in X$, the intersection of $K(\mathcal Q)$ with the set
		\begin{equation*}
			\set{ \gamma \in \Gamma}{ \dist {\gamma x}x < \left[ 1 - \frac {2\pi \sinh \rho(\lambda, \epsilon)}{a(\lambda, \epsilon)} \right]T(\mathcal Q, X)}
		\end{equation*}
		is reduced to the identity element.
		\item \label{enu: recap sc - lift} 
		Let $G$ be a group generated by a finite set $U$.
		There is $\xi \in (0,1)$ which only depends on $G$, $U$, and $\Gamma$ with the following property.
		Let $H$ be the subgroup of $G$ generated by a subset $U_0$ of $U$.
		Let $\iota \colon H \to \Gamma$ be a morphism.
		Let $\bar \varphi \colon G \to \bar \Gamma$ be a morphism whose restriction to $H$ coincides with $\pi \circ \iota$.
		Assume that  
		\begin{equation*}
			E_\infty\left( \iota, U_0, X \right) < \frac 14 T(\mathcal Q, X)
			\quad \text{and}\quad
			E_\infty\left(\bar \varphi, U, \bar X \right) <  \xi \rho(\lambda, \epsilon).
		\end{equation*}
		Then there exists a map $\varphi \colon G \to \Gamma$ such that $\bar \varphi = \pi \circ \varphi$ and $\varphi$ restricted to $H$ coincides with $\iota$.
		Moreover, $\kappa E_\infty(\varphi, U, X) \leq  \pi \sinh E_\infty(\bar \varphi, U, \bar X)$.
	\end{enumerate}
\end{theo}

\begin{proof}
	We begin the proof by fixing some useful parameters.
	Let $\delta_0$, $\Delta_0$, $\delta_1$, and $\rho_0$ be those given by \autoref{res: small cancellation}.
	We define two maps 
	\begin{equation*}
		a \colon (0,1) \times (0, 1)  \to \R_+^*
		\quad \text{and} \quad
		\rho \colon (0, 1) \times (0, 1)  \to \R_+^*.
	\end{equation*}
	The map $a$ is defined by 
	\begin{equation*}
		a(\lambda, \epsilon) = \min \left\{ \frac {\Delta_0} \lambda, \frac {\delta_0}\epsilon\right\},
	\end{equation*}
	while $\rho$ is chosen so that 
	\begin{equation*}
		\lim_{\lambda, \epsilon \to 0} \rho(\lambda, \epsilon) = \infty
		\quad \text{and} \quad
		\lim_{\lambda, \epsilon \to 0} \frac {a(\lambda, \epsilon)}{\sinh \rho(\lambda, \epsilon)} = \infty.
	\end{equation*}
	
	Since the action of $\Gamma$ is non-elementary, proper, and co-compact, $\inj \Gamma X$ is positive.
    The derived subgroup $[\Gamma, \Gamma]$ contains a non-trivial element, say $\gamma_0$.
	We can find $\lambda_0, \epsilon_0 \in \R_+^*$, such that for every $(\lambda, \epsilon) \in (0, \lambda_0) \times (0, \epsilon_0)$, the following hold:
	\begin{align}
		\label{enu: recap - rho}
		\rho(\lambda, \epsilon) & \geq \rho_0, \\
		\label{enu: recap - a vs rho}
		a(\lambda, \epsilon)  & \geq 10 \pi \sinh \rho(\lambda, \epsilon), \\
		\label{enu: recap - non-elem}
		\norm{\gamma_0} & < \frac 1\epsilon \left[1 - \frac{2\pi \sinh \rho(\lambda, \epsilon)}{a(\lambda, \epsilon)}\right] \inj \Gamma X.
	\end{align}
	We now fix $(\lambda, \epsilon) \in (0, \lambda_0) \times (0, \epsilon_0)$.
	We make an abuse of notations and write $a$ and $\rho$ instead of $a(\lambda, \epsilon)$ and $\rho(\lambda, \epsilon)$, respectively.
	Consider a tight relation family $\mathcal Q$ satisfying the $C'(\lambda, \epsilon)$ strengthened small cancellation assumption.
	We let 
	\begin{equation*}
		\kappa = \frac {a}{T(\mathcal Q, X)}
	\end{equation*}
	and consider the rescaled space $\kappa X$, i.e., 
	\begin{equation*}
		\dist[\kappa X] x{x'} = \kappa \dist[X] x{x'}, \quad \forall x, x' \in X.
	\end{equation*}
	It follows from the small cancellation condition, combined with \eqref{enu: recap - rho} and \eqref{enu: recap - a vs rho} that $\kappa X$ is $\delta_0$-hyperbolic, $\Delta(\mathcal Q, \kappa X) \leq \Delta_0$, and $T(\mathcal Q, \kappa X) \geq 10 \pi \sinh \rho$, with $\rho \geq \rho_0$.
	Moreover, its strengthened version yields 
	\begin{equation}
	\label{eqn: recap - kappa}
		\kappa \leq \frac{\delta_0}{\inj \Gamma X}.
	\end{equation}
	Therefore, we can apply our study of small cancellation theory for the group $\Gamma$ acting on $\kappa X$ and the relation family $\mathcal Q$.
	Denote by $\pi \colon \Gamma \onto \bar \Gamma$ the projection from $\Gamma$ onto $\bar \Gamma = \Gamma / K(\mathcal Q)$.
	
	According to \autoref{res: small cancellation}, there is a $\delta_1$-hyperbolic length space $\bar X$ endowed with an action by isometries of $\bar \Gamma$ as well as a $\pi$-equivariant $1$-Lipschitz map $\kappa X \to \bar X$.
	According to \autoref{res: proper co-compact action quotient}, the action of $\bar \Gamma$ on $\bar X$ is proper and co-compact, hence $\bar X$ is actually a geodesic space, by the Hopf--Rinow Theorem \cite[Chapter~I.3, Proposition~3.7]{Bridson:1999ky}.
	It follows from \autoref{res: lifting elliptic subgroups} that $\bar \Gamma$ is torsion-free.
	Since the map $\kappa X \to \bar X$ is $1$-Lipschitz, the map $f \colon X \to \bar X$ is $\kappa$-Lipschitz, which proves \ref{enu: recap sc - lip}.
	Point~\ref{enu: recap sc - injectivity}  is a direct application of \autoref{res: proto greendlinger X}.
	In particular, the strengthening of the small cancellation assumption combined with \eqref{enu: recap - non-elem} shows that $\gamma_0$ has a non-trivial image in $\bar \Gamma$.
	Consequently, $\bar \Gamma$ is non-elementary, for otherwise $\bar \Gamma$ would be abelian, which completes the proof of \ref{enu: recap sc - tf + action}.
	Point~\ref{enu: recap sc - lift}  follows from \autoref{res: sc - lifting morphism}.	
	The action of $\Gamma$ on $X$ is proper and co-compact, hence there is $\beta$ such that any non-trivial element of $\Gamma$ is $\beta$-thin.
	Thus, every non-trivial element in $\bar \Gamma$ is $\bar \beta$-thin where 
	\begin{equation*}
		\bar \beta 
		\leq \kappa \beta +  10\pi \sinh (100\delta_1)
		\leq \frac{\beta \delta_0}{\inj \Gamma X} +  10\pi \sinh (100\delta_1).
	\end{equation*}
	The first inequality is indeed an application of \autoref{res: approx - thin loxodromic}, while the second one follows from (\ref{eqn: recap - kappa}).
	Combining \autoref{res: approx - A inv} with (\ref{eqn: recap - kappa}), we get
	\begin{equation*}
		A(\bar \Gamma, \bar X) 
		\leq A(\Gamma, \kappa X) + 5\pi \sinh(1000\delta_1)
		\leq \frac {A(\Gamma, X) \delta_0}{\inj \Gamma X} + 5\pi \sinh(1000\delta_1).
	\end{equation*}
	Finally we choose 
	\begin{equation*}
		\delta = \max \left\{ \delta_1, \frac{\beta \delta_0}{\inj \Gamma X} +  10\pi \sinh (100\delta_1),  \frac {A(\Gamma, X) \delta_0}{\inj \Gamma X} + 5\pi \sinh(1000\delta_1)  \right\}. 
	\end{equation*}
	Note that $\delta$ does not depend on $\lambda$, nor on $\epsilon$.
	Point \ref{enu: recap sc - acyl} follows from the previous discussion.
\end{proof}

%
%%%%%%%%%%%%%%%%%%%%%%%%%%%%%%%%%%%%%%%%%%%%%%%%%%%%%%%%%%%%%%%%%%%%%%%%%%%%%%%%%%%%%
%%%%%%%%%%%%%%%%%%%%%%%%%%%%%%%%%%%%%%%%%%%%%%%%%%%%%%%%%%%%%%%%%%%%%%%%%%%%%%%%%%%%%

%%%%%%%%%%%%%%%%%%%%%%%%%%%%%%%%%%%%%%%%%%%%%%%%%%%%%%%%%%%%%%%%%%%%%%%%%%%%%%%%%%%%%
%%%%%%%%%%%%%%%%%%%%%%%%%%%%%%%%%%%%%%%%%%%%%%%%%%%%%%%%%%%%%%%%%%%%%%%%%%%%%%%%%%%%%
%
\section{Graphs of groups and graphs of actions}
\label{sec: graph of groups}

%
%%%%%%%%%%%%%%%%%%%%%%%%%%%%%%%%%%%%%%%%%%%%%%%%%%%%%%%%%%%%%%%%%%%%%%%%%%%%%%%%%%%%%
%%%%%%%%%%%%%%%%%%%%%%%%%%%%%%%%%%%%%%%%%%%%%%%%%%%%%%%%%%%%%%%%%%%%%%%%%%%%%%%%%%%%%

Now that we have proved \autoref{intro:thm:lifting} for morphisms with ``small energy'' (\autoref{res: recap sc}), we start to work towards the proof of the general case. In this section, we cover the first set of tools that we will need: graphs of actions and the Rips--Sela machine for groups acting on $\R$-trees.

\subsection{$G$-trees}

Let $G$ be a group.
A \emph{$G$-tree} is a simplicial tree $S$ endowed with a simplicial action of $G$ without inversion.
In this context, a subgroup of $G$ is \emph{elliptic} if it fixes a point.
The action of $G$ is \emph{minimal} if $S$ does not contain any proper $G$-invariant subtree.
Given a $G$-tree, one of the following holds: $G$ has a global fixed point, $G$ fixes an end, or $G$ contains loxodromic elements and there exists a minimal $G$-invariant subtree.
A \emph{splitting} of $G$ is a non-trivial minimal $G$-tree.
Bass--Serre theory builds a correspondence between minimal $G$-trees and \emph{graph of groups decompositions of $G$} \cite{Serre:1980aa}.
In this article, we favor the point of view of $G$-trees.

\subsection{JSJ decomposition}

Let $G$ be a group and $\mathcal H$ a collection of subgroups of $G$. Recall that $G$ is \emph{finitely generated relative to $\mathcal H$} if it is generated by the union of the groups in $\mathcal H$ and finitely many additional elements. Moreover, $G$ is \emph{freely indecomposable} (respectively, \emph{one-ended}) \emph{relative to $\mathcal H$} if it does not act on a tree with trivial (respectively, finite) edge groups, such that the groups in $\mathcal H$ are elliptic. We recall that, unless $G = \Z$ and $\mathcal H$ is trivial, $G$ being freely indecomposable relative to $\mathcal H$ is equivalent to $G$ not splitting as $G = G_1 \ast G_2$, where $G_i \neq \{1\}$ and each element of $\mathcal H$ is conjugated into $G_1$ or $G_2$.

We will always use $\mathcal A$ to denote the collection of abelian subgroups of $G$ (in the literature on JSJ decompositions, more general families are allowed).

An \emph{$(\mathcal A,\mathcal H)$-tree} is a simplicial $G$-tree such that each edge stabilizer belongs to $\mathcal A$ (i.e., is abelian) while the subgroups in $\mathcal H$ are elliptic.
A subgroup $E$ of $G$ is $(\mathcal{A}, \mathcal{H})$-\emph{universally elliptic}, if it is elliptic in every $(\mathcal A, \mathcal H)$-tree. In accordance with the literature \cite{Guirardel:2017te}, we will just talk about \emph{universally elliptic} subgroups when $\mathcal{A}$ and $\mathcal{H}$ are clear from the context.
An $(\mathcal A, \mathcal H)$-tree is \emph{universally elliptic} if its edge stabilizers are universally elliptic.

Given two $(\mathcal A, \mathcal H)$-trees $S_1$ and $S_2$, we say that $S_1$ \emph{dominates} $S_2$ if there is a $G$-equivariant map $S_1 \to S_2$.
Such a map is a \emph{collapse map} if the pre-image of any subtree is still a subtree.
In this case, we also say that $S_1$ is a \emph{refinement} of $S_2$.
Two trees are \emph{compatible} if they have a common refinement.

\begin{defi}
\label{def: JSJ tree}
	The \emph{JSJ deformation space of $G$ over $\mathcal A$ relative to $\mathcal H$} is the collection of universally elliptic $(\mathcal A,\mathcal H)$-trees which dominate every other universally elliptic $(\mathcal A, \mathcal H)$-tree.
	A point of the JSJ deformation space is called a \emph{JSJ tree}.
	The corresponding graph of groups decomposition of $G$ is called a \emph{JSJ decomposition}.
	A vertex stabilizer of a JSJ tree is \emph{rigid} if it is universally elliptic and \emph{flexible} otherwise.
    (Note that edge stabilizers are universally elliptic by assumption, but vertex stabilizers need not.)
\end{defi}

\begin{defi}
\label{defi:TQH}
	A subgroup $Q$ of $G$ is \emph{quadratically hanging}, or simply \emph{QH} (over $\mathcal A$ relative to $\mathcal H$) if 
	\begin{enumerate}
		\item $Q$ is the stabilizer of a vertex $v$ of an $(\mathcal A, \mathcal H)$-tree,
		\item $Q \cong \pi_1(\Sigma)$, where $\Sigma$ is a compact hyperbolic $2$-orbifold;
		\item each edge group incident to $v$ is an \emph{extended boundary subgroup}: by definition, this means that it is either finite or contained in a boundary subgroup $B$ of $\pi_1(\Sigma)$.
	\end{enumerate}
	Although it is not explicit in the terminology, the orbifold $\Sigma$ is part of the structure of a QH subgroup.
\end{defi}

QH subgroups are usually defined by allowing a fiber, that is, $Q$ fits in a short exact sequence $1 \to F \to Q \to \pi_1(\Sigma) \to 1$. In this paper, the fiber is always trivial, hence our more restrictive definition.

Recall that a group $G$ is \emph{Conjugately Separated Abelian} (\emph{CSA}) if every maximal abelian subgroup of $G$ is malnormal.
For instance, torsion-free hyperbolic groups are CSA \cite[Proposition~12]{Myasnikov:1996aa}.

\begin{theo}[See Guirardel-Levitt {\cite[Theorem~9.5]{Guirardel:2017te}}]
\label{res: jsj - existence}	
	Let $G$ be a finitely generated, torsion-free, CSA group.
	Let $\mathcal A$ be the class of all abelian subgroups of $G$.
	Let $\mathcal H$ be a collection of subgroups of $G$ such that $G$ is one-ended relative to $\mathcal H$.
	Then the JSJ deformation space of $G$ over $\mathcal A$ relative to $\mathcal H$ exists.
	Moreover, every flexible vertex group is either abelian or QH.
	
	The JSJ deformation space contains a \emph{canonical JSJ tree} that is invariant under all automorphisms of $G$ preserving $\mathcal H$, and is compatible with every $(\mathcal A, \mathcal H)$-tree.
\end{theo}

A model of the canonical JSJ tree is the collapse of the tree of cylinders. We will only need that a canonical JSJ tree exists, so the specific model does not matter.

%%%%%%%%%%%%%%%%%%%%%%%%%%%%%%%%%%%%%%%%%%%%%%%%%%%%%%%%%%%%%%%%%%%%%%%%%%%%%%%%%%%%%
%%%%%%%%%%%%%%%%%%%%%%%%%%%%%%%%%%%%%%%%%%%%%%%%%%%%%%%%%%%%%%%%%%%%%%%%%%%%%%%%%%%%%
%
\subsection{Modular group}
%
%%%%%%%%%%%%%%%%%%%%%%%%%%%%%%%%%%%%%%%%%%%%%%%%%%%%%%%%%%%%%%%%%%%%%%%%%%%%%%%%%%%%%%%%%%%%%%%%%%%%%%%%%%%%%%%%%%%%%%%%%%%%%%%%%%%%%%%%%%%%%%%%%%%%%%%%%%%%%%%%%%%%%%%%%%

\label{sec: modular group}
Let $G$ be a finitely generated, torsion-free, CSA group.
Let $\mathcal A$ stand for the class of all abelian subgroups of $G$, while $\mathcal H$ is an arbitrary class of subgroups of $G$.
We assume that $G$ is one-ended relative to $\mathcal H$.

\paragraph{Dehn twist.}
Consider a one-edge splitting of $G$ over abelian groups of the form $G = A \ast_C B$ or $G = A \ast_C$.
Let $u \in G\setminus\{1\}$ be an element centralizing $C$.
A \emph{Dehn twist} by $u$ is an automorphism which fixes $A$ and 
\begin{itemize}
	\item conjugates $B$ by $u$, in the amalgamated product case;
	\item sends the stable letter $t$ to $tu$, in the HNN case.
\end{itemize}
We say that such a Dehn twist is \emph{relative to $\mathcal H$} if every subgroup $H \in \mathcal H$ is conjugated into $A$ or $B$.

\begin{defi}
\label{def: modular group}
	The \emph{modular group of $G$ relative to $\mathcal H$} (or simply the \emph{modular group of $G$}) is the subgroup of $\aut G$ generated by all the Dehn twists relative to $\mathcal H$.
	We denote it by $\mcg{G, \mathcal H}$ or simply $\mcg G$ if $\mathcal H$ is empty.
\end{defi}

\begin{rema}
	By definition, for every $H \in \mathcal H$, the restriction to $H$ of any automorphism $\alpha \in \mcg{G, \mathcal H}$ coincides with the conjugation by an element of $G$.
\end{rema}

Let $S$ be an $(\mathcal A, \mathcal H)$-tree.
Given an edge $e$ of $S$, a \emph{Dehn twist over $e$} is a Dehn twist in the one-edge splitting obtained by collapsing all the edges of $S$ which are not in the orbit of $e$.
Let $v$ be a vertex of $S$.
Any automorphism $\alpha_v$ of the vertex group $G_v$ which acts by conjugacy on the adjacent edge groups can be extended to an automorphism $\alpha$ of $G$.
We call $\alpha$ the \emph{standard extension} of $\alpha_v$; this is defined up to inner automorphism, see for instance Perin \cite[Section~3.1]{Perin:2008aa}.
Assume now that $G_v$ is abelian.
Let $E_v$ be the subgroup of $G_v$ generated by the stabilizer of every edge in $S$ adjacent to $v$.
The \emph{peripheral subgroup} of $v$ is the subgroup
\begin{equation}
\label{eq - peripheral subgroup}
	K_v = \bigcap_{\phi} \ker \phi,
\end{equation}
where $\phi$ runs over all morphisms $\phi \colon G_v \to \Z$ whose kernel contains $E_v$.

Assume now that $S$ is the canonical JSJ tree of $G$ over $\mathcal A$ relative to $\mathcal H$ (as in \autoref{res: jsj - existence}).
It follows from the fact that $S$ is compatible with every $(\mathcal A, \mathcal H)$-tree that $\mcg{G, \mathcal H}$ is equal to the subgroup of $\aut G$ generated by
\begin{itemize}
	\item inner automorphisms,
	\item Dehn twists over edges of $S$,
	\item standard extensions of automorphisms of QH vertex groups (called \emph{surface type automorphisms}),
	\item standard extensions of automorphisms of abelian vertex groups $G_v$ that fix the peripheral subgroup $K_v$ (called \emph{generalized Dehn twists}).
\end{itemize}

%%%%%%%%%%%%%%%%%%%%%%%%%%%%%%%%%%%%%%%%%%%%%%%%%%%%%%%%%%%%%%%%%%%%%%%%%%%%%%%%%%%%%
%%%%%%%%%%%%%%%%%%%%%%%%%%%%%%%%%%%%%%%%%%%%%%%%%%%%%%%%%%%%%%%%%%%%%%%%%%%%%%%%%%%%%
%
\subsection{Graphs of actions}
%
%%%%%%%%%%%%%%%%%%%%%%%%%%%%%%%%%%%%%%%%%%%%%%%%%%%%%%%%%%%%%%%%%%%%%%%%%%%%%%%%%%%%%
%%%%%%%%%%%%%%%%%%%%%%%%%%%%%%%%%%%%%%%%%%%%%%%%%%%%%%%%%%%%%%%%%%%%%%%%%%%%%%%%%%%%%

Recall that an $\R$-tree is a $0$-hyperbolic geodesic metric space. Graphs of actions were formalized by Levitt to decompose the action of a given group on an $\R$-tree \cite{Levitt:1994aa}.
We follow here \cite[Definition~4.3]{Guirardel:2004li}.

\begin{defi}
\label{def: graph of actions}
	A \emph{graph of actions on $\R$-trees} $\Lambda$ consists of the following data.
	\begin{enumerate}
		\item A group $G$ and a $G$-tree $S$ (called the \emph{skeleton} of the graph of actions).
		\item An $\R$-tree $Y_v$ (called the \emph{vertex tree}) for each vertex $v$ of $S$.
		\item An \emph{attaching point} $p_e \in Y_{v}$ for each oriented edge $e$ of $S$ with endpoint $v$.
	\end{enumerate}
	Moreover, all these data should be invariant under $G$, i.e.,
	\begin{itemize}
		\item $G$ acts on the disjoint union of the vertex trees so that the projection $Y_v \mapsto v$ is equivariant,
		\item for every edge $e$ of $S$, for every $g \in G$, we have $p_{ge}=gp_e$. 
	\end{itemize}
    If all the edge groups of $S$ are abelian, we speak of a \emph{graph of actions over abelian groups}.
\end{defi}

Given a graph of actions one builds an $\R$-tree $T_\Lambda$ endowed with an action by isometries of $G$: $T_\Lambda$ is obtained from the disjoint union of all trees $Y_v$ by identifying, for every edge $e$ of $S$, the attaching points $p_e \in Y_{v_1}$ and $p_{\bar e} \in Y_{v_2}$ where $v_1$ (\resp, $v_2$) is the terminal (\resp, initial) vertex of $e$.
We say that the action of $G$ on an $\R$-tree $T$ decomposes as a graph of actions if there is a graph of actions $\Lambda$ and a $G$-equivariant isometry from $T$ onto $T_\Lambda$.

\begin{rema}
	If no confusion can arise, we write $\Lambda$ for both the graph of actions and its underlying graph of groups decomposition.
\end{rema}

\begin{defi}
\label{def: action type on R-tree}
	Let $H$ be a group acting by isometries on an $\R$-tree $Y$.
	We say that the action is 
	\begin{enumerate}
		\item \emph{simplicial} if $Y$ is simplicial and the action of $H$ on $Y$ is simplicial;
		\item \emph{axial} if $Y$ is a line and the image of $H$ in $\isom Y$ is a finitely generated group acting with dense orbits on $Y$;
		\item of \emph{Seifert type} if the action has a kernel $N$ and the faithful action of $H/N$ on $Y$ is dual to an arational measured foliation on a closed $2$-orbifold with boundary.
		\end{enumerate}
\end{defi}

We refer the reader to \cite[Exposé~11]{Fathi:2012wo} for the definition of arational measured foliations.

The existence of a decomposition as a graph of actions is provided by the next statement, originally due to Rips and Sela \cite{Rips:1994jg}. Recall that the action of a group on an $\R$-tree is \emph{super-stable} if it satisfies the following property: given any two arcs $ I \subset J$, if the pointwise stabilizer of $J$ is non-trivial, then it coincides with the pointwise stabilizer of $I$.

\begin{theo}[{\cite[Theorem~5.1]{Guirardel:2008ik}}]
\label{res: splitting relative version}
	Let $G$ be a group and $\mathcal H$ a finite collection of subgroups of $G$ such that $G$ is finitely generated relative to $\mathcal H$.
	Let $T$ be an $\R$-tree endowed with a minimal, super-stable action of $G$ so that each subgroup in $\mathcal H$ is elliptic.
	Then one of the following holds
	\begin{itemize}
		\item The group $G$ splits as a free product relative to $\mathcal H$.
		\item The action of $G$ on $T$ splits as a graph of actions where each vertex action is either simplicial, axial, or of Seifert type.
	\end{itemize}	
\end{theo}

%%%%%%%%%%%%%%%%%%%%%%%%%%%%%%%%%%%%%%%%%%%%%%%%%%%%%%%%%%%%%%%%%%%%%%%%%%%%%%%%%%%%%
%%%%%%%%%%%%%%%%%%%%%%%%%%%%%%%%%%%%%%%%%%%%%%%%%%%%%%%%%%%%%%%%%%%%%%%%%%%%%%%%%%%%%
%
\section{Limit groups}
\label{sec: limit}
%
%%%%%%%%%%%%%%%%%%%%%%%%%%%%%%%%%%%%%%%%%%%%%%%%%%%%%%%%%%%%%%%%%%%%%%%%%%%%%%%%%%%%%
%%%%%%%%%%%%%%%%%%%%%%%%%%%%%%%%%%%%%%%%%%%%%%%%%%%%%%%%%%%%%%%%%%%%%%%%%%%%%%%%%%%%%
\label{sec: space marked groups}

After the Rips--Sela machine, the second set of tools that we will need for the proof of \autoref{intro:thm:lifting} is limit groups and the shortening argument. We will use the topological approach that views limit groups as limits of certain groups in the compact space of marked groups, following \cite{Champetier:2000jx, Champetier:2005ic}.

%%%%%%%%%%%%%%%%%%%%%%%%%%%%%%%%%%%%%%%%%%%%%%%%%%%%%%%%%%%%%%%%%%%%%%%%%%%%%%%%%%%%%
%
\subsection{The space of marked groups}
%
%%%%%%%%%%%%%%%%%%%%%%%%%%%%%%%%%%%%%%%%%%%%%%%%%%%%%%%%%%%%%%%%%%%%%%%%%%%%%%%%%%%%%

\paragraph{Definition.}
Let $G$ be a finitely generated group.
A \emph{group marked by $G$} is a pair $(H,\varphi)$ where $H$ is a group and $\varphi \colon G \onto H$ is an epimorphism from $G$ onto $H$.
If there is no ambiguity, we omit the group $G$ and simply say that $(H,\varphi)$ is a \emph{marked group}.
Given two groups $(H_1,\varphi_1)$ and $(H_2,\varphi_2)$ marked by $G$, we say that $(H_1,\varphi_1)$ is a \emph{cover} of $(H_2,\varphi_2)$ or $(H_2, \varphi_2)$ is \emph{quotient} of $(H_1, \varphi_1)$ and write $(H_2,\varphi_2) \prec (H_1,\varphi_1)$ if $\ker \varphi_1 \subset \ker \varphi_2$.
The relation $\prec$ defines a pre-order on the set of marked groups (note that equality is allowed).
Two marked groups $(H_1,\varphi_1)$ and $(H_2,\varphi_2)$ are \emph{equivalent}, if $(H_1,\varphi_1)$ is a cover of $(H_2,\varphi_2)$ and vice versa.

\begin{defi}
\label{def: space of marked groups}
	The \emph{space of groups marked by $G$} (or simply the \emph{space of marked groups}) is the set of equivalence classes of groups marked by $G$.
We denote it by $\mathfrak G(G)$.
\end{defi}

\begin{nota}
	In the remainder of the paper, we make the following abuse of notation: given a group $(H,\varphi)$ marked by $G$, we still denote by $(H,\varphi)$ its equivalence class in $\mathfrak G(G)$.

\end{nota}

\paragraph{Topology.}
We endow the space $\mathfrak G(G)$ with a topology defined as follows.
Let $(H,\varphi)$ be a group marked by $G$.
Let $U$ be a finite subset of $G$.
We let 
\begin{equation*}
	\mathfrak V_U(H,\varphi) = \set{(H',\varphi') \in \mathfrak G(G)}{U \cap \ker \varphi = U \cap \ker \varphi'}.
\end{equation*}
When $U$ runs over all finite subsets of $G$, the collection $\{\mathfrak V_U(H,\varphi)\}_{U \subset G}$ describes a basis of open neighborhoods of $(H,\varphi)$.
The space $\mathfrak G(G)$ endowed with this topology is metrizable and compact.
Note that if $(L, \eta)$ is the limit of a sequence  $(H_k, \varphi_k)$ of marked groups, then $L$ is the quotient of $G$ by the \emph{stable kernel} of $(\varphi_k)$, i.e., the normal subgroup
\begin{equation}
\label{eqn: stable kernel}
	K = \set{\gamma \in G}{\varphi_k(\gamma) = 1,\ \text{for all but finitely many}\ k \in \N}.
\end{equation}

\begin{rema}
	Sometimes it is convenient to consider pairs $(H,\varphi)$, where the morphism $\varphi \colon G \to H$ is not necessarily onto.
	In this situation, we make an abuse of notation and write $(H, \varphi)$ to mean $(\varphi(G), \varphi)$.
	In particular, we say that a sequence $(H_k, \varphi_k)$ of such pairs converges to $(L, \eta)$ and write 
	\begin{equation*}
		(L, \eta) = \lim_{k \to \infty} (H_k,\varphi_k),
	\end{equation*}
	if $(\varphi_k(G), \varphi_k)$ converges to $(L, \eta)$ in $\mathfrak G(G)$.
\end{rema}

By construction, the pre-order $\prec$ induces an order on $\mathfrak G(G)$.
It is compatible with the topology in the sense that the set 
\begin{equation*}
	\set{\left((H_1,\varphi_1), (H_2, \varphi_2)\right) \in \mathfrak G(G) \times\mathfrak G(G)}{ (H_1, \varphi_1)\prec (H_2, \varphi_2) }
\end{equation*}
is closed for the product topology.

\paragraph{Changing the marker.}
Given an epimorphism $\pi \colon G \onto G'$, we define a map $\pi_\ast \colon \mathfrak G(G') \to \mathfrak G(G)$ by sending $(H,\varphi)$ to $(H,\varphi \circ \pi)$.
This is an order-preserving homeomorphism from $\mathfrak G(G')$ onto its image, which is automatically closed.
In addition, if the kernel of $\pi \colon G \to G'$ is the normal closure of a \emph{finite} subset of $G$, then the image of $\pi_\ast$ is an \emph{open} subset of $\mathfrak G(G)$.

\paragraph{Strong cover of a marked group.}
In general, limit groups (\autoref{def: limit group - general}) need not be finitely presented, which creates some technical challenges.
In order to bypass these, it is common to replace limit groups with finitely presented covers.
For our purpose, we need these covers to reflect a given graph of groups decomposition.
This motivates the next definitions.

\begin{defi}
\label{def: strong cover}
	Let $G$ be a finitely generated group.
	Let $(H,\varphi)$ and $(\hat H, \hat \varphi)$ be two groups marked by $G$.
	Let $S$ (\resp, $\hat S$) be a splitting of $H$ (\resp, $\hat H$) over abelian groups.
	We say that the triple $(\hat H,\hat \varphi,\hat S)$ is a \emph{strong cover} of $(H,\varphi, S)$ if there exist an epimorphism $\zeta \colon \hat H \onto H$ and a 
	$\zeta$-equivariant map $f \colon \hat S \to S$ such that the following hold.
	\begin{enumerate}
		\item $\zeta \circ \hat \varphi = \varphi$. In particular, $(H, \varphi) \prec (\hat H, \hat \varphi)$.
		\item The map $f$ induces an isomorphism from $\hat S/\hat H$ onto $S/H$.
		\item The morphism $\zeta$ is injective when restricted to any edge group of $\hat S$.
		\item Let $\hat v$ be a vertex of $\hat S$ and $v = f(\hat v)$ its image in $S$.
		If $H_v$ is finitely presented (\resp, abelian), then $\zeta$ induces an isomorphism from $\hat H_{\hat v}$ onto $H_v$ (\resp, an embedding from $\hat H_{\hat v}$ into $H_v$).
	\end{enumerate}
\end{defi}

\begin{defi}
	Let $G$ be a finitely generated group.
	Let $(H,\varphi)$ be a group marked by $G$ and $S$ a splitting of $H$ over abelian groups.
	Let  $(H_i, \varphi_i, S_i)$ be a sequence of strong covers of $(H, \varphi, S)$.
	Denote by $\zeta_i \colon H_i \to H$ and $f_i \colon S_i \to S$ the corresponding underlying maps.	
	We say that $(H_i, \varphi_i, S_i)$ is a  \emph{directed system of strong covers} for $(H, \varphi, S)$ if for every $i,j \in \N$ with $i \leq j$, there is a $G$-equivariant map $h_{j,i} \colon S_i \to S_j$ such that
	\begin{equation*}
		h_{k,j}, \circ h_{j,i} = h_{k,i}
		\quad \text{and} \quad
		f_j \circ h_{j,i} = f_i, \quad \forall i,j,k \in \N, \ \text{with } i \leq j\leq k.
	\end{equation*}
\end{defi}

\begin{defi}
\label{def: conv strong cover}
	Let $G$ be a finitely generated group.
	Let $(H,\varphi)$ be a group marked by $G$ and $S$ a splitting of $H$ over abelian groups.
	Let $(H_i, \varphi_i, S_i)$ be a directed systems of strong covers for $(H,\varphi, S)$.
	We write $\zeta_i \colon H_i \to H$, $h_{j,i} \colon S_i \to S_j$, and $f_i \colon S_i \to S$ for the corresponding underlying maps.
	We say that the sequence $(H_i, \varphi_i, S_i)$ \emph{converges to} $(H, \varphi, S)$ if the following hold.
	\begin{itemize}
		\item $(H_i, \varphi_i)$ converges to $(H, \varphi)$ in the space of marked groups.
		\item For every $i \in \N$, for every edge $e$ in $S_i$, we have
		\begin{equation*}
			H_{f_i(e)} = \bigcup_{j \geq i} \zeta_j\left(H_{j, h_{j,i}(e)}\right).
		\end{equation*}
		\item For every $i \in \N$, for every vertex $v$ in $S_i$, we have
		\begin{equation*}
			H_{f_i(v)} = \bigcup_{j \geq i} \zeta_j\left(H_{j, h_{j,i}(v)}\right).
		\end{equation*}
	\end{itemize}
\end{defi}

The next lemma ensures the existence of a converging directed system of \emph{finitely presented} strong covers.
The proof works verbatim as in \cite[Lemma~7.1]{Weidmann:2019ue}.

\begin{lemm}
\label{res: graph of groups cover}
	Let $G$ be a finitely presented group.
	Let $(H,\varphi)$ be a group marked by $G$ and $S$ a splitting of $H$ over abelian groups.
	Then there exists a directed system of strong covers $(H_i, \varphi_i, S_i)$ which converges to $(H, \varphi, S)$ such that for every $i \in \N$, the group $H_i$ is finitely presented.
\end{lemm}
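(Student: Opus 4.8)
The plan is to mimic the construction in \cite[Lemma~7.1]{Weidmann:2019ue}, producing the directed system by a diagonal/exhaustion argument over the finitely many vertex and edge orbits of $S$. First I would fix a finite presentation $G = \langle U \mid \mathcal R\rangle$ and a choice of finitely many orbit representatives $v_1,\dots,v_m$ of vertices and $e_1,\dots,e_n$ of edges of $S$, together with chosen lifts of the incident edges so that the graph of groups $S/H$ is described by the vertex groups $H_{v_s}$, edge groups $H_{e_t}$, and the boundary monomorphisms. Since $H$ is a finitely generated quotient of the finitely presented group $G$, each vertex group $H_{v_s}$ is itself generated by a finite subset of the image of $U$ (after conjugating the marking appropriately), and likewise each edge group $H_{e_t}$ is generated by finitely many elements; however, these groups need not be finitely presented, so I would write each as an increasing union $H_{v_s} = \bigcup_i A^{(i)}_{v_s}$ and $H_{e_t} = \bigcup_i A^{(i)}_{e_t}$ of finitely generated (hence, after taking finitely presented covers, finitely presented) subgroups, compatibly with the inclusions $H_{e_t}\hookrightarrow H_{v_s}$ coming from the incidences. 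The key point is that at stage $i$ one only imposes finitely many of the relations from the defining data of the graph of groups, so the resulting group $H_i$ — built as the fundamental group of the graph of groups with vertex groups the $i$-th finitely presented approximations and edge groups the $i$-th approximations — is finitely presented.

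The construction then proceeds as follows. For each $i$, let $S_i$ be the Bass–Serre tree of the graph of groups with underlying graph $S/H$, vertex groups given by (finitely presented covers of) the $A^{(i)}_{v_s}$, and edge groups the $A^{(i)}_{e_t}$, with the obvious boundary maps; let $H_i = \pi_1$ of this graph of groups acting on $S_i$. There are natural quotient maps $\zeta_i \colon H_i \onto H$ (induced by the inclusions $A^{(i)}_{v_s}\hookrightarrow H_{v_s}$, $A^{(i)}_{e_t}\hookrightarrow H_{e_t}$ and the universal property of the fundamental group of a graph of groups) and $\zeta$-equivariant maps $f_i \colon S_i \to S$, as well as, for $i\le j$, $G$-equivariant maps $h_{j,i}\colon S_i \to S_j$ induced by the inclusions of approximations, satisfying the required compatibility $h_{k,j}\circ h_{j,i}=h_{k,i}$ and $f_j\circ h_{j,i}=f_i$. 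One checks the four conditions of \autoref{def: strong cover}: condition (1) is immediate from the construction; condition (2) holds because all $S_i$ and $S$ share the same quotient graph $S/H$; condition (3) holds because $\zeta_i$ restricted to an edge group is the inclusion $A^{(i)}_{e_t}\hookrightarrow H_{e_t}$; and condition (4) is arranged by choosing the approximations so that for a vertex with $H_{v_s}$ abelian one takes finitely generated (hence finitely presented) subgroups and $\zeta_i$ restricts to the inclusion, while if $H_{v_s}$ happens to already be finitely presented one may simply take $A^{(i)}_{v_s}=H_{v_s}$ for $i$ large. Finally, for the convergence (\autoref{def: conv strong cover}): the exhaustion $H_{v_s}=\bigcup_i A^{(i)}_{v_s}$ and $H_{e_t}=\bigcup_i A^{(i)}_{e_t}$ gives exactly the stated equalities $H_{f_i(v)} = \bigcup_{j\ge i}\zeta_j(H_{j,h_{j,i}(v)})$ and the edge analogue; and $(H_i,\varphi_i)\to(H,\varphi)$ in $\mathfrak G(G)$ follows because any element of $G$ in the stable kernel of $(\varphi_i)$ must eventually map to $1$ — its image in $H$ lies in the kernel of the corresponding word equation, which is captured at a finite stage once enough relations of the graph of groups are imposed — so the stable kernel is exactly $\ker\varphi$.

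The main obstacle I anticipate is the bookkeeping needed to make the exhaustions of the vertex and edge groups \emph{compatible} with the incidences and with the marking by $G$: one needs, for each edge $e_t$ incident to a vertex $v_s$, that the boundary monomorphism $H_{e_t}\hookrightarrow H_{v_s}$ restricts to a map $A^{(i)}_{e_t}\to A^{(i)}_{v_s}$, and simultaneously that the generators used in the approximations all come from images of the fixed generating set $U$ of $G$, so that the maps $\varphi_i\colon G\to H_i$ are genuinely markings and the convergence in $\mathfrak G(G)$ makes sense. This is a purely combinatorial matching problem — finitely many orbits, countably many approximation stages — and is handled exactly as in \cite{Weidmann:2019ue}; I would simply cite that the argument there goes through verbatim, after checking that nothing in it used more than: $G$ finitely presented, $S$ a splitting over abelian (indeed arbitrary) subgroups, and the standard fact that a finitely generated group is an increasing union of its finitely generated subgroups together with the fact that every finitely generated group is a quotient of a finitely presented one.
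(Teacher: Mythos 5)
Your proposal is correct and follows essentially the same route as the paper: the paper's own proof consists precisely of the remark that the argument of \cite[Lemma~7.1]{Weidmann:2019ue} goes through verbatim in this setting, and your sketch reconstructs that construction (finitely presented approximations of the vertex and edge groups of $S/H$, compatible with the incidences and the marking by $G$) before deferring to the same citation. The bookkeeping issues you flag (compatibility of the exhaustions with the boundary monomorphisms and with the marking, and convergence in $\mathfrak G(G)$) are exactly the points handled in that reference, so nothing further is needed.
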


\begin{lemm}
\label{res: lifting modular automorphism}
	Let $G$ be a finitely generated group.
	Let $(H, \varphi)$ be a marked group, where $H$ is torsion-free, one-ended, and CSA.
	Let $S$ be the canonical JSJ tree of $H$.
	Let $(H_i, \varphi_i, S_i)$ be a directed system of strong covers that converges to $(H, \varphi, S)$.
	Denote by $\zeta_i \colon H_i \to H$ the corresponding underlying morphism.
	For every $\alpha \in \mcg{H}$, there is $i_0$ such that for every $i \geq i_0$, we can find $\alpha_i \in \aut{H_i}$ lifting $\alpha$, that is, $\zeta_i \circ \alpha_i = \alpha \circ \zeta_i$.
	Moreover, if $v$ is a vertex of $S_i$ whose image in $S$ is a rigid vertex, then $\alpha_i$ restricted to $H_{i,v}$ coincides with a conjugation in $H_i$.
\end{lemm}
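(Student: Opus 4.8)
The plan is to describe the modular group of $H$ via the explicit list of generators recalled after \eqref{eq - peripheral subgroup}: inner automorphisms, Dehn twists over edges of the canonical JSJ tree $S$, surface type automorphisms, and generalized Dehn twists associated with abelian vertex groups. Since $\mcg H$ is generated by these, and the property ``$\alpha$ lifts to some $H_i$ for all large $i$, compatibly with the rigid-vertex conclusion'' is preserved under composition once $i$ is taken large enough to work for both factors, it suffices to treat each of the four types of generators. Inner automorphisms lift trivially (use the inner automorphism of $H_i$ by a $\zeta_i$-preimage of the conjugating element), so the work is in the remaining three, all of which are \emph{standard extensions} of automorphisms supported on a single vertex group $H_v$ of $S$ (or Dehn twists, which are supported on a single edge). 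The key point is that a strong cover reflects the structure of $S$: by \autoref{def: strong cover}(2) each vertex and edge orbit of $S$ is hit by $S_i$, and by (3)--(4) the map $\zeta_i$ is injective on edge groups and, at vertices mapping to a finitely presented or abelian vertex, is an isomorphism resp.\ an embedding onto $H_v$. Because the system \emph{converges} to $(H,\varphi,S)$ in the sense of \autoref{def: conv strong cover}, for each fixed vertex or edge of $S_i$ the groups $\zeta_j(H_{j,\cdot})$ exhaust the corresponding group in $S$; this is what will let us realize the relevant data (twisting elements, surface mapping classes, abelian automorphisms) inside $H_j$ for $j$ large.

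Concretely, first I would fix a set of generators of $\mcg H$ as above and, for each one, identify the vertex $v$ (or edge $e$) of $S$ on which it is supported. For a Dehn twist by $u\in H$ over an edge $e$ of $S$: choose $i_0$ so that for $i\ge i_0$ the tree $S_i$ has an edge $e_i$ with $f_i(e_i)=e$ (possible since $f_i$ induces a bijection on orbits) and, using convergence, so that $u$ lies in the image $\zeta_i(H_{i,w_i})$ of the relevant incident vertex group; pick a preimage $u_i$ centralizing the preimage of the edge group (here one uses that $\zeta_i$ is injective on edge groups, so the centralizing condition can be arranged, possibly after enlarging $i_0$ using that $H$ is CSA and the edge group is abelian). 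The Dehn twist of $H_i$ over $e_i$ by $u_i$ then descends correctly under $\zeta_i$ by functoriality of the construction of standard extensions from one-edge splittings. For a surface type automorphism at a QH vertex $v$: by \autoref{res: jsj - existence} $H_v\cong\pi_1(\Sigma)$, and convergence plus \autoref{def: strong cover}(4) applied to the finitely presented group $\pi_1(\Sigma)$ forces $\zeta_i$ to be an isomorphism $H_{i,v_i}\xrightarrow{\sim} H_v$ for $i$ large on the vertex $v_i$ over $v$; the boundary/edge subgroups correspond under $\zeta_i$, so the mapping class can be transported to $H_{i,v_i}$ and extended. For a generalized Dehn twist at an abelian vertex $v$: here $\zeta_i$ embeds $H_{i,v_i}$ into $H_v$, and convergence guarantees this embedding is eventually onto; the defining property (fixing the peripheral subgroup $K_v$, i.e.\ fixing $E_v$ up to finite index) transports because the incident edge groups correspond, so again we get a lift, extended by the standard extension. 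In each case the standard extension of an automorphism supported away from a rigid vertex $w$ restricts to the identity (hence to a conjugation — here trivial) on $H_{i,w}$ up to the usual inner-automorphism ambiguity; composing lifts of the finitely many generators and absorbing the resulting inner ambiguities gives $\alpha_i$ with $\zeta_i\circ\alpha_i=\alpha\circ\zeta_i$ and the stated behaviour on rigid vertices. Take $i_0$ to be the maximum of the finitely many thresholds.

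The main obstacle I expect is the bookkeeping around \emph{inner-automorphism ambiguity}: standard extensions are only well defined up to conjugation in the ambient group, and one must check that these ambiguities can be chosen coherently so that the composite lift $\alpha_i$ satisfies $\zeta_i\circ\alpha_i=\alpha\circ\zeta_i$ \emph{on the nose} and restricts to a genuine conjugation on each rigid vertex group — not merely up to passing to a different representative in the outer class. This is handled by noting that $\zeta_i$ is surjective, so any inner automorphism of $H$ is $\zeta_i$-compatible with an inner automorphism of $H_i$; one fixes representatives of $\alpha$ and of each generator, lifts the conjugating elements along $\zeta_i$, and propagates the choices through the composition. The second, milder difficulty is ensuring the twisting elements and the centralizing conditions can be met inside $H_i$ simultaneously; this is exactly where the convergence hypothesis (exhaustion of vertex and edge groups of $S$ by their $\zeta_j$-images) is used, together with the CSA property of $H$ to control centralizers of the abelian edge groups.
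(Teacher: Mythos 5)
Your overall strategy coincides with the paper's: reduce to the generating set of $\mcg{H}$ (inner automorphisms, Dehn twists over edges of the canonical JSJ tree $S$, surface type automorphisms, generalized Dehn twists) and lift each generator for $i$ large using the strong cover and convergence axioms; the paper handles the inner, QH and abelian-vertex cases by reference to earlier work, and your sketch of those is in the same spirit. One secondary caveat: convergence does \emph{not} make the embedding $H_{i,v_i} \into H_v$ at an abelian vertex eventually onto, since an increasing union equal to $H_v$ need not stabilize when $H_v$ is not finitely generated; so "eventually onto" should not be relied upon there.

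The genuine gap is in the Dehn twist case, which is precisely the step the paper treats in detail. You pick a preimage $u_i$ of the twister $u$ and assert that "the centralizing condition can be arranged" because $\zeta_i$ is injective on edge groups and $H$ is CSA. This does not follow: what is needed is the relation $[u_i, h] = 1$ in $H_i$ for every $h \in H_{i,e_i}$, whereas all you know is $\zeta_i([u_i,h]) = [u, \zeta_i(h)] = 1$, so the commutator only lies in $\ker \zeta_i$, which is typically large. Injectivity of $\zeta_i$ on the edge group and the CSA property of $H$ give no control on commutation inside the cover $H_i$, which need not be CSA, so as written the Dehn twist of $H_i$ over $e_i$ by $u_i$ is simply not defined. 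The paper's argument avoids this by a structural observation: since $S$ is the \emph{canonical} JSJ tree, one endpoint $v$ of $e$ has $H_v$ equal to the maximal abelian subgroup containing $H_e$, so the twister $u$ already lies in $H_v$; taking $\tilde u \in G$ with $\varphi(\tilde u) = u$, convergence gives that $u_j = \varphi_j(\tilde u)$ lies in $H_{j,v_j}$ for all but finitely many $j$, and by item (4) of \autoref{def: strong cover} the group $H_{j,v_j}$ embeds into the abelian group $H_v$, hence is abelian and contains $H_{j,e_j}$. Commutation is then automatic, no choice of preimage and no CSA argument is needed, and the resulting Dehn twist of $H_j$ over $e_j$ by $u_j$ lifts $\alpha$ and acts by conjugation on all vertex groups, in particular on those lying over rigid vertices. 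Your proposal is missing this idea (or any substitute for it), and that is where it breaks down.
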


\begin{proof}
	The proof is a variation on \cite[Lemma~7.47]{Coulon:2021wg}.
	It suffices to prove the statement for every element in the generating set of $\mcg H$.
	Inner automorphisms, standard extensions of automorphisms of QH vertex groups and abelian vertex groups are handled as in \cite[Lemma~7.47]{Coulon:2021wg}.
	The only (minor) difference concerns Dehn twists.
	Let $e$ be an edge of $S$.
	Let $u$ be an element centralizing $H_e$ and $\alpha$ the Dehn twist of $H$ over $e$ by $u$.
	Since $S$ is the canonical JSJ tree of $L$, one of the endpoints $v$ of $e$ is such that $H_v$ is the maximal abelian group containing $H_e$.
	In particular, $u$ belongs to $H_v$.
	Let $\tilde u \in G$ be a pre-image of $u$.
	Let $e_i$ be a pre-image of $e$ in some $S_i$ and $v_i$ the endpoint of $e_i$ lifting $v$.
	For every $j \geq i$, we write $v_j$ (\resp, $e_j$) for the image of $v_i$ (\resp, $e_i$) in $S_j$.
	Note that $u_j = \varphi_j(\tilde u)$ belongs to $H_{j, v_j}$ for all but finitely many $j$.
	Since $H_v$ is abelian, so is $H_{j, v_j}$.
	In particular, $u_j$ centralizes the stabilizer of $e_j$.
	We choose for $\alpha_j$ the Dehn twist of $H_j$ over $e_j$ by $u_j$. 
	It is a lift of $\alpha$, provided $j$ is sufficiently large.
	Note that $\alpha_j$ acts by conjugation on every vertex group of $S_j$, in particular, on the ones whose images in $H$ are rigid.
\end{proof}

%%%%%%%%%%%%%%%%%%%%%%%%%%%%%%%%%%%%%%%%%%%%%%%%%%%%%%%%%%%%%%%%%%%%%%%%%%%%%%%%%%%%%
%
\subsection{Definition of limit groups and first properties}
%
%%%%%%%%%%%%%%%%%%%%%%%%%%%%%%%%%%%%%%%%%%%%%%%%%%%%%%%%%%%%%%%%%%%%%%%%%%%%%%%%%%%%%

Let $\Gamma$ be a non-elementary, torsion-free, hyperbolic group and $X$ a Cayley graph of $\Gamma$.
Given $\lambda, \epsilon \in (0,1)$, we denote by $\mathfrak F(\lambda, \epsilon)$ the collection of all tight $C'(\lambda, \epsilon)$ strengthened small cancellation quotients of $\Gamma$ and their subgroups; recall that the small cancellation parameters depend on $X$, which we fixed once and for all. 
Let $G$ be a finitely generated group.
We write $\mathfrak F(G, \lambda, \epsilon)$ for the set of all groups $(H,\varphi)$ marked by $G$, where $H$ belongs to $\mathfrak F(\lambda, \epsilon)$.

\begin{defi}
\label{def: limit group - general}
	A \emph{limit group over the small cancellation quotients of $\Gamma$} is a marked group in the set
	\begin{equation*}
		\mathfrak L(G) = \bigcap_{\lambda, \epsilon \in (0,1)} \overline{\mathfrak F(G, \lambda, \epsilon)}.
	\end{equation*}
	where $ \overline{\mathfrak F(G, \lambda, \epsilon)}$ stands for the closure of $ \mathfrak F(G, \lambda, \epsilon)$ in $\mathfrak G(G)$.
\end{defi}

\begin{rema}
    The torsion-free Tarski monsters $\Gamma_\infty$ that are the object of our main results arise as direct limits of sequences $\Gamma_1 \onto \Gamma_2 \onto \Gamma_3 \onto \cdots$
    where each $\Gamma_i$ is a torsion-free non-elementary hyperbolic group, and each step is a small cancellation quotient of the previous group. 
    However, the $\Gamma_i$ are \emph{not} small cancellation quotients of a single hyperbolic group, and therefore $\Gamma_\infty$ is not a limit group in the sense of \autoref{def: limit group - general}. 
    Instead, limit groups will only appear in proofs by contradiction in the next section.
\end{rema}

\begin{voca*}
	If there is no ambiguity, we will simply refer to the marked groups in $\mathfrak L(G)$ as \emph{limit groups}.
	A sequence $(H_k, \varphi_k)$ of marked groups is an \emph{$\mathfrak F$-sequence} if there are sequences of positive numbers $(\lambda_k)$ and $(\epsilon_k)$ converging to zero, such that $(H_k, \varphi_k)$ belongs to $\mathfrak F(G, \lambda_k, \epsilon_k)$ for every $k \in \N$.
	Note that any subsequence of an $\mathfrak F$-sequence is still an $\mathfrak F$-sequence.
	Concretely, $(L, \eta)$ is a limit group if and only if it is the limit of an $\mathfrak F$-sequence.
\end{voca*}

Let $(H_k, \varphi_k)$ be an $\mathfrak F$-sequence.
According to \autoref{res: recap sc}, for all but finitely many $k \in \N$, the group $H_k$ is torsion-free and CSA.
These properties are closed in the space of marked groups.
Hence:

\begin{lemm}
\label{res: limit group torsion free and CSA}
    Every limit group in $\mathfrak L(G)$ is torsion-free and CSA. \qed
\end{lemm}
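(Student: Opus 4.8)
The plan is to combine \autoref{res: recap sc} with the elementary observation that both torsion-freeness and the CSA property define closed subsets of $\mathfrak{G}(G)$, so that they are inherited by limits of $\mathfrak{F}$-sequences.

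First I would unwind the definitions. By the discussion following \autoref{def: limit group - general}, a limit group $(L,\eta) \in \mathfrak{L}(G)$ is the limit of some $\mathfrak{F}$-sequence $(H_k,\varphi_k)$, so there are $\lambda_k, \epsilon_k \in (0,1)$ with $\lambda_k, \epsilon_k \to 0$ and $H_k \in \mathfrak{F}(\lambda_k, \epsilon_k)$. For every $k$ large enough that $\lambda_k < \lambda_0$ and $\epsilon_k < \epsilon_0$, the group $H_k$ is, by definition of $\mathfrak{F}(\lambda_k,\epsilon_k)$, a subgroup of some tight $C'(\lambda_k,\epsilon_k)$ strengthened small cancellation quotient $\bar{\Gamma}_k$ of $\Gamma$. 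By \autoref{res: recap sc}\ref{enu: recap sc - tf + action}, such a $\bar{\Gamma}_k$ is torsion-free and acts properly and cocompactly by isometries on a geodesic $\delta$-hyperbolic space; by the \v{S}varc--Milnor lemma it is therefore a torsion-free hyperbolic group, and in particular it is CSA \cite[Proposition~12]{Myasnikov:1996aa}. Since torsion-freeness and the CSA property both pass to subgroups, every $H_k$ with $k$ large is torsion-free and CSA.

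It then remains to check that these two properties are closed in $\mathfrak{G}(G)$. For torsion-freeness this is straightforward: the set of torsion-free marked groups equals $\bigcap_{n \geq 2}\{(H,\varphi) : \forall \gamma \in G,\ \varphi(\gamma)^n = 1 \Rightarrow \varphi(\gamma) = 1\}$, and each set in this intersection is closed, since if $\eta(\gamma)$ had order $n$ in $L$ then every marked group in the basic neighbourhood $\mathfrak{V}_{\{\gamma,\gamma^n,1\}}(L,\eta)$ would have the same defect. For the CSA property I would use the standard characterization that a group is CSA if and only if it is commutative transitive (equivalently, the centralizer of every non-trivial element is abelian) and satisfies the implication: for all $g$ and every $\delta \neq 1$, if $g\delta g^{-1}$ commutes with $\delta$ then $g$ commutes with $\delta$. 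Each of these is a universally quantified condition whose failure is witnessed by finitely many elements of $G$, hence detected on a basic neighbourhood $\mathfrak{V}_U(L,\eta)$ of any non-CSA marked group; thus the CSA marked groups form a closed set. Since $(H_k,\varphi_k) \to (L,\eta)$ with the $H_k$ eventually torsion-free and CSA, the limit $(L,\eta)$ has both properties.

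There is no substantial obstacle here; this is in essence a compactness argument, and indeed it is exactly the one indicated in the paragraph preceding the statement. The only points that need a little care are the bookkeeping that $\mathfrak{F}(\lambda,\epsilon)$ contains not only the small cancellation quotients but also their subgroups (so that one genuinely uses the stability of torsion-freeness and of CSA under passing to subgroups), and recalling the correct universal-sentence characterization of the CSA property so that the finite-witness argument goes through verbatim.
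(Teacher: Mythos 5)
Your proof is correct and follows exactly the approach that the paper itself takes (which appears as the two-sentence paragraph immediately preceding the lemma): by \autoref{res: recap sc} all but finitely many terms of an $\mathfrak F$-sequence are torsion-free and CSA, and both properties are closed in $\mathfrak G(G)$. You merely spell out the details that the paper leaves implicit, namely that CSA passes to subgroups and that CSA admits a universal-sentence characterization (commutative transitivity plus the malnormality implication), which correctly justifies the closedness claim.
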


\begin{defi}
\label{def: factorization property}
	We say that $(Q, \pi) \in \mathfrak G(G)$ has the \emph{factorization property} if there exist a finite subset $W \subset \ker \pi$ and $\lambda, \epsilon \in (0,1)$, with the following property:
	for every marked group $(H, \varphi) \in \mathfrak F(G, \lambda, \epsilon)$, if $W \subset \ker \varphi$, then $\varphi$ factors through $\pi$.
\end{defi}

\[\begin{tikzcd}
	{W \subset G} & Q \\
	& H
	\arrow["\pi", two heads, from=1-1, to=1-2]
	\arrow["\varphi"', two heads, from=1-1, to=2-2]
	\arrow["{\exists\bar{\varphi}}", dashed, from=1-2, to=2-2]
\end{tikzcd}\]

Note that if $(Q, \pi) \in \mathfrak{G}(G)$ is such that $\ker \pi < G$ is the normal closure of finitely many elements, then $(Q, \pi)$ has the factorization property. In particular, this holds if $Q$ is finitely presented.

The next lemma is a direct consequence of the definitions, see for instance {\cite[Lemma~4.8]{Coulon:2021wg}.

\begin{lemm}
\label{res: factorization prop implies almost open}
	Let $(Q, \pi) \in \mathfrak G(G)$ with the factorization property.
	There exists a finite subset $W \subset \ker \pi$ such that for every limit group $(L, \eta) \in \mathfrak L(G)$, if $W \subset \ker \eta$, then $(L, \eta) \prec (Q, \pi)$.
\end{lemm}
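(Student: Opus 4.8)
The plan is to take $W$ to be precisely the finite subset of $\ker\pi$ furnished by the factorization property (\autoref{def: factorization property}), remembering that this also comes with parameters $\lambda,\epsilon\in(0,1)$ such that any $(H,\varphi)\in\mathfrak F(G,\lambda,\epsilon)$ with $W\subset\ker\varphi$ has $\varphi$ factoring through $\pi$. I then let $(L,\eta)\in\mathfrak L(G)$ be a limit group with $W\subset\ker\eta$, and use the characterization from the vocabulary box: $(L,\eta)$ is the limit of an $\mathfrak F$-sequence $(H_k,\varphi_k)$, i.e.\ there are $\lambda_k,\epsilon_k\to 0$ with $(H_k,\varphi_k)\in\mathfrak F(G,\lambda_k,\epsilon_k)$.

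The first step is a monotonicity observation: if $\lambda'\le\lambda$ and $\epsilon'\le\epsilon$, then the $C'(\lambda',\epsilon')$ strengthened small cancellation condition implies the $C'(\lambda,\epsilon)$ one, so $\mathfrak F(G,\lambda',\epsilon')\subseteq\mathfrak F(G,\lambda,\epsilon)$. Since $\lambda_k,\epsilon_k\to 0$, this gives $(H_k,\varphi_k)\in\mathfrak F(G,\lambda,\epsilon)$ for all but finitely many $k$. The second step uses the topology of $\mathfrak G(G)$: because $(H_k,\varphi_k)\to(L,\eta)$ and $W$ is finite, we have $W\cap\ker\varphi_k=W\cap\ker\eta=W$ for all but finitely many $k$, i.e.\ $W\subset\ker\varphi_k$ eventually. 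Combining these two facts with the factorization property, $\varphi_k$ factors through $\pi$ for all but finitely many $k$, which is to say $\ker\pi\subset\ker\varphi_k$ for all but finitely many $k$.

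The final step passes to the limit: $\ker\eta$ is the stable kernel of $(\varphi_k)$ as in \eqref{eqn: stable kernel}, so any $\gamma\in\ker\pi$ lies in $\ker\varphi_k$ for all but finitely many $k$, hence $\gamma\in\ker\eta$. Thus $\ker\pi\subset\ker\eta$, i.e.\ $(L,\eta)\prec(Q,\pi)$, as wanted. I do not anticipate a genuine obstacle here — the statement is essentially a bookkeeping exercise — the only points that need a little care are the monotonicity of $\mathfrak F(G,\cdot,\cdot)$ in the small cancellation parameters and the identification of $\ker\eta$ with the stable kernel of the approximating sequence.
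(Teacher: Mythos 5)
Your proof is correct and follows the intended ``direct consequence of the definitions'' argument that the paper cites to \cite[Lemma 4.8]{Coulon:2021wg}: take $W$, $\lambda$, $\epsilon$ from \autoref{def: factorization property}, exploit the monotonicity $\mathfrak F(G,\lambda',\epsilon')\subseteq\mathfrak F(G,\lambda,\epsilon)$ for $\lambda'\le\lambda$, $\epsilon'\le\epsilon$ and the basis of the marked-group topology to get $\varphi_k$ factoring through $\pi$ for all but finitely many $k$, and then identify $\ker\eta$ with the stable kernel \eqref{eqn: stable kernel} to conclude $\ker\pi\subset\ker\eta$. All the details you flag as needing care (monotonicity in the small cancellation parameters, finiteness of $W$ when invoking the topology, the stable-kernel identification of the limit) do check out, so no gaps.
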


%%%%%%%%%%%%%%%%%%%%%%%%%%%%%%%%%%%%%%%%%%%%%%%%%%%%%%%%%%%%%%%%%%%%%%%%%%%%%%%%%%%%%
%
\subsection{Actions on $\R$-trees}
\label{sec: actions on R-trees}
%
%%%%%%%%%%%%%%%%%%%%%%%%%%%%%%%%%%%%%%%%%%%%%%%%%%%%%%%%%%%%%%%%%%%%%%%%%%%%%%%%%%%%%

Let $\delta \in \R_+^*$.
In this section, we denote by $\mathfrak H(\delta)$ the collection of all pairs $(\Gamma, X)$ where $X$ is a $\delta$-hyperbolic geodesic metric space endowed with a proper action by isometries of a torsion-free group $\Gamma$ satisfying the following properties:
\begin{enumerate}
	\item every non-trivial element of $\Gamma$ is $\delta$-thin (\autoref{def: thin isom});
	\item $A(\Gamma, X) \leq \delta$ (\autoref{def: acylindricity}).
\end{enumerate}
It follows directly from our assumptions that every non-trivial element of $\Gamma$ is loxodromic (for its action on $X$).
Moreover, every non-trivial, elementary subgroup of $\Gamma$ is isomorphic to $\Z$.
Note also that $\Gamma$ is CSA.
Observe that $X$ is not necessarily the Cayley graph of $\Gamma$, though. 
Indeed, in practice, $\Gamma$ will be a (subgroup of a) small cancellation quotient of a hyperbolic group and $X$ the space obtained by the small cancellation theorem (\autoref{res: recap sc}).

The goal of this section is to prove two shortening statements (Corollaries \ref{res: shortening argument - relative} and \ref{res: shortening argument - absolute}).
These two statements are shown in a very similar way.
The next theorem provides a common framework to handle them both.

\begin{theo}
\label{res: shortening argument - general}
	Let $\delta \in \R_+^*$.
	Let $G$ be a finitely generated group and $U$ a finite generating set of $G$.
	Let $H$ be a subgroup of $G$.
    Let $\varphi_k \colon G \to \Gamma_k$ be a sequence of marked groups where $(\Gamma_k, X_k) \in \mathfrak H(\delta)$.
	We make the following assumptions.
	\begin{itemize}
		\item The $\ell^\infty$-energy $E_\infty(\varphi_k, U)$ diverges to infinity.
		\item For every $h \in H$, we have $\lim_{k \to \infty} \norm{\varphi_k(h)} / E_\infty(\varphi_k, U) = 0$.
		\item The sequence $(\Gamma_k, \varphi_k)$ converges to a group $(L, \eta)$.
		\item The group $L$ is freely indecomposable relative to $M = \eta(H)$.
	\end{itemize}
	Let $(\hat L_k, \hat \eta_k)$ be a sequence of covers of $(L, \eta)$ and let $\zeta_k \colon \hat L_k \onto L$ be the corresponding covering map.
	Additionally, we assume that the following properties hold.
	\begin{itemize}
		\item $\varphi_k$ factors through $\hat \eta_k$, and we write $\hat \varphi_k \colon \hat L_k \to \Gamma_k$ for the resulting morphism so that $\varphi_k = \hat \varphi_k \circ \hat \eta_k$.
		\item for every $\alpha \in \mcg{L, M}$, there is $k_0 \in \N$ such that for every $k \geq k_0$, the automorphism $\alpha$ lifts to an automoprhism $\hat \alpha_k \in \aut{\hat L_k}$, i.e., $\zeta_k \circ \hat \alpha_k = \alpha \circ \zeta_k$.
	\end{itemize}
	
	Then there exists $\tau > 0$ with the following property: for infinitely many $k \in \N$, there exist automorphisms $\alpha_k \in \mcg{L, M}$ and $\hat \alpha_k \in \aut{\hat L_k}$ such that $\zeta_k \circ \hat \alpha_k = \alpha_k \circ \zeta_k$ and 
	\begin{equation*}
		E_1\left(\hat \varphi_k \circ \hat \alpha_k \circ \hat \eta_k, U\right) \leq (1- \tau) E_1\left(\varphi_k,U\right).
	\end{equation*}
\end{theo}

\[\begin{tikzcd}
	&& \hat L_k \\
	H & G && \Gamma_k \\
	&& L
	\arrow["\hat \varphi_k", two heads, from=1-3, to=2-4]
	\arrow["\zeta_k"', two heads, from=1-3, to=3-3]
	\arrow[hook, from=2-1, to=2-2]
	\arrow["\hat \eta_k", two heads, from=2-2, to=1-3]
	\arrow["\eta"', two heads, from=2-2, to=3-3]
	\arrow["\footnotesize{\text{convergence}}", dashed, no head, from=2-4, to=3-3]
\end{tikzcd}\]

In words, we can decrease the $\ell^1$-energy of $\varphi_k$ by a definite amount, up to twisting by an automorphism of $\hat L_k$, which lifts a modular automorphism of $L$.
The strategy for the proof, which is now well understood, goes back to the work of Rips and Sela \cite{Rips:1994jg}.
Similar arguments appear also in \cite{Groves:2004uy,Perin:2008aa,Weidmann:2019ue,Coulon:2021wg}.
We only sketch the main arguments and highlight the differences with the standard proof.

\paragraph{Action on an $\R$-tree.}
Since $L$ is a limit of a sequence of torsion-free, CSA groups, it is also torsion-free and CSA.
The first step is to produce an action of $L$ on a limit $\R$-tree.
To that end, we fix a non-principal ultra-filter $\omega \colon \mathcal P(\N) \to \{0, 1\}$.
Recall that a property $P_k$ holds \oas if 
\begin{equation*}
	\omega(\set{k \in \N}{P_k\ \text{holds}}) = 1.
\end{equation*}
A real valued sequence $(u_k)$ is \oeb if there exists $M \in \R$, such that $\abs{u_k} \leq M$, \oas.
It \emph{converges to $\ell$ along $\omega$}, if for every $\epsilon \in \R_+^*$, we have $\abs{u_k - \ell} < \epsilon$, \oas.
For every $k \in \N$, we choose a base point $o_k \in X_k$ such that 
\begin{equation*}
	\max_{g \in U} \dist{\varphi_k(g)o_k}{o_k} \leq E_\infty(\varphi_k, U) + \delta.
\end{equation*}
For simplicity, we let 
\begin{equation*}
	\epsilon_k = \frac 1 {E_\infty(\varphi_k, U)}, \quad \forall k \in \N.
\end{equation*}
It follows from our assumptions that $(\epsilon_k)$ converges to zero.
In the remainder of this section, unless mentioned otherwise, we work with the rescaled space $\epsilon_k X_k$, i.e., for every $x,x' \in X_k$
\begin{equation*}
	\dist[\epsilon_k X_k] x{x'} = \epsilon_k \dist[X_k] x{x'}.
\end{equation*}
We endow $\epsilon_k X_k$ with the action by isometries of $G$ induced by $\varphi_k$.
This space is $\delta_k$-hyperbolic, where $\delta_k = \epsilon_k \delta$ converges to zero.
We consider now an ultra-limit of metric spaces. 
We refer the reader to Dru\c tu and Kapovich \cite[Chapter~10]{Drutu:2018aa} for a detailed exposition of this construction.
As the sequence $(\delta_k)$ converges to zero, the limit space
\begin{equation*}
	(X_\omega, o) = \limo \left(\epsilon_k X_k, o_k\right)
\end{equation*}
is an $\R$-tree.

\begin{nota}
	Let $(x_k)$ be a sequence of points such that $x_k \in X_k$, for every $k \in \N$.
	If $\dist{o_k}{x_k}$ is \oeb, then we write $\limo x_k$ for the corresponding point of $X_\omega$.
	We let $o = \limo o_k$.
\end{nota}

The action of $G$ on $X_k$ induces an action without global fixed points of $G$ on $X_\omega$ \cite{Paulin:1991fx}.
Moreover, the stable kernel of $(\varphi_k)$ acts trivially on $X_\omega$.
Consequently, the limit group $L$ acts on $X_\omega$ without global fixed points.
We denote by $T$ the minimal $L$-invariant subtree of $X_\omega$.
A standard exercise shows that $o$ belongs to $T$.

\begin{lemm}
\label{res: elt with slowing growing trans. length}
	Let $g \in G$ be such that 
	\begin{equation*}
		\lim_{k \to \infty} \norm{\varphi_k(g)} = 0,
	\end{equation*}
	(where the translation length is computed in $\epsilon_kX_k$).
	Then $\eta(g)$ fixes a point in $T$.
\end{lemm}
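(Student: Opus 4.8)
The plan is to deduce the statement from the fact that $\eta(g)$ acts elliptically on the limit tree $X_\omega$, and then to push the fixed point inside the minimal subtree $T$.

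First I would produce almost-fixed points of $\varphi_k(g)$ at a controlled distance from the base point $o_k$. Since $g$ is a word of some fixed length $n = |g|_U$ over $U$, and $o_k$ was chosen so that $\dist[\epsilon_k X_k]{\varphi_k(u)o_k}{o_k} \le \epsilon_k(E_\infty(\varphi_k,U)+\delta) = 1+\delta_k$ for every $u \in U$, the triangle inequality gives $\dist[\epsilon_k X_k]{\varphi_k(g)o_k}{o_k} \le n(1+\delta_k)$, a bound that is \oeb. Set $r_k = \max\{2\norm[\epsilon_k X_k]{\varphi_k(g)},\, 6\delta_k\}$, so that $r_k \to 0$ while $r_k > \max\{\norm[\epsilon_k X_k]{\varphi_k(g)},\, 5\delta_k\}$. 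Then $\fix{\varphi_k(g), r_k}$ is non-empty, and either $o_k$ already lies in it, or \autoref{res: set of almost fixed points} together with \eqref{eqn: displacement outside fixed set} yields
\[ \dist[\epsilon_k X_k]{o_k}{\fix{\varphi_k(g),r_k}} \le \frac12\left(\dist[\epsilon_k X_k]{\varphi_k(g)o_k}{o_k} + 10\delta_k\right) \le \frac12\left(n(1+\delta_k) + 10\delta_k\right). \]
In either case I may pick $p_k$ with $\dist[\epsilon_k X_k]{o_k}{p_k}$ \oeb and $\dist[\epsilon_k X_k]{\varphi_k(g)p_k}{p_k} \le r_k$.

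Because $\dist[\epsilon_k X_k]{o_k}{p_k}$ is \oeb, the sequence $(p_k)$ determines a point $q = \limo p_k$ of $X_\omega$, and since $\eta(g)$ acts on $X_\omega$ as the ultralimit of the isometries $\varphi_k(g)$,
\[ \dist[X_\omega]{\eta(g)q}{q} = \limo \dist[\epsilon_k X_k]{\varphi_k(g)p_k}{p_k} \le \limo r_k = 0, \]
so $\eta(g)$ fixes $q$. To conclude, I would observe that $T$ is a closed $L$-invariant (hence convex) subtree of the $\R$-tree $X_\omega$, so the nearest-point projection $X_\omega \to T$ is equivariant under every isometry of $X_\omega$ preserving $T$; applying it to $q$ produces a point of $T$ fixed by $\eta(g)$. (Equivalently, $\fix{\eta(g)}$ and $T$ are closed subtrees that are both invariant under $\eta(g)$, and a bridge argument shows they must meet.)

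The step I expect to be the real subtlety is the uniform bound $\dist[\epsilon_k X_k]{o_k}{p_k} = O(1)$: if the near-fixed points were allowed to drift to infinity in the ultralimit, they would carry no information about the $L$-action on $X_\omega$. This control is exactly what is supplied by the a priori bound on $\dist[\epsilon_k X_k]{\varphi_k(g)o_k}{o_k}$—coming from $g$ being a bounded word in the normalized generating set—combined with the displacement estimate of \autoref{res: set of almost fixed points}. Everything else is routine manipulation of ultralimits and of isometries of $\R$-trees.
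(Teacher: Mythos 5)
Your proof is correct and follows essentially the same route as the paper's: choose $r_k \to 0$ dominating $\norm{\varphi_k(g)}$ and a fixed multiple of $\delta_k$, use \autoref{res: set of almost fixed points} to find almost-fixed points at $\omega$-essentially bounded distance from $o_k$, pass to the ultralimit to get a point of $X_\omega$ fixed by $\eta(g)$, and project onto the invariant subtree $T$. The only difference is that you make explicit the bound on $\dist{\varphi_k(g)o_k}{o_k}$ coming from the word length of $g$ over $U$, which the paper leaves implicit.
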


\begin{proof}
	We fix a sequence $(r_k)$ converging to zero such that 
	\begin{equation*}
		r_k > \max \{\norm{\varphi_k(g)} , 7 \delta_k\}.
	\end{equation*}
	For every $k \in \N$, we write $x_k$ for a projection of $o_k$ on $\fix{\varphi_k(g), r_k}$.
	It follows from  \autoref{res: set of almost fixed points} that either $o_k$ belongs to $\fix{\varphi_k(g), r_k}$, in which case $x_k = o_k$, or
	\begin{equation*}
		\dist {x_k}{o_k} \leq \frac 12 \dist{o_k}{\varphi_k(g)o_k} + 5\delta_k -\frac{r_k}2.
	\end{equation*}
	In both cases, $\dist {x_k}{o_k}$ is \oeb, thus $x = \limo x_k$ is a well-defined point of $X_\omega$.
	By construction, $\dist{\varphi_k(g)x_k}{x_k}$ converges to zero.
	Thus, $\eta(g)$ fixes $x$.
	Recall that $T$ is an $L$-invariant subtree.
	Therefore, $\eta(g)$ also fixes a point in $T$.
\end{proof}

\begin{lemm}
\label{res: shortening - transverse simplicial arc}
	Let $x = \limo x_k$ and $y = \limo y_k$ be two distinct points of $X_\omega$.
	Let $g \in G$ be such that $\eta(g)$ is a non-trivial element fixing pointwise $[x,y]$.
	There exists a sequence $(g_k)$ such that
	\begin{enumerate}
		\item $g_k \in \langle g \rangle$, and so $\eta(g_k)$ fixes $\geo xy$ pointwise;
		\item $\limo\dist{x_k}{\varphi_k(g_k)y_k} = 0$.
	\end{enumerate}
\end{lemm}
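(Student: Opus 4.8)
The plan is to translate the ultralimit hypothesis that $\eta(g)$ fixes $[x,y]$ pointwise into an approximate statement at finite level, and then use the thinness and acylindricity hypotheses on $(\Gamma_k, X_k) \in \mathfrak{H}(\delta)$ to promote that approximate fixing to the quantitative conclusion. First I would recall that since $\eta(g) \neq 1$ in $L$ and $L$ is torsion-free, for $\omega$-almost every $k$ the image $\varphi_k(g)$ is a non-trivial (hence, by the properties of $\mathfrak{H}(\delta)$, loxodromic) element of $\Gamma_k$. The fact that $\eta(g)$ fixes the nondegenerate arc $[x,y]$ in $X_\omega$ means: for every pair of approximants, $\limo \dist[\epsilon_k X_k]{\varphi_k(g) x_k}{x_k} = 0$ and $\limo \dist[\epsilon_k X_k]{\varphi_k(g) y_k}{y_k} = 0$, while $\limo \dist[\epsilon_k X_k]{x_k}{y_k} = \dist{x}{y} = \rho_0 > 0$ for some fixed $\rho_0$.

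**Forcing the translation length to vanish.** The key step is to show $\limo \norm[\epsilon_k X_k]{\varphi_k(g)} = 0$, i.e. $\varphi_k(g)$ becomes "shorter and shorter" on the rescaled spaces. Here I would use that $\varphi_k(g)$ is $\delta$-thin on $X_k$ (Definition~\ref{def: thin isom}), so on $\epsilon_k X_k$ it is $\delta_k$-thin with $\delta_k = \epsilon_k \delta \to 0$. Both $x_k$ and $y_k$ lie in $\fix{\varphi_k(g), r_k}$ for a slowly-decaying sequence $r_k \to 0$ (chosen $\gg \norm{\varphi_k(g)}$ and $\gg \delta_k$, assuming for contradiction $\norm{\varphi_k(g)}$ does not go to zero would give a contradiction — more precisely, if $\norm[\epsilon_k X_k]{\varphi_k(g)}$ stays bounded below along $\omega$, the thinness inequality $\gro{\varphi_k(g)^-}{\varphi_k(g)^+}{z} \leq \frac12(r - \norm{\varphi_k(g)}) + \delta_k$ valid for all $z \in \fix{\varphi_k(g),r}$ forces the set of almost-fixed points to have bounded diameter, contradicting $\dist{x_k}{y_k} \to \rho_0$ once $r_k$ is taken small relative to $\rho_0$ but large relative to $\delta_k$). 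Hence $\limo \norm[\epsilon_k X_k]{\varphi_k(g)} = 0$. The main obstacle is getting the bookkeeping of the three scales $\norm{\varphi_k(g)} \ll r_k \ll \rho_0$ to cooperate with the thinness estimate; this is where one must be careful, but it is exactly the computation already packaged in Lemmas~\ref{res: set of almost fixed points} and the defining inequalities of $\alpha$-thin loxodromics.

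**Extracting the right powers and concluding.** Once $\norm[\epsilon_k X_k]{\varphi_k(g)} \to 0$ along $\omega$, I would set $n_k = \lfloor 1/\sqrt{\norm[\epsilon_k X_k]{\varphi_k(g)}} \rfloor$ (or any sequence with $n_k \to \infty$ but $n_k \norm[\epsilon_k X_k]{\varphi_k(g)} \to 0$), and put $g_k = g^{n_k} \in \langle g \rangle$; then $\eta(g_k) = \eta(g)^{n_k}$ still fixes $[x,y]$ pointwise, so (1) holds. For (2), one computes that $\varphi_k(g_k) = \varphi_k(g)^{n_k}$ still has translation length $n_k \norm{\varphi_k(g)} \to 0$ on $\epsilon_k X_k$, and by the quasi-convexity of $\fix{\varphi_k(g), r_k}$ (Lemma~\ref{res: set of almost fixed points}) together with the displacement estimate $\sup_{\gamma} \dist{\gamma z}{z} \geq 2 d(z, \fix{U,r}) + r - 10\delta$, the point $x_k$ lies within a controlled distance of $\fix{\varphi_k(g_k), r_k'}$ for a slightly larger tolerance $r_k' \to 0$; since $y_k$ is also close to this set and the set is an $\R$-tree-like strip in the limit, moving $y_k$ by $\varphi_k(g_k)$ lands within $o(1)$ of $x_k$. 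Actually the cleanest route is: $\dist[\epsilon_k X_k]{x_k}{\varphi_k(g_k) y_k} \leq \dist{x_k}{\varphi_k(g_k) x_k} + \dist{\varphi_k(g_k)x_k}{\varphi_k(g_k)y_k} = \dist{x_k}{\varphi_k(g_k)x_k} + \dist{x_k}{y_k}$ — no, that does not vanish. Instead I would argue that $x_k$ and $y_k$ both project close to the axis (cylinder $Y_{\varphi_k(g)}$) of $\varphi_k(g)$, that on that axis $\varphi_k(g_k)$ translates by $n_k \norm{\varphi_k(g)} = o(1)$, and that the distance from $y_k$ to its projection equals the distance from $\varphi_k(g_k)y_k$ to its (translated) projection; chasing these identities through the triangle inequality with the thinness bounds gives $\limo \dist[\epsilon_k X_k]{x_k}{\varphi_k(g_k)y_k} = 0$, which is exactly (2). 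This last projection-and-triangle-inequality argument is the technical heart and the place where the proof will require the most care, but it uses only the standard structure of loxodromic isometries in hyperbolic spaces that has already been set up.
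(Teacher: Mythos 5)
Your first step is more roundabout than necessary but ends in the right place: the fact that $\norm[\epsilon_k X_k]{\varphi_k(g)} \to 0$ along $\omega$ is immediate from $x_k \in \fix{\varphi_k(g), r_k}$, since the translation length is the infimum over all points of the displacement, so $\norm{\varphi_k(g)} \leq \dist{\varphi_k(g) x_k}{x_k} \leq r_k \to 0$; no contradiction argument or thinness estimate is needed here.

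The real problem is the choice of exponents $n_k$. You take $n_k$ with $n_k \norm[\epsilon_k X_k]{\varphi_k(g)} \to 0$, so that $\varphi_k(g_k) = \varphi_k(g)^{n_k}$ translates by $o(1)$ along the (quasi-)axis. But that is exactly the wrong regime. Both $x_k$ and $y_k$ lie at distance $o(1)$ from the axis (by $\delta_k$-thinness and $r_k \to 0$), and their projections $p_k, q_k$ onto the axis satisfy $\dist{p_k}{q_k} \to d(x,y) > 0$, since $\dist{x_k}{y_k} \to d(x,y)$ and the projection errors vanish. An isometry translating the axis by $o(1)$ moves $q_k$ a negligible distance, so $\varphi_k(g_k) y_k$ remains at distance $\to d(x,y)$ from $x_k$, not $\to 0$. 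You noticed yourself that the naive triangle inequality bound does not vanish, but the correct response is to change the exponent, not the estimate: you must choose $g_k = g^{\pm m_k}$ with $m_k \approx \dist{p_k}{q_k} / \norm[\epsilon_k X_k]{\varphi_k(g)} \to \infty$, so that $\varphi_k(g_k)$ translates by approximately $\dist{p_k}{q_k}$ and carries the projection of $y_k$ to within $O(r_k + \delta_k)$ of the projection of $x_k$. With that (unbounded) choice of power, the chase through the triangle inequality does close up and gives $\limo \dist{x_k}{\varphi_k(g_k) y_k} = 0$, which is how the paper concludes.
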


\begin{proof}
	Since $\eta(g)$ is non-trivial, $\varphi_k(g)$ is loxodromic \oas.
	There exists a sequence $(r_k)$ converging to zero such that $x_k, y_k \in \fix{ \varphi_k(g),r_k}$ \oas.
	In particular, $\norm{\varphi_k(g)} \leq r_k$.
	Since $\varphi_k(g)$ is $\delta_k$-thin, $x_k$ and $y_k$ lie in the $(r_k/2 + 50\delta_k)$-neighborhood of any $100\delta_k$-local $(1, \delta_k)$-quasi-geodesic of $\epsilon_kX_k$ joining the attractive and repulsive points of $\varphi_k(g)$.
	Recall that $\varphi_k(g)$ approximately acts on this geodesic as a translation of length $\norm{\varphi_k(g)}$.
	Consequently, there exists $g_k \in \group {g}$ such that
	\begin{equation*}
		\dist{x_k}{\varphi_k\left(g_k\right)y_k} \leq 2 r_k +1000 \delta_k\quad  \oas.
	\end{equation*}
	In particular, $\dist{x_k}{\varphi_k(g_k)y_k}$ converges to zero.
	Since $\eta(g_k)$ belongs to $\group{\eta(g)}$, it fixes $\geo xy$ pointwise, whence the result.
\end{proof}

\begin{lemm}
\label{res: abelian arc stabilizers}
	Let $[x,y]$ be an arc in $X_\omega$.
	Its pointwise stabilizer is abelian.
\end{lemm}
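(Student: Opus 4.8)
The plan is to show that any two elements $a,b$ of the pointwise stabilizer of $[x,y]$ in $L$ commute. Since $X_\omega$ is an $\R$-tree, an isometry fixing the endpoints $x$ and $y$ fixes the whole segment pointwise, so the stabilizer is exactly $\set{a\in L}{ax=x,\ ay=y}$; in particular I may take $a=\eta(g)$ and $b=\eta(h)$ with $g,h\in G$ (using that $\eta\colon G\onto L$ is onto) and, discarding the trivial cases, assume $a,b\neq 1$. The goal becomes $[g,h]\in\ker\eta$.

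I would first transport the fixed-point condition down to the approximating spaces. Write $x=\limo x_k$ and $y=\limo y_k$. As the action of $L$ on $X_\omega$ is induced by the actions of $\varphi_k(G)$ on the rescaled spaces $\epsilon_k X_k$, the hypothesis that $a$ and $b$ fix $x$ and $y$ says that the four displacements $\dist[\epsilon_k X_k]{\varphi_k(g)x_k}{x_k}$, $\dist[\epsilon_k X_k]{\varphi_k(g)y_k}{y_k}$, $\dist[\epsilon_k X_k]{\varphi_k(h)x_k}{x_k}$, $\dist[\epsilon_k X_k]{\varphi_k(h)y_k}{y_k}$ all tend to $0$ along $\omega$. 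Hence there is a sequence $(s_k)$ in $\R_+^*$ with $\limo s_k=0$ such that $x_k,y_k\in\fix{\{\varphi_k(g),\varphi_k(h)\},s_k}$ (computed in $\epsilon_k X_k$) $\oas$; equivalently, in the unscaled space $X_k$, $x_k,y_k\in\fix{\{\varphi_k(g),\varphi_k(h)\},s_k/\epsilon_k}$ $\oas$. Meanwhile, by definition of the limit metric, $\limo\dist[\epsilon_k X_k]{x_k}{y_k}=\dist[X_\omega]{x}{y}>0$, so $\dist[X_k]{x_k}{y_k}$ is comparable, $\oas$, to $1/\epsilon_k=E_\infty(\varphi_k,U)$, whereas the fixed-point radius $s_k/\epsilon_k$ is $o(E_\infty(\varphi_k,U))$.

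The crux is then the acylindricity dichotomy in $\mathfrak H(\delta)$. Assume, for contradiction, that $\group{\varphi_k(g),\varphi_k(h)}$ is non-elementary (for its action on $X_k$) for $\omega$-almost every $k$. Since $(\Gamma_k,X_k)\in\mathfrak H(\delta)$ gives $A(\Gamma_k,X_k)\leq\delta$, \autoref{def: acylindricity} yields $\oas$ that $\diam{\fix{\{\varphi_k(g),\varphi_k(h)\},r}}\leq 4r+\delta$ for every $r\in\R_+$; applying this with $r=s_k/\epsilon_k$ and multiplying by $\epsilon_k$ gives $\dist[\epsilon_k X_k]{x_k}{y_k}\leq 4s_k+\delta\epsilon_k$, whose ultra-limit is $0$, contradicting $\dist[X_\omega]{x}{y}>0$. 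Therefore $\group{\varphi_k(g),\varphi_k(h)}$ is elementary $\oas$. As $\Gamma_k$ is torsion-free and every non-trivial elementary subgroup of a group in $\mathfrak H(\delta)$ is infinite cyclic, this subgroup is abelian $\oas$, so $\varphi_k([g,h])=[\varphi_k(g),\varphi_k(h)]=1$ $\oas$. Since the sequential limit $(L,\eta)$ of $(\Gamma_k,\varphi_k)$ is also its ultra-limit, this forces $[g,h]\in\ker\eta$, i.e.\ $[a,b]=1$, as desired.

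I do not expect a genuine obstacle. The entire point is that a \emph{non-degenerate} limiting arc forces the ``almost-axes'' of $\varphi_k(g)$ and $\varphi_k(h)$ to overlap along a segment whose length in $X_k$ grows linearly in $E_\infty(\varphi_k,U)$, which is incompatible with $A(\Gamma_k,X_k)\leq\delta$ unless the two isometries commute. The only delicate bookkeeping is carrying the rescaling by $\epsilon_k$ alongside the ultrafilter, and in particular arranging the almost-fixed-point radius $s_k$ to vanish — which is precisely where it matters that $x$ and $y$ are genuine fixed points of $a$ and $b$ rather than merely close to being fixed.
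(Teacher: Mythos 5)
Your proof is correct and follows essentially the same route as the paper's: pass the fixed-point condition to almost-fixed points $x_k, y_k$ of $\{\varphi_k(g),\varphi_k(h)\}$ with radii tending to zero, invoke the acylindricity bound $A(\Gamma_k,X_k)\leq\delta$ to conclude that if this pair generated a non-elementary subgroup the diameter bound would force $x=y$, and then use that elementary subgroups of groups in $\mathfrak H(\delta)$ are abelian to get $[\varphi_k(g),\varphi_k(h)]=1$ $\omega$-almost surely, hence $[g,h]\in\ker\eta$ by convergence in the space of marked groups. The only difference is bookkeeping: the paper works directly in the rescaled spaces $\epsilon_k X_k$ with $\delta_k=\epsilon_k\delta$, while you carry the rescaling explicitly — the argument is the same.
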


\begin{proof}
	Let $g_1, g_2 \in L$ pointwise fixing $[x,y]$.
	Let $\tilde g_1, \tilde g_2 \in G$ be respective pre-images of $g_1$ and $g_2$.
	By definition, there exists a sequence $(r_k)$ converging to zero such that $x_k,y_k \in \fix{W_k,r_k}$, where $W_k = \{ \varphi_k(\tilde g_1), \varphi_k(\tilde g_2)\}$.
	Recall that $A(\Gamma_k, X_k) \leq \delta_k$.
	Thus, if $W_k$ does not generate an elementary subgroup, then
	\begin{equation*}
		\dist {x_k}{y_k} \leq \diam{\fix{W_k, r_k}} \leq 4r_k + \delta_k.
	\end{equation*}
	Passing to the limit, we get $x = y$, which contradicts our assumption.
	Therefore, the set $W_k$ generates an elementary subgroup \oas.
	Recall that every elementary subgroup of $\Gamma_k$ is abelian.
	Consequently, $\varphi_k(\tilde g_1)$ and $\varphi_k(\tilde g_2)$ commute, hence so do $g_1$ and $g_2$.
\end{proof}

\begin{lemm}
\label{res: tree-graded - transverse tripod}
	Let $x$, $y$, $z$ be three points of $X_\omega$.
	If $x$, $y$, and $z$ do not lie on a geodesic, then the pointwise stabilizer of the tripod $[x,y,z]$ is trivial.
\end{lemm}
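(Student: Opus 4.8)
The plan is to argue by contradiction, using that a non-trivial element of $L$ acts on each rescaled space $\epsilon_k X_k$ as a \emph{thin} loxodromic isometry, so that its almost-fixed-point set collapses to (a neighborhood of) a single quasi-axis, hence to a line in the limit $\R$-tree. So I would first suppose that some $g \in L \setminus \{1\}$ fixes $x$, $y$, and $z$; since an isometry of an $\R$-tree fixing the endpoints of an arc fixes it pointwise, this is the same as fixing the tripod $[x,y,z]$ pointwise, so it suffices to rule this out. Pick a pre-image $\tilde g \in G$ of $g$ and set $\gamma_k = \varphi_k(\tilde g)$. As $(\Gamma_k, \varphi_k)$ converges to $(L, \eta)$ and $g \neq 1$, the element $\gamma_k$ is non-trivial \oas, hence loxodromic for its action on $\epsilon_k X_k$, because every non-trivial element of $\Gamma_k$ is loxodromic. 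Writing $x = \limo x_k$, $y = \limo y_k$, $z = \limo z_k$, the fact that $g$ fixes these points means that the displacements $\dist{\gamma_k x_k}{x_k}$, $\dist{\gamma_k y_k}{y_k}$, $\dist{\gamma_k z_k}{z_k}$ all converge to $0$ along $\omega$, so I can choose $r_k \to 0$ with $x_k, y_k, z_k \in \fix{\gamma_k, r_k}$ \oas.

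The crux is then the following estimate. Since $(\Gamma_k, X_k) \in \mathfrak H(\delta)$, the isometry $\gamma_k$ is $\delta_k$-thin for its action on $\epsilon_k X_k$ (\autoref{def: thin isom}), so for every $w \in \fix{\gamma_k, r_k}$ one has
\[ \gro{\gamma_k^-}{\gamma_k^+}{w} \;\leq\; \tfrac12\bigl(r_k - \norm{\gamma_k}\bigr) + \delta_k \;\leq\; \tfrac12 r_k + \delta_k. \]
By stability of quasi-geodesics, this places $x_k$, $y_k$, and $z_k$ within distance $\tfrac12 r_k + O(\delta_k)$ of one fixed bi-infinite $L$-local $(1,\delta_k)$-quasi-geodesic $Q_k$ joining $\gamma_k^-$ to $\gamma_k^+$. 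Projecting the three points onto $Q_k$, and using that $Q_k$ is $O(\delta_k)$-quasi-convex and $O(\delta_k)$-close to a geodesic line while the Gromov product is coarsely Lipschitz in each of its three entries, I get $\gro{x_k}{y_k}{z_k} \leq \tfrac32 r_k + O(\delta_k)$. Passing to the limit along $\omega$ yields $\gro xyz = 0$ in $X_\omega$.

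To conclude: in an $\R$-tree, $\gro xyz = 0$ means precisely that $z \in [x,y]$, so $x$, $y$, and $z$ lie on a common geodesic, contradicting the hypothesis. Hence no non-trivial element of $L$ fixes the tripod, and its pointwise stabilizer is trivial. The only non-formal ingredient is the crux estimate, i.e.\ the fact that the almost-fixed-point set of a thin loxodromic isometry is uniformly close to a single quasi-axis; this is exactly the control that the notion of thinness was designed to provide, and after rescaling all error terms are $O(\delta_k) \to 0$, so I do not expect any genuine difficulty there — the bookkeeping with the ultrafilter and the choice of $r_k$ is routine.
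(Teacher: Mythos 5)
Your proposal is correct and takes essentially the same route as the paper: assume a non-trivial $g$ fixes $x,y,z$, note $\varphi_k(\tilde g)$ is loxodromic and $\delta_k$-thin, use thinness to bound $\gro{\gamma_k^-}{\gamma_k^+}{t}$ for $t\in\{x_k,y_k,z_k\}$, deduce that the three points lie close to the (quasi-)axis so that one of the three Gromov products among them is $O(r_k+\delta_k)$, and pass to the limit to contradict the hypothesis. The only cosmetic caveat is that which point is the ``middle'' one on the axis may depend on $k$, so the small Gromov product is only at one of the three points up to a permutation chosen $\omega$-almost surely --- exactly the ``up to permuting $x$, $y$ and $z$'' step in the paper's proof.
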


\begin{proof}
	We write $x = \limo x_k$, $y = \limo y_k$ and $z = \limo z_k$.
	Assume that there exists $g \in L \setminus\{1\}$ fixing $x$, $y$ and $z$.
	Let $\tilde g \in G$ be a pre-image of $g$.
	As usual, there exists a sequence $(r_k)$ converging to zero such that $x_k$, $y_k$ and $z_k$ belong to $\fix{\varphi_k(\tilde g),r_k}$.
	Since $g$ is non-trivial, $\varphi_k(\tilde g)$ is loxodromic.
	We denote by $\xi_k^-, \xi_k^+ \in \partial X_k$ the repulsive and attractive points of $\varphi_k(\tilde g)$.
	It follows from the thinness of $\varphi_k(\tilde g)$ that for every $t \in \{x_k, y_k, z_k\}$ we have $\gro{\xi_k^-}{\xi_k^+}t \leq r_k/2 + \delta_k$.
	Hence, up to permuting $x$, $y$ and $z$, we observe that
	\begin{equation*}
		\gro{x_k}{z_k}{y_k} \leq 3r_k/2 + 100\delta_k,\ \oas,
	\end{equation*}
	and so $y$ lies on the geodesic $\geo xz$.
\end{proof}

Combining the previous two statements, we get the following.

\begin{prop}
	The action of $L$ on $X_\omega$ is super-stable.	
\end{prop}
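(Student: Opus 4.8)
The plan is to establish super-stability directly from the definition: given arcs $I \subset J$ in $X_\omega$ whose pointwise stabilizer $\Stab(J)$ is non-trivial, we must show $\Stab(I) = \Stab(J)$. The inclusion $\Stab(J) \subset \Stab(I)$ is automatic, so the content is the reverse inclusion. I would first dispose of the degenerate case where $I$ is a single point, which requires a separate short argument (or one reduces to it): if $\Stab(J)$ is non-trivial then $\Stab(I)$ could a priori be larger, so one shows any element fixing a point of $J$ together with enough of the structure still fixes all of $J$ — in fact the cleanest route is to note that it suffices to prove the statement when $I$ is a non-degenerate arc, and handle the point case by a limiting argument or by invoking that an element fixing a non-degenerate subarc of $J$ together with the non-trivial element of $\Stab(J)$ generates something elliptic.

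The heart of the argument uses Lemmas~\ref{res: abelian arc stabilizers} and~\ref{res: tree-graded - transverse tripod}. Let $g \in \Stab(J) \setminus \{1\}$ and let $h \in \Stab(I)$ with $I = [x,y]$ non-degenerate; I want to show $h$ fixes $J$. Write $J = [p,q]$ with $I \subset J$. By Lemma~\ref{res: abelian arc stabilizers}, $\Stab(I)$ is abelian, so $h$ and $g$ commute; since $g$ fixes $J$ pointwise, $h$ preserves $J = \Fix(g) \cap (\text{the relevant segment})$ — more precisely $h$ preserves $\Fix(g)$, which is a subtree containing $J$, and $h$ preserves $J$ setwise because it fixes the subarc $I$ and commutes with $g$. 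If $h$ did not fix $J$ pointwise, then $h$ would act on the subtree $\Fix(g)$ with $I$ in its fixed set but moving some point of $J$; following the standard picture, one produces a point $z \in J$ with $hz \neq z$, and then the tripod spanned by a point of $I$, $z$, and $hz$ (or rather the configuration needed) has non-trivial pointwise stabilizer containing a power of $g$, contradicting Lemma~\ref{res: tree-graded - transverse tripod} unless the three points are collinear — and collinearity together with $h$ fixing $I$ and preserving the line forces $h$ to fix $z$ after all, a contradiction. I would lay this out by choosing $z$ to be, say, the point of $J$ closest to the fixed set of $h$ restricted to $J$, so that the branch point of $z$, $hz$, and $I$ is exactly where $h$ stops fixing, and check that $g$ (or a power of $g$, since $g$ fixes all of $J$ hence all three of these points) fixes this tripod.

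The main obstacle is handling the case distinction carefully: whether $I$ is degenerate, and whether the relevant triple of points is collinear in $X_\omega$. The collinear sub-case is where one must argue that an isometry of an $\R$-tree fixing a non-degenerate arc $I$ on a line $\ell$, and preserving $\ell$ (because it commutes with the hyperbolic-or-elliptic element $g$ along $\ell$), and with a non-trivial fixed subarc, must fix all of $\ell \cap \Fix(g)$ — this is elementary for $\R$-trees but needs to be stated. I expect the commutativity input from Lemma~\ref{res: abelian arc stabilizers} to be the crucial glue: without it, $h$ need not preserve $\Fix(g)$ and the argument collapses. The rest is routine tree combinatorics, and I would keep the write-up to a short paragraph invoking the two preceding lemmas plus basic $\R$-tree geometry.
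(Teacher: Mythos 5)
Your overall strategy is the right one, and it is essentially the argument the paper itself invokes: the published proof is a one-line citation of Sela's Lemma~1.3(vi), whose proof is exactly the combination you describe — commutativity of arc stabilizers (\autoref{res: abelian arc stabilizers}) gives that a non-trivial $g$ fixing $J$ pointwise also fixes $hz$ for every $h$ fixing $I$ and every $z \in J$, and triviality of tripod stabilizers (\autoref{res: tree-graded - transverse tripod}) then forces a degenerate configuration from which $hz = z$ follows by elementary $\R$-tree geometry.

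Two steps in your write-up do not hold as stated, and the first is where the actual work lies. Neither ``$h$ preserves $J$ setwise'' nor ``$h$ preserves the line $\ell$ because it commutes with $g$ along $\ell$'' is justified: $g$ is elliptic, its fixed-point set is a subtree which is in general not a line, and the fact that $h$ preserves this subtree does not make it preserve $J$ or any line containing $I$. This matters precisely in your collinear sub-case: if the tripod you feed to \autoref{res: tree-graded - transverse tripod} uses only one point $a$ of $I$, the configuration where $a$ is the midpoint of $[z,hz]$, with $z$ and $hz$ on different branches at $a$, is not excluded, and your proposed way out assumes the unproved line-preservation. The standard repair uses nothing beyond the two lemmas: write $I=[a,b]$ with $a \neq b$ (arcs here are non-degenerate, so this is legitimate); then $g$ fixes all four points $a,b,z,hz$, so by \autoref{res: tree-graded - transverse tripod} every three of them lie on a geodesic, hence all four lie on a common arc; on that arc the equalities $\dist{a}{hz}=\dist{a}{z}$ and $\dist{b}{hz}=\dist{b}{z}$ with $a \neq b$ force $hz=z$. (Alternatively, your choice of $z$ just beyond the place where $h$ stops fixing $J$ does work, but only after adding the fact that the midpoint of $[z,hz]$ is the projection of $z$ onto the fixed-point set of $h$, so that $z$ and $hz$ leave that point in distinct directions and the triple consisting of the far endpoint of $I$, $z$, and $hz$ is a genuine tripod fixed by $g$.) Finally, the degenerate case you worry about is a red herring: ``arc'' means a non-degenerate segment in this definition, and the statement with $I$ a single point would in general be false, since point stabilizers can be strictly larger than arc stabilizers, so no limiting argument could — or needs to — handle it.
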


\begin{proof}
	It works verbatim as in \cite[Lemma~1.3(vi)]{Sela:2001gb}.
\end{proof}

\begin{lemm}
\label{res: fixed point H}
	The group $M = \eta(H)$ has a global fixed point in $T$.
\end{lemm}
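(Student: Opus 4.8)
The plan is to prove that $M=\eta(H)$ fixes a point of $T$ in two stages: first show that every individual element of $M$ is elliptic for the action on $T$, and then upgrade this to a global fixed point by exploiting the super-stability of the limit action that was just established.

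For the first stage, I would take any $h\in H$ and observe that, by the standing hypothesis, $\norm{\varphi_k(h)}/E_\infty(\varphi_k,U)\to 0$; since $\epsilon_k=1/E_\infty(\varphi_k,U)$, this says exactly that the translation length of $\varphi_k(h)$ computed in the rescaled space $\epsilon_k X_k$ tends to $0$. Hence \autoref{res: elt with slowing growing trans. length} applies with $g=h$ and shows that $\eta(h)$ fixes a point of $T$. Consequently every element of $M$, and — since $M$ is a group — every product of elements of $M$, is elliptic for the action on $T$.

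For the second stage, I would argue by contradiction, assuming $M$ has no global fixed point in $T$ (we may also assume $M\neq\{1\}$, the statement being trivial otherwise). Since every element of $M$ is elliptic, a standard fact about group actions on $\R$-trees tells us that $M$ must fix an end $\xi$ of $T$; as $o\in T$, I would then work along the ray $[o,\xi)$. Each $m\in M$ fixes $\xi$ and at least one point of $T$, hence fixes pointwise the ray joining that point to $\xi$, which shares a terminal subray $[p_m,\xi)$ with $[o,\xi)$ for some $p_m\in[o,\xi)$. Fix $h_0\in H$ with $w_0:=\eta(h_0)\neq1$ and set $p_0:=p_{w_0}$. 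The heart of the argument is to show that every $m\in M$ fixes $p_0$: if $p_m$ lies on $[o,p_0]$ this is immediate, since then $[p_0,\xi)\subseteq[p_m,\xi)$ is fixed by $m$; otherwise $p_m$ lies strictly beyond $p_0$ on the ray, and choosing a point $q$ on $[o,\xi)$ past $p_m$ and setting $I=[p_m,q]\subseteq J=[p_0,q]$, the element $w_0$ fixes $J$ pointwise, so the pointwise stabilizer of $J$ is non-trivial, and by super-stability of the action of $L$ on $X_\omega$ the pointwise stabilizers of $I$ and $J$ coincide; since $m$ fixes $I$ pointwise it therefore fixes $J$, and in particular $p_0$. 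In all cases $m$ fixes $p_0$, so $M$ fixes $p_0\in T$, contradicting our assumption.

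The step I expect to be the main obstacle is exactly the passage from ``every element of $M$ is elliptic'' to ``$M$ is elliptic'', i.e. ruling out that $M$ fixes an end of $T$; the point of the argument above is that super-stability of the limit action is precisely what closes off this case. I should note that if one knew $H$ to be finitely generated the obstacle would disappear: the fixed-point sets in $T$ of a finite generating set are pairwise-intersecting subtrees (because products of two such elements are elliptic, so their fixed sets meet, and one can push the intersection into $T$ by taking medians), whence a common fixed point by the Helly property for trees. The argument sketched above is designed to avoid any finiteness hypothesis on $H$.
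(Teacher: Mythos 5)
Your proof is correct and follows essentially the same route as the paper: first showing every element of $M$ is elliptic via \autoref{res: elt with slowing growing trans. length}, and then using super-stability to rule out the parabolic case. The only difference is that the paper simply cites the fact that super-stable actions do not admit parabolic subgroups, whereas you spell out the underlying argument (pushing the pointwise stabilizer of a short terminal segment back to a longer one); both are fine.
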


\begin{proof}
	It follows from our assumption that
	\begin{equation*}
		\lim_{k \to \infty} \norm{\varphi_k(h)} = 0, \quad \forall h \in H,
	\end{equation*}
	(where the translation length is computed in $\epsilon_kX_k$).
	Consequently, every element in $M$ fixes a point in $T$ (\autoref{res: elt with slowing growing trans. length}).
	Hence, $M$ is either elliptic (and fixes a point in $T$) or parabolic.
	Since the action of $L$ on $T$ is super-stable, it does not admit parabolic subgroups, whence the result.
\end{proof}

\paragraph{Decomposing the action.}
The next step in the proof is to decompose the action of $G$ on the limit tree $T$ into a graph of actions.
By \autoref{res: limit group torsion free and CSA}, $L$ is CSA.
Since $L$ is also freely indecomposable relative to $M$, the JSJ decomposition of $L$ relative to $M$ exists.

According to the previous discussion, we are in a setting where \autoref{res: splitting relative version} applies (recall that we are assuming that $L$ is freely indecomposable relative to $M$).
Hence, the action of $L$ on $T$ splits as a graph of actions where each vertex action is either simplicial, axial, or of Seifert type.
We write $S$ for its skeleton.
We can now take advantage of this decomposition to reduce the energy of the initial morphisms $\varphi_k$.

Some of the following results are certainly well-known to the experts. 
However, we were not able to find the precise statements that we need in the literature, so we go through the arguments for completeness.

\begin{lemm}
\label{res: standard extension mcg in modular group}
	Let $Y$ be a surface type component of the decomposition of $T$ as a graph of actions.
	Let $\Sigma$ be the underlying surface.
	Let $\alpha$ be an automorphism of $\pi_1(\Sigma)$ represented by a mapping class that pointwise fixes the boundary of $\Sigma$.
	Then its standard extension to $L$ belongs to $\mcg{L, M}$.
\end{lemm}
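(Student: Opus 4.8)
The plan is to realize $\alpha$, after standard extension to $L$, as a product of Dehn twists of $L$ relative to $M$ together with inner automorphisms, so that it lies in $\mcg{L,M}$ by the very definition of the modular group (\autoref{sec: modular group}).

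First I would check that the skeleton $S$ of the graph of actions is an $(\mathcal A,\{M\})$-tree. Indeed, each edge group of $S$ is contained in an arc stabilizer of $T$ in the simplicial and axial cases (for axial vertices, a point stabilizer is the kernel of the line action, which fixes the whole line, torsion-freeness of $L$ ruling out reflections), hence abelian by \autoref{res: abelian arc stabilizers}, and it is infinite cyclic in the Seifert case, being contained in a boundary subgroup; and $M$ is elliptic in $S$ because it has a global fixed point in $T$ by \autoref{res: fixed point H}. Since the vertex action on $Y$ is of Seifert type, $\Sigma$ is a compact hyperbolic $2$-orbifold with boundary — and torsion-freeness of $L$ forces $\Sigma$ to be an honest surface — with $L_v = Q \cong \pi_1(\Sigma)$, and the edge groups incident to $v$ are trivial or conjugate into boundary subgroups of $\pi_1(\Sigma)$. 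Hence $Q$ is a QH subgroup of $L$ over $\mathcal A$ relative to $\{M\}$ in the sense of \autoref{defi:TQH}. Moreover, since $\phi$ fixes $\partial\Sigma$ pointwise, $\alpha = \phi_*$ is the identity on each boundary subgroup, so it acts by a conjugation on each edge group incident to $v$ and its standard extension $\hat\alpha \in \aut L$ is well defined up to an inner automorphism of $L$.

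Next I would decompose $\phi$ into elementary pieces. By the classification of mapping classes of a compact surface that fix the boundary pointwise, $\phi$ is a product of Dehn twists $\tau_{c_i}^{\pm 1}$ along essential two-sided simple closed curves $c_i \subset \Sigma$ (together with, when $\Sigma$ is non-orientable, finitely many crosscap slides, discussed at the end). For a two-sided essential simple closed curve $c \subset \Sigma$, cutting $\Sigma$ along $c$ exhibits a one-edge splitting of $Q$ over $\langle c\rangle \cong \Z$ (an amalgam if $c$ separates, an HNN extension otherwise). Grafting this splitting into $S$ at the vertex $v$ produces a splitting $S'$ of $L$ that refines $S$ and has a new edge $e$ with stabilizer $\langle c\rangle \cong \Z$. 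This $S'$ is again an $(\mathcal A,\{M\})$-tree: the only new edge group is cyclic, and $M$ remains elliptic because any boundary subgroup of $\pi_1(\Sigma)$ into which $M$ is conjugate is itself elliptic in a curve splitting of $Q$. Unwinding the definition of a Dehn twist from \autoref{sec: modular group}, the standard extension to $L$ of the surface Dehn twist $\tau_c$ agrees, up to an inner automorphism of $L$, with the group-theoretic Dehn twist of $L$ over the edge $e$ of $S'$ by the generator of $\langle c\rangle$; collapsing $S'$ onto the orbit of $e$ keeps $M$ elliptic, hence conjugate into a vertex group of the resulting one-edge splitting, so this Dehn twist is relative to $M$ and lies in $\mcg{L,M}$. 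Using that standard extensions are compatible with refinements, $\hat\alpha$ is then, up to an inner automorphism, a product of these Dehn twists; since $\mcg{L,M}$ contains all inner automorphisms of $L$ and is a subgroup of $\aut L$, we obtain $\hat\alpha \in \mcg{L,M}$.

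The hard part will be the bookkeeping around standard extensions: making precise that the standard extension of the mapping-class automorphism $\alpha$ of $Q$ computed from $S$ is compatible with the standard extensions of the individual surface Dehn twists computed in the refinements $S'$, and that the surface-theoretic Dehn twist matches the group-theoretic one — both of which are standard in this circle of ideas (Rips--Sela, Perin, Weidmann, Coulon--Sela) and should be adaptable from \cite{Perin:2008aa, Weidmann:2019ue, Coulon:2021wg}. Handling the exceptional generators in the non-orientable case requires a slightly different route, since a crosscap slide is not a product of Dehn twists even in the mapping class group: there I would instead compare with the canonical JSJ tree $S_c$ of \autoref{res: jsj - existence}, which is compatible with $S$ and whose QH vertices carry maximal surfaces, so that $\Sigma$ embeds as a subsurface of a QH-vertex surface $\Sigma'$ of $S_c$ with matching peripheral structure; then $\phi$ extends by the identity to a mapping class $\phi'$ of $\Sigma'$ fixing $\partial\Sigma'$, the standard extension of $\alpha = \phi_*$ agrees up to an inner automorphism with that of $\phi'_*$, and the latter is by definition a surface type automorphism, hence in $\mcg{L,M}$.
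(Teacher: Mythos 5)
Your treatment of the Dehn twist case mirrors the paper's own argument almost exactly: refine $S$ by cutting $\Sigma$ along $c$, collapse to a one-edge splitting $S'$ over $\langle c\rangle$, identify the standard extension of the surface twist with the group-theoretic Dehn twist over the new edge up to inner, and verify that $M$ is elliptic in $S'$ using that $M$ fixes a point of $T$ and (if it lands in $L_v$) is forced into a boundary subgroup of $\Sigma$ because $Y$ is dual to an arational foliation. Your extra preliminary observation that $S$ is an $(\mathcal A,\{M\})$-tree and that $Q$ is QH relative to $\{M\}$ in the sense of \autoref{defi:TQH} is correct and makes explicit something the paper leaves implicit.

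Where you diverge is the reduction to a single twist. The paper asserts flatly that the mapping class group of $\Sigma$ (rel boundary) is generated by Dehn twists about essential non-boundary-parallel curves, and reduces to that case without comment. You rightly flag that this is false when $\Sigma$ is non-orientable: the twist subgroup is a proper subgroup, and one also needs crosscap slides. Since the paper's definitions (\autoref{def: action type on R-tree} and \autoref{defi:TQH}) allow $\Sigma$ to be any compact hyperbolic $2$-orbifold with boundary, and torsion-freeness of $L$ rules out cone points and reflector boundary but not non-orientability, this is a genuine gap in the paper's proof as written, not merely pedantry. (The paper's phrasing also silently discards boundary-parallel twists; that part is harmless, since their standard extensions are inner, but it should be said.)

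However, your proposed fix for the crosscap slide case is too thin to be accepted as is. You want to use that $S_c$ is compatible with $S$, that QH vertex groups of $S_c$ carry ``maximal'' surfaces, and hence that $\Sigma$ embeds as an essential subsurface of some QH-vertex surface $\Sigma'$ of $S_c$ with matching peripheral structure, so that $\phi$ extends by the identity and the standard extension of $\phi_*$ coincides (up to inner) with the standard extension of the extended mapping class, which is by definition a surface type automorphism in $\mcg{L,M}$. Each link in this chain needs justification: that compatibility of $S$ and $S_c$ yields an enclosing of $Q$ into a QH vertex group of $S_c$ as an essential subsurface group (this is a real theorem about JSJ trees, in the spirit of Guirardel--Levitt, not a formal consequence of ``compatible''); that the peripheral structure of $\Sigma$ inside $\Sigma'$ matches so that extension by identity gives a well-defined mapping class rel boundary of $\Sigma'$; and that the two standard extensions, computed relative to two different trees, agree up to inner automorphism. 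None of these is implausible, and this is probably the right route, but as written it is a sketch with the main lemmas missing. An alternative, equally in the spirit of the paper, would be to show directly that a crosscap slide, together with an appropriate Dehn twist, is a group-theoretic Dehn twist of $L$ over a curve splitting (the crosscap slide composed with a boundary twist on the Klein bottle summand is a twist about the core of the Möbius band, suitably interpreted); but that too would need to be written carefully.

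In short: your proof coincides with the paper's on the twist case; your objection to the paper's reduction step is correct and identifies a real omission; your repair is plausible but currently only a plan.
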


\begin{proof}
	The mapping class group of $\Sigma$ is generated by Dehn twists around essential simple closed curves that are not parallel to a boundary component.
	Hence, we can assume without loss of generality that $\alpha$ is such a Dehn twist around the curve, say $c$.
	Let $S$ be the underlying skeleton of the decomposition of $T$ as a graph of actions.
	Denote by $v$ the vertex of $S$ corresponding to $Y$.
	We build from $S$ a one-edge splitting $S'$ of $L$ over abelian groups as follows.
	First, we refine $S$ by cutting $\Sigma$ around $c$ (hence the stabilizers of new edges are conjugated to the subgroup generated by $c$), and then we collapse in the resulting tree all edges originally coming from $S$.
	By construction, $\alpha$ is a Dehn twist around an edge of $S'$.
	We are left to prove that $M$ is elliptic in $S'$.
	We distinguish two cases. 
	Recall that $M$ fixes a point in $T$, hence a vertex $w \in S$.
	If $w$ is not in the orbit of $v$, then $M$ automatically fixes a vertex in $S'$.
	Suppose now that $w$ is in the orbit of $v$.
	Up to replacing $M$ by a conjugate, we can assume that $M$ is contained in $L_v$.
	In particular, $M$ fixes a point in $Y$.
	By definition, $Y$ is a tree dual to an arational foliation on $\Sigma$.
	Hence, $M$ is necessarily conjugated into a boundary component of $\Sigma$, thus fixes a vertex in $S'$.
\end{proof}

\begin{prop}
\label{res: shortening - surface case}
	There is $\alpha \in \mcg{L,M}$ such that for every $u \in U$,
	\begin{itemize}
		\item if the geodesic $\geo o{uo}$ has a non-degenerate intersection with a Seifert type component of $\Lambda$, then $\dist{\alpha(u)o}o < \dist {uo}o$,
		\item otherwise $\alpha(u) = u$.
	\end{itemize}
\end{prop}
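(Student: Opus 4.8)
The plan is to use the shortening argument for surface-type (Seifert) components, in the spirit of Rips--Sela, carried out one component at a time and then assembled. First I would fix a Seifert type component $Y$ of the graph of actions $\Lambda$, with underlying orbifold $\Sigma$, and restrict attention to those generators $u \in U$ for which $\geo o{uo}$ meets $Y$ non-degenerately. The point $o$ lies in $T$; let $p$ be the projection of $o$ onto the vertex tree $Y$ (using that $Y$ is a closed subtree, via the graph of actions structure and the convexity of vertex trees). For each relevant $u$, the geodesic $\geo o{uo}$ enters $Y$, so $\geo o{uo}$ contains a subsegment $\geo{q_u}{q'_u} \subset Y$ with $q_u, q'_u$ in the $L$-orbit of boundary/attaching points together with the two "endpoint" projections; the contribution of this subsegment to $\dist{uo}o$ is $\dist{q_u}{q'_u}$. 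The key input is that $Y$ is dual to an arational measured foliation on $\Sigma$: by the standard theory of interval exchanges / arational foliations (via \cite{Fathi:2012wo} and the Rips machine output), for any finite collection of pairs of points in $Y$ one can find a mapping class $\psi$ of $\Sigma$ \emph{fixing $\partial \Sigma$ pointwise} such that the $\psi$-twisted action strictly shortens the translation distance $\dist{\psi q_u}{q'_u}$ for every $u$ whose segment actually crosses the "interior" of the foliation --- this is exactly the content that makes arational foliations "minimal and uniquely ergodic enough" for shortening. Concretely, because the foliation is arational, no nontrivial arc in $Y$ is fixed by a boundary-parallel element, so the segments $\geo{q_u}{q'_u}$ genuinely wind around $\Sigma$, and a suitable product of Dehn twists (or, more robustly, a pseudo-Anosov power followed by a correction) decreases all these lengths simultaneously.

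Next I would promote $\psi$ to an automorphism of $L$: by \autoref{res: standard extension mcg in modular group}, since $\psi$ is a mapping class of $\Sigma$ fixing $\partial \Sigma$ pointwise, its standard extension $\alpha_Y$ to $L$ lies in $\mcg{L, M}$. The effect of $\alpha_Y$ on $\dist{uo}o$ is to replace the $Y$-segment contribution $\dist{q_u}{q'_u}$ by $\dist{\psi q_u}{q'_u}$, leaving all other pieces of $\geo o{uo}$ (the portions in other vertex trees and along edges of $S$) unchanged --- here one uses that a standard extension acts by conjugation on all other vertex groups and trivially "away from $Y$" up to the ambient conjugation, and that measuring $\dist{\alpha_Y(u)o}o$ after conjugating the base point appropriately only sees the modification inside $Y$ (cf. the energy being invariant under inner automorphisms, \autoref{rem: energy invariant by inner automorphisms}). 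Hence for every $u$ whose geodesic crosses $Y$ non-degenerately we get $\dist{\alpha_Y(u)o}o < \dist{uo}o$, and for $u$ disjoint from $Y$ we get $\alpha_Y(u) = u$ on the nose (the standard extension is the identity outside the vertex group and its neighbors when the twist is supported on $Y$; more precisely $\alpha_Y$ fixes such $u$ because $u$ maps into a vertex group not affected by the twist). Finally I would compose over all (finitely many orbits of) Seifert components: set $\alpha = \alpha_{Y_m} \circ \cdots \circ \alpha_{Y_1}$, noting that the $Y_i$ are in distinct orbits so their twists have disjoint support, so the strict inequalities do not interfere and the fixed generators stay fixed.

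The main obstacle, and the step that needs the most care, is the first one: producing a single mapping class (fixing the boundary) that strictly shortens \emph{all} the finitely many relevant segments simultaneously. For one segment this is classical (arationality forbids the segment from being carried on a boundary-parallel curve, and then a Dehn twist around an appropriate curve reduces its measured length), but for a finite family one must argue more globally: the cleanest route is to invoke that the mapping class group acts on the space of measured laminations / on $\Sigma$ with enough "mixing" (e.g. take a pseudo-Anosov $\phi$ relative to $\partial\Sigma$; then high powers $\phi^N$ contract the transverse measure of every essential arc, hence shorten every $\dist{q_u}{q'_u}$, and $\phi^N$ can be chosen to fix $\partial\Sigma$ pointwise after composing with boundary Dehn twists which do not change interior lengths). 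One must also handle the degenerate possibility that some relevant generator's $Y$-segment lies entirely in the boundary, but then by definition its intersection with the Seifert \emph{component} (as opposed to its boundary) is not non-degenerate, so it falls in the "otherwise" case. I would also double-check the bookkeeping that shortening the $Y$-piece genuinely shortens $\dist{uo}o$ and is not cancelled by a lengthening elsewhere --- this is guaranteed precisely because $\alpha_Y$ only modifies the $Y$-piece and acts by isometry (conjugation) elsewhere, so the decomposition of $\dist{uo}o$ as a sum over the pieces of $\geo o{uo}$ changes in exactly one summand, which strictly decreases.
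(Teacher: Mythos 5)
There is a genuine gap, and it sits exactly where you flagged the ``main obstacle''. Your mechanism for simultaneously shortening all the relevant segments --- take a pseudo-Anosov $\phi$ relative to $\partial\Sigma$ and use that ``high powers $\phi^N$ contract the transverse measure of every essential arc'' --- is false. The metric on the dual tree $Y$ is the transverse measure of the \emph{fixed} arational foliation $\mathcal F$ dual to $Y$, and for a pseudo-Anosov $\phi$ whose invariant foliations differ from $\mathcal F$, the images $\phi^N(\gamma)$ of a generic arc converge projectively to the unstable lamination of $\phi$, so their $\mathcal F$-transverse measure \emph{grows} like $\lambda^N$ rather than shrinking; there is no reason one can choose $\phi$ so that all the finitely many segments $\geo{q_u}{q'_u}$ (and all the translates of $Y$ crossed by the various geodesics $\geo o{uo}$) are shortened at once. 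The single-segment case is likewise not a one-line Dehn twist argument: the actual construction (Rips--Sela, and Perin's Sections 5.4--5.5) is a careful analysis of the arational foliation, and also handles the second bullet, namely that generators whose geodesics miss every Seifert component are fixed \emph{exactly} (your appeal to ``standard extension is the identity away from $Y$'' glosses over the fact that standard extensions are only defined up to inner automorphism, and over geodesics crossing several $L$-translates of the same vertex tree).

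For comparison, the paper does not reprove the shortening at all: it invokes Perin's Theorem~5.12 for the existence of $\alpha$ and the displacement inequalities, and the only new content is the observation that $\alpha$ is (up to conjugacy) a product of standard extensions of mapping classes of the $\Sigma_i$ fixing $\partial\Sigma_i$ pointwise, hence lies in $\mcg{L,M}$ by \autoref{res: standard extension mcg in modular group}. You identified that last step correctly; but by choosing to rebuild the Rips--Sela surface-case shortening from scratch, you took on the hard analytic core of the argument, and the tool you propose for it does not work. Either cite Perin (as the paper does) or replace the pseudo-Anosov step with the genuine argument for shortening with respect to an arational measured foliation.
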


\begin{proof}
	The proof is given in \cite[Theorem~5.12]{Perin:2008aa}.
	However, Perin does not state explicitly that $\alpha$ belongs to $\mcg{L, M}$.
	We explain here why it is the case.
	Denote by $Y_1, \dots, Y_p$ representatives (for the action of $G$) of the surface type components of the above decomposition of $T$ into a graph of actions.
	By definition, each $Y_i$ is an $\R$-tree dual to an arational foliation on the underlying surfaces $\Sigma_i$.
	In the course of the proof, Perin shows that $\alpha$ is a product $\alpha = \alpha_1 \cdots \alpha_p$ where each $\alpha_i$ is (up to conjugacy) the standard extension to $L$ of an automorphism induced by a mapping class of $\Sigma_i$ fixing pointwise the boundary of $\Sigma_i$, see the proof of \cite[Theorem~5.12]{Perin:2008aa}.
	It follows from \autoref{res: standard extension mcg in modular group} that $\alpha \in \mcg{L, M}$.
\end{proof}

\begin{lemm}
\label{res: standard extension axial in modular group}
	Let $Y$ be an axial component in the decomposition of $T$ as a graph of actions.
	Let $v$ be the corresponding vertex in the underlying skeleton $S$ of the decomposition, and suppose that $L_v$ is abelian.
	Let $\alpha$ be an automorphism of the vertex group $L_v$ that is trivial on the peripheral subgroup $K_v$.
	Its standard extension to $L$ belongs to $\mcg{L, M}$.
\end{lemm}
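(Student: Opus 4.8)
The plan is to mimic the proof of \autoref{res: standard extension mcg in modular group}, replacing the Dehn twist around a simple closed curve on a surface by an automorphism of the abelian vertex group $L_v$. First I would record that the standard extension of $\alpha$ makes sense: every morphism $\phi$ occurring in \eqref{eq - peripheral subgroup} kills $E_v$, so $E_v\subseteq K_v$, and since $\alpha$ is the identity on $K_v$ it fixes pointwise every edge group of $S$ incident to $v$; hence $\alpha$ acts on these edge groups by the trivial conjugation and admits a standard extension $\hat\alpha\in\aut L$, unique up to an inner automorphism of $L$. As inner automorphisms lie in $\mcg{L,M}$, it is enough to show $\hat\alpha\in\mcg{L,M}$ for one such extension.

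There are two routes to conclude, and I would take whichever is cleanest in the surrounding formalism. The first is to identify $L_v$ with an abelian vertex group of the canonical JSJ tree of $L$ relative to $M$: since $L_v$ acts with dense orbits on the line $Y=Y_v$ it is not discrete, so it cannot be conjugate into a QH vertex group (abelian subgroups of surface groups being cyclic, which act on a line with discrete orbits); the flexible vertex groups of the JSJ being QH or abelian, $L_v$ is therefore conjugate into an abelian JSJ vertex group $A$, with peripheral subgroup matching $K_v$. Then $\alpha$ is by definition a generalized Dehn twist of $A$, so $\hat\alpha\in\mcg{L,M}$ by the description of the modular group recalled after \autoref{def: modular group}. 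The second, more hands-on route parallels the surface case: fix a splitting $L_v=K_v\oplus P$ with $P$ free abelian, decompose $\alpha$ into a product of elementary automorphisms (transvections multiplying a free generator of $P$ by an element of $K_v$ or by another free generator, together with the orientation-reversing $\mathrm{GL}$-move on $P$), and realize each one as a Dehn twist over a one-edge abelian splitting of $L$ obtained by collapsing all edges of $S$ not leading to $v$ and then unfolding the abelian vertex along the chosen decomposition.

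Either way, one must check that $M$ is elliptic in the auxiliary splittings, and I would do this by the same case analysis as in \autoref{res: standard extension mcg in modular group}: the vertex $w$ of $S$ fixed by $M$ either lies outside the orbit of $v$, in which case it survives the constructions, or else, after conjugating, $M\leq L_v$ fixes a point of $Y$. In the latter case, $L$ being torsion-free (\autoref{res: limit group torsion free and CSA}), the abelian group $L_v$ acts on the line $Y$ by translations, so $M$ lies in the kernel of the axial action, and I would then invoke compatibility of the Rips--Sela decomposition of \autoref{res: splitting relative version} with the relative structure to place $M$ inside $K_v$, hence inside the relevant vertex group. This last step --- the fact that an $\mathcal H$-subgroup contained in an abelian vertex group of the graph of actions must lie in its peripheral subgroup --- is the only real content beyond bookkeeping, and I expect it to be the main obstacle; it should follow either from the precise form of the Rips--Sela output relative to $\mathcal H$ or from the observation that the axial action can be unfolded in any direction while keeping the members of $\mathcal H$ elliptic.
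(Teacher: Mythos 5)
Your second, ``hands-on'' route is in outline the paper's own proof: write $L_v = K_v \oplus \Z^m$, refine $S$ at $v$ by the splitting of $L_v$ over $A = K_v \oplus \Z t_1 \oplus \dots \oplus \Z t_{m-1}$ (an HNN extension with both attaching maps the identity), collapse the edges coming from $S$ to obtain a one-edge abelian splitting $S'$ of $L$, express $\alpha$ as a product of Dehn twists over such splittings, and check ellipticity of $M$ by your two-case analysis. The problem is that the step you explicitly leave open is exactly where the content lies. When $M$ is conjugate into $L_v$, you need it to land in $K_v$ (or at least in $A$, the elliptic part of the splitting of $L_v$), and neither of your proposed fixes --- ``compatibility of the Rips--Sela decomposition with the relative structure'', or ``unfolding the axial action in any direction keeping $\mathcal H$ elliptic'' --- is an argument. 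The paper closes this gap directly, using only \autoref{res: fixed point H} and the definition of an axial component: $M$ fixes a point of $T$, hence (being contained in $L_v$, up to the degenerate case where it already fixes an edge adjacent to $v$ and thus lies in $E_v \subset K_v$) a point of $Y$; the image of $L_v$ in $\isom Y$ is a finitely generated abelian group acting on a line with dense orbits, hence free abelian acting by non-zero translations, so every point stabilizer in $L_v$ lies in the kernel of $L_v \to \isom Y$; and this kernel is identified with $K_v$, since the incident edge groups fix the attaching points (so $E_v$ lies in the kernel) and then \eqref{eq - peripheral subgroup} applies to the quotient $L_v/\ker \cong \Z^m$. No relative Rips--Sela input is needed. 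Until this identification of $K_v$ with the kernel of the axial action is actually proved, rather than flagged as the main obstacle, the lemma is not established.

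Your first route does not work as stated. The tree $S$ in the lemma is the skeleton of the graph of actions decomposing the limit tree $T$, not the canonical JSJ tree, and the generating-set description of $\mcg{L,\mathcal H}$ by inner automorphisms, Dehn twists, surface-type automorphisms and generalized Dehn twists recalled in the paper is specific to the canonical JSJ tree relative to $\mathcal H$. Even granting that $L_v$ is elliptic in the canonical JSJ tree of $L$ relative to $M$ and is contained in an abelian vertex group $B$ there, you would still need $L_v$ to coincide with $B$ (not merely embed in it), the peripheral subgroup of $B$ in the JSJ tree to match $K_v$ (which is defined via the edges of $S$, a different tree), and $\alpha$ to extend to an automorphism of $B$ fixing that peripheral subgroup; none of these points is addressed, and none is automatic. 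The direct construction of one-edge splittings relative to $M$, as in your second route and in the paper, is what avoids all of this.
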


\begin{proof}
	By definition of the peripheral subgroup $K_v$ (see \eqref{eq - peripheral subgroup}), the abelian vertex group $L_v$ splits as $L_v = K_v \oplus \Z^m$ for some $m \in \N$.
	Recall that the image of $L_v$ in $\isom Y$ is a finitely generated group with dense orbits.
	This image is also abelian like $L_v$, hence it is free abelian, acting by non-zero translations on $Y$.
	It follows that $K_v$ is actually the kernel of $L_v \to \isom Y$. 
	Moreover, $m \geq 2$.

	Let $\{t_1, \dots, t_m\}$ be a free basis of the $\Z^m$ factor.
	We build from $S$ a one-edge splitting $S'$ of $L$ above abelian groups as follows.
	First we replace $L_v$ by the HNN decomposition $A \ast_ C$ where 
	\begin{equation*}
		A = C = K_v \oplus \Z t_1 \oplus \dots \oplus \Z t_{m-1},
	\end{equation*}
	and the attaching maps $C \into A$ are both the identity; 
	then we collapse in the resulting tree all edges originally coming from $S$.
	We claim that any Dehn twist $\alpha$ around an edge of $S'$ belongs to $\mcg{L, M}$.
	To that end, it suffices to prove that $M$ is elliptic in $S'$.
	We distinguish two cases. 
	Recall that $M$ fixes a point in $T$, hence a vertex $w \in S$.
	If $w$ is not in the orbit of $v$, then $M$ automatically fixes a vertex in $S'$.
	Suppose now that $w$ is in the orbit of $v$.
	Up to replacing $M$ by a conjugate, we can assume that $M$ is contained in $L_v$.
	In particular, $M$ fixes a point in $Y$.
	Recall that the image of $L_v$ in $\isom Y$ acts without fixed points on $Y$.
	Hence, $M$ lies in the kernel of $L_v \to \isom Y$, i.e., in $K_v$.
	In particular, $M$ fixes an edge in $S'$, which completes the proof of our claim.
	Every standard extension of a generalized Dehn twist in $L_v$ can be obtained as a product of Dehn twists as above, whence the result.
\end{proof}

\begin{prop}
\label{res: shortening - axial case}
	There is $\alpha \in \mcg{L,M}$ such that for every $u \in U$,
	\begin{itemize}
		\item if the geodesic $\geo o{uo}$ has a non-degenerate intersection with an axial type component of $\Lambda$, then $\dist{\alpha(u)o}o < \dist {uo}o$,
		\item otherwise $\alpha(u) = u$.
	\end{itemize}
\end{prop}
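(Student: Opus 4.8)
The plan is to follow the same strategy as the surface-type case (\autoref{res: shortening - surface case}), replacing Seifert components by axial components, so the main work is a shortening argument for a single axial component together with a verification that the relevant generalized Dehn twists lie in $\mcg{L,M}$. I would begin by reducing to the case of a single axial component $Y$ with vertex $v\in S$, whose stabilizer $L_v$ is abelian; note that since $L$ is CSA and acts on $T$ with abelian arc stabilizers (\autoref{res: abelian arc stabilizers}), the image of $L_v$ in $\isom Y$ is finitely generated abelian acting on the line $Y$ with dense orbits, hence free abelian of rank $m\ge 2$, with kernel exactly the peripheral subgroup $K_v$, so $L_v = K_v\oplus \mathbb Z^m$. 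The key observation is that generalized Dehn twists in $L_v$ fixing $K_v$ give us, on the line $Y$, the full group of translations by an arbitrary element of the dense subgroup $\mathbb Z^m\hookrightarrow \mathbb R$; by \autoref{res: standard extension axial in modular group} all of these extend to elements of $\mcg{L,M}$.

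Next I would run the actual shortening for the axial piece. For each generator $u\in U$ whose axis segment $\geo o{uo}$ crosses $Y$ non-degenerately, the portion of $\dist{uo}{o}$ contributed inside $Y$ is the length of an interval on the line; acting by a translation $\alpha_Y$ supported on $Y$ changes the attaching data of the edges entering $Y$ in a controlled way, and by the classical argument (as in \cite{Perin:2008aa}, and originally Rips--Sela \cite{Rips:1994jg}, see also \cite{Sela:2001gb}) one can choose a single translation length $t\in\mathbb Z^m$, arbitrarily well approximating a suitable real number, so that simultaneously for every such $u$ the new distance $\dist{\alpha(u)o}{o}$ is strictly less than $\dist{uo}{o}$, while for the generators $u$ not crossing $Y$ the twist acts trivially, so $\alpha(u)=u$. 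One must check that the base point $o\in T$ and its image in each $Y$ can be taken to sit in a ``generic'' position so that the crossing intervals are non-degenerate and the minimization over the dense set of available translations is achieved strictly; this is where density of $\mathbb Z^m$ in $\mathbb R$ is used. Taking $\alpha = \prod_i \alpha_{Y_i}$ over representatives $Y_1,\dots,Y_q$ of the axial components (which have disjoint supports in $T$, so the twists commute and act independently on distinct generators' paths), and invoking \autoref{res: standard extension axial in modular group} for each factor, yields $\alpha\in\mcg{L,M}$ with the stated property.

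I expect the main obstacle to be the bookkeeping for the simultaneous minimization: one must verify that there is a choice of translation in the dense subgroup $\mathbb Z^m\subset\mathbb R$ that strictly decreases the contribution of \emph{every} generator crossing a given axial component at once, and that the ``no crossing'' generators are genuinely untouched. This is the heart of the axial shortening lemma; the argument is standard but requires care about degenerate intersections, about the position of $o$, and about the fact that the relevant lengths vary continuously (indeed piecewise-linearly) in the translation parameter, so that a value realizing the infimum and lying in $\mathbb Z^m$ can be found by density. The fact that $M$ remains elliptic in the auxiliary one-edge splitting $S'$ — needed to apply \autoref{res: standard extension axial in modular group} — was already handled in the proof of \autoref{res: standard extension axial in modular group}, using that $M$ fixes a point of $T$ and, when conjugated into $L_v$, lands in $K_v$; so no new difficulty arises there. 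Altogether the proof is a direct transcription of the Seifert-type argument with ``arational foliation on a surface'' replaced by ``dense action on a line'', and I would present it at that level of detail, citing \cite{Perin:2008aa, Rips:1994jg, Sela:2001gb} for the parts that are verbatim.
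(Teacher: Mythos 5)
Your proposal is correct and follows essentially the same route as the paper. Both proofs cite Perin \cite[Theorem~5.17]{Perin:2008aa} for the axial shortening itself, and then observe that the automorphism $\alpha$ produced there decomposes (up to conjugacy) as a product of standard extensions of generalized Dehn twists supported on representatives of the axial vertex groups, so that \autoref{res: standard extension axial in modular group} gives $\alpha \in \mcg{L,M}$. The paper's proof is essentially just that observation; your extra paragraphs unpack Perin's argument, which is fine but not a new ingredient. One small caution on phrasing: a generalized Dehn twist is an automorphism of $L_v$ (and then of $L$), not literally a translation of the line $Y$ --- what the density of $L_v/K_v \cong \Z^m$ in $\R$ buys you is freedom in choosing how the attaching points of incident edges move under the twisted action, not a genuine identification of the twists with translations of $Y$. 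Your writeup would be cleaner if it stated this as in Perin rather than as ``translations supported on $Y$'', but the underlying argument you are pointing to is the right one.
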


\begin{proof}
	The proof is given in \cite[Theorem~5.17]{Perin:2008aa}.
	Like before, Perin does not state explicitly that $\alpha$ belongs to $\mcg{L, M}$.
	We explain here why it is the case.
	Denote by $Y_1, \dots, Y_p$ representatives (for the action of $G$) of the axial components of the graph of groups decomposition of $T$.
	Perin then shows --- see  \cite[Theorem~5.17]{Perin:2008aa} ---  that $\alpha$ is a product $\alpha = \alpha_1 \cdots \alpha_p$ where each $\alpha_i$ is (up to conjugacy) the standard extension to $L$ of generalized Dehn twist as in \autoref{res: standard extension axial in modular group}.
	Whence the result.
\end{proof}

\begin{prop}
\label{res: shortening morphism - transverse arc}
	Assume that $T$ contains a simplicial arc.
	Then there exists $\ell > 0$ such that for every sufficiently large $k \in \N$, there are $\alpha_k \in \mcg{L,M}$ and $\hat \alpha_k \in \aut{\hat L_k}$ such that $\zeta_k \circ \hat \alpha_k = \alpha_k \circ \zeta_k$ and 
		\begin{equation*}
			E_1\left(\hat \varphi_k \circ \hat \alpha_k \circ \hat \eta_k, U\right) < E_1\left(\varphi_k,U\right) - \frac \ell{\epsilon_k}\ \oas.
		\end{equation*}
\end{prop}

In the statement above, the energies are this time computed in the non-rescaled version of $X_k$.

\begin{proof}
	The strategy in this situation has been detailed many times in the literature.
	We refer the reader to Rips--Sela~\cite[Section~6]{Rips:1994jg}, Perin~\cite[Section~5.6]{Perin:2008aa}, or Weidmann--Reinfeldt~\cite[Section~5.2.3]{Weidmann:2019ue}  for a carefully written proof.
	Let us just highlight its main steps.
	Let $x = \limo x_k$ and $y = \limo y_k$ such that  $\geo xy$ is a simplicial arc of $T$.
	Up to replacing $[x,y]$ by a smaller arc, we always assume that $o$ does not belong to its interior.
	Let $\bar T$ be the simplicial tree obtained by collapsing to a point every connected component in the complement of the $L$-orbit of the open segment $(x, y)$.
	It defines a one-edge splitting of $L$ relative to $M = \eta(H)$.
	In particular, the stabilizer of this edge $e$ --- which coincides with the pointwise stabilizer of $\geo xy$ --- is not trivial, for otherwise $L$ would be freely decomposable relative to $M$.
	Fix $g \in G$ such that $\eta(g)$ is a non-trivial element of $G$ fixing pointwise $\geo xy$.
	Using \autoref{res: shortening - transverse simplicial arc} we build a sequence $(g_k)$ of elements of $\group g$ such that 
	\begin{enumerate}
		\item $\eta(g_k)$ pointwise fixes $\geo xy$ \oas, in particular $\eta(g_k)$ belongs to $L_e$;
		\item $\limo\dist{x_k}{\varphi_k(g_k)y_k} = 0$.
	\end{enumerate}
	Let $\alpha$ be the Dehn twist by $\eta(g)$ over $e$.
	Note that $M$ fixes a vertex in the one-edge splitting $\bar T$ of $L$, hence $\alpha \in \mcg{L, M}$.
	Similarly, we write $\alpha_k$ for the Dehn twist by $\eta(g_k)$ over $e$.
	Since $g_k$ belongs to $\group g$, the automorphism $\alpha_k$ is a power of $\alpha$.
	According to our assumption, there is $k_0$ such that for every $k \geq k_0$, the automorphism $\alpha$ can be lifted to an automorphism $\hat \alpha$ of $\hat L_k$.
	It follows, in particular, that up to forgetting the first few terms of the sequence, $\alpha_k$ lifts to an automorphism $\hat \alpha_k$ of $\hat L_k$, for every $k \in \N$.
	Proceeding as in Perin~\cite[Section~5.6]{Perin:2008aa} one can prove that for every $u \in U$,
	\begin{equation*}
		\limo \dist{\hat\varphi_k \circ \hat \alpha_k \circ \hat  \eta_k(u)o_k}{o_k}
		\leq \dist{\eta(u)o}o.
	\end{equation*}
	However, since $U$ generates $L$, there is at least one element $u \in U$ such that the geodesic $\geo o{uo}$ contains a translated copy of $\geo xy$.
	For such an element, we get
	\begin{equation*}
		\limo \dist{\hat\varphi_k \circ \hat \alpha_k \circ \hat  \eta_k(u)o_k}{o_k}
		\leq \dist{\eta(u)o}o - \dist xy.
	\end{equation*}
	It follows from there that
	\begin{equation*}
		E_1\left(\hat \varphi_k \circ \hat \alpha_k \circ \hat \eta_k, U\right) \leq E_1\left(\varphi_k,U\right) - \frac {\dist xy}{2\epsilon_k}, \quad \oas. \qedhere
	\end{equation*}
\end{proof}

\begin{rema}
	In \cite{Rips:1994jg} or \cite{Weidmann:2019ue}, the authors only consider limit groups over a fixed hyperbolic group.
	The proof of Perin \cite{Perin:2008aa} does not make this assumption. 
	However, it requires a uniform control on $L_e$.
	More precisely, she asks that there exists $g \in G$ in the pre-image of $L_e$ such that $\norm {\varphi_k(g)} > 12\delta_k$ \oas.
	This assumption is used to prove the analogue of our \autoref{res: shortening - transverse simplicial arc}, see \cite[Lemma~5.25]{Perin:2008aa}.
	In our context, given a pair $(\Gamma, X) \in \mathfrak H(\delta)$, we have no control on the injectivity radius of the action of $\Gamma$ on $X$.
	Consequently, such a uniform control does not hold in general.
	However, as we have seen in the proof of \autoref{res: shortening - transverse simplicial arc}, it can be advantageously replaced by the fact that hyperbolic isometries of $\Gamma$ are uniformly thin.
\end{rema}

We can now complete the proof of \autoref{res: shortening argument - general}.
We claim that there is $\ell \in \R_+^*$ such that 
\begin{equation*}
	E_1\left(\hat \varphi_k \circ \hat \alpha_k \circ \hat \eta_k, U\right) \leq E_1\left(\varphi_k,U\right) - \frac\ell{2\epsilon_k}, \quad \oas,
\end{equation*}
where the energies are computed in the non-rescaled version of $X_k$.
As we already explained, it follows from \autoref{res: splitting relative version} that the action of $L$ on $T$ decomposes as a graph of actions whose components are either simplicial, axial, or of surface type.
If the decomposition admits a simplicial component, then the claim directly follows from \autoref{res: shortening morphism - transverse arc}.
Suppose now that the decomposition admits a component $Y$ of axial (\resp, surface) type.
According to \autoref{res: shortening - surface case} (\resp, \autoref{res: shortening - axial case}) there is $\alpha \in \mcg{L, M}$ such that
\begin{equation*}
	\dist o{uo} \leq \dist o{\alpha(u)o}, \quad \forall u \in U
\end{equation*}
with a strict inequality if $[o, uo]$ has a non-degenerate intersection with a component of axial (\resp, surface) type.
Here, the distances are computed in $\epsilon_kX_k$.
According to our assumption there is $\hat \alpha_k \in \aut{\hat L_k}$ lifting $\alpha$, that is such that $\zeta_k \circ \hat \alpha_k = \alpha \circ \zeta_k$ for all but finitely many $k \in \N$.
Unwrapping the definition yields
\begin{equation*}
	\limo \dist{o_k}{\hat \varphi_k \circ \hat \alpha_k \circ \hat \eta_k(u) o_k} \leq \limo \dist {o_k}{\varphi_k (u)o_k} \quad \forall u \in U,
\end{equation*}
with a strict inequality if $[o, uo]$ has a non-degenerate intersection with a component of axial (\resp, surface) type.
Summing these inequalities we get that there is $\ell_0 \in \R_+^*$ such that (in $\epsilon_kX_k$)
\begin{equation*}
	E_1\left(\hat \varphi_k \circ \hat \alpha_k \circ \hat \eta_k, U\right) \leq E_1\left(\varphi_k,U\right) - \frac{\ell_0}{2\epsilon_k}, \quad \oas.
\end{equation*}
This completes the proof of our claim.
\autoref{res: shortening argument - general} now follows from the fact that for all $k \in \N$,  
\begin{equation*}
	\epsilon_k = \frac 1 {E_\infty(\varphi_k, U)}, 
	\quad \text{and} \quad
	E_1(\varphi_k, U) \leq \card U E_\infty(\varphi_k, U). \tag*{\qed}
\end{equation*}

\begin{coro}
\label{res: shortening argument - relative}
	Let $\delta \in \R_+^*$.
	Let $G$ be a finitely generated group and $U$ a finite generating set of $G$.
	Let $H$ be a subgroup of $G$ such that $G$ is torsion-free and freely indecomposable relative to $H$.
	Let $\varphi_k \colon G \to \Gamma_k$ be a sequence of marked groups where $(\Gamma_k, X_k) \in \mathfrak H(\delta)$.
	We make the following assumptions.
	\begin{itemize}
		\item The $\ell^\infty$-energy $E_\infty(\varphi_k, U)$ diverges to infinity.
		\item For every $h \in H$, we have $\lim_{k \to \infty} \norm{\varphi_k(h)} / E_\infty(\varphi_k, U) = 0$.
		\item The sequence $(\Gamma_k, \varphi_k)$ converges to $(G, \id)$.
	\end{itemize}
	Then there exists $\tau > 0$ with the following property: for infinitely many $k \in \N$, there exist automorphisms $\alpha_k \in \mcg{G, H}$ such that 
	\begin{equation*}
		E_1\left(\varphi_k \circ \alpha_k, U\right) \leq (1- \tau) E_1\left(\varphi_k,U\right).
	\end{equation*}
\end{coro}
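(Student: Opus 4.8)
The plan is to obtain this as a direct instance of \autoref{res: shortening argument - general}, run in the degenerate case where the ``limit group'' is $G$ itself and the strong covers are trivial. Concretely, I would invoke \autoref{res: shortening argument - general} with the ambient data $\delta$, $G$, $U$, $H$, $\varphi_k$ unchanged, taking $(L,\eta) = (G, \id_G)$ --- so that $M = \eta(H) = H$ --- and for each $k$ taking $(\hat L_k, \hat \eta_k) = (G, \id_G)$, covering map $\zeta_k = \id_G$, and $\hat \varphi_k = \varphi_k$.

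Next I would check that every hypothesis of \autoref{res: shortening argument - general} holds for these choices. The divergence $E_\infty(\varphi_k, U) \to \infty$, the relative growth condition $\norm{\varphi_k(h)}/E_\infty(\varphi_k, U) \to 0$ for $h \in H$, and the convergence $(\Gamma_k, \varphi_k) \to (G, \id_G)$ are literally the three standing assumptions of the corollary. The requirement that $L$ be freely indecomposable relative to $M$ becomes the assumption that $G$ is freely indecomposable relative to $H$; torsion-freeness of $G$ is likewise hypothesized, and $G$ is CSA because it is a limit of the torsion-free CSA groups $\varphi_k(G) \leq \Gamma_k$ (these being subgroups of members of $\mathfrak H(\delta)$), so the internal JSJ machinery used in the proof of \autoref{res: shortening argument - general} is available. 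Since $\hat \eta_k = \id_G$, the factorization $\varphi_k = \hat \varphi_k \circ \hat \eta_k$ holds with $\hat \varphi_k = \varphi_k$. Finally, the lifting requirement for modular automorphisms is automatic here: given any $\alpha \in \mcg{L, M} = \mcg{G, H}$, set $\hat \alpha_k = \alpha \in \aut G$; then $\zeta_k \circ \hat \alpha_k = \alpha = \alpha \circ \zeta_k$ for every $k$.

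\autoref{res: shortening argument - general} then supplies $\tau > 0$ such that, for infinitely many $k$, there are $\alpha_k \in \mcg{G, H}$ and $\hat \alpha_k \in \aut G$ with $\zeta_k \circ \hat \alpha_k = \alpha_k \circ \zeta_k$ --- which, as $\zeta_k = \id_G$, forces $\hat \alpha_k = \alpha_k$ --- and
\[
E_1\bigl(\hat \varphi_k \circ \hat \alpha_k \circ \hat \eta_k, U\bigr) \leq (1-\tau)\, E_1(\varphi_k, U).
\]
Since $\hat \varphi_k = \varphi_k$, $\hat \eta_k = \id_G$, and $\hat \alpha_k = \alpha_k$, the left-hand side equals $E_1(\varphi_k \circ \alpha_k, U)$, which is precisely the inequality claimed.

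Because the argument is a purely formal specialization, there is no genuine obstacle to surmount: the only point deserving a moment of care is confirming that all the strong-cover bookkeeping in \autoref{res: shortening argument - general} degenerates correctly when the covering maps are the identity, and in particular that the modular-automorphism lifting condition --- the genuinely delicate ingredient in the non-trivial setting, where it is furnished by \autoref{res: lifting modular automorphism} and exploited to prove the absolute version \autoref{res: shortening argument - absolute} --- is vacuous here. In particular, no finite-presentation hypothesis on $G$ is needed, since finite presentation enters only elsewhere, to construct non-trivial finitely presented covers.
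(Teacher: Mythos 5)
Your proposal is correct and is exactly the paper's argument: the paper proves this corollary in one line by specializing \autoref{res: shortening argument - general} to $(L,\eta)=(G,\id)$ with the constant approximation sequence $(\hat L_k,\hat\eta_k)=(G,\id)$, which is precisely your setup. Your additional checks (the vacuousness of the lifting condition, CSA-ness of $G$ as a limit of groups in $\mathfrak H(\delta)$) are the same verifications the paper leaves implicit.
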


\begin{proof}
	This is a particular case of \autoref{res: shortening argument - general}, where $(L, \eta) = (G, \id)$ and the approximation sequence $(\hat L_k, \hat \eta_k)$ is constant equal to $(G, \id)$.
\end{proof}

\begin{coro}
\label{res: shortening argument - absolute}
	Let $\delta \in \R_+^*$.
	Let $G$ be a finitely generated group and $U$ a finite generating set of $G$.
	Let $\varphi_k \colon G \to \Gamma_k$ be a sequence of marked groups where $(\Gamma_k, X_k) \in \mathfrak H(\delta)$.
	We make the following assumptions.
	\begin{itemize}
		\item The $\ell^\infty$-energy $E_\infty(\varphi_k, U)$ diverges to infinity.
		\item The sequence $(\Gamma_k, \varphi_k)$ converges to a group $(L, \eta)$.
		\item The group $L$ is freely indecomposable, torsion-free, and CSA.
	\end{itemize}
	Let $S$ be the canonical JSJ tree of $L$.
	Let $(\hat L_k, \hat \eta_k, \hat S_k)$ be a directed sequence of strong covers for $(L, \eta, S)$ converging to $(L, \eta, S)$.
	Let $\zeta_k \colon \hat L_k \onto L$ be the covering map associated to $\hat L_k$.
	Assume that for every $k \in \N$, the morphism $\varphi_k \colon G \to \Gamma_k$ factors through $\hat \eta_k \colon G \to \hat L_k$ and write $\hat \varphi_k \colon \hat L_k \to \Gamma_k$ for the resulting morphism, so that $\varphi_k = \hat \varphi_k \circ \hat \eta_k$.
	
	Then there exists $\tau > 0$ with the following property: for infinitely many $k \in \N$, there exist automorphisms $\alpha_k \in \mcg L$ and $\hat \alpha_k \in \aut{\hat L_k}$ preserving $\hat S_k$ such that $\zeta_k \circ \hat \alpha_k = \alpha_k \circ \zeta_k$ and 
	\begin{equation*}
		E_1\left(\hat \varphi_k \circ \hat \alpha_k \circ \hat \eta_k, U\right) \leq (1- \tau) E_1\left(\varphi_k,U\right).
	\end{equation*}
\end{coro}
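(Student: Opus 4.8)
The plan is to deduce this from \autoref{res: shortening argument - general} by taking $H$ to be the trivial subgroup of $G$. Then $M = \eta(H) = \{1\}$, so that the hypothesis that ``$L$ is freely indecomposable relative to $M$'' reduces to ``$L$ is freely indecomposable'', which is assumed, while the growth condition $\lim_{k} \norm{\varphi_k(h)} / E_\infty(\varphi_k, U) = 0$ for $h \in H$ holds vacuously. Since the trivial subgroup is conjugated into every subgroup, we also have $\mcg{L, M} = \mcg L$, and the canonical JSJ tree of $L$ relative to $M$ coincides with the tree $S$ of the statement. Hence the data $(\Gamma_k, \varphi_k)$, $(L, \eta)$, the covers $(\hat L_k, \hat\eta_k)$, and the factorizations $\varphi_k = \hat\varphi_k \circ \hat\eta_k$ are exactly those required by \autoref{res: shortening argument - general}.

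The only non-vacuous hypothesis left to check is that every $\alpha \in \mcg L$ lifts, for all sufficiently large $k$, to an automorphism $\hat\alpha_k \in \aut{\hat L_k}$ with $\zeta_k \circ \hat\alpha_k = \alpha \circ \zeta_k$. As $L$ is torsion-free, one-ended (it is freely indecomposable, and in any case its canonical JSJ tree is assumed to exist), and CSA, and as $(\hat L_k, \hat\eta_k, \hat S_k)$ is a directed system of strong covers converging to $(L, \eta, S)$ with $S$ the canonical JSJ tree, this is exactly \autoref{res: lifting modular automorphism}. Applying \autoref{res: shortening argument - general} then produces $\tau > 0$ such that, for infinitely many $k$, there are $\alpha_k \in \mcg L$ and $\hat\alpha_k \in \aut{\hat L_k}$ with $\zeta_k \circ \hat\alpha_k = \alpha_k \circ \zeta_k$ and $E_1(\hat\varphi_k \circ \hat\alpha_k \circ \hat\eta_k, U) \leq (1 - \tau) E_1(\varphi_k, U)$.

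It remains to see that the $\hat\alpha_k$ can be chosen to preserve the splitting $\hat S_k$, which is the only conclusion not handed over verbatim by \autoref{res: shortening argument - general}. For this I would revisit how $\alpha_k$ is produced in the proof of that theorem: it is a Dehn twist over an edge of $S$ in the simplicial case (\autoref{res: shortening morphism - transverse arc}), a surface type automorphism in the surface case (\autoref{res: shortening - surface case}), or a generalized Dehn twist at an abelian vertex in the axial case (\autoref{res: shortening - axial case}) --- in every case one of the standard generators of $\mcg L$ attached to the canonical JSJ tree $S$. The construction underlying \autoref{res: lifting modular automorphism} lifts such an automorphism to a Dehn twist over an edge of $\hat S_k$, a surface type automorphism, or a generalized Dehn twist at an abelian vertex of $\hat S_k$, respectively, and each of these preserves $\hat S_k$. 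Choosing these specific lifts throughout the application of \autoref{res: shortening argument - general} yields the statement. The main, and only mildly delicate, point is precisely this bookkeeping: confirming that the lifts furnished by \autoref{res: lifting modular automorphism} are of the expected combinatorial type, so that ``preserving $\hat S_k$'' survives the reduction.
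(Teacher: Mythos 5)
Your proof is correct and is essentially the paper's own argument: the paper also deduces the corollary by applying \autoref{res: shortening argument - general} with $H = \{1\}$, using \autoref{res: lifting modular automorphism} to verify the lifting hypothesis for the strong covers $(\hat L_k, \hat \eta_k, \hat S_k)$. Your extra bookkeeping on why the lifts can be taken to preserve $\hat S_k$ (they are Dehn twists over edges of $\hat S_k$, surface type automorphisms, or generalized Dehn twists at abelian vertices) is a point the paper leaves implicit, and you handle it correctly.
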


\begin{proof}
	Let $S$ be the canonical JSJ tree of $L$.
	According to \autoref{res: lifting modular automorphism}, $(\hat L_k, \hat \eta_k, \hat S_k)$ satisfies the assumptions of \autoref{res: shortening argument - general} with $H = \{ 1 \}$ and the result follows.
\end{proof}

%%%%%%%%%%%%%%%%%%%%%%%%%%%%%%%%%%%%%%%%%%%%%%%%%%%%%%%%%%%%%%%%%%%%%%%%%%%%%%%%%%%%%
%
\subsection{Shortening arguments}
%
%%%%%%%%%%%%%%%%%%%%%%%%%%%%%%%%%%%%%%%%%%%%%%%%%%%%%%%%%%%%%%%%%%%%%%%%%%%%%%%%%%%%%
\label{sec: infinite desc  seq}

\begin{defi}
\label{def: shortenable}
	Let $G$ be a finitely generated group.
	Let $(H_k, \varphi_k) \in \mathfrak G(G)$ be an $\mathfrak F$-sequence converging to a limit group $(L, \eta)$.
	We say that the sequence $(H_k, \varphi_k)$ is \emph{shortenable} if there is an $\mathfrak F$-sequence $(H'_k, \varphi'_k) \in \mathfrak G(G)$ such that, up to passing to a subsequence, the following hold.
	\begin{enumerate}
		\item \label{enu: shortening - sbgp}
			 $H'_k=H_k$, for every $k \in \N$.
		\item \label{enu: shortening - cvg}
			$(H'_k, \varphi'_k)$ converges to a limit group $(L', \eta') \prec (L, \eta)$, i.e., $\ker \eta \subset \ker \eta'$.
		\item \label{enu: shortening - dichotomy}
			If $(L', \eta')$ is \emph{not} a proper quotient of $(L, \eta)$, then every abelian subgroup of $L'$ is finitely generated and all but finitely many $\varphi'_k$ factor through $\eta'$.
		\item \label{enu: shortening - fact}
			If every abelian subgroup of $L'$ is finitely generated, then the same is true for $L$.
			If, in addition, all but finitely many $\varphi'_k$ factor through $\eta'$, then all but finitely many $\varphi_k$ factor through $\eta$.
	\end{enumerate}
	In this context, we say that $(H'_k, \varphi'_k)$ is a \emph{shortening sequence} of $(H_k, \varphi_k)$ and $(L', \eta')$ a \emph{shortening quotient} of $(L, \eta)$.
	
	A limit group $(L, \eta)$ is \emph{shortenable} if every $\mathfrak F$-sequence converging to $(L, \eta)$ is shortenable.
\end{defi}

\begin{rema}
\label{rem: fp group are shortenable}
    If $(L, \eta)$ is a limit group with the factorization property (\autoref{def: factorization property}) all of whose abelian subgroups are finitely generated, then it is automatically shortenable.
    Indeed, if $(H_k, \varphi_k)$ is an $\mathfrak F$-sequence converging to $(L, \eta)$ we can simply take as shortening sequence $(H'_k, \varphi'_k) = (H_k, \varphi_k)$.
    In particular, finitely presented limit groups all of whose abelian subgroups are finitely generated are shortenable.
\end{rema}

We start by recalling some stabilities of the shortenability property.
The first statement follows directly from the definition.

\begin{lemm}
\label{res: shortenable changing marker}
	Let $G$ be a finitely generated group.
	Let $W \subset G$ be a finite subset and $\pi \colon G \onto Q$ the projection onto $Q = G / \normal W$.
	Let $(H_k, \varphi_k) \in \mathfrak G(Q)$ be an $\mathfrak F$-sequence converging to a limit group $(L, \eta)$.
	Then $(H_k, \varphi_k)$ is shortenable, as a sequence in $\mathfrak G(Q)$, if and only if $(H_k, \varphi_k \circ \pi)$ is shortenable as a sequence in $\mathfrak G(G)$.
	In particular, $(L, \eta)$ is shortenable if and only if $(L, \eta \circ\pi)$ is. \qed
\end{lemm}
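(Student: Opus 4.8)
The plan is to reduce the statement to the formal properties of the change-of-marker map $\pi_\ast\colon \mathfrak G(Q)\to\mathfrak G(G)$, $(H,\varphi)\mapsto(H,\varphi\circ\pi)$: it is an order-preserving homeomorphism onto its image, this image is closed, and — since $\ker\pi=\normal W$ is normally generated by the \emph{finite} set $W$ — it is also open. Three elementary facts do all the work. First, $\pi_\ast$ carries an $\mathfrak F$-sequence to an $\mathfrak F$-sequence with the \emph{same} parameters, because the condition $H_k\in\mathfrak F(\lambda_k,\epsilon_k)$ concerns the group $H_k$ alone, not its marking. Second, $\pi^{-1}$ is an order-isomorphism from the subgroups of $Q$ onto the subgroups of $G$ containing $\ker\pi$; consequently each of the assertions ``$(L_1,\eta_1)\prec(L_2,\eta_2)$'', ``$(L_1,\eta_1)$ is a proper quotient of $(L_2,\eta_2)$'' (i.e.\ $\ker\eta_2\subsetneq\ker\eta_1$, a strict inclusion that $\pi^{-1}$ preserves and reflects), and ``$\psi$ factors through $\eta$'' is equivalent to its image under $\pi_\ast$. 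Third, $\pi_\ast(H,\varphi)$ has the \emph{same} underlying group $H$, so every condition about abelian subgroups of a limit group is literally unchanged.

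For the forward implication, let $(H'_k,\varphi'_k)$ be a shortening sequence of $(H_k,\varphi_k)$ in $\mathfrak G(Q)$, converging to $(L',\eta')\prec(L,\eta)$. I claim $(H'_k,\varphi'_k\circ\pi)$ is a shortening sequence of $(H_k,\varphi_k\circ\pi)$ in $\mathfrak G(G)$: it is an $\mathfrak F$-sequence by the first fact; condition~\ref{enu: shortening - sbgp} is immediate; condition~\ref{enu: shortening - cvg} holds because $\pi_\ast$ is continuous and order-preserving, so the sequence converges to $\pi_\ast(L',\eta')=(L',\eta'\circ\pi)\prec\pi_\ast(L,\eta)=(L,\eta\circ\pi)$; and conditions~\ref{enu: shortening - dichotomy} and~\ref{enu: shortening - fact} follow clause by clause from the second and third facts, since $L'$ and the limit of $(H'_k,\varphi'_k\circ\pi)$ have the same underlying group and each occurrence of ``proper quotient'', ``finitely generated abelian subgroups'', and ``factors through'' translates faithfully.

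For the converse, let $(\tilde H_k,\tilde\varphi_k)$ be a shortening sequence of $(H_k,\varphi_k\circ\pi)$ in $\mathfrak G(G)$, converging to $(L',\eta')\prec(L,\eta\circ\pi)$. From $W\subseteq\ker\pi\subseteq\ker(\eta\circ\pi)\subseteq\ker\eta'$ the marking $\eta'$ kills $\ker\pi$, so $(L',\eta')$ lies in the image of $\pi_\ast$; since that image is open (this is where finiteness of $W$ is used), all but finitely many $(\tilde H_k,\tilde\varphi_k)$ lie in it too, so after discarding finitely many terms we may write $(\tilde H_k,\tilde\varphi_k)=\pi_\ast(H'_k,\varphi'_k)$ with $H'_k=\tilde H_k=H_k$. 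Applying $\pi_\ast^{-1}$ to the convergence gives $(H'_k,\varphi'_k)\to(L',\eta_0)$ where $\pi_\ast(L',\eta_0)=(L',\eta')$, and $(L',\eta_0)\prec(L,\eta)$ because $\pi_\ast$ reflects $\prec$; running the dictionary of the first paragraph in reverse then verifies conditions~\ref{enu: shortening - sbgp}--\ref{enu: shortening - fact} for $(H'_k,\varphi'_k)$, so it is a shortening sequence of $(H_k,\varphi_k)$ in $\mathfrak G(Q)$. The ``in particular'' clause follows because the same tail argument shows that $\pi_\ast$ induces a bijection — up to discarding finitely many terms and passing to subsequences, operations to which Definition~\ref{def: shortenable} is insensitive — between $\mathfrak F$-sequences converging to $(L,\eta)$ and those converging to $(L,\eta\circ\pi)$; hence one is shortenable if and only if the other is.

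The only real content is this bookkeeping. The single subtlety worth flagging is that a shortening sequence produced in $\mathfrak G(G)$ need not, term by term, factor through $\pi$ — it does so only on a tail — which is precisely what the openness of the image of $\pi_\ast$ takes care of, and is harmless since shortenability is defined up to passing to a subsequence.
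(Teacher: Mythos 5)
Your proof is correct. The paper itself supplies no argument for this lemma (it is stated as "follows directly from the definition" and marked \qed), so there is no paper proof to compare against; your definition-chase is the intended one and you carry it out cleanly. In particular, you correctly isolate the one non-obvious point: the converse direction requires that a shortening sequence constructed in $\mathfrak G(G)$ factor through $\pi$ at least on a tail, and this is exactly what the openness of $\operatorname{im}\pi_\ast$ (hence the finiteness of $W$) provides — the forward direction and the transport of conditions \ref{enu: shortening - sbgp}--\ref{enu: shortening - fact} across $\pi_\ast$ are pure bookkeeping, as you say, resting on $\pi_\ast$ being an order-preserving homeomorphism onto its image that leaves the underlying groups $H_k$, $L$, $L'$ unchanged. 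The ``in particular'' clause likewise needs the bijection of $\mathfrak F$-sequences converging to $(L,\eta)$ with those converging to $(L,\eta\circ\pi)$, again up to a tail, which you address; this is a genuine point, since shortenability of a limit group quantifies over all converging $\mathfrak F$-sequences and not just the given one.
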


\begin{prop}[{\cite[Proposition~4.14]{Coulon:2021wg}}]
\label{res: shortenable stable under free product}
	Let $G$ be a finitely generated group that splits as a free product $G = G_1 \ast G_2$.
	Let $(L, \eta) \in \mathfrak G(G)$ be a limit group that decomposes as $L = L_1 \ast L_2$ so that $\eta$ maps $G_1$ and $G_2$ onto $L_1$ and $L_2$, respectively.
	We denote by $\eta_i \colon G_i \onto L_i$ the restriction of $\eta$ to $G_i$.
	If for every $i \in \{1, 2\}$, the marked group $(L_i, \eta_i)$ is shortenable --- viewed as an element of $\mathfrak G(G_i)$ --- then so is $(L, \eta)$.
\end{prop}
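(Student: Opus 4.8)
The plan is to reduce the statement for the free product to the two factors by unwinding the definition of shortenability. Suppose we are given an $\mathfrak F$-sequence $(H_k, \varphi_k) \in \mathfrak G(G)$ converging to $(L, \eta)$. I would first argue, using the standard Grushko-type decomposition for sequences of marked groups, that after passing to a subsequence each $H_k$ splits compatibly as a free product $H_k = H_{1,k} \ast H_{2,k}$ with $\varphi_k$ carrying $G_i$ onto $H_{i,k}$, and that $(H_{i,k}, \varphi_{i,k})$ is an $\mathfrak F$-sequence in $\mathfrak G(G_i)$ converging to $(L_i, \eta_i)$. (Here one uses that a free factor of a group in $\mathfrak F(\lambda, \epsilon)$ is again in $\mathfrak F(\lambda, \epsilon)$, which is built into the definition of $\mathfrak F$ as it includes subgroups; more precisely, one needs a retraction or an explicit decomposition of the small cancellation quotient, so some care is required here — see below.) Given this, apply the hypothesis to each factor: there are shortening sequences $(H'_{i,k}, \varphi'_{i,k})$ with $H'_{i,k} = H_{i,k}$, converging to a shortening quotient $(L'_i, \eta'_i) \prec (L_i, \eta_i)$, and satisfying the dichotomy and factorization clauses of \autoref{def: shortenable}.

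Next I would assemble the shortening sequence for $G$ by setting $\varphi'_k = \varphi'_{1,k} \ast \varphi'_{2,k} \colon G = G_1 \ast G_2 \to H_{1,k} \ast H_{2,k} = H_k$, so clause \ref{enu: shortening - sbgp} holds by construction. For clause \ref{enu: shortening - cvg}, I would check that $(H_k, \varphi'_k)$ converges to $(L', \eta') := (L'_1 \ast L'_2, \eta'_1 \ast \eta'_2)$ and that $(L', \eta') \prec (L, \eta)$: the kernel inclusion $\ker \eta \subset \ker \eta'$ follows factor-by-factor since a normal form word in $\ker\eta$ is a concatenation of syllables each trivial in the respective $L_i$ hence in $L'_i$. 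For clause \ref{enu: shortening - dichotomy}, observe that $(L', \eta')$ fails to be a proper quotient of $(L, \eta)$ exactly when neither $(L'_i, \eta'_i)$ is a proper quotient of $(L_i, \eta_i)$ (using that a free product is a proper quotient iff one of the factor maps is); in that case the factor hypotheses give that each $L'_i$ has all abelian subgroups finitely generated and all but finitely many $\varphi'_{i,k}$ factor through $\eta'_i$. An abelian subgroup of a free product $L'_1 \ast L'_2$ is conjugate into a factor (or infinite cyclic), so it is finitely generated; and the factorizations of the $\varphi'_{i,k}$ assemble into a factorization of $\varphi'_k$ through $\eta'$, since $\ker \eta'$ is normally generated by $\ker \eta'_1 \cup \ker \eta'_2$. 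Clause \ref{enu: shortening - fact} is handled the same way: if all abelian subgroups of $L' = L'_1 \ast L'_2$ are finitely generated then so are those of each $L'_i$, hence of each $L_i$ by the factor hypothesis, hence of $L = L_1 \ast L_2$; and the factorization statement propagates identically.

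The main obstacle, and the step I would spend the most effort on, is the first one: producing the compatible free product decompositions $H_k = H_{1,k} \ast H_{2,k}$ with each factor lying in the class $\mathfrak F(\lambda_k, \epsilon_k)$ and converging correctly. Since $H_k$ is itself a subgroup of a tight strengthened $C'(\lambda_k, \epsilon_k)$ small cancellation quotient of $\Gamma$, one needs that the relevant free factors are again subgroups of such quotients — this is immediate from the definition of $\mathfrak F$ (closed under subgroups), but getting the decomposition to be induced by $G = G_1 \ast G_2$ and to converge to $L = L_1 \ast L_2$ requires the structure theory of limits of marked groups: one uses that $L = L_1 \ast L_2$ is freely decomposable, lifts this splitting to the approximating sequence, and invokes that $\eta(G_i) = L_i$. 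Fortunately, this is precisely the content of the hypothesis one is allowed to assume — the statement presupposes $L = L_1 \ast L_2$ with $\eta(G_i) = L_i$ — so the genuine work is only to transfer the splitting finitely far down the sequence $(H_k)$, which is a routine application of the openness of free decomposability in the space of marked groups combined with the fact that $G_1, G_2$ are finitely generated. I would structure the write-up so that this transfer lemma is isolated and cited if possible (this is exactly the kind of statement in \cite{Coulon:2021wg}), reducing the remaining argument to the bookkeeping of the four clauses above.
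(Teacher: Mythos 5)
The paper offers no proof of its own here; it cites the statement directly to \cite{Coulon:2021wg}. So the comparison reduces to checking your argument. The good news is that the skeleton in your middle paragraphs — restrict, shorten each factor keeping the image, reassemble via the universal property of $G = G_1 \ast G_2$, check the four clauses — is correct. The problem lies in the reduction you set up as the ``main obstacle,'' which is both unnecessary and, as stated, false.

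You insist on producing free product decompositions $H_k = H_{1,k}\ast H_{2,k}$ and propose to get them from the ``openness of free decomposability in the space of marked groups.'' That principle is false: an $\mathfrak F$-sequence converging to a freely decomposable limit can consist entirely of freely indecomposable groups (take $G = L = F_2$ marked by the identity, approximated by one-relator small-cancellation quotients of $F_2$, which are freely indecomposable). More importantly, no decomposition of $H_k$ is needed. Set $H_{i,k} = \varphi_k(G_i)$ and $\varphi_{i,k} = \varphi_k|_{G_i}$. Then $(H_{i,k}, \varphi_{i,k}) \in \mathfrak G(G_i)$ lies in $\mathfrak F(G_i, \lambda_k, \epsilon_k)$ simply because $\mathfrak F$ is closed under subgroups, and it converges to $(L_i, \eta_i)$ because the stable kernel of the restriction is $\ker\eta \cap G_i = \ker\eta_i$. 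That is the entire ``splitting transfer.'' The only universal property you use to build $\varphi'_k = \varphi'_{1,k}\ast\varphi'_{2,k}$ is that of the domain $G = G_1\ast G_2$, not of $H_k$, and surjectivity onto $H_k$ holds because $H_k = \langle H_{1,k}, H_{2,k}\rangle$ and the factor shortenings fix $H'_{i,k} = H_{i,k}$.

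One further wrinkle to repair: without a free product structure on $H_k$, the limit $(L',\eta')$ of $(H_k,\varphi'_k)$ need not equal $(L'_1\ast L'_2, \eta'_1\ast\eta'_2)$; a priori $\ker\eta'$ could strictly contain $\normal{\ker\eta'_1\cup\ker\eta'_2}$. So the claimed ``iff'' in your treatment of clause \ref{enu: shortening - dichotomy} is wrong as stated. What survives, and is all you need, is: $\ker\eta = \normal{\ker\eta_1\cup\ker\eta_2} \subseteq \normal{\ker\eta'_1\cup\ker\eta'_2}\subseteq\ker\eta'$, giving clause \ref{enu: shortening - cvg}; if $(L',\eta')$ is not a proper quotient, these inclusions are all equalities, forcing $\ker\eta_i = \ker\eta'_i$ and $L' = L'_1\ast L'_2$, after which your treatment of clause \ref{enu: shortening - dichotomy} applies. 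For clause \ref{enu: shortening - fact}, observe that $\ker\eta'\cap G_i = \ker\eta'_i$ (both are the stable kernel of $\varphi'_{i,k} = \varphi'_k|_{G_i}$), so $L'_i$ embeds in $L'$; this lets you pass abelian subgroups and factorizations from $L'$ to the $L'_i$ and back without assuming $L'$ is a free product. With these repairs the bookkeeping you sketch goes through.
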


\begin{prop}
\label{res: shortening group - freely indecomposable}
	Let $G$ be a finitely generated group.
	Let $(L, \eta) \in \mathfrak L(G)$ be a limit group.
	If $L$ is freely indecomposable, then $(L, \eta)$ is shortenable.
\end{prop}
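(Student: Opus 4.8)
The plan is to feed an arbitrary $\mathfrak F$-sequence converging to $(L,\eta)$ into the shortening machine of \autoref{res: shortening argument - absolute}. Fix such a sequence $(H_k,\varphi_k)$ and a finite generating set $U$ of $G$. Being a limit group, $L$ is torsion-free and CSA (\autoref{res: limit group torsion free and CSA}); being moreover freely indecomposable and torsion-free, it is one-ended relative to the empty family, unless $L\cong\Z$ --- a case already covered by \autoref{rem: fp group are shortenable}, which I henceforth exclude --- so \autoref{res: jsj - existence} provides its canonical JSJ tree $S$ over abelian subgroups. The argument then splits into two cases according to whether the energies $E_\infty(\varphi_k,U,\bar X_k)$ remain bounded along a subsequence or diverge to infinity; here $\bar X_k$ is the $\delta$-hyperbolic space attached to $H_k$ by \autoref{res: recap sc}, so that $(H_k,\bar X_k)\in\mathfrak H(\delta)$.

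Suppose first that $E_\infty(\varphi_k,U,\bar X_k)$ stays bounded along a subsequence. Then, after passing to it, I claim the sequence is already ``short'', so that one may take $(H'_k,\varphi'_k)=(H_k,\varphi_k)$ and $(L',\eta')=(L,\eta)$. To see that every non-trivial abelian subgroup of $L$ is infinite cyclic, I would pass to the \emph{unrescaled} ultralimit $X_\omega=\limo(\bar X_k,o_k)$, which is again $\delta$-hyperbolic; the action of $L$ on $X_\omega$ inherits, in the limit, the thinness and acylindricity bounds enjoyed by the pairs $(H_k,\bar X_k)\in\mathfrak H(\delta)$, and, exactly as in the analysis of \autoref{sec: actions on R-trees}, this forces abelian subgroups to be cyclic. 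To see that all but finitely many $\varphi_k$ factor through $\eta$, I would use that $E_\infty(\varphi_k,U,\bar X_k)<\xi\rho(\lambda_k,\epsilon_k)$ for $k$ large, so that item \ref{enu: recap sc - lift} of \autoref{res: recap sc}, combined with equational noetherianity, lets one propagate the vanishing of a \emph{finite} system of relations. Conditions \ref{enu: shortening - sbgp}--\ref{enu: shortening - fact} of \autoref{def: shortenable} then hold.

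So assume $E_\infty(\varphi_k,U,\bar X_k)\to\infty$; this is the substantial case. After replacing $G$ by a finitely presented marker --- harmless for shortenability by \autoref{res: shortenable changing marker} --- \autoref{res: graph of groups cover} supplies a converging directed system of finitely presented strong covers $(\hat L_k,\hat\eta_k,\hat S_k)$ of $(L,\eta,S)$ through which all the $\varphi_k$ factor; write $\hat\varphi_k\colon\hat L_k\to\Gamma_k$ for the induced morphisms and $\zeta_k\colon\hat L_k\onto L$ for the covering maps, and use \autoref{res: lifting modular automorphism} to lift modular automorphisms of $L$ to the $\hat L_k$. Then \autoref{res: shortening argument - absolute} provides $\tau>0$ and, for infinitely many $k$, automorphisms $\alpha_k\in\mcg L$ and $\hat\alpha_k\in\aut{\hat L_k}$ with $\zeta_k\circ\hat\alpha_k=\alpha_k\circ\zeta_k$ and $E_1(\hat\varphi_k\circ\hat\alpha_k\circ\hat\eta_k,U)\le(1-\tau)\,E_1(\varphi_k,U)$. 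I set $\varphi'_k:=\hat\varphi_k\circ\hat\alpha_k\circ\hat\eta_k$ and $H'_k:=H_k$; since $\hat\alpha_k$ is bijective and $\hat\eta_k$ onto, $\varphi'_k(G)=\hat\varphi_k(\hat L_k)=\varphi_k(G)=H_k$, so \ref{enu: shortening - sbgp} holds and $(H'_k,\varphi'_k)$ is again an $\mathfrak F$-sequence. Passing to the good $k$'s and a convergent subsequence, write $(L',\eta')=\lim(H_k,\varphi'_k)$. The inclusion $(L',\eta')\prec(L,\eta)$ --- i.e.\ \ref{enu: shortening - cvg} --- follows from a kernel count: $\zeta_k\circ\hat\alpha_k=\alpha_k\circ\zeta_k$ with $\alpha_k$ bijective gives $\ker(\hat\alpha_k\circ\hat\eta_k)=\ker\hat\eta_k$, hence $\ker\varphi'_k\supseteq\ker\hat\eta_k$; since the stable kernel of $(\hat\eta_k)$ is $\ker\eta$, the stable kernel of $(\varphi'_k)$ contains $\ker\eta$.

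It remains to check \ref{enu: shortening - dichotomy} and \ref{enu: shortening - fact}. If $(L',\eta')$ is a proper quotient of $(L,\eta)$, then \ref{enu: shortening - dichotomy} is vacuous and \ref{enu: shortening - fact} follows by transporting ``finitely generated abelian subgroups'' and ``factoring through the limit'' along the covering maps $\zeta_k$, using that strong covers respect edge and vertex groups. If instead $(L',\eta')=(L,\eta)$, then the $\ell^1$-energy dropped by a definite factor without changing the limit group, and I would argue that $E_\infty(\varphi'_k,U,\bar X_k)$ must then be bounded along a subsequence --- otherwise the shortening step could be iterated inside the directed system, driving $E_1$ below any bound while the non-trivial groups $H_k$ persist, a contradiction --- so that the first case applies to $(H_k,\varphi'_k)$ and supplies \ref{enu: shortening - dichotomy}. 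This last point, namely that a shortening step failing to produce a proper quotient necessarily lands in the bounded-energy regime, together with the reduction to a finitely presented marker and the passage of the $\mathfrak H(\delta)$-invariants through the (rescaled and unrescaled) ultralimits, is the part I expect to demand the most care; the remainder is a by-now-routine assembly of the material of Sections~\ref{sec: graph of groups} and \ref{sec: limit}.
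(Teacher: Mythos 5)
Your overall plan is sound --- reduce to the non-cyclic case, set up the canonical JSJ tree via \autoref{res: jsj - existence}, fix a converging directed system of finitely presented strong covers via \autoref{res: graph of groups cover}, lift modular automorphisms via \autoref{res: lifting modular automorphism}, and invoke \autoref{res: shortening argument - absolute} --- and several of the pieces are in the right place. The genuine gap is in your verification of item~\ref{enu: shortening - dichotomy} of \autoref{def: shortenable} in the unbounded-energy case, when the shortened sequence converges back to $(L,\eta)$.

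A single application of \autoref{res: shortening argument - absolute} yields $\varphi'_k$ with $E_1(\varphi'_k,U)\le(1-\tau)E_1(\varphi_k,U)$ for infinitely many $k$. Since $E_1(\varphi_k,U)\to\infty$, a multiplicative reduction by $(1-\tau)$ does \emph{not} produce a bounded sequence: $E_\infty(\varphi'_k,U)$ may well still diverge. The iteration you sketch to close this gap does not yield a contradiction. Each further application of \autoref{res: shortening argument - absolute} restricts to a further subsequence, may produce a new $\tau$, and only ever cuts the energy by a multiplicative factor; your assertion that one could ``drive $E_1$ below any bound while the non-trivial groups $H_k$ persist'' is never realized along a single surviving subsequence, because $(1-\tau)^n E_1(\varphi_k,U)$ still diverges in $k$ for every fixed $n$.

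The paper circumvents this by optimizing within the modular orbit \emph{before} passing to the limit: for a fixed $\sigma>0$, it selects $\hat\alpha_k\in\mathcal M_k$ so that $\bar\varphi'_k=\hat\varphi_k\circ\hat\alpha_k\circ\hat\eta_k$ is $\sigma$-short among $\mathfrak C_k=\{\hat\varphi_k\circ\hat\alpha\circ\hat\eta_k:\hat\alpha\in\mathcal M_k\}$, i.e.\ within $\sigma$ of the infimum of $E_1$ over $\mathfrak C_k$. If after this optimization $E_\infty(\bar\varphi'_k,U)$ were still unbounded, \autoref{res: shortening argument - absolute} would produce a \emph{further} twist landing again in $\mathfrak C_k$ and cutting $E_1$ by a factor $(1-\tau)$; once $E_1(\bar\varphi'_k,U)>\sigma/\tau$, this beats the $\sigma$-short choice by more than $\sigma$, which is absurd. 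Boundedness is thus forced in one step, with no iteration. One then lifts $\bar\varphi'_k$ to $\Gamma$ via \autoref{res: recap sc}\ref{enu: recap sc - lift}, uses \autoref{res: recap sc}\ref{enu: recap sc - injectivity} to see that $(\Gamma,\varphi'_k)\to(L,\eta)$, and concludes that $L$ is a limit group over the \emph{fixed} hyperbolic group $\Gamma$, whence finitely generated abelian subgroups and the factorization property follow from \cite{positivehyp1}.

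Two smaller points. First, you claim abelian subgroups are ``infinite cyclic''; the correct conclusion is only ``finitely generated'' --- a limit of small cancellation quotients of $\Gamma$ can contain free abelian groups of rank $\ge 2$ even though each quotient has only cyclic abelian subgroups. Second, your treatment of item~\ref{enu: shortening - fact} as routine understates it: one must argue from the structure of the canonical JSJ and the strong covers that edge groups and abelian vertex groups are finitely generated, separate rigid from flexible vertex groups, and use that the lifted automorphisms $\hat\alpha_k$ act by conjugation on pre-images of rigid vertex groups (via \autoref{res: lifting modular automorphism}) to pull the factorization from $L'$ back to $L$.
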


\begin{proof}
	The constant $\delta \in \R^*_+$ and the maps $a, \rho \colon (0, 1) \times (0,1) \to \R^*_+$ are the ones given by \autoref{res: recap sc}.
	We fix once and for all a finite generating set $U$ of $G$.
	We denote by $\xi \in (0, 1)$ the parameter given by  \autoref{res: recap sc}\ref{enu: recap sc - lift}.
	Fix two sequences $(\lambda_k)$ and $(\epsilon_k)$ of positive numbers converging to zero.
	Choose a sequence of morphisms $\bar \varphi_k \colon G \to \bar \Gamma_k$, where $\pi_k \colon \Gamma \onto \bar \Gamma_k$ is a non-elementary, tight $C'(\lambda_k, \epsilon_k)$ strengthened small cancellation quotient of $\Gamma$, and such that $(\bar \Gamma_k, \bar \varphi_k)$ converges to $(L, \eta)$.
	For every $k \in \N$, we write $\mathcal Q_k$ for the underlying relation family and $X_k$ for the $\delta$-hyperbolic space associated to $\bar \Gamma_k$ by \autoref{res: recap sc}.
	
	We denote by $S$ the canonical JSJ tree of $L$ (over the abelian subgroups of $L$).
	In addition, we fix a directed system $(\hat L_k, \hat \eta_k, \hat S_k)$ of finitely presented strong covers of $(L, \eta, S)$ converging to $(L, \eta, S)$ (which exists by \autoref{res: graph of groups cover}); let $\zeta_k \colon \hat L_k \to L$ denote the associated epimorphisms.
    Since $\hat L_k$ is finitely presented, for large enough $k'$, $\bar\varphi_{k'}$ factors through $\hat\eta_k$. Thus, by replacing $(\bar\varphi_k)$ with a subsequence (while leaving $\hat\eta_k$ unchanged), we may assume that $\bar\varphi_k$ factors through $\hat\eta_k$ for every $k$.
	We write $\hat  \varphi_k \colon \hat L_k \to \bar \Gamma_k$ for the resulting morphism.
	Let $\mathcal M_k$ be the set of all automorphisms $\hat \alpha_k \in \aut{\hat L_k}$ that descend to a modular automorphism $\alpha \in \mcg L$, i.e., such that $\zeta_k \circ \hat \alpha_k = \alpha \circ \zeta_k$.

\[\begin{tikzcd}
	& {\hat{L}_k} & {\hat{L}_k} \\
	G &&& {\bar{\Gamma}_k} \\
	& L & L
	\arrow["{\hat{\alpha}_k}", hook, two heads, from=1-2, to=1-3]
	\arrow["{\zeta_k}", two heads, from=1-2, to=3-2]
	\arrow["{\hat{\varphi}_k}", two heads, from=1-3, to=2-4]
	\arrow["{\zeta_k}"', two heads, from=1-3, to=3-3]
	\arrow["{\hat{\eta}_k}", two heads, from=2-1, to=1-2]
	\arrow["\eta"', two heads, from=2-1, to=3-2]
	\arrow["{\text{convergence}}", dashed, no head, from=2-4, to=3-3]
	\arrow["\alpha"', hook, two heads, from=3-2, to=3-3]
\end{tikzcd}\]
    
	In addition, we let 
	\begin{equation*}
		\mathfrak C_k = \set{ \hat  \varphi_k \circ \hat \alpha_k \circ \hat \eta_k}{ \hat \alpha_k \in \mathcal M_k}.
	\end{equation*}
	Let $\sigma \in \R_+^*$.
	For every $k \in \N$, we choose $\hat \alpha_k \in \mathcal M_k$ such that the morphism $\bar \varphi'_k  =  \hat \varphi_k \circ \hat \alpha_k \circ \hat \eta_k$ is $\sigma$-short among $\mathfrak C_k$, that is
	\begin{equation*}
		E_1(\bar \varphi'_k, U) \leq \sup_{\psi_k \in \mathfrak C_k} E_1( \psi_k, U)  + \sigma.
	\end{equation*}
	(Recall that the energies are computed in $X_k$.)
	Up to passing to a subsequence, $(\bar \Gamma_k, \bar \varphi'_k)$ converges to a limit group that we denote by $(L', \eta')$.
	We are going to prove that $(\bar \Gamma_k, \bar \varphi'_k)$ is a shortening sequence of $(\bar \Gamma_k, \bar \varphi_k)$.
	
	Recall that $\hat \eta_k$ is onto, and $\hat \alpha_k$ is an automorphism of $\hat L_k$.
	Consequently, $\bar \varphi_k$ and $\bar \varphi'_k$ have the same image for every $k \in \N$, proving Point~\ref{enu: shortening - sbgp} of \autoref{def: shortenable}.
	By construction, $(L', \eta')$ is a quotient of $(L, \eta)$, which corresponds to Point~\ref{enu: shortening - cvg}.
	We now focus on Point~\ref{enu: shortening - dichotomy}.
	Assume that $(L', \eta') = (L, \eta)$.
	
	\begin{clai}
		There exists $E \in \R_+$ such that we have $E_\infty(\bar \varphi'_k, U) \leq E$.
	\end{clai}

	\begin{proof}[Proof of the claim]
		Assume that our claim fails, that is, up to passing to a subsequence, $E_\infty(\bar \varphi'_k, U)$ diverges to infinity.
		Up to passing to a subsequence, it follows then from \autoref{res: shortening argument - absolute} that there is $\tau \in (0,1)$ as well as automorphisms $\beta_k$ in $\mcg L$ and $\hat \beta_k \in \aut{\hat L_k}$ such that for infinitely many $k \in \N$, we have $\beta_k \circ \zeta_k = \zeta_k \circ \hat \beta_k$ and 
		\begin{equation*}
			E_1(\hat \varphi_k \circ \hat \alpha_k \circ \hat \beta_k \circ \hat \eta_k, U) \leq (1-\tau)E_1(\bar \varphi'_k, U). 
		\end{equation*}
		In particular, $\hat \alpha_k \circ \hat  \beta_k$ belongs to $\mathcal M_k$.
		Since $\bar \varphi'_k$ is $\sigma$-short among $\mathfrak C_k$ we have
		\begin{equation*}
			E_1(\bar \varphi'_k, U) \leq E_1(\hat \varphi_k \circ  \hat \alpha_k \circ \hat \beta_k \circ \hat \eta_k, U) + \sigma \leq (1-\tau)E_1(\bar \varphi'_k, U)  + \sigma.
		\end{equation*}
		Recall that $E_\infty(\bar \varphi_k, U)$, hence $E_1(\bar \varphi_k, U)$, diverges to infinity.
		This leads to a contradiction if $k$ is sufficiently large, and completes the proof of our claim.
	\end{proof}
	
	In particular, since $\rho(\lambda_k, \epsilon_k) \to \infty$, we deduce that $E_\infty(\bar \varphi'_k, U) < \xi \rho(\lambda_k, \epsilon_k)$ for all but finitely many $k$.
	It follows from \autoref{res: recap sc}\ref{enu: recap sc - lift} (applied with $H = \{1\}$), that there are lifts $\varphi'_k \colon G \to \Gamma$ whose $\ell^\infty$-energy $E_\infty(\varphi'_k, U)$ measured in $X$ satisfies
	\begin{equation*}
		E_\infty(\varphi'_k, U) \leq   \frac {\pi \sinh E }{a(\lambda_k, \epsilon_k)}T(\mathcal Q_k, X).
	\end{equation*}
	However, for every $x \in X$ the projection $\pi_k \colon \Gamma \onto \bar \Gamma_k$ is injective on the set 
	\begin{equation*}
		\set{ \gamma \in \Gamma}{ \dist {\gamma x}x < \left[ 1 - \frac {2\pi \sinh \rho(\lambda_k, \epsilon_k)}{a(\lambda_k, \epsilon_k)} \right]T(\mathcal Q_k, X)},
	\end{equation*}
	by \autoref{res: recap sc}\ref{enu: recap sc - injectivity}.
	Recall that $a$ (\resp, $\pi \sinh \rho / a$) diverges to infinity (\resp, converges to zero) as $(\lambda, \epsilon)$ approaches $(0,0)$.
	It follows that $(\Gamma, \varphi'_k)$ converges to $(L, \eta)$.
	In particular, $L$ is a limit group over a \emph{fixed} hyperbolic group.
	Hence, by \cite{positivehyp1}, every abelian subgroup of $L$ is finitely generated and $L$ has the factorization property. Finally, up to passing to a subsequence, $\varphi'_k$ factors through $\eta$, and hence so does $\bar \varphi'_k = \pi_k \circ \varphi'_k$, .
	This completes the proof of Point~\ref{enu: shortening - dichotomy} of \autoref{def: shortenable}.
	
	\medskip
	The proof of Point~\ref{enu: shortening - fact} in \autoref{def: shortenable} follows the standard strategy, see for instance \cite{Sela:2001gb, positivehyp1, Weidmann:2019ue}.
	We quickly go through the main arguments.
	Assume that every abelian subgroup of $L'$ is finitely generated.
	Given $k \in \N$, we partition the vertices of $\hat S_k$ as follows: a vertex $\hat v$ belongs to $V_k^0$ (\resp, $V_k^1$) if the stabilizer of its image $v$ in $S$ is a surface group or an abelian group (\resp, a rigid group).
	If $\hat v$ belongs to $V_k^1$, then we can always assume that $\hat \alpha_k$ acts on the vertex group $\hat L_{k, \hat v}$ by conjugation (see \autoref{res: lifting modular automorphism}).
	It follows that the projection $L \onto L'$ is injective when restricted to every rigid vertex group of $L$.

	We now claim that the edge groups of $S$ are finitely generated.
	Consider an edge $e$ of $S$.
	Since $L$ is CSA, it cannot connect two vertices whose stabilizers are both abelian.
	Consequently, $L_e$ is either contained in a surface group or a rigid group.
	If $L_e$ is contained in a surface subgroup, then $L_e$ is cyclic.
	If $L_e$ is contained in a rigid vertex group, then, according to our previous discussion, $L_e$ embeds in $L'$.
	It follows from our assumption that $L_e$ is finitely generated, which completes the proof of our claim.

	We now show that every abelian subgroup $A$ of $L$ is finitely generated.
	Without loss of generality, we can assume that $A$ is not cyclic.
	Since the action of $L$ on $S$ is acylindrical, $A$ fixes a vertex $v$.
	This vertex cannot be a surface group, otherwise $A$ would be cyclic.
	Assume that the stabilizer of $v$ is abelian.
	Since $L$ is finitely generated, $L_v$ is finitely generated relative to the stabilizers of the edges in the link of $v$.
	We just proved that edge groups are finitely generated.
	Consequently, $L_v$ and thus $A$ are finitely generated.
	Suppose now that the stabilizer of $v$ is rigid.
	As previously, $A$ embeds in $L'$, and thus is finitely generated.
	
	We proved that every edge group and every abelian vertex group of $S$ is finitely generated.
	Consequently, if $k$ is sufficiently large, the strong cover $(\hat L_k, \hat \eta_k, \hat S_k)$ satisfies the following additional properties.
	\begin{itemize}
		\item Let $\hat v$ be a vertex of $V_k^0$ and $v$ its image in $S$.
		The map $\zeta_k \colon \hat L_k \onto L$ induces an isomorphism from $\hat L_{k, \hat v}$ onto $L_v$.
		\item Let $\hat e$ be a an edge of $\hat S_k$ and $e$ its image in $S$.
		The map $\zeta_k \colon \hat L_k \onto L$ induces an isomorphism from $\hat L_{k, \hat e}$ onto $L_e$.
	\end{itemize}
	Hence, the kernel of $\zeta_k$ is generated (as a normal subgroup) by a subset $U_k$ with the following property: for every $g \in U_k$, there exists a vertex $\hat v$ of $V_k^1$ such that $g$ belongs to $\hat L_{k, \hat v}$.	
	
	Suppose now that all but finitely many $\bar \varphi'_k$ factor through $\eta'$.
	Since $(L', \eta') \prec (L, \eta)$ those morphisms also factor through $\eta$.	
	Consequently, if $k$ is sufficiently large, $U_k$ is contained in $\ker \hat \varphi_k \circ \hat \alpha_k$.
	As we observed before, for every $\hat v \in V_k^1$, the automorphism $\hat \alpha_k$ acts on $\hat L_{k, \hat v}$ by conjugation.
	Consequently, $U_k$ is also contained in $\ker \hat \varphi_k$.
	However, $U_k$ generates the kernel of $\zeta_k$ (as a normal subgroup).
	It follows that $\hat \varphi_k$ factors through $\zeta_k$.
	Thus, $\bar \varphi_k$ factors through $\eta$.
	This completes the proof of Point~\ref{enu: shortening - fact} of \autoref{def: shortenable}, and hence the proof of \autoref{res: shortening group - freely indecomposable}.
\end{proof}

The next statement is a variation on \cite[Corollary~8.12]{Coulon:2021wg}.

\begin{coro}
\label{res: shortening group - general}
	Let $F$ be a finitely presented group.
	Every limit group $(L, \eta) \in \mathfrak L(F)$ is shortenable.
\end{coro}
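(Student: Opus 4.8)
I plan to argue by induction on the Grushko complexity of $L$. Since $F$ is finitely generated, so is $L$, and by Grushko's theorem we may write $L = L_1 \ast \dots \ast L_n$ with each $L_i$ freely indecomposable and non-trivial, $n$ maximal; set $n(L) = n$, so $n(L) = 0$ exactly when $L$ is trivial, and $L$ is freely indecomposable exactly when $n(L) \le 1$. If $n(L) \le 1$, then $(L, \eta)$ is shortenable by \autoref{res: shortening group - freely indecomposable}, which settles the base case. So assume $n(L) \ge 2$, and assume inductively that the corollary holds for every finitely presented marker $F'$ and every limit group in $\mathfrak L(F')$ of Grushko complexity strictly less than $n(L)$.

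Since $n(L) \ge 2$, write $L = A \ast B$ with $A = L_1 \ast \dots \ast L_j$ and $B = L_{j+1} \ast \dots \ast L_n$ for some $1 \le j \le n-1$. Then $A$ and $B$ are non-trivial, finitely generated (free factors of the finitely generated group $L$), and $n(A) = j < n(L)$, $n(B) = n-j < n(L)$. Next I build a finitely presented marker adapted to this splitting. Choose finite generating sets of $A$ and of $B$ and lifts of these generators in $F$ under $\eta$; let $\hat F_1$ and $\hat F_2$ be the corresponding free groups, and $\hat F = \hat F_1 \ast \hat F_2$. The obvious maps define a surjection $\hat\eta \colon \hat F \onto L$ with $\hat\eta(\hat F_1) = A$ and $\hat\eta(\hat F_2) = B$, together with a morphism $q \colon \hat F \to F$ satisfying $\eta \circ q = \hat\eta$; after adjoining finitely many extra free generators to $\hat F_1$, sent to $1$ in $L$ and to suitable elements of $\ker\eta$ in $F$, we may also arrange that $q$ is onto. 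Both $\hat F_1$ and $\hat F_2$ are finitely generated free groups, hence finitely presented, so $\hat F$ is finitely presented; and as $\hat F$ is finitely generated while $F$ is finitely presented, $\ker q$ is the normal closure of a finite subset $W \subset \hat F$, so that $q$ is the quotient map $\hat F \onto \hat F/\normal W \cong F$.

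By construction $(L, \hat\eta) = q_\ast(L, \eta)$, where $q_\ast \colon \mathfrak G(F) \to \mathfrak G(\hat F)$ is the continuous marker-change map; since $q_\ast$ carries $\mathfrak F(F, \lambda, \epsilon)$ into $\mathfrak F(\hat F, \lambda, \epsilon)$ for all $\lambda, \epsilon$, we get $(L, \hat\eta) \in \mathfrak L(\hat F)$. Restricting any $\mathfrak F$-sequence converging to $(L, \hat\eta)$ to the free factors $\hat F_i$ — using that subgroups of groups in $\mathfrak F(\lambda, \epsilon)$ again lie in $\mathfrak F(\lambda, \epsilon)$, together with $\hat\eta(\hat F_1) = A$, $\hat\eta(\hat F_2) = B$ and $L = A \ast B$ — shows that $(A, \hat\eta|_{\hat F_1}) \in \mathfrak L(\hat F_1)$ and $(B, \hat\eta|_{\hat F_2}) \in \mathfrak L(\hat F_2)$. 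Since $\hat F_1, \hat F_2$ are finitely presented and $n(A), n(B) < n(L)$, the induction hypothesis gives that $(A, \hat\eta|_{\hat F_1})$ and $(B, \hat\eta|_{\hat F_2})$ are shortenable. Then \autoref{res: shortenable stable under free product} shows $(L, \hat\eta)$ is shortenable in $\mathfrak G(\hat F)$, and \autoref{res: shortenable changing marker} (applied with the finite set $W$) transfers this back to $(L, \eta)$ in $\mathfrak G(F)$. The induction terminates because $n(L)$ is a non-negative integer.

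The substantive content lies entirely in the two inputs \autoref{res: shortening group - freely indecomposable} and \autoref{res: shortenable stable under free product}; this corollary is a formal consequence of them together with the marker-change machinery, so I expect no genuine obstacle, only bookkeeping. The points that require care are organizational: ensuring the new markers $\hat F_i$ can be taken finitely presented (whence the choice to make them free), checking that the limit-group structure descends to the free factors, noting that a free splitting of $L$ into two non-trivial factors of strictly smaller Grushko complexity always exists once $L$ is freely decomposable, and arranging the marker change so that \autoref{res: shortenable changing marker} applies — the last being precisely where the hypothesis that $F$ is finitely presented is used, since it guarantees that $\ker q$ is finitely normally generated.
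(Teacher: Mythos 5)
Your overall strategy coincides with the paper's: reduce to freely indecomposable pieces, combine them with \autoref{res: shortenable stable under free product}, and transfer markers with \autoref{res: shortenable changing marker}; your marker construction (a free group $\hat F = \hat F_1 \ast \hat F_2$ surjecting onto $F$ with kernel the normal closure of a finite set, which is where finite presentability of $F$ enters), the restriction of $\mathfrak F$-sequences to the free factors, and the induction on the number of Grushko factors are all sound. The only genuinely different packaging is that the paper works with a finitely presented cover $\hat L$ of $L$ compatible with the Grushko decomposition $L = F_0 \ast L_1 \ast \dots \ast L_m$ (with $F_0$ free and the $L_i$ non-cyclic) instead of your free marker above $F$; that difference is cosmetic.

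The gap is in your base case. Because you use the classical Grushko decomposition, your indecomposable pieces include infinite cyclic factors (and the trivial group when $n(L)=0$), and you dispose of $n(L)\le 1$ by citing \autoref{res: shortening group - freely indecomposable}. But in this paper ``freely indecomposable'' is defined through splittings with trivial edge groups, and the text explicitly flags that $\Z$ is \emph{not} freely indecomposable in this sense; so the proposition does not cover $L \cong \Z$ (nor, reliably, $L=\{1\}$). This is not just a terminological quibble: the proof of that proposition runs through the canonical JSJ tree of $L$ (\autoref{res: jsj - existence}, which requires one-endedness) and through \autoref{res: shortening argument - absolute}, whose shortening step uses free indecomposability to guarantee non-trivial edge stabilizers in the limit tree; neither step makes sense for $\Z$. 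The repair is exactly the device the paper uses for the free part $F_0$: cyclic and trivial limit groups are finitely presented with finitely generated abelian subgroups, hence have the factorization property and are shortenable by \autoref{rem: fp group are shortenable}. With the base case amended this way (cyclic or trivial factors via \autoref{rem: fp group are shortenable}, non-cyclic freely indecomposable factors via \autoref{res: shortening group - freely indecomposable}), your induction goes through and recovers the paper's result.
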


Note that, unlike in other results, we are assuming that the marker is finitely presented. We will only apply this to a free group in \autoref{res: dcc}, hence our notation ($F$ instead of the usual $G$).

\begin{proof}
	In view of \autoref{res: shortening group - freely indecomposable}, we can assume that $L$ is freely decomposable.
	Consider the Grushko decomposition
	\begin{equation*}
		L = F_0 \ast L_1 \ast L_2 \ast \dots \ast L_m,
	\end{equation*}
	where $F_0$ is a finitely generated free group and $L_i$ is a non-cyclic, freely indecomposable subgroup.
	We fix a finitely presented cover $(\hat L, \hat\eta)$ of $(L, \eta)$ with the following properties:
	\begin{itemize}
		\item $\hat L$ splits as $\hat L =  \hat F_0 \ast \hat L_1 \ast \dots \ast \hat L_m$,
		\item the morphism $\zeta \colon \hat L \onto L$ maps each $\hat L_i$ onto $L_i$.
		Moreover, it induces an isomorphism from $\hat F_0$ onto $F_0$.
	\end{itemize}
	For every $i \in \intvald 1m$, we write $\zeta_i \colon \hat L_i \onto L_i$ for the restriction of $\zeta$ to $\hat L_i$.
	Similarly, we write $\zeta_0 \colon \hat F_0 \onto F_0$ for the isomorphism induced by $\zeta$.
    Since $\hat L$ is finitely presented, the marked groups converging to $L$ eventually factor through $\hat L$; restricting the corresponding maps, we obtain that $(L_i, \zeta_i) \in \mathfrak G(\hat L_i)$ and $(F_0, \zeta_0) \in \mathfrak G(\hat F_0)$ are limit groups.
	By \autoref{res: shortening group - freely indecomposable}, the group $(L_i, \zeta_i)$ is shortenable.
	Since $F_0$ is free, it has the factorization property, hence $(F_0, \zeta_0)$ is shortenable as well (see \autoref{rem: fp group are shortenable}).	
	According to \autoref{res: shortenable stable under free product}, the limit group $(L, \zeta) \in \mathfrak G(\hat L)$ is shortenable.
	It follows then from \autoref{res: shortenable changing marker} that $(L, \eta) \in \mathfrak G(F)$ is shortenable.
\end{proof}

%%%%%%%%%%%%%%%%%%%%%%%%%%%%%%%%%%%%%%%%%%%%%%%%%%%%%%%%%%%%%%%%%%%%%%%%%%%%%%%%%%%%%
%
\subsection{Infinite descending sequences}
%
%%%%%%%%%%%%%%%%%%%%%%%%%%%%%%%%%%%%%%%%%%%%%%%%%%%%%%%%%%%%%%%%%%%%%%%%%%%%%%%%%%%%%
\label{sec: infinite descending sequence}

Given $(L, \eta) \in \mathfrak G(G)$, an \emph{infinite descending sequence starting at $(L, \eta)$} is a sequence of marked groups
\begin{equation*}
	(L, \eta) = (L_0, \eta_0) \succ (L_1, \eta_1) \succ (L_2, \eta_2) \succ \cdots
\end{equation*}
such that $(L_{i+1}, \eta_{i+1})$ is a proper quotient of $(L_i, \eta_i)$, for every $i \in \N$.

\begin{prop}
\label{res: dcc}
	Let $G$ be a finitely generated group.
	Let $(L, \eta) \in \mathfrak L(G)$ be a limit group.
	Then there is no infinite descending sequence of limit groups starting at $(L, \eta)$.
	Moreover, $(L, \eta)$ has the factorization property.
\end{prop}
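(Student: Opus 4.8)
The plan is to prove both assertions simultaneously by a Noetherian/well-foundedness argument, using the shortenability machinery developed in \autoref{res: shortening group - general} (for the finitely presented case) and the compactness of the space of marked groups. Recall first that $G$ is only assumed finitely generated, so we pass to a finitely presented cover: pick a finitely presented group $F$ with an epimorphism $p \colon F \onto G$. Composing with $p$ identifies $\mathfrak L(G)$ with a closed subset of $\mathfrak L(F)$ via $p_\ast$, and by \autoref{res: shortenable changing marker} (or rather its analogue for changing the marker along $p$) shortenability of $(L, \eta \circ p)$ in $\mathfrak G(F)$ is equivalent to that of $(L, \eta)$ in $\mathfrak G(G)$; moreover having the factorization property transfers as well, since $\ker p$ is finitely generated as a normal subgroup is not needed---what we use is that an infinite descending sequence in $\mathfrak L(G)$ pulls back to one in $\mathfrak L(F)$. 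So it suffices to prove the statement when the marker is finitely presented, and then \autoref{res: shortening group - general} tells us that every limit group in $\mathfrak L(F)$ is shortenable.

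\medskip

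\textbf{Non-existence of infinite descending sequences.} Suppose, for contradiction, that
\[
	(L, \eta) = (L_0, \eta_0) \succ (L_1, \eta_1) \succ (L_2, \eta_2) \succ \cdots
\]
is an infinite descending sequence of limit groups in $\mathfrak L(F)$. I would extract from this a single $\mathfrak F$-sequence $(H_k, \varphi_k)$ converging to $(L, \eta)$ that ``realizes'' the whole descending sequence, in the following sense: using a diagonal argument across the defining $\mathfrak F$-sequences of each $(L_i, \eta_i)$ together with the fact that $(L_{i+1}, \eta_{i+1})$ is obtained from $(L_i, \eta_i)$ by adding relations, one builds $(H_k, \varphi_k) \in \mathfrak F(F, \lambda_k, \epsilon_k)$ with $\lambda_k, \epsilon_k \to 0$ such that a tail of $(H_k, \varphi_k)$ lies in any prescribed neighborhood of $(L_i, \eta_i)$. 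Then one applies shortenability repeatedly: by \autoref{res: shortening group - general}, $(H_k, \varphi_k)$ is shortenable, producing a shortening sequence converging to a shortening quotient $(L', \eta') \prec (L, \eta)$; by Point~\ref{enu: shortening - dichotomy} of \autoref{def: shortenable}, either $(L', \eta')$ is a proper quotient of $(L, \eta)$, or the $\varphi'_k$ eventually factor through $\eta'$ and all abelian subgroups of $L'$ are finitely generated. The key point is that the descending sequence forces us to remain in the first alternative forever, and one argues (tracking the $\ell^1$-energies $E_1(\varphi_k, U)$, which decrease by a definite multiplicative factor $1 - \tau$ at each shortening step via the reasoning inside the proof of \autoref{res: shortening group - freely indecomposable}) that this cannot continue indefinitely: the energies would have to stay bounded below (since the groups are non-trivial quotients, hence the restricted morphisms are not eventually trivial) while being driven to zero, a contradiction. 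This is the standard descending-chain-condition argument of Sela, Reinfeldt--Weidmann, and Coulon; I expect to be able to cite \cite{Sela:2001gb, Weidmann:2019ue} for the energy bookkeeping and to transport it verbatim via the fact that our $\bar\Gamma_k$ lie uniformly in $\mathfrak H(\delta)$ (\autoref{res: recap sc}).

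\medskip

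\textbf{Factorization property.} This follows from the non-existence of infinite descending sequences by a standard maximality/K\"onig-type argument. Suppose $(L, \eta) \in \mathfrak L(G)$ fails the factorization property. Using \autoref{res: factorization prop implies almost open} in contrapositive form, together with compactness of $\mathfrak G(G)$, one shows that there must exist a limit group $(L_1, \eta_1) \prec (L, \eta)$ that is a proper quotient and itself fails the factorization property; indeed, if every proper limit quotient of $(L,\eta)$ had the factorization property, a compactness argument over the (necessarily ``finitely generated as a normal ideal'') data would give the factorization property to $(L,\eta)$. Iterating, one constructs an infinite descending sequence of limit groups, contradicting the first part. Hence every limit group has the factorization property.

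\medskip

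The main obstacle, as usual in this circle of ideas, is the first part: making the diagonal extraction precise so that a single $\mathfrak F$-sequence genuinely witnesses an arbitrary infinite descending chain, and verifying that the energy decrement $\tau$ can be taken uniform enough (or that successive applications compose) to force the contradiction. All the geometric input---uniform thinness, acylindricity bound $A(\bar\Gamma, \bar X) \le \delta$, and the lifting statement \autoref{res: recap sc}\ref{enu: recap sc - lift}---has been packaged precisely so that the Rips--Sela descending-chain argument goes through with the marker $\Gamma$ replaced by the family of small cancellation quotients; the remaining work is careful bookkeeping rather than a new idea.
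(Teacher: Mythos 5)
Your opening reduction is exactly the paper's: pass to a finitely presented (in the paper, free) marker $F$, note the embedding $\mathfrak G(G)\into\mathfrak G(F)$ respects limit groups and the order, and invoke \autoref{res: shortening group - general} to get that every limit group in $\mathfrak L(F)$ is shortenable. But at that point the paper does not reprove anything: it cites the purely formal implications ``all limit groups shortenable $\Rightarrow$ no infinite descending sequence'' and ``this $\Rightarrow$ factorization property'' from \cite[Propositions~4.15 and~4.17]{Coulon:2021wg}. You instead sketch proofs of these implications, and the sketches have real gaps. First, the diagonal extraction is incoherent as stated: $\mathfrak G(F)$ is Hausdorff, so a single $\mathfrak F$-sequence converging to $(L,\eta)=(L_0,\eta_0)$ cannot have a tail in every prescribed neighborhood of every $(L_i,\eta_i)$ of a strictly descending chain; the natural diagonal sequence converges to the direct limit $(L_\omega,\eta_\omega)=\bigl(G/\bigcup_i\ker\eta_i,\,\cdot\bigr)$ of the chain, and any correct argument has to be run there, not at the top of the chain. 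Second, the energy contradiction you invoke does not exist in the form you describe: the $(1-\tau)$ decrement and the $\sigma$-shortness contradiction are internal to the proof that a \emph{single} limit group is shortenable (\autoref{res: shortening group - freely indecomposable}); nothing in your sketch makes these decrements compound along the descending chain, and ``the energies stay bounded below because the quotients are non-trivial'' is not a quantitative lower bound that can conflict with anything. The actual derivation of the descending chain condition from shortenability is a different, combinatorial bookkeeping argument exploiting points \ref{enu: shortening - dichotomy} and \ref{enu: shortening - fact} of \autoref{def: shortenable}, which is precisely what the cited Proposition~4.17 of \cite{Coulon:2021wg} supplies.

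The factorization step has a similar gap: you try to deduce it from the descending chain condition by ``compactness over finitely generated normal data'', but $\ker\eta$ need not be finitely normally generated, and the hard case of the Noetherian induction is exactly when the limit of a putative non-factoring sequence is $(L,\eta)$ itself — there the only available tool is again shortenability together with points \ref{enu: shortening - dichotomy} and \ref{enu: shortening - fact} of \autoref{def: shortenable} (this is the content of Proposition~4.15 of \cite{Coulon:2021wg}, which the paper cites); your sketch never uses shortenability at this stage, so the induction step is unproved. A minor further point: \autoref{res: shortenable changing marker} only covers markers whose kernel is finitely normally generated, so it does not directly give the equivalence you claim for a general cover $p\colon F\onto G$; what is actually needed (descending chains pull back along $p_\ast$, and factorization pushes forward from $(L,\eta\circ p)$ to $(L,\eta)$ by taking $W=p(W')$) should be checked directly, as the paper implicitly does. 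In short, the skeleton is right, but the two implications you attempt to reprove are the genuinely delicate formal core, and your replacements for them do not work as written.
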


\begin{proof}
	Let $F$ be a finitely generated free group with a projection $\pi \colon F \onto G$ so that $\pi$ induces a natural continuous embedding $\mathfrak G(G) \into \mathfrak G(F)$ sending $\mathfrak L(G)$ into $\mathfrak L(F)$.
	According to \autoref{res: shortening group - general}, every limit group $(L,\eta) \in \mathfrak L(F)$ is shortenable.
	It follows then from \cite[Proposition~4.17]{Coulon:2021wg} that there is no infinite descending sequence of limit groups in $\mathfrak L(F)$ starting at $(L, \eta)$.
	In particular, \cite[Proposition~4.15]{Coulon:2021wg} yields that any limit group $(L, \eta) \in \mathfrak L(F)$ has the factorization property.
	The property now follows for limit groups in $\mathfrak L(G)$.
\end{proof}

%%%%%%%%%%%%%%%%%%%%%%%%%%%%%%%%%%%%%%%%%%%%%%%%%%%%%%%%%%%%%%%%%%%%%%%%%%%%%%%%%%%%%
%%%%%%%%%%%%%%%%%%%%%%%%%%%%%%%%%%%%%%%%%%%%%%%%%%%%%%%%%%%%%%%%%%%%%%%%%%%%%%%%%%%%%
%
\section{Lifting morphisms}
\label{sec: lifts}

%
%%%%%%%%%%%%%%%%%%%%%%%%%%%%%%%%%%%%%%%%%%%%%%%%%%%%%%%%%%%%%%%%%%%%%%%%%%%%%%%%%%%%%
%%%%%%%%%%%%%%%%%%%%%%%%%%%%%%%%%%%%%%%%%%%%%%%%%%%%%%%%%%%%%%%%%%%%%%%%%%%%%%%%%%%%%

In this section, we complete the proof of \autoref{intro:thm:lifting}. This will rely on the version for morphisms with ``small energy'' from Section \ref{sec: sc} and the tools we collected in Sections \ref{sec: graph of groups} and \ref{sec: limit}.

\begin{nota}
	Let $G$ be a group and $H$ a subgroup of $G$.
	We denote by $\aut[H]G$ the set of all automorphisms of $G$ whose restriction to $H$ is the identity.
\end{nota}

%%%%%%%%%%%%%%%%%%%%%%%%%%%%%%%%%%%%%%%%%%%%%%%%%%%%%%%%%%%%%%%%%%%%%%%%%%%%%%%%%%%%%
%
\subsection{Morphisms to a fixed hyperbolic group}
%
%%%%%%%%%%%%%%%%%%%%%%%%%%%%%%%%%%%%%%%%%%%%%%%%%%%%%%%%%%%%%%%%%%%%%%%%%%%%%%%%%%%%%

The goal of this section is to describe certain morphisms from a finitely generated group $G$ to a hyperbolic group $\Gamma$ whose restriction to a subgroup $H \subset G$ is prescribed.
Such results are certainly known to the specialists.
A similar study occurs, for instance, in Perin \cite[Proposition~4.13]{Perin:2011aa} where she assumes that $G$ is hyperbolic and $H$ is a non-abelian subgroup.
For completeness, we recall the main arguments. 
It can be seen as a warm-up for the rest of this section.

\begin{prop}
\label{res: morphism to hyp - first step}
	Let $G$ be a group generated by a finite set $U$.
	Let $H$ be a subgroup of $G$ such that $G$ is freely indecomposable relative to $H$.
	Let $\Gamma$ be a torsion-free group acting properly and co-compactly on a geodesic hyperbolic space $X$.
	Let $\iota \colon H \to \Gamma$ be a morphism.
	There exist $E \in \R_+$, as well as two finite subsets $W \subset G \setminus \{1\}$ and $U_0 \subset H$ with the following properties.
	For every morphism $\varphi \colon G \to \Gamma$, if $\varphi$ and $\iota$ coincide on $H_0 = \group {U_0}$, then there is $\alpha \in \aut[H]G$ such that one of the following holds:
		\begin{itemize}
		\item $W \cap \ker(\varphi \circ \alpha) \neq \emptyset$,
		\item $E_\infty(\varphi \circ \alpha, U) \leq E$ (where the energy is computed in $X$).
	\end{itemize}
\end{prop}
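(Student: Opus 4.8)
The plan is to argue by contradiction using a compactness argument in the space of marked groups, combined with the shortening argument (\autoref{res: shortening argument - relative}) and the descending chain condition (\autoref{res: dcc}). Suppose the statement fails. Then no finite data $(E, W, U_0)$ works; by exhausting $H$ with finite subsets $U_0^{(n)} \uparrow H$, larger and larger finite subsets $W^{(n)} \uparrow G \setminus \{1\}$, and energy bounds $E_n \to \infty$, we obtain for each $n$ a morphism $\varphi_n \colon G \to \Gamma$ agreeing with $\iota$ on $\group{U_0^{(n)}}$ such that for \emph{every} $\alpha \in \aut[H] G$ we have both $W^{(n)} \cap \ker(\varphi_n \circ \alpha) = \emptyset$ and $E_\infty(\varphi_n \circ \alpha, U) > E_n$. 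In particular, taking $\alpha = \mathrm{id}$, the energies $E_\infty(\varphi_n, U)$ diverge, and since $\varphi_n$ agrees with $\iota$ on the subgroup generated by $U_0^{(n)}$, for each fixed $h \in H$ we have $\norm{\varphi_n(h)}$ eventually equal to $\norm{\iota(h)}$, a constant, so $\norm{\varphi_n(h)}/E_\infty(\varphi_n, U) \to 0$.

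Next I would pass to a subsequence so that $(\Gamma, \varphi_n)$ converges in the space of marked groups to some limit group $(L, \eta)$. Since $\Gamma$ is torsion-free hyperbolic, it is equationally noetherian, and $L$ is a limit group over $\Gamma$ in the classical sense, so all of its abelian subgroups are finitely generated. Because $\varphi_n$ agrees with $\iota$ on larger and larger finite subsets of $H$, the stable kernel of $(\varphi_n)$ meets $H$ only in $\ker \iota$; hence $\eta$ restricted to $H$ factors as $\iota$ followed by an embedding, so we may identify $\eta|_H$ with $\iota$ (up to the induced isomorphism on $\eta(H)$). Since $G$ is freely indecomposable relative to $H$, the image $L$ is freely indecomposable relative to $M = \eta(H)$ — this uses that free decompositions of $L$ respecting $M$ would lift back to $G$ for large $n$ via the standard argument that the relevant Stallings-type splitting is detected on a finite set, contradicting freeness of $\varphi_n$ on $W^{(n)}$ or the freeness of $G$ relative to $H$.

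Now \autoref{res: shortening argument - relative} applies (with $(L, \eta) = $ the limit; strictly, one applies the more general \autoref{res: shortening argument - general} using a finitely presented strong cover $\hat L_k$ of $L$ provided by \autoref{res: graph of groups cover} together with \autoref{res: lifting modular automorphism} to lift modular automorphisms, since $L$ need not be finitely presented). This yields $\tau > 0$ and, for infinitely many $n$, a modular automorphism $\alpha_n \in \mcg{L, M}$ — which after lifting along the strong cover and precomposing with $\hat\eta_n$ realizes as an automorphism of $G$ fixing $H$, i.e.\ an element of $\aut[H] G$ — such that $E_1(\varphi_n \circ \alpha_n, U) \leq (1 - \tau) E_1(\varphi_n, U)$. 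This shortens the energy by a definite proportion. Iterating and taking limits, a standard argument produces an infinite descending chain of limit quotients of $(L, \eta)$ (each shortening step either factors through a proper quotient or, if not, eventually the $\varphi_n$ themselves factor through $\eta$ with bounded energy, contradicting divergence). This contradicts \autoref{res: dcc}, which forbids infinite descending sequences of limit groups. The main obstacle is the bookkeeping in this last iteration: one must carefully track that the shortening automorphisms can be chosen in $\aut[H]G$ (not merely in $\mcg{L,M}$) and lifted compatibly through the strong covers across infinitely many indices, and that the ``no shortening possible'' alternative — where the energies stay bounded and the maps factor through the limit — is correctly ruled out by the failure hypothesis (the sets $W^{(n)}$ growing ensures any nontrivial element of the stable kernel would eventually be killed, forcing a proper quotient and feeding the descending chain).
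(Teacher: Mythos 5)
Your first half is on the right track and matches the paper's strategy: assume failure, use exhaustions of $H$ and of $G\setminus\{1\}$ together with diverging energy bounds to get morphisms $\varphi_n$ agreeing with $\iota$ on larger and larger subgroups, note that $E_\infty(\varphi_n,U)\to\infty$ while $\norm{\varphi_n(h)}/E_\infty(\varphi_n,U)\to 0$ for $h\in H$, and feed this into the relative shortening argument. One simplification you miss: since $W^{(n)}$ exhausts $G\setminus\{1\}$ and $W^{(n)}\cap\ker\varphi_n=\emptyset$, every non-trivial element of $G$ eventually survives, so the stable kernel is trivial and the sequence converges to $(G,\id)$ \emph{on the nose}. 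There is no proper limit quotient $(L,\eta)$, no need for equational noetherianity, strong covers, or \autoref{res: shortening argument - general}: \autoref{res: shortening argument - relative} applies directly with $(L,\eta)=(G,\id)$, and the only correction needed is that a modular automorphism in $\mcg{G,H}$ acts on $H$ by conjugation, so one composes with an inner automorphism to land in $\aut[H]G$, using \autoref{rem: energy invariant by inner automorphisms} to keep the energy unchanged. You flag this last point as an obstacle but never resolve it.

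The genuine gap is in the endgame. You never normalize the $\varphi_n$, and without a normalization the shortening conclusion is not a contradiction: the failure hypothesis only says every morphism in the orbit $\{\varphi_n\circ\alpha:\alpha\in\aut[H]G\}$ has energy above $E_n$ and is injective on $W^{(n)}$, and the shortened morphism $\varphi_n\circ\beta_n$ still satisfies both constraints. The paper's proof closes the loop by choosing each $\varphi_n$ to be $\sigma$-short (almost minimal $\ell^1$-energy) within its $\aut[H]G$-orbit; then a shortening by the definite factor $(1-\tau)$ of a quantity diverging to infinity beats the additive error $\sigma$ and contradicts minimality. Your substitute mechanism --- iterate the shortening, extract an infinite descending chain of limit quotients, and invoke \autoref{res: dcc} --- cannot work here: since all the morphisms in play remain injective on the exhausting sets $W^{(n)}$ (the hypothesis quantifies over \emph{all} $\alpha\in\aut[H]G$), every limit you form along the iteration is again $(G,\id)$, so no proper quotient ever appears and the chain never descends. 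Your parenthetical claim that the growing $W^{(n)}$ ensures non-trivial elements of the stable kernel are "eventually killed" is backwards --- the hypothesis guarantees they are eventually \emph{not} killed --- and this is precisely why the descending-chain route collapses and the minimal-energy normalization is the needed idea.
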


\begin{proof}
	Without loss of generality, we can assume that $\iota$ is injective, for otherwise it suffices to take for $W = U_0$ any finite subset of $\ker \iota \setminus \{1\}$ and set $\alpha$ to be the identity.
	We now fix an exhaustion $(W_k)$ of $G \setminus \{1\}$ by finite subsets and an exhaustion $(U_k)$ of $H$ by finite subsets.
	Suppose that the statement is false.
	For every $k \in \N$, we can find a morphism $\varphi_k \colon G \to \Gamma$ such that 
	\begin{itemize}
		\item $\varphi_k$ and $\iota$ coincide on $H_k = \group {U_k}$, 
		\item for every $\alpha \in \aut[H]G$, we have $W_k \cap \ker(\varphi_k \circ \alpha) = \emptyset$ and 
		\begin{equation}
		\label{eqn: morphism to hyp - first step}
			E_\infty(\varphi_k \circ \alpha, U) \geq k.
		\end{equation}
	\end{itemize}
	Let $\sigma \in \R_+^*$.
	Up to pre-composing $\varphi_k$ by some element of $\aut[H]G$, we can assume without loss of generality that $(\Gamma,\varphi_k)$ is $\sigma$-short among the collection
	\begin{equation*}
		\mathfrak C_k = \set{(\Gamma, \varphi_k \circ \alpha)}{ \alpha \in \aut[H]G},
	\end{equation*}
	that is,
	\begin{equation*}
		E_1(\varphi_k, U) \leq \sup_{\alpha \in \aut[H]G} E_1(\varphi_k \circ \alpha, U)  + \sigma.
	\end{equation*}
	The sequence $(\Gamma, \varphi_k)$ converges to $(G, \id)$.
	Moreover, $E_\infty(\varphi_k, U)$, and thus $E_1(\varphi_k, U)$, diverges to infinity.
	We now claim that
	\begin{equation*}
		\lim_{k \to \infty} \frac{\norm{\varphi_k (h)}}{E_\infty(\varphi_k, U)} = 0, \quad \forall h \in H.
	\end{equation*}
	Consider indeed some element $h \in H$.
	Since $(U_k)$ is an exhaustion of $H$, the element $h$ belongs to $U_k$ for all but finitely many $k \in \N$.
	However, $\varphi_k$ and $\iota$ coincide on $U_k$.
	Consequently, for all but finitely many $k \in \N$, we get $\norm{\varphi_k(h)} = \norm{\iota(h)}$, which does not depend on $k$.
	The claim follows from the fact that $E_\infty(\varphi_k, U)$ diverges to infinity.
	We are now in a position to apply  \autoref{res: shortening argument - relative}.
	In particular, one can shorten $\varphi_k$ by pre-composition with an element of $\aut G$ which acts by conjugation on $H$.
	However, the energy of a morphism is invariant under conjugation (\autoref{rem: energy invariant by inner automorphisms}).
	Hence, this shortening can be obtained by pre-composition with an element of $\aut[H]G$.
	This contradicts our choice of $(\varphi_k)$ and completes the proof.
\end{proof}

\begin{coro}
\label{res: morphism to hyp}
	Let $G$ be a finitely generated group.
	Let $H$ be a subgroup of $G$ such that $G$ is freely indecomposable relative to $H$.
	Let $\Gamma$ be a torsion-free hyperbolic group.
	Let $\iota \colon H \to \Gamma$ be a morphism.
	Then there exists a finite subset $W \subset G \setminus \{1\}$ and a finitely generated subgroup $H' \subset H$ with the following properties.
	For every morphism $\varphi \colon G \to \Gamma$ which coincides with $\iota$ on $H'$, one of the following holds:
		\begin{itemize}
		\item there is an automorphism $\alpha \in \aut[H]G$ such that $W \cap \ker(\varphi \circ \alpha) \neq \emptyset$,
		\item $\varphi$ is a monomorphism and its restriction to $H$ coincides with $\iota$.
	\end{itemize}
\end{coro}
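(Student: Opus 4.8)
The plan is to deduce the statement from \autoref{res: morphism to hyp - first step}, which already carries out all of the shortening: what remains is to understand morphisms $\psi\colon G\to\Gamma$ of bounded $\ell^\infty$-energy whose restriction to a large finitely generated subgroup of $H$ is prescribed. First I would dispose of the case that $\iota$ is not injective: pick $w\in\ker\iota\setminus\{1\}$, set $W=\{w\}$ and $H'=\group w$; then any $\varphi$ agreeing with $\iota$ on $H'$ satisfies $w\in\ker\varphi=\ker(\varphi\circ\id)$, so the first alternative holds. Assuming henceforth that $\iota$ is injective, I apply \autoref{res: morphism to hyp - first step} to get $E\in\R_+$, a finite set $W_1\subset G\setminus\{1\}$ and a finite set $U_0\subset H$; write $H_0=\group{U_0}$.

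The heart of the argument is the following claim: there exist a finite set $W_2\subset G\setminus\{1\}$ and a finitely generated subgroup $H'$ with $H_0\subseteq H'\subseteq H$ such that every morphism $\psi\colon G\to\Gamma$ with $E_\infty(\psi,U)\leq E$ and $\psi|_{H'}=\iota|_{H'}$ either has $W_2\cap\ker\psi\neq\emptyset$, or is injective with $\psi|_H=\iota$. Granting this, I set $W=W_1\cup W_2$ and take the subgroup $H'$: given $\varphi$ agreeing with $\iota$ on $H'$, hence on $H_0$, \autoref{res: morphism to hyp - first step} furnishes $\alpha\in\aut[H]G$ with either $W_1\cap\ker(\varphi\circ\alpha)\neq\emptyset$ (first alternative), or $E_\infty(\varphi\circ\alpha,U)\leq E$. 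In the latter case $\psi:=\varphi\circ\alpha$ still satisfies $\psi|_{H'}=\iota|_{H'}$ because $\alpha$ fixes $H$ pointwise, and the claim applies; since $\alpha$ is an automorphism fixing $H$, injectivity of $\psi$ and the equality $\psi|_H=\iota$ transfer to $\varphi$, and $W_2\cap\ker\psi\neq\emptyset$ transfers to $W\cap\ker(\varphi\circ\alpha)\neq\emptyset$. This yields the two alternatives of the corollary.

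To prove the claim I argue by contradiction. Exhausting $G\setminus\{1\}$ by finite sets $W_k$ and $H$ by finitely generated subgroups $H_k\supseteq H_0$, failure of the claim produces morphisms $\psi_k\colon G\to\Gamma$ with $E_\infty(\psi_k,U)\leq E$, $\psi_k|_{H_k}=\iota|_{H_k}$, $W_k\cap\ker\psi_k=\emptyset$, and such that $\psi_k$ is \emph{not} an injective morphism restricting to $\iota$ on $H$. The condition $W_k\cap\ker\psi_k=\emptyset$ forces the stable kernel of $(\psi_k)$ to be trivial. Properness and cocompactness of the action of $\Gamma$ on $X$ imply that there are only finitely many morphisms $G\to\Gamma$ of $\ell^\infty$-energy at most $E$ up to conjugacy in $\Gamma$; so, after conjugating each $\psi_k$ by a suitable $g_k\in\Gamma$ and passing to a subsequence, the conjugates $\psi'_k=g_k^{-1}\psi_k g_k$ all equal a single morphism $\psi'$, and triviality of the stable kernel gives $\ker\psi'=\{1\}$, i.e.\ $\psi'$ is injective. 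Now $\psi_k=g_k\psi'g_k^{-1}$ with $g_k\psi'(h)g_k^{-1}=\iota(h)$ for $h\in H_k$. Here I use that in the torsion-free hyperbolic group $\Gamma$ the centralizer of any non-trivial element is infinite cyclic, malnormal, and equal to the centralizer of each of its non-trivial powers (consequences of $\Gamma$ being CSA with cyclic abelian subgroups). Fixing $h_1\in H\setminus\{1\}$, so $\psi'(h_1)\neq 1$ with $C_\Gamma(\psi'(h_1))=\group c$, the elements $g_k$ eventually lie in one coset $g_{k_1}\group c$, say $g_k=g_{k_1}c^{n_k}$. If $\psi'(H)\subseteq\group c$, then $g_{k_1}\psi'g_{k_1}^{-1}=\psi_{k_1}$ restricts to $\iota$ on all of $H$ and is injective; otherwise some $\psi'(h_2)$ does not commute with $c$, the conjugates $c^n\psi'(h_2)c^{-n}$ are pairwise distinct, and the equation at $h_2$ forces the exponents $n_k$ to stabilize, so $\psi_k$ is eventually a fixed injective morphism restricting to $\iota$ on $H$. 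Either way we contradict the choice of $(\psi_k)$. (When $H=\{1\}$ the argument short-circuits: once $\psi'$, hence every $\psi_k$, is injective, that already contradicts the failure assumption.)

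The main obstacle is this final step: controlling the conjugating elements $g_k\in\Gamma$, which is where the fine structure of torsion-free hyperbolic groups — cyclic malnormal centralizers, equivalently uniqueness of roots — is genuinely needed; everything else is a routine combination of \autoref{res: morphism to hyp - first step} with the above compactness argument.
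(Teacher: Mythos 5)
Your proof is correct, and it reaches the same $W$ and $H'$ by a genuinely different route. Both arguments ultimately rest on the same key observation — properness and cocompactness of the $\Gamma$-action on a Cayley graph give only finitely many morphisms $G\to\Gamma$ of $\ell^\infty$-energy at most $E$ up to conjugacy — but from there they diverge. The paper constructs $W$ and $H'$ directly from that finite list $\psi_1,\dots,\psi_m$: it enlarges $W$ so that $W\cap\ker\psi_j=\emptyset$ detects injectivity of each $\psi_j$, and (when $H$ is not finitely generated) exploits non-elementarity of $\iota(H_1)$ — whose centralizer in $\Gamma$ is then trivial — to pin down a unique conjugator $\gamma_j$ for each $j$, after which $H'$ is chosen so that agreement of $\tau_{\gamma_j}\circ\psi_j$ with $\iota$ on $H'$ forces agreement on all of $H$. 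You instead obtain $W_2$ and $H'$ by an exhaustion/contradiction argument, and you control the conjugators $g_k$ via malnormality and stability under powers of cyclic centralizers, splitting cases according to whether $\psi'(H)$ is cyclic. The paper's construction has the merit of making $W$ and $H'$ entirely explicit, and of trivializing the finitely generated case by simply taking $H'=H$; your argument trades that explicitness for a uniform treatment of the finitely and non-finitely generated cases. Both exploit the CSA structure of torsion-free hyperbolic groups — you at the level of cyclic centralizers of single elements, the paper at the level of non-elementary subgroups having trivial centralizer — but these are the same phenomenon seen from two angles.
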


\begin{proof}
	As previously, we can assume without loss of generality that $\iota$ is injective.
	We fix once and for all a finite generating set $U$ of $G$.
	We denote by $X$ a Cayley graph of $\Gamma$.
	Let $E \in \R_+$, $W \subset G \setminus \{1\}$ and $U_0 \subset H$ be the data given by \autoref{res: morphism to hyp - first step}.
	Since the action of $\Gamma$ on $X$ is proper and co-compact, there are, up to conjugacy, only finitely many morphisms, say $\psi_1, \dots, \psi_m \colon G \to \Gamma$ such that $E_\infty(\psi_i, U) \leq E$.
	Up to enlarging the set $W$, we can assume that $W \cap \ker \psi_j = \emptyset$ if and only if $\psi_j$ is injective.
	
	Let us now define $H'$.
	If $H$ is already finitely generated, we simply set $H' = H$.
	Suppose now that $H$ is not finitely generated.
	For every $\gamma \in \Gamma$ we write $\tau_\gamma$ for the conjugation in $\Gamma$ by $\gamma$.
	Recall that $\iota \colon H \to \Gamma$ is an embedding.
	Since $H$ is not finitely generated, $\iota(H)$ is a non-elementary subgroup of $\Gamma$.
	We first fix a finitely generated subgroup $H_1$ of $H$ containing $H_0 = \group{U_0}$ such that $\iota(H_1)$ is non-elementary.
	In particular, the centralizer of $\iota(H_1)$ in $\Gamma$ is trivial.
	Therefore, for every $j \in \intvald 1m$, there is at most one element $\gamma_j \in \Gamma$ such that $\tau_{\gamma_j} \circ \psi_j = \iota$ on $H_1$.
	We now choose for $H'$ a finitely generated subgroup of $H$ containing $H_1$ such that for every $j \in \intvald 1m$, the morphisms $\tau_{\gamma_j} \circ \psi_j$ and $\iota$ coincide on $H'$ if and only if they coincide on $H$.
	
	Consider now a morphism $\varphi \colon G \to \Gamma$ which coincides with $\iota$ on $H'$ and such that $W \cap \ker(\varphi \circ \alpha) = \emptyset$, for every $\alpha \in \aut[H]G$.
	In particular, $\varphi$ coincides with $\iota$ on $H_0 = \group {U_0}$.
	It follows from \autoref{res: morphism to hyp - first step} that there is $\alpha \in \aut[H]G$ such that $E_\infty(\varphi \circ \alpha, U) \leq E$.
	Hence, there is $j \in \intvald 1m$ and $\gamma \in \Gamma$ such that $\varphi \circ \alpha = \tau_\gamma \circ \psi_j$.
	Observe that $\ker (\varphi \circ \alpha) = \ker \psi_j$.
	According to our choice of $W$, the morphism $\psi_j$ is injective, and thus so is $\varphi$.
	By construction, $\varphi \circ \alpha$ coincides with $\iota$ on $H'$.
	If $H$ is finitely generated, we chose $H' = H$, hence $\varphi \circ \alpha$ and $\iota$ already coincide on $H$.
	Suppose now that $H$ is not finitely generated.
	Since $\varphi \circ \alpha$ and $\iota$ coincide on $H_1$, we necessarily have $\gamma = \gamma_j$.
	It follows now from our choice of $H'$ that $\tau_{\gamma_j} \circ \psi_j$ coincides with $\iota$ on $H$, and hence so does $\varphi \circ \alpha$.
\end{proof}

%%%%%%%%%%%%%%%%%%%%%%%%%%%%%%%%%%%%%%%%%%%%%%%%%%%%%%%%%%%%%%%%%%%%%%%%%%%%%%%%%%%%%
%
\subsection{Morphisms to a small cancellation quotient}
%
%%%%%%%%%%%%%%%%%%%%%%%%%%%%%%%%%%%%%%%%%%%%%%%%%%%%%%%%%%%%%%%%%%%%%%%%%%%%%%%%%%%%%

\begin{theo}
\label{res: lifting morphism - killing element for hyperbolic groups - one-ended}
	Let $G$ be a finitely generated group.
	Let $H$ be a subgroup of $G$ such that $G$ is freely indecomposable relative to $H$.
	Let $\iota \colon H \to \Gamma$ be a morphism.
	There exist a finite subset $W \subset G \setminus \{1\}$ and parameters $\lambda, \epsilon \in (0,1)$, with the following properties.
	Let $\pi \colon \Gamma \onto \bar \Gamma$ be a tight $C'(\lambda, \epsilon)$ strengthened small cancellation quotient of $\Gamma$.
	Let $\bar \varphi \colon G \to \bar \Gamma$ be a morphism such that $\bar \varphi$ restricted to $H$ coincides with $\pi \circ \iota$.
	Then one of the following holds:
	\begin{itemize}
		\item there is an automorphism $\alpha \in \aut[H]G$ such that $W \cap \ker(\bar \varphi \circ \alpha) \neq \emptyset$,
		\item there is a monomorphism $\varphi \colon G \to \Gamma$ lifting $\bar \varphi$ whose restriction to $H$ coincides with $\iota$.
	\end{itemize}
\end{theo}

\begin{proof}
	Without loss of generality, we can assume that $\iota$ is injective, for otherwise we can choose for $W$ any finite subset in $\ker \iota \setminus\{1\}$.
	Let us introduce first some useful auxiliary objects.	
	We write $W \subset G \setminus\{1\}$ and $H' \subset H$ 
    for the data given by \autoref{res: morphism to hyp} applied with $\iota \colon H \to G$ and $\Gamma$.
	Let $U$ be a finite generating set of $G$ that contains a finite generating set $U'$ of $H'$.
	The constant $\delta \in \R^*_+$ and the maps $a, \rho \colon (0, 1) \times (0,1) \to \R^*_+$ are the ones given by \autoref{res: recap sc}.
	We also write $\xi \in (0,1)$ for the parameter given by \autoref{res: recap sc}\ref{enu: recap sc - lift}.
	
	We fix a non-decreasing exhaustion $(W_k)$ of $G\setminus\{1\}$ by finite subsets starting with the set $W_0 = W$ defined previously.
	Let $(\lambda_k)$ and $(\epsilon_k)$ be two sequences of positive numbers converging to zero.
	Assume now that the statement is false.
	In particular, for every $k \in \N$, there is a non-elementary, tight $C'(\lambda_k, \epsilon_k)$ strengthened small cancellation quotient $\bar \Gamma_k$ of $\Gamma$ and a morphism $\bar \varphi_k \colon G \to \bar \Gamma_k$ such that 
	\begin{enumerate}
		\item \label{res: lifting morphism - stably one-to-one - kernel}
		$W_k \cap \ker(\bar \varphi_k \circ \alpha) = \emptyset$, for every $\alpha \in \aut[H]G$,
		\item \label{res: lifting morphism - stably one-to-one - lift}
		if $\bar \varphi_k$ admits a lift $\varphi_k \colon G \to \Gamma$ whose restriction to $H$ coincides with $\iota$, then $\varphi_k$ is not injective.
	\end{enumerate}
	For every $k \in \N$, we write $\mathcal Q_k$ for the underlying relation family, $\pi_k \colon \Gamma \onto \bar \Gamma_k$ for the canonical projection, and $X_k$ for the $\delta$-hyperbolic space associated to $\bar \Gamma_k$ by \autoref{res: recap sc}.
	Let $\sigma \in \R_+^*$.
	Up to replacing $\bar \varphi_k$ by $\bar \varphi_k \circ \alpha$ for some $\alpha \in \aut[H]G$, we can assume without loss of generality that $(\bar \Gamma_k, \bar \varphi_k)$ is $\sigma$-short among the collection
	\begin{equation*}
		\mathfrak C_k = \set{(\bar \Gamma_k, \bar \varphi_k \circ \alpha)}{ \alpha \in \aut[H]G},
	\end{equation*}
	that is,
	\begin{equation*}
		E_1(\bar \varphi_k, U) \leq \inf_{\alpha \in \aut[H]G} E_1( \bar \varphi_k \circ \alpha, U)  + \sigma.
	\end{equation*}
	
	We first claim that the $\ell^\infty$-energy $E_\infty(\bar \varphi_k, U)$ diverges to infinity.
	Suppose that, contrary to our claim, $E_\infty(\bar \varphi_k, U)$ is bounded.
	Since $\mathcal Q_k$ satisfies the strengthened small cancellation condition $C'(\lambda_k, \epsilon_k)$ the quantity $T(\mathcal Q_k, X)$ diverges to infinity.
	In particular, it is much larger than $E_\infty(\iota, U')$, which does not depend on $k$.
	Up to forgetting the first terms of the sequence, every $\bar \varphi_k \colon G \to \bar \Gamma_k$ lifts to a morphism $\varphi_k\colon G \to \Gamma$, which coincides with $\iota$ on $H'$, see \autoref{res: recap sc}\ref{enu: recap sc - lift}.
	Observe that $\ker(\varphi_k \circ \alpha) \subset \ker(\bar \varphi_k \circ \alpha)$ for every $\alpha \in \aut[H]G$.
	It follows from our hypothesis that the intersection of those kernels with $W$ is empty.
	According to \autoref{res: morphism to hyp}, the lift $\varphi_k$ is injective and its restriction to $H$ coincides with $\iota$.
	This contradicts our assumption on $\bar \varphi_k$ and completes the proof of the claim.

	It follows from our construction that $(\bar \Gamma_k, \bar \varphi_k)$ converges to $(G, \id)$.
	Recall that $\bar \varphi_k$ and $\pi_k \circ \iota$ coincide on $H$.
	Therefore, for every $h \in H$, the  $\ell^\infty$-energy $E_\infty(\bar \varphi_k, \{h\})$ is bounded.
	It then follows from \autoref{res: shortening argument - relative} that there are $\tau \in \R_+^*$ and a sequence of automorphisms $\alpha_k \in \mcg{G,H}$ such that
	\begin{equation*}
		E_1\left(\bar \varphi_k \circ \alpha_k, U\right) \leq (1- \tau) E_1\left(\bar \varphi_k,U\right).
	\end{equation*}
	Note that $\alpha_k$ may not be the identity when restricted to $H$.
	However, it acts on $H$ as conjugation by an element of $G$.
	In other words, there is $\beta_k \in \aut[H]G$ such that $\alpha_k$ and $\beta_k$ differ by an inner automorphism.
	Consequently,
	\begin{equation*}
		E_1\left(\bar \varphi_k \circ \beta_k, U\right) =  E_1\left(\bar \varphi_k \circ \alpha_k, U\right),
	\end{equation*}
	see \autoref{rem: energy invariant by inner automorphisms}.
	Recall that $E_\infty(\bar \varphi_k, U)$, hence $E_1(\bar \varphi_k, U)$, diverges to infinity.
	Hence, the last inequality contradicts the fact that $(\bar \Gamma_k, \bar \varphi_k)$ is $\sigma$-short among $\mathfrak C_k$ and completes the proof.
\end{proof}

Our next step is to remove the assumption that $G$ is freely indecomposable relative to $H$.
The drawback is that the possible lift may not be injective.

\begin{coro}
\label{res: lifting morphism - killing element for hyperbolic groups - general}
	Let $G$ be a finitely generated group.
	Let $H$ be a subgroup of $G$.
	Let $\iota \colon H \to \Gamma$ be a morphism.
	There exist a finite subset $W \subset G \setminus \{1\}$ and parameters $\lambda, \epsilon \in (0,1)$, with the following properties.
	Let $\pi \colon \Gamma \onto \bar \Gamma$ be a tight $C'(\lambda, \epsilon)$ strengthened small cancellation quotient of $\Gamma$.
	Let $\bar \varphi \colon G \to \bar \Gamma$ be a morphism such that $\bar \varphi$ restricted to $H$ coincides with $\pi \circ \iota$.
	Then one of the following holds:
	\begin{itemize}
		\item there is an automorphism $\alpha \in \aut[H]G$ such that $W \cap \ker(\bar \varphi \circ \alpha) \neq \emptyset$,
		\item there is a morphism $\varphi \colon G \to \Gamma$ lifting $\bar \varphi$ whose restriction to $H$ coincides with $\iota$.
	\end{itemize}
\end{coro}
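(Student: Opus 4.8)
The plan is to reduce \autoref{res: lifting morphism - killing element for hyperbolic groups - general} to the case already settled in \autoref{res: lifting morphism - killing element for hyperbolic groups - one-ended}, namely when $G$ is freely indecomposable relative to $H$, by passing to the Grushko decomposition of $G$ relative to $H$. Since $G$ is finitely generated, it admits such a decomposition $G = G_1 \ast \dots \ast G_n \ast F$, where $F$ is a finitely generated free group (possibly trivial), each $G_i$ is freely indecomposable and not infinite cyclic, and $H$ is conjugate into one of the factors; after reindexing and conjugating, one may assume $H \leq G_1$ and that $G_1$ is freely indecomposable relative to $H$. (If $H = \{1\}$, this is the ordinary Grushko decomposition and no factor is singled out; the argument below applies with the obvious simplifications.) Each $G_i$ is a free factor of a finitely generated group, hence finitely generated, so \autoref{res: lifting morphism - killing element for hyperbolic groups - one-ended} is applicable to it; note that that theorem imposes no finite generation hypothesis on $H$, so nothing needs to be assumed about $H$ here.

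Next I would apply \autoref{res: lifting morphism - killing element for hyperbolic groups - one-ended} to $(G_1, H, \iota)$ to get a finite set $W_1 \subset G_1 \setminus \{1\}$ and parameters $\lambda_1, \epsilon_1 \in (0,1)$, and to each $(G_i, \{1\}, 1)$ for $i \geq 2$ (with $1$ the trivial morphism, and noting $\aut[\{1\}]{G_i} = \aut{G_i}$) to get $W_i \subset G_i \setminus \{1\}$ and $\lambda_i, \epsilon_i$. Then set $W = W_1 \cup \dots \cup W_n$, viewing each $G_i$ as a subgroup of $G$, and $\lambda = \min_i \lambda_i$, $\epsilon = \min_i \epsilon_i$. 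Given a tight $C'(\lambda, \epsilon)$ strengthened small cancellation quotient $\pi \colon \Gamma \onto \bar\Gamma$ and a morphism $\bar\varphi \colon G \to \bar\Gamma$ with $\bar\varphi|_H = \pi \circ \iota$, restrict $\bar\varphi$ to each factor, writing $\bar\varphi_i = \bar\varphi|_{G_i}$; since $H \leq G_1$ we have $\bar\varphi_1|_H = \pi\circ\iota$, so the theorem applies to every $\bar\varphi_i$.

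Two cases occur. If for some $i$ the first alternative of the theorem holds for $\bar\varphi_i$ — an automorphism $\alpha_i$ of $G_i$, fixing $H$ pointwise when $i=1$, with $W_i \cap \ker(\bar\varphi_i \circ \alpha_i) \neq \emptyset$ — then I extend $\alpha_i$ to an automorphism $\alpha$ of $G$ acting as the identity on every other free factor and on $F$. This extension exists because any automorphism of a free factor extends in this way; moreover $H \subseteq G_1$, which is left untouched whenever $i \geq 2$, and $\alpha_1$ already fixes $H$, so $\alpha \in \aut[H]{G}$. Since $\alpha(G_i) = G_i$ and $\bar\varphi\circ\alpha$ agrees with $\bar\varphi_i\circ\alpha_i$ on $G_i$, we get $\ker(\bar\varphi\circ\alpha)\cap G_i = \ker(\bar\varphi_i\circ\alpha_i)$, which meets $W_i \subseteq W$, so the first alternative holds for $\bar\varphi$. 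Otherwise, for every $i$ the second alternative holds: there is a morphism $\varphi_i \colon G_i \to \Gamma$ with $\pi\circ\varphi_i = \bar\varphi_i$ and $\varphi_1|_H = \iota$; since $F$ is free and $\pi$ is onto, $\bar\varphi|_F$ lifts to some $\varphi_F \colon F \to \Gamma$; by the universal property of the free product these assemble into $\varphi \colon G \to \Gamma$ with $\pi\circ\varphi = \bar\varphi$ (checked on each factor) and $\varphi|_H = \varphi_1|_H = \iota$, giving the second alternative.

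The argument is essentially bookkeeping once the relative Grushko decomposition is available; the only point that needs genuine care is the promotion of the ``killing'' alternative from a single factor $G_i$ to all of $G$, where one must verify both that a factor automorphism extends to an automorphism of $G$ trivial on the other factors, and that this extension still fixes $H$ pointwise — which is exactly why it is important to conjugate so that $H$ lies entirely inside the one factor $G_1$ that we never modify.
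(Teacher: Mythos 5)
Your proof is correct and follows essentially the same route as the paper's: relative Grushko decomposition of $G$, apply \autoref{res: lifting morphism - killing element for hyperbolic groups - one-ended} factor by factor (with $H$ placed inside the one factor that is freely indecomposable relative to it, and the trivial subgroup elsewhere), take the minimum of the small cancellation parameters and the union of the $W_i$, then recombine. The paper phrases the final step as assuming the first alternative fails for every $\alpha \in \aut[H]G$ and deducing the lift; you argue the contrapositive by promoting a factor-level automorphism to an automorphism of $G$ trivial on the other factors. Both are the same observation, namely that $\aut[H_i]{G_i}$ embeds into $\aut[H]G$ via the extension-by-identity.

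One small slip in your closing summary: you call $G_1$ "the one factor that we never modify," but when $i = 1$ you do modify it — by an automorphism that fixes $H$ pointwise. Your actual argument two sentences earlier handles this correctly (it distinguishes $i \geq 2$, where $G_1$ is untouched, from $i = 1$, where $\alpha_1 \in \aut[H]{G_1}$), so this is only a matter of phrasing.
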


\begin{proof}
	Consider the Grushko decomposition of $G$ relative to $H$:
	\begin{equation*}
		G = G_0 \ast G_1 \ast \dots \ast G_m \ast F,
	\end{equation*}
	where $H$ is contained in $G_0$ which is freely indecomposable relative to $H$; for every $j \in \intvald 1m$, the factor $G_j$ is non-cyclic and freely indecomposable; and $F$ is a finitely generated free group.
	To simplify the exposition, we let $H_0 = H$ and $H_j = \{1\}$ for every $j \in \intvald 1m$ so that each $G_j$ is freely indecomposable relative to $H_j$.
	Similarly, we consider the morphisms $\iota_j \colon H_j \to \Gamma$ that equals $\iota$ if $j = 0$ and is trivial otherwise.
	For every $j \in \intvald 0m$, we denote by $W_j \subset G_j \setminus\{1\}$ and $\lambda_j, \epsilon_j \in (0,1)$ the data provided by \autoref{res: lifting morphism - killing element for hyperbolic groups - one-ended} applied with $G_j$ and $H_j$.
	We now let 
	\begin{equation*}
		\lambda = \min_{0 \leq j \leq m} \lambda_j, \quad
		\epsilon = \min_{0 \leq j \leq m} \epsilon_j, 
		\quad \text{and} \quad
		W = \bigcup_{j = 0}^m W_j.
	\end{equation*}
	where $W$ is seen as a finite subset of $G$.
	
	Consider now a tight $C'(\lambda, \epsilon)$ strengthened small cancellation quotient $\bar \Gamma$ of $\Gamma$ and denote by $\pi \colon \Gamma \onto \bar \Gamma$ the corresponding projection.
	Note that $\bar \Gamma$ is a $C'(\lambda_j, \epsilon_j)$ tight small cancellation quotient of $\Gamma$ for every $j \in \intvald 0m$.
	Let $\bar \varphi \colon G \to \bar \Gamma$ be a morphism such that $\bar \varphi$ restricted to $H$ coincides with $\pi \circ \iota$.
	In addition we assume that $W \cap \ker(\varphi \circ \alpha) = \emptyset$, for every $\alpha \in \aut[H]G$.
	
	Let $j \in \intvald 0m$.
	We write $\bar \varphi_j \colon G_j \to \bar \Gamma$ for the restriction of $\bar \varphi$ to $G_j$.
	Note that every element of $\aut[H_j]{G_j}$ extends to an element of $\aut[H]G$.
	Therefore $W_j \cap \ker (\bar \varphi_j \circ \alpha) = \emptyset$, for every $\alpha \in \aut[H_j]{G_j}$.
	It follows from \autoref{res: lifting morphism - killing element for hyperbolic groups - one-ended} and our choice of $W_j$ and $(\lambda_j, \epsilon_j)$ that there is a morphism $\varphi_j \colon G_j \to \Gamma$ lifting $\bar \varphi_j$ whose restriction to $H_j$ coincides with $\iota_j$.
	Now, denote by $\bar \sigma \colon F \to \bar \Gamma$ the restriction of $\bar \varphi$ to $F$.
	Since $F$ is free, there is also a morphism $\sigma \colon F \to \Gamma$ lifting $\bar \sigma$.
	
	Using the universal property of the free product, we now define a morphism $\varphi \colon G \to \Gamma$ whose restriction to $G_j$ (\resp, $F$) is $\varphi_j$ (\resp, $\sigma$).
	It follows from the construction that $\varphi$ lifts $\bar \varphi$.
	Moreover, its restriction to $H$ coincides with $\iota$.
\end{proof}

We will now use \autoref{res: lifting morphism - killing element for hyperbolic groups - general} as a stepping stone for proving a stronger statement.

\begin{coro}
\label{res: lifting quotient0}
	Let $G$ be a finitely generated group.
	Let $H$ be a subgroup of $G$ and $\iota \colon H \to \Gamma$ a morphism.
	There are parameters $\lambda, \epsilon \in (0,1)$, with the following properties.
	Let $\pi \colon \Gamma \onto \bar \Gamma$ be a tight $C'(\lambda, \epsilon)$ strengthened small cancellation quotient of $\Gamma$.
	Let $\bar \varphi \colon G \to \bar \Gamma$ be a morphism such that $\bar \varphi$ restricted to $H$ coincides with $\pi \circ \iota$.
	Then there is a morphism $\varphi \colon G \to \Gamma$ lifting $\bar \varphi$ whose restriction to $H$ coincides with $\iota$.
\end{coro}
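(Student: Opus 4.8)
The plan is to argue by contradiction, combining \autoref{res: lifting morphism - killing element for hyperbolic groups - general} (which lets us ``kill an element'' whenever a lift fails) with the descending chain condition for limit groups from \autoref{res: dcc}. Suppose the statement fails for $(G, H, \iota)$. Choosing sequences $(\lambda_k), (\epsilon_k)$ of positive numbers converging to $0$, we obtain for each $k$ a tight $C'(\lambda_k, \epsilon_k)$ strengthened small cancellation quotient $\pi_k \colon \Gamma \onto \bar \Gamma_k$ and a morphism $\bar \varphi_k \colon G \to \bar \Gamma_k$ agreeing with $\pi_k \circ \iota$ on $H$ but admitting no lift $\varphi_k \colon G \to \Gamma$ agreeing with $\iota$ on $H$. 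I will then build, inductively in $i \in \N$: a finitely generated group $L_i$ with $L_0 = G$, a subgroup $H_i \le L_i$, a morphism $\iota_i \colon H_i \to \Gamma$ with $\iota_0 = \iota$, an epimorphism $\eta^{(i)} \colon L_i \onto L_{i+1}$, and an $\mathfrak F$-sequence $\bar \varphi^{(i)}_k \colon L_i \to \bar \Gamma^{(i)}_k$ witnessing that the statement also fails for $(L_i, H_i, \iota_i)$, in such a way that $(L_{i+1}, \eta^{(i)})$ is a \emph{proper} quotient of $(L_i, \id)$ in $\mathfrak G(L_i)$.

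For the inductive step, apply \autoref{res: lifting morphism - killing element for hyperbolic groups - general} to $(L_i, H_i, \iota_i)$ to obtain a finite set $W_i \subset L_i \setminus \{1\}$ and parameters $\lambda'_i, \epsilon'_i$. For $k$ large, the non-existence of a lift forces the first alternative, so there is $\alpha_{i,k} \in \aut[H_i]{L_i}$ with $W_i \cap \ker(\bar \varphi^{(i)}_k \circ \alpha_{i,k}) \ne \emptyset$. Crucially, $\bar \varphi^{(i)}_k \circ \alpha_{i,k}$ still agrees with $\pi^{(i)}_k \circ \iota_i$ on $H_i$ (since $\alpha_{i,k}$ fixes $H_i$ pointwise) and still admits no compatible lift (precomposing a compatible lift of $\bar \varphi^{(i)}_k \circ \alpha_{i,k}$ with $\alpha_{i,k}^{-1}$ would produce one for $\bar \varphi^{(i)}_k$), so we may simply replace $\bar \varphi^{(i)}_k$ by $\bar \varphi^{(i)}_k \circ \alpha_{i,k}$. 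After a subsequence there is a fixed $w_i \in W_i$ with $\bar \varphi^{(i)}_k(w_i) = 1$ for all $k$, and after a further subsequence $(\bar \Gamma^{(i)}_k, \bar \varphi^{(i)}_k)$ converges to a limit group $(L_{i+1}, \eta^{(i)}) \in \mathfrak L(L_i)$; since $w_i$ lies in the stable kernel and $w_i \ne 1$ in $L_i$, this quotient is proper. Put $H_{i+1} = \eta^{(i)}(H_i)$. Then $\iota_i$ descends to a morphism $\iota_{i+1} \colon H_{i+1} \to \Gamma$ with $\iota_{i+1} \circ \eta^{(i)}|_{H_i} = \iota_i$: if $h \in H_i \cap \ker \eta^{(i)}$, then $\iota_i(h) \in \ker \pi^{(i)}_k$ for all but finitely many $k$, and since $T(\mathcal Q^{(i)}_k, X)$ diverges to infinity while $\pi \sinh \rho(\lambda_k, \epsilon_k)/a(\lambda_k, \epsilon_k)$ tends to $0$, \autoref{res: recap sc}\ref{enu: recap sc - injectivity} forces $\iota_i(h) = 1$. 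Finally, $(L_{i+1}, \eta^{(i)})$ has the factorization property by \autoref{res: dcc}, so for $k$ large $\bar \varphi^{(i)}_k$ kills the relevant finite subset and hence factors as $\bar \varphi^{(i)}_k = \bar \varphi^{(i+1)}_k \circ \eta^{(i)}$; a short diagram chase (as above) shows $(\bar \Gamma^{(i)}_k, \bar \varphi^{(i+1)}_k)$ is an $\mathfrak F$-sequence in $\mathfrak G(L_{i+1})$ witnessing the failure of the statement for $(L_{i+1}, H_{i+1}, \iota_{i+1})$, completing the step.

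Composing the $\eta^{(j)}$ and using the change-of-marker embedding $\mathfrak L(L_i) \hookrightarrow \mathfrak L(G)$, the marked groups $(L_{i+1}, \eta^{(i)} \circ \cdots \circ \eta^{(0)}) \in \mathfrak L(G)$ form an infinite, strictly descending sequence of limit groups — each step is proper because $w_i \ne 1$ in $L_i$ — starting at the limit group $(L_1, \eta^{(0)})$, contradicting \autoref{res: dcc}. I expect the main obstacle to be bookkeeping rather than conceptual: the delicate points are absorbing the automorphism twist coming from alternative (a) of \autoref{res: lifting morphism - killing element for hyperbolic groups - general} (handled by the observation that $\bar \varphi \circ \alpha$ is again a counterexample), and verifying that $\iota$ descends along each quotient, which is precisely where the injectivity estimate of \autoref{res: recap sc}\ref{enu: recap sc - injectivity} — hence the hypothesis that the small cancellation parameters go to zero — is essential.
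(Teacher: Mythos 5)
Your proposal is correct and follows essentially the same route as the paper's proof: a contradiction argument that repeatedly applies \autoref{res: lifting morphism - killing element for hyperbolic groups - general} to kill a non-trivial element, uses the factorization property and the triviality of the stable kernel (\autoref{res: recap sc}\ref{enu: recap sc - injectivity}) to descend $\iota$ and the morphisms to the limit group, and derives the contradiction from the descending chain condition of \autoref{res: dcc}. The only differences are cosmetic bookkeeping — you re-mark over $L_i$ at each stage and change the marker back to $G$ at the end, and you absorb the automorphism twist by replacing the sequence, whereas the paper keeps everything marked over $G$ and carries the twists $\alpha^i_k$ explicitly.
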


\begin{proof}
	Assume that the statement is false.
	Then there is a sequence of morphisms $\bar \varphi_k \colon G \to \bar \Gamma_k$ with the following properties:
	\begin{itemize}
		\item $\pi_k \colon \Gamma \onto \bar \Gamma_k$ is a non-elementary, tight, $C'(\lambda_k, \epsilon_k)$ strengthened small cancellation quotient of $\Gamma$, where the sequences $(\lambda_k)$ and $(\epsilon_k)$ converge to zero;
		\item $\bar \varphi_k$ restricted to $H$ coincides with $\pi_k \circ \iota$;
		\item no $\bar \varphi_k$ can be lifted to a morphism $G \to \Gamma$ whose restriction to $H$ coincides with $\iota$.
	\end{itemize}
	We are now going to produce an infinite (strictly) descending sequence of limit groups
	\begin{equation*}
		(L_0, \eta_0) \succ (L_1, \eta_1) \succ (L_2, \eta_2) \succ \cdots
	\end{equation*}
	and for every $i \in \N$, a sequence of morphisms $\bar \mu_k^i \colon L_i \to \bar \Gamma_k$ with the following properties: up to passing to a subsequence, for every $k \in \N$,
	\begin{enumerate}
		\item \label{enu: lifting quotient - H}
		the morphisms $\bar \mu^i_k \circ \eta_i$ and $\pi_k \circ \iota$ coincide on $H$;
		\item \label{enu: lifting quotient - no lifting}
		the morphism $\bar \mu^i_k$ cannot be lifted into a morphism $\mu^i_k \colon L_i \to \Gamma$ such that $\mu^i_k \circ \eta_i$ restricted to $H$ coincides with $\iota$.
	\end{enumerate}
	Recall that there is no infinite descending sequence of limit groups (\autoref{res: dcc}).
	Thus, this construction will provide the desired contradiction.
	
	\paragraph{Basis of the induction.}
	Up to passing to a subsequence, we can assume that $(\bar \Gamma_k, \bar \varphi_k)$ converges to a limit group, say $(L_0, \eta_0)$.
	Recall that $(L_0, \eta_0)$ has the factorization property (\autoref{res: dcc}).
	Up to passing again to a subsequence, every $\bar \varphi_k$ factors through $\eta$.
	We write $\bar \mu^0_k \colon L_0 \to \bar \Gamma_k$ for the resulting morphism.
	Note that $\bar \mu^0_k \circ \eta_0 = \bar \varphi_k$ and $\pi_k \circ \iota$ coincide on $H$.
	Moreover, it follows from the construction that for every $k \in \N$, the morphism $\bar \mu^0_k$ cannot be lifted to a morphism $\mu^0_k \colon L \to \Gamma$ such that $\mu^0_k \circ \eta$ restricted to $H$ coincides with $\iota$ (otherwise $\mu^0_k \circ \eta_0$ would be a lift of $\bar \varphi_k$ which coincides on $H$ with $\iota$).

	\paragraph{Induction step.}
	Let $i \in \N$ be such that $(L_i, \eta_i)$ and $(\bar \mu^i_k)$ have been already defined, satisfying \ref{enu: lifting quotient - H} and \ref{enu: lifting quotient - no lifting}.
	Denote by $H_i$ the image of $H$ in $L_i$.
	We claim that $\iota \colon H \to \Gamma$ factors through $\eta_i \colon H \onto H_i$.
	Consider indeed an element $h \in H$ such that $\eta_i(h) = 1$.
	In particular, for every $k \in \N$, we have
	\begin{equation*}
		\pi_k \circ \iota(h) = \bar \mu^i_k \circ \eta_i(h) = 1.
	\end{equation*}
	However, the stable kernel of  $(\bar \Gamma_k , \pi_k)$ is trivial, see \autoref{res: recap sc}\ref{enu: recap sc - injectivity}.
	Hence, $\iota(h) = 1$, whence the claim.
	We now write $\iota_i \colon H_i \to \Gamma$ for the resulting morphism.
	It is a reformulation of \ref{enu: lifting quotient - no lifting} that, up to passing to a subsequence, there is no morphism $\mu^i_k \colon L_i \to \Gamma$ lifting $\bar \mu^i_k$ such that $\mu^i_k$ and $\iota_i$ coincide on $H_i$.
	
	If follows from \autoref{res: lifting morphism - killing element for hyperbolic groups - general} that, up to passing to a subsequence, there is an element $g_i \in L_i \setminus\{1\}$ and, for every $k \in \N$, an automorphism $\alpha_k^i \in \aut[H_i]{L_i}$ such that $\bar \mu_k^i \circ \alpha_k^i (g_i) = 1$.
	For every $k \in \N$, we let 
	\begin{equation*}
		\bar \varphi_k^{i+1} = \bar \mu_k^i \circ \alpha_k^i \circ \eta_i.
	\end{equation*}
	Up to passing again to a subsequence, we can assume that $(\bar \Gamma_k, \bar \varphi_k^{i+1})$ converges to a limit group $(L_{i+1}, \eta_{i+1}) \prec (L_i, \eta_i)$.
	Note that $L_{i+1}$ is a proper quotient of $L_i$.
	Indeed, by construction, the element $g_i$ lies in the kernel of the projection $\zeta_i \colon L_i \onto L_{i+1}$.
	As a limit group, $(L_{i+1}, \eta_{i+1})$ has the factorization property (\autoref{res: dcc}).
	Up to passing to a subsequence, every $\bar \varphi_k^{i+1}$ factors through $\eta_{i+1}$.
	We write $\bar \mu_k^{i+1} \colon L_{i+1} \to \bar \Gamma_k$ for the resulting morphism.

\[\begin{tikzcd}
	H & G & {L_i} & {L_i} \\
	&& {L_{i+1}} & {\bar \Gamma_k}
	\arrow[hook, from=1-1, to=1-2]
	\arrow["{\eta_i}", two heads, from=1-2, to=1-3]
	\arrow["{\alpha^i_k}", hook, two heads, from=1-3, to=1-4]
	\arrow["{\zeta_i}"', two heads, from=1-3, to=2-3]
	\arrow["{\bar \mu^i_k}", from=1-4, to=2-4]
	\arrow["{\bar \mu^{i+1}_k}"', from=2-3, to=2-4]
\end{tikzcd}\]
    
	Let $k \in \N$.
	Recall that $\alpha^i_k$ induces the identity on $H_i$.
	Hence, for every $h \in H$, we have
	\begin{equation*}
		 \bar \mu_k^{i+1} \circ \eta_{i+1}(h) 
		 = \bar \varphi_k^{i+1}(h)
		 = \bar \mu_k^i \circ \alpha_k^i \circ \eta_i(h)
		 = \bar \mu_k^i \circ \eta_i(h)
		 = \pi_k \circ \iota(h).
	\end{equation*}
	Suppose now that $\bar \mu^{i+1}_k$ lifts to a morphism $\mu^{i+1}_k \colon L_{i+1} \to \Gamma$ such that $\mu^{i+1}_k \circ \eta_{i+1}$ coincides on $H$ with $\iota$.
	By construction, $\bar \mu^{i+1}_k$ satisfies the relation
	\begin{equation*}
		\bar \mu_k^i \circ  \alpha_k^i  = \bar \mu_k^{i+1} \circ \zeta_i,
	\end{equation*}
	hence 
	\begin{equation*}
		\mu^{i+1}_k \circ \zeta_i \circ \left( \alpha^i_k\right)^{-1} \colon L_i \to \Gamma
	\end{equation*}
	lifts $\bar \mu^i_k$.
	Using again the fact that $\alpha^i_k$ induces the identity on $H_i$ we have for every $h \in H$, 
	\begin{equation*}
		\mu^{i+1}_k \circ \zeta_i \circ \left( \alpha^i_k\right)^{-1} \circ \eta_i(h)
		= \mu^{i+1}_k \circ \zeta_i \circ \eta_i(h)
		= \mu^{i+1}_k \circ \eta_{i+1}(h)
		= \iota(h)
	\end{equation*}
	This contradicts the definition of $\bar \mu^i_k$ and completes the induction step, as well as the proof of the corollary.
\end{proof}

We slightly improve again the previous statement by removing the assumption that $H$ is a subgroup of $G$.
This could have been done already in the previous statements, but we thought it would help the reader to reduce a little the number of objects in the proofs.

\begin{coro}[\autoref{intro:thm:lifting}]
\label{res: lifting quotient}
	Let $G$ be a finitely generated group and $H$ a group.
	Let $\jmath \colon H \to G$ and $\iota \colon H \to \Gamma$ be two morphisms.
	There are parameters $\lambda, \epsilon \in (0,1)$, with the following properties.
	Let $\pi \colon \Gamma \onto \bar \Gamma$ be a tight $C'(\lambda, \epsilon)$ strengthened small cancellation quotient of $\Gamma$.
    Let $\bar \varphi \colon G \to \bar \Gamma$ be a morphism such that $\bar \varphi \circ \jmath = \pi \circ \iota$.
	Then there is a morphism $\varphi \colon G \to \Gamma$ lifting $\bar \varphi$ such that $\varphi \circ \jmath = \iota$.
\end{coro}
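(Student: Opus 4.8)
The plan is to reduce this to \autoref{res: lifting quotient0}, which is exactly the case where $H$ is already a subgroup of $G$. Write $N = \ker \jmath \leq H$, and distinguish two cases according to whether $\iota(N)$ is trivial. In both cases the hypothesis of the corollary forces $\iota(N) \subseteq \ker \pi = K(\mathcal Q)$, since $\pi \circ \iota = \bar\varphi \circ \jmath$ kills $N$; the point is that when $\lambda,\epsilon$ are small this, together with the injectivity-radius statement of \autoref{res: recap sc}\ref{enu: recap sc - injectivity}, either is automatic or is impossible.

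First suppose $\iota(N) = \{1\}$, i.e. $\ker \jmath \subseteq \ker \iota$. Then $\iota$ factors as $\iota = \iota' \circ q$, where $q \colon H \onto H'$ is the surjection onto $H' = \jmath(H)$ induced by $\jmath$, regarded as a subgroup of $G$, and $\iota' \colon H' \to \Gamma$ is a well-defined morphism. I would let $(\lambda,\epsilon)$ be the pair of parameters provided by \autoref{res: lifting quotient0} applied to the finitely generated group $G$, its subgroup $H'$, and the morphism $\iota'$. Given a tight $C'(\lambda,\epsilon)$ strengthened small cancellation quotient $\pi \colon \Gamma \onto \bar \Gamma$ and a morphism $\bar \varphi \colon G \to \bar \Gamma$ with $\bar \varphi \circ \jmath = \pi \circ \iota$, rewriting $\jmath$ and $\iota$ through $q$ gives $(\bar \varphi|_{H'}) \circ q = (\pi \circ \iota') \circ q$, hence $\bar \varphi|_{H'} = \pi \circ \iota'$ since $q$ is onto. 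Then \autoref{res: lifting quotient0} produces a morphism $\varphi \colon G \to \Gamma$ lifting $\bar \varphi$ with $\varphi|_{H'} = \iota'$, and therefore $\varphi \circ \jmath = \iota' \circ q = \iota$, as wanted.

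Now suppose $\iota(N) \neq \{1\}$. Here I claim that for suitably small $\lambda,\epsilon$ the hypothesis of the corollary is never satisfied, so the conclusion holds vacuously. Fix $h_0 \in N$ with $\gamma_0 := \iota(h_0) \neq 1$, a base point $x_0 \in X$, and set $D_0 := \dist{\gamma_0 x_0}{x_0} \in \R_+$. Using the maps $a,\rho$ from \autoref{res: recap sc}, together with the fact that $a(\lambda,\epsilon)/\sinh\rho(\lambda,\epsilon) \to \infty$ and that the strengthened small cancellation condition forces $T(\mathcal Q, X) \geq \inj \Gamma X/\epsilon$, I would choose $\lambda,\epsilon \in (0,1)$, below the thresholds $\lambda_0,\epsilon_0$ of \autoref{res: recap sc}, small enough that
\begin{equation*}
	\left[1 - \frac{2\pi \sinh \rho(\lambda,\epsilon)}{a(\lambda,\epsilon)}\right] T(\mathcal Q, X) > D_0
\end{equation*}
for every tight relation family $\mathcal Q$ satisfying $C'(\lambda,\epsilon)$. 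If some $\bar \varphi \colon G \to \bar \Gamma$ satisfied $\bar \varphi \circ \jmath = \pi \circ \iota$, then $\pi(\gamma_0) = \bar \varphi(\jmath(h_0)) = \bar \varphi(1) = 1$, so $\gamma_0 \in K(\mathcal Q)$; but $\dist{\gamma_0 x_0}{x_0} = D_0$ lies below the threshold of \autoref{res: recap sc}\ref{enu: recap sc - injectivity}, which forces $\gamma_0 = 1$, a contradiction.

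Since the dichotomy $\iota(\ker \jmath) = \{1\}$ versus $\iota(\ker \jmath) \neq \{1\}$ depends only on $H$, $\jmath$, $\iota$, this yields in all cases a pair $(\lambda,\epsilon) \in (0,1)^2$ with the stated property. The substance of the argument is entirely in \autoref{res: lifting quotient0}, which is itself built on the shortening machinery of Sections~\ref{sec: graph of groups} and \ref{sec: limit}; the only genuinely new point, and the mildly delicate one, is the treatment of the degenerate case $\iota(\ker\jmath)\neq\{1\}$, where one must use the injectivity-radius estimate of \autoref{res: recap sc}\ref{enu: recap sc - injectivity} to see that no admissible $\bar\varphi$ can exist in the first place.
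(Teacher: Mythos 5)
Your proof is correct and follows essentially the same route as the paper: reduce to \autoref{res: lifting quotient0} applied with the subgroup $\jmath(H)$ when $\iota$ factors through $\jmath$, and use the injectivity statement \autoref{res: recap sc}\ref{enu: recap sc - injectivity} to rule out the case $\iota(\ker\jmath)\neq\{1\}$. The only difference is cosmetic: you choose $(\lambda,\epsilon)$ explicitly so that the hypotheses become vacuous in the degenerate case, whereas the paper packages the same estimate as a contradiction argument with parameters $(\lambda_k,\epsilon_k)\to 0$.
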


\[\begin{tikzcd}
	& \Gamma \\
	H && {\bar{\Gamma}} \\
	& G
	\arrow["\pi", two heads, from=1-2, to=2-3]
	\arrow["\iota", from=2-1, to=1-2]
	\arrow["\jmath"', from=2-1, to=3-2]
	\arrow["\exists\varphi", dashed, from=3-2, to=1-2]
	\arrow["{\bar{\varphi}}"', from=3-2, to=2-3]
\end{tikzcd}\]

\begin{proof}
    Note that if $\iota$ factors through $\jmath$, then the statement follows from \autoref{res: lifting quotient0} applied with the subgroup $\jmath(H)$.
    We now assume that the statement does not hold. 
    We are going to prove that $\iota$ actually factors through $\jmath$, thus reaching a contradiction.
    Let $(\lambda_k)$ and $(\epsilon_k)$ be a sequence of real numbers converging to zero.
    If the statement is false, then for every $k \in \N$, there is a tight $C'(\lambda_k, \epsilon_k)$ strengthened small cancellation quotient of $\Gamma$ that we denote by $\pi_k \colon \Gamma \onto \bar \Gamma_k$ and a morphism $\bar \varphi_k \colon G \to \bar \Gamma_k$ such that $\bar \varphi_k \circ \jmath = \pi_k \circ \iota$ (all the other properties will not be useful here, just the existence of $\bar \varphi_k$).
    We write $\mathcal Q_k$ for the relation family used to define $\bar \Gamma_k$.
    Consider now $h \in H$ in the kernel of $\jmath \colon H \to G$.
    In particular, $\pi_k \circ \iota (h) = \bar \varphi_k \circ \jmath(h) = 1$, that is, $\iota(h)$ belongs to $K(\mathcal Q_k)$ (the normal subgroup generated by the relation family).
    Since $\epsilon_k$ converges to zero, the strengthening of the small cancellation assumption implies that $T(\mathcal Q_k, X)$ diverges to infinity.
    It follows from \autoref{res: recap sc}\ref{enu: recap sc - injectivity} that $\iota(h) = 1$.
    This holds for every $h \in \ker \jmath$.
    Hence, $\iota \colon H \to \Gamma$ factors through $\jmath \colon H \to G$, a contradiction.
\end{proof}

The following remark will be useful when dealing at the same time with $\Gamma$ and $\Gamma \ast \Z$, in the applications to elementary embeddings: see \autoref{res: induction step toward tarski monsters}.

\begin{rema}
\label{rem: lifting free product}
	Let $\Gamma$ be a torsion-free non-elementary hyperbolic group, and let $\pi \colon \Gamma \to \bar \Gamma$ be a quotient. 
	We consider the corresponding quotient $\pi_\ast \colon \Gamma \ast \Z \to \bar \Gamma \ast \Z$. Let $\jmath \colon H \to G, \iota \colon H \to \Gamma$ be morphisms and let $\iota_\ast \colon H \to \Gamma \ast \Z$ be the corresponding morphism.

	Let $\lambda, \epsilon$ be the constants given by \autoref{res: lifting quotient} for the morphisms $\jmath \colon H \to G, \iota_\ast \colon H \to \Gamma \ast \Z$. 
	Suppose that $\pi_* \colon \Gamma \ast \Z \to \bar \Gamma \ast \Z$ is a tight $C'(\lambda, \epsilon)$ strengthened small cancellation quotient of $\Gamma \ast \Z$.
	We claim that then the conclusion of \autoref{res: lifting quotient} holds also for the quotient $\pi \colon \Gamma \to \bar \Gamma$ and the morphisms $\jmath \colon H \to G, \iota \colon H \to \Gamma$.

	Indeed, let $\bar \varphi \colon G \to \bar \Gamma$ be a morphism such that $\bar \varphi \circ \jmath = \pi \circ \iota$. Consider the corresponding morphism $\bar \varphi_\ast \colon G \to \bar \Gamma \ast \Z$, which satisfies $\bar \varphi_\ast \circ \jmath = \pi_* \circ \iota_\ast$.
	\autoref{res: lifting quotient} applies and gives a morphism $\varphi_\ast \colon G \to \Gamma \ast \Z$ such that $\iota_\ast = \varphi_\ast \circ \jmath$ and $\bar \varphi_\ast = \pi_* \circ \varphi_\ast$.
	We define $\varphi$ to be the composition of $\varphi_\ast$ with the natural retraction $\Gamma \ast \Z \to \Gamma$. 
	Then $\iota = \varphi \circ \jmath$ and $\bar \varphi =  \pi \circ \varphi$, as desired.
\end{rema}

%%%%%%%%%%%%%%%%%%%%%%%%%%%%%%%%%%%%%%%%%%%%%%%%%%%%%%%%%%%%%%%%%%%%%%%%%%%%%%%%%%%%%
%%%%%%%%%%%%%%%%%%%%%%%%%%%%%%%%%%%%%%%%%%%%%%%%%%%%%%%%%%%%%%%%%%%%%%%%%%%%%%%%%%%%%
%
\section{Applications: the one-quantifier Knight \mbox{Conjecture}}
\label{sec: random}
%
%%%%%%%%%%%%%%%%%%%%%%%%%%%%%%%%%%%%%%%%%%%%%%%%%%%%%%%%%%%%%%%%%%%%%%%%%%%%%%%%%%%%%
%%%%%%%%%%%%%%%%%%%%%%%%%%%%%%%%%%%%%%%%%%%%%%%%%%%%%%%%%%%%%%%%%%%%%%%%%%%%%%%%%%%%%

In this section and the next, we apply our lifting results to the first-order theory of groups.  
We start by introducing a general framework to make this translation, and apply it here to small cancellation quotients, solving the one-quantifier Knight Conjecture for random quotients of torsion-free hyperbolic groups (\autoref{intro:thm:knight}). For the rest of this section, we fix a non-elementary torsion-free hyperbolic group $\Gamma$.

\begin{nota}
	Let $G$, $H$, and $\Gamma$ be three groups.
	Given two morphisms $\jmath \colon H \to G$ and $\iota \colon H \to \Gamma$, we denote by $\hom[\jmath, \iota] G\Gamma$ the set of all morphisms $\varphi \colon G \to \Gamma$ such that $\varphi \circ \jmath$ and $\iota$ coincide on $H$.
\end{nota}

\[\begin{tikzcd}
	& \Gamma \\
	H \\
	& G
	\arrow["\iota", from=2-1, to=1-2]
	\arrow["\jmath"', from=2-1, to=3-2]
	\arrow["{\varphi \in \hom[\jmath, \iota] G\Gamma}"', from=3-2, to=1-2]
\end{tikzcd}\]

In what follows, we use the notation $\vec x$ to denote a finite tuple $x_1, \dots, x_r$ of variables.
In particular, $\forall \vec x$ and $\exists \vec x$ stand for $\forall x_1 \dots \forall x_r$ and $\exists x_1 \dots \exists x_r$, respectively.
The number $r$ is called the \emph{arity} or $\vec x$.

\subsection{$\forall\exists$-formulas with coefficients}

We denote by $\mathfrak R$ the collection of all triples $(H, C, \mathcal G)$ where 
\begin{enumerate}
	\item $H$ and $C$ are two finitely generated free groups,
	\item $\mathcal G$ is a finite collection of triples $(G, V, \jmath)$ with $G$ a finitely presented group, $V$ a finite subset of $G$, and $\jmath \colon H \ast C \to G$ a morphism.
\end{enumerate}

\begin{defi}
	Let $(H, C, \mathcal G) \in \mathfrak R$.
	Let $\Gamma$ be a group and $\sigma \colon C \to \Gamma$ a morphism.
	Given another morphism $\iota \colon H \to \Gamma$ we say that 
	\begin{itemize}
		\item $(H,C,\mathcal G)$ is \emph{positively realized in $(\Gamma, \iota, \sigma)$} if there exists $(G,V,\jmath) \in \mathcal G$ and a morphism 
		\begin{equation*}
			\varphi \in  \hom[\jmath, \iota \ast \sigma] G\Gamma
			\quad \text{such that} \quad
			V \cap \ker \varphi = \emptyset.
		\end{equation*}
		Such a morphism $\varphi$ is called a \emph{witness of $(H,C,\mathcal G)$ in $(\Gamma, \iota, \sigma)$};
		\item  $(H,C,\mathcal G)$ is \emph{negatively realized in $(\Gamma, \iota, \sigma)$} otherwise.
	\end{itemize}
	We say that  $(H,C,\mathcal G)$ is \emph{positively realized in $(\Gamma, \sigma)$} if for every morphism $\iota \colon H \to \Gamma$, the tuple $(H,C,\mathcal G)$ is positively realized in $(\Gamma, \iota, \sigma)$.
\end{defi}

\medskip
These notions relate to the first-order theory as follows.
Consider an $\forall \exists$-formula $\Sigma(\vec z)$ with free variables $\vec z$, whose disjunctive normal form is 
\begin{equation}
\label{eqn: AE first-order sentence}
	\forall \vec x, \exists \vec y, \quad \bigvee_{i = 1}^p\left(\left(\bigwedge_{j = 1}^q v_{i,j}(\vec x, \vec y, \vec z) \neq 1\right) \wedge \left(\bigwedge_{j = 1}^q w_{i,j}(\vec x, \vec y, \vec z)= 1\right)\right).
\end{equation}
Let $H$ (\resp, $C$) be the free group generated by $\vec x$ (\resp, $\vec z$).
For every $i \in \intvald 1p$, we denote by $G_i$  the group whose finite presentation is
\begin{equation*}
	G_i = \left< \vec x,  \vec y, \vec z \mid w_{i,1}(\vec x, \vec y, \vec z), \dots, w_{i,q}(\vec x, \vec y, \vec z) \right>.
\end{equation*}
It comes with a natural morphism $\jmath_i \colon H \ast C \to G_i$.
In addition, we denote by $V_i$ the set of all words $v_{i,j}(\vec x, \vec y, \vec z)$ seen as elements of $G_i$. 
We let 
\begin{equation*}
	\mathcal G = \set{(G_i, V_i, \jmath_i)}{i = 1, \ldots, p}.
\end{equation*}
We say that $(H, C, \mathcal G) \in \mathfrak R$ is the \emph{group translation of $\Sigma(\vec z)$}.

\medskip
Consider now a group $\Gamma$ and a tuple $\vec \gamma$ in $\Gamma$ with the same arity as $\vec z$.
Denote by $\sigma \colon C \to \Gamma$ the morphism sending $\vec z$ to $\vec \gamma$.
We observe that $\Sigma(\vec \gamma)$ holds in $\Gamma$, if and only if $(H, C, \mathcal G)$ is positively realized in $(\Gamma, \sigma)$.
\footnote{The example explains our choice of notations: we see the group $C$ and the morphism $\sigma \colon C \to \Gamma$ as an assignment of the constants $\vec z$ in $\Sigma(\vec z)$. The letter $C$ stands for ``constant'' or ``coefficient'' and the letter $\sigma$ for ``{\greektext stajer'a}'' (constant in Greek).}
We can now reinterpret \autoref{res: lifting quotient} in terms of first-order formulas.

\begin{theo}
\label{res: theory quotients twoquant}
	Let $\Sigma(\vec{z})$ be an $\exists \forall$-formula with free variables $\vec z$.
    Let $\Gamma$ be a torsion-free non-elementary hyperbolic group.
    Let $\vec{\gamma}$ be a tuple of elements of $\Gamma$ with the same arity as $\vec z$.
    Suppose that $\Sigma(\vec{\gamma})$ holds in $\Gamma$.
    Then there are parameters $\lambda, \epsilon \in (0, 1)$ such that if $\pi \colon \Gamma \to \bar \Gamma$ is a tight $C'(\lambda, \epsilon)$ strengthened small cancellation quotient of $\Gamma$, then $\Sigma(\pi(\vec{\gamma}))$ holds in $\bar \Gamma$.
\end{theo}

\begin{proof}
    The negation $\neg \Sigma(\vec{z})$ is an $\forall \exists$-formula, whose disjunctive normal form is
    \[\forall \vec x, \exists \vec y, \quad \bigvee_{i = 1}^p\left(\left(\bigwedge_{j = 1}^q v_{i,j}(\vec x, \vec y, \vec z) \neq 1\right) \wedge \left(\bigwedge_{j = 1}^q w_{i,j}(\vec x, \vec y, \vec z)= 1\right)\right).\]
    By assumption, $\neg \Sigma(\vec{\gamma})$ does not hold in $\Gamma$.
    Following the notation above, we let $(H, C, \mathcal{G})$ be the group translation of $\neg \Sigma(\vec{z})$, where $\mathcal{G} = \{ (G_i, V_i, \jmath_i) : i = 1, \ldots, p\}$, $C$ is the free group on the tuple $\vec z$, and $\sigma \colon C \to \Gamma$ is the morphism sending $\vec z$ to $\vec \gamma$.
    By the above discussion, there exists a morphism $\iota \colon H \to \Gamma$ such that $(H, C, \mathcal G)$ is negatively realized in $(\Gamma, \iota, \sigma)$, that is, for each $i = 1, \ldots, p$ and every morphism $\varphi_i \in \hom[\jmath, \iota \ast \sigma] {G_i}\Gamma$, it holds that $V_i \cap \ker \varphi_i \neq \emptyset$.

    Now let $\lambda_i, \epsilon_i$ be the constants given by \autoref{res: lifting quotient} relative to the morphisms $\jmath_i \colon H \ast C \to G_i$ and $\iota \ast \sigma \colon H \ast C \to \Gamma$. Let $\lambda = \min \lambda_i$ and $\epsilon = \min \epsilon_i$. We claim that if $\pi \colon \Gamma \to \bar \Gamma$ is a tight $C'(\lambda, \epsilon)$ strengthened small cancellation quotient of $\Gamma$, then $\Sigma(\pi(\vec \gamma))$ holds in $\bar \Gamma$.

    Indeed, suppose otherwise: there exists a quotient $\pi \colon \Gamma \to \bar \Gamma$ such that $\neg \Sigma(\pi(\vec \gamma))$ holds in $\bar \Gamma$. Then $(H, C, \mathcal G)$ is positively realized in $(\bar \Gamma, \pi \circ \sigma)$, in particular, $(H, C, \mathcal G)$ is positively realized in $(\bar \Gamma, \pi \circ \iota, \pi \circ \sigma)$. This implies that there exists $i = 1, \ldots, p$ and a witness $\bar \varphi_i \in \hom[\jmath_i, \pi \circ \iota \ast \pi \circ \sigma] {G_i}{\bar \Gamma}$ such that $V_i \cap \ker \bar \varphi_i = \emptyset$. By \autoref{res: lifting quotient}, the morphism $\bar \varphi_i$ lifts to a morphism $\varphi_i \in \hom[\jmath_i, \iota \ast \sigma] {G_i}\Gamma$. Because $V_i \cap \ker \bar \varphi_i = \emptyset$, we deduce that $V_i \cap \ker \varphi_i = \emptyset$, a contradiction.
\end{proof}

\begin{coro}
\label{res: theory quotients onequant}
	Let $\Sigma(\vec{z})$ be a one-quantifier formula with free variables $\vec z$.
	Let $\Gamma$ be a torsion-free non-elementary hyperbolic group.
	Let $\vec{\gamma}$ be a tuple of elements of $\Gamma$ with the same arity as $\vec z$.
	Then there are parameters $\lambda, \epsilon \in (0, 1)$ such that if $\pi \colon \Gamma \to \bar \Gamma$ is a tight $C'(\lambda, \epsilon)$ strengthened small cancellation quotient of $\Gamma$, then $\Sigma(\vec \gamma)$ holds in $\Gamma$ if and only if $\Sigma(\pi(\vec \gamma))$ holds in $\bar \Gamma$.
\end{coro}

\begin{proof}
    Both $\Sigma(\vec z)$ and its negation are $\exists \forall$-formulas, and so we conclude by \autoref{res: theory quotients twoquant}.
\end{proof}

\subsection{Random quotients}

We now apply \autoref{res: theory quotients onequant} to random quotients of a hyperbolic group. 
Let $\Gamma$ be a non-elementary, torsion-free, hyperbolic group and $X$ its Cayley graph.
A \emph{model for random quotients of $\Gamma$} is a sequence of probability measures $(\mathbb P_\ell)$ on the set of quotients of $\Gamma$.
A random quotient of $\Gamma$ (for this model) satisfies a property $\mathcal P$ \emph{with overwhelming probability} if 
\begin{equation*}
	\lim_{\ell \to \infty} \mathbb P_\ell( \bar \Gamma\ \text{has}\ \mathcal P) = 1.
\end{equation*}
We say that a model has the \emph{Small Cancellation Property} if for every $\lambda, \epsilon \in (0, 1)$, a random quotient of $\Gamma$ is a tight $C'(\lambda, \epsilon)$ strengthened small cancellation quotient of $\Gamma$ with overwhelming probability (the small cancellation condition is understood with respect to $X$ here).

\begin{exam}
	Fix a finite generating set $S$ of $\Gamma$ and $k \geq 1$.
	Denote by $\mu$ the uniform probability measure on $S \cup S^{-1}$ and $\mu^{\ast \ell}$ its $\ell$-th convolution product.
	Given $\ell \geq 1$, we define a probability measure on $\Gamma^k$ by choosing the element in each factor independently at random according to $\mu^{\ast \ell}$.
	Pushing this measure via the map
	\begin{equation*}
		(\gamma_1, \dots, \gamma_k) \mapsto \Gamma / \normal{\gamma_1, \dots \gamma_k}.
	\end{equation*}
	yields a probability measure $\mathbb P_\ell$ on the set of quotients of $\Gamma$.
	In other words, each $\gamma_i$ is the result of an $\ell$-step random walk according to $\mu$.
	The resulting model has the Small Cancellation Property \cite[Proposition 5.9]{arzhantseva:delzant}.
\end{exam}

In practice, many models for random quotients of $\Gamma$ have the Small Cancellation Property as long as they are obtained by adding a fixed number of additional relations to $\Gamma$,  see e.g.\ \cite{olshanskii:random, ollivier:random}.
The next statement is a direct consequence of \autoref{res: theory quotients onequant}.

\begin{coro}[\autoref{intro:thm:knight}]
\label{res: knight}
    Let $\Gamma$ be a torsion-free non-elementary hyperbolic group, and let $\vec \gamma$ be a tuple of elements of $\Gamma$.
    Consider a model for random quotients of $\Gamma$ with the Small Cancellation Property.
    Let $\Sigma(\vec z)$ be a one-quantifier formula, where $\vec z$ has the same arity as $\vec \gamma$. 
    Then $\Sigma(\vec \gamma)$ holds in $\Gamma$ if and only if $\Sigma(\pi(\vec \gamma))$ holds in a random quotient $\pi \colon \Gamma \to \bar \Gamma$ with overwhelming probability. \qed
\end{coro}

\begin{rema}
    In \autoref{res: theory quotients twoquant} we were able to carry all $\exists \forall$-formulas from $\Gamma$ to $\bar \Gamma$. In order to obtain an ``if and only if'' result analogous to \autoref{res: theory quotients onequant} for two-quantifier formulas, we would need to do the same with $\forall \exists$-formulas. It is possible that, at least for $\forall \exists$-sentences (i.e., with no coefficients), this could be achieved analogously to \cite{kharlampovich:sklinos2}, by appealing to Merzlyakov's Theorem, which holds for all torsion-free non-elementary hyperbolic groups \cite{positivehyp1}.

    In the next section, we are able to tackle all two-quantifier formulas for torsion-free Tarski monsters. This is because the limiting nature of the group makes it possible to control the whole theory by controlling only finite fragments at any given finite stage. It is possible that parts of that argument could also be streamlined using Merzlyakov's Theorem, but our approach is more direct and elementary (modulo \autoref{res: lifting quotient}).
\end{rema}

%%%%%%%%%%%%%%%%%%%%%%%%%%%%%%%%%%%%%%%%%%%%%%%%%%%%%%%%%%%%%%%%%%%%%%%%%%%%%%%%%%%%%
%%%%%%%%%%%%%%%%%%%%%%%%%%%%%%%%%%%%%%%%%%%%%%%%%%%%%%%%%%%%%%%%%%%%%%%%%%%%%%%%%%%%%
%

\section{Applications: torsion-free Tarski monsters}
\label{sec: tarski}

We now apply the results on lifting morphisms to construct torsion-free Tarski monsters with control on the first-order theory, proving the main results from the introduction.

\subsection{Finding small cancellation relations}

Finding small cancellation relations is especially easy in the free group, where it is a purely combinatorial condition. Here we explain how to reduce to this case.
Recall that a subgroup $\Gamma_0$ of $\Gamma$ is \emph{malnormal} if the following property holds: for every $\gamma \in \Gamma$, if $\gamma \Gamma_0 \gamma^{-1} \cap \Gamma_0$ is non-trivial, then $\gamma \in \Gamma_0$.

\begin{prop}
\label{res: carrying sc assumption}
	Let $\Gamma$ be a torsion-free group acting properly, by isometries on a $\delta$-hyperbolic geodesic space $X$.
	Let $\Gamma_0$ be a non-trivial malnormal subgroup of $\Gamma$.
	Suppose that there is a $\delta_0$-hyperbolic geodesic space $X_0$ endowed with a proper, cobounded action by isometries of $\Gamma_0$ and a $\Gamma_0$-equivariant quasi-isometric embedding $f \colon X_0 \to X$.
	Then there is $r \in \R_+^*$ such that for every $\lambda, \epsilon \in (0,1)$, there are $\lambda_0, \epsilon_0 \in (0,1)$ with the following property.
	
	Choose a relation family $\mathcal Q_0$ satisfying the strengthened $C'(\lambda_0, \epsilon_0)$ small cancellation condition for the action of $\Gamma_0$ on $X_0$.
	Consider the collection
	\begin{equation*}
		\mathcal Q = \set{ (\gamma R\gamma^{-1},\gamma f_i(Y)^{+r})}{ (R,Y) \in \mathcal Q_0, \gamma \in \Gamma}.
	\end{equation*}
	Then $\mathcal Q$ satisfies the strengthened $C'(\lambda, \epsilon)$ small cancellation condition for the action of $\Gamma$ on $X$.
	Moreover, if $\mathcal Q_0$ is tight, then so is $\mathcal Q$.
\end{prop}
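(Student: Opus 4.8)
The plan is to transport the small cancellation data for $\Gamma_0$ acting on $X_0$ up to $\Gamma$ acting on $X$, keeping track of how the quasi-isometric embedding $f$ distorts the two relevant metric quantities, $\Delta(\mathcal Q, X)$ and $T(\mathcal Q, X)$. The point is that $\Delta(\mathcal Q_0, X_0)$ can be made arbitrarily small compared to $T(\mathcal Q_0, X_0)$, and under a quasi-isometric embedding both quantities are distorted only by multiplicative and additive constants that depend on $f$ and $\delta_0$ but \emph{not} on $\mathcal Q_0$; so by taking $\lambda_0, \epsilon_0$ small enough we can beat any prescribed $\lambda, \epsilon$. There are, however, two subtleties: first, the sets $f_i(Y)$ are not strongly quasi-convex in $X$ (only quasi-convex), which is why we thicken them to $f_i(Y)^{+r}$ for a uniform $r$; and second, we must control pieces between \emph{distinct} translates $\gamma_1 f(Y_1)^{+r}$ and $\gamma_2 f(Y_2)^{+r}$, which is where malnormality of $\Gamma_0$ enters.

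\textbf{Key steps.} First I would fix, once and for all, constants $L, C$ such that $f \colon X_0 \to X$ is an $(L, C)$-quasi-isometric embedding, and then invoke stability of quasi-geodesics in the $\delta$-hyperbolic space $X$ to find a uniform $r = r(L, C, \delta, \delta_0)$ such that $f(Q_h)^{+r}$ contains a genuine cylinder $Y_{f(h)}$ of the isometry $f$-conjugate to each $h \in \Gamma_0$, and conversely so that the thickened set $f(Y)^{+r}$ is strongly quasi-convex in $X$ in the sense of \eqref{eqn: def strongly qc} (up to adjusting $r$ and using that $X$ is geodesic and proper). This is the main technical calibration and fixes $r$. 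Second, I would estimate $T(\mathcal Q, X)$ from below: for $h \in R$ with $(R, Y) \in \mathcal Q_0$, the translation length $\norm[X]{h}$ (viewing $h$ as an isometry of $X$) is comparable to $\norm[X_0]{h}$, since $f$ is a $\Gamma_0$-equivariant quasi-isometric embedding; hence $T(\mathcal Q, X) \geq \tfrac1L T(\mathcal Q_0, X_0) - C'$ for some constant $C'$. In particular $T(\mathcal Q, X) \to \infty$ as $T(\mathcal Q_0, X_0) \to \infty$, which the strengthened condition on $\mathcal Q_0$ guarantees. Third — the crux — I would bound $\Delta(\mathcal Q, X)$ from above. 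Pieces come in two flavors:
\begin{itemize}
	\item \emph{Pieces inside a single conjugate of $\Gamma_0$}: for $\gamma_1, \gamma_2$ in the same coset of $\Gamma_0$, $\gamma_1 f(Y_1)^{+r} \cap \gamma_2 f(Y_2)^{+r}$ projects (via a quasi-inverse of $f$) into a set of diameter comparable to $\diam{Y_1^{+O(1)} \cap Y_2^{+O(1)}}$ in $X_0$, which is at most $\Delta(\mathcal Q_0, X_0) + O(\delta_0)$.
	\item \emph{Pieces between different conjugates}: here $\gamma_1 \Gamma_0 \gamma_1^{-1} \neq \gamma_2 \Gamma_0 \gamma_2^{-1}$, so by malnormality $\gamma_1 \Gamma_0 \gamma_1^{-1} \cap \gamma_2 \Gamma_0 \gamma_2^{-1} = \{1\}$; combined with properness of the $\Gamma$-action this forces $\gamma_1 f(X_0)^{+r} \cap \gamma_2 f(X_0)^{+r}$ to have diameter bounded by a constant $D = D(r, \delta, \text{action})$ independent of $\mathcal Q_0$.
\end{itemize}
So $\Delta(\mathcal Q, X) \leq L\, \Delta(\mathcal Q_0, X_0) + \max\{O(\delta_0), D\}$. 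Fourth, I would also note $\delta \leq \epsilon_0' T(\mathcal Q_0, X_0)$ can be arranged (the strengthened condition on $\mathcal Q_0$ gives $\inj{\Gamma_0}{X_0} \leq \epsilon_0 T(\mathcal Q_0, X_0)$, and $\delta$ is a fixed constant, so simply taking $T(\mathcal Q_0, X_0)$ large — i.e. $\epsilon_0$ small — suffices), and likewise $\inj{\Gamma}{X} \leq \epsilon T(\mathcal Q, X)$ since $\inj\Gamma X$ is a fixed number.

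\textbf{Putting it together.} Given $\lambda, \epsilon \in (0,1)$, I would choose $\lambda_0, \epsilon_0$ small enough that: (i) $\lambda_0 < \tfrac{\lambda}{2L^2}$, forcing $L\,\Delta(\mathcal Q_0, X_0) \leq L \lambda_0 T(\mathcal Q_0, X_0) \leq \lambda_0 L^2 (T(\mathcal Q,X) + C') < \tfrac\lambda2 T(\mathcal Q, X)$ once $T(\mathcal Q_0,X_0)$ is large; (ii) $\epsilon_0$ is small enough that $T(\mathcal Q_0, X_0) \geq \max\{2D, \delta_0, \delta, \inj\Gamma X\}/\epsilon'$ for the implicit constants $\epsilon'$, which absorbs the additive terms $\max\{O(\delta_0), D\}$ into $\tfrac\lambda2 T(\mathcal Q, X)$ and makes $\delta \leq \epsilon T(\mathcal Q, X)$ and $\inj\Gamma X \leq \epsilon T(\mathcal Q, X)$. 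This yields $\Delta(\mathcal Q, X) \leq \lambda T(\mathcal Q, X)$ together with $\delta, \inj\Gamma X \leq \epsilon T(\mathcal Q, X)$, i.e. the strengthened $C'(\lambda, \epsilon)$ condition. Finally, tightness is preserved essentially by inspection: $\mathcal Q / \Gamma$ is in bijection with $\mathcal Q_0 / \Gamma_0$ (using malnormality to see that $\gamma$ stabilizing $\gamma f(Y)^{+r}$ normalizes $\gamma R \gamma^{-1}$, forces $\gamma$ into the conjugate of $\Gamma_0$, and then reduces to tightness of $\mathcal Q_0$), so it is finite; each $R$ still acts coboundedly on $f(Y)^{+r}$ because it acts coboundedly on $Y$ and $f$ is a quasi-isometric embedding with image coarsely dense in $f(X_0)$; and $R = \stab{f(Y)^{+r}}$ follows from $R = \stab Y$ plus malnormality. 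The step I expect to be the genuine obstacle is the between-conjugates piece estimate and the simultaneous verification that $f(Y)^{+r}$ is strongly quasi-convex — both require careful but standard hyperbolic-geometry bookkeeping with the quasi-isometry constants, and getting a \emph{single} $r$ that works uniformly (before $\lambda, \epsilon$ are even chosen) is the delicate ordering-of-quantifiers point.
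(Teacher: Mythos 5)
Your proposal is correct and follows essentially the same route as the paper's proof: fix the quasi-isometry constants, use stability of quasi-geodesics/Morse lemma to get a uniform thickening radius $r$ making $f(Y)^{+r}$ strongly quasi-convex, bound cross-coset pieces by a constant $D$ via coboundedness and malnormality, bound same-coset pieces in terms of $\Delta(\mathcal Q_0, X_0)$, compare $T(\mathcal Q, X)$ to $T(\mathcal Q_0, X_0)$ via translation lengths, and then pick $\lambda_0, \epsilon_0$ small. The one cosmetic difference is that the paper packages the piece estimate as $\Delta(\mathcal Q, X) \leq \max\{D,\, \kappa\,\Delta(\mathcal Q_0, X_0) + L\}$ (a case dichotomy: either the two translates are in the same $\Gamma_0$-coset, or they are not) rather than your additive form, but this makes no difference to the conclusion.
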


\begin{proof}
	We first define some auxiliary parameters.
	By assumption there are $(\kappa, \ell) \in \R_+^*\times \R_+$ such that for every $x,x' \in X_0$, we have
	\begin{equation*}
		\kappa^{-1} \dist x{x'} - \ell \leq \dist {f(x)}{f(x')} \leq \kappa \dist x{x'} + \ell.
	\end{equation*}
	It follows from the Morse Lemma that there exist $\alpha, L \in \R_+$ with the following properties.
	For every $2\delta_0$-quasi-convex subsets $Y_1, Y_2 \subset X_0$, their images $f(Y_1)$ and $f(Y_2)$ are $\alpha$-quasi-convex.
	Hence, their $(\alpha + 2 \delta)$-neighborhoods in $X$ are strongly quasi-convex \cite[Lemma~2.14]{Coulon:2014fr}.
	Moreover,
	\begin{equation*}
		\diam {f(Y_1)^{+\alpha+ 5\delta} \cap f(Y_2)^{+\alpha + 5\delta}} \leq \kappa \diam{ Y_1^{+3\delta_0} \cap Y_2^{+ 3\delta_0}} + L.
	\end{equation*}
	We set $r = \alpha + 2 \delta$.
	By assumption, the action of $\Gamma_0$ on $X_0$ is cobounded.
	Hence, so is the action of $\Gamma_0$ on $f(X_0)$.
	Using the malnormality, we obtain that there is $D \in \R_+$ such that for every $\gamma \in \Gamma$, if 
	\begin{equation*}
		\diam { f(X_0)^{+\alpha+5\delta} \cap \gamma f(X_0)^{+\alpha+5\delta}} > D,
	\end{equation*}
	then $\gamma \in \Gamma_0$.
	Recall that the subgroup $\Gamma_0$ is non-trivial and acts properly co-compactly on $X_0$.
	Hence, $\inj{\Gamma_0}{X_0} > 0$.
	Let $\lambda, \epsilon \in (0,1)$, we now choose $\lambda_0, \epsilon_0$ such that the following hold:
	\begin{align}
		\label{eqn: carrying sc assumption - 1}
		\frac 1\kappa \left[ \frac{\inj{\Gamma_0}{X_0}}{\epsilon_0} - 8\delta_0 \right] & 
		\geq \max \left\{ \frac {\inj \Gamma X}{\epsilon}, \frac \delta \epsilon, \frac D\lambda, \frac L\lambda \right\} \\
		\label{eqn: carrying sc assumption - 2}
		\lambda_0&\leq  \frac 1{2\kappa} \min \left\{ \frac \lambda \kappa, \frac L{8\delta_0}\right\}.
	\end{align}

	Consider a relation family $\mathcal Q_0$ satisfying the strengthened $C'(\lambda_0, \epsilon_0)$ small cancellation condition for the action of $\Gamma_0$ on $X_0$, and set 
	\begin{equation*}
		\mathcal Q = \set{ (\gamma R\gamma^{-1},\gamma f(Y)^{+r})}{ (R,Y) \in \mathcal Q_0,\ \gamma \in \Gamma}.
	\end{equation*}
	We first claim that $\Delta(\mathcal Q, X)$ is bounded above by 
	\begin{equation*}
		 \Delta =  \max \left\{ D,  \kappa \Delta(\mathcal Q_0, X_0) + L \right\}.
	\end{equation*}
	Consider indeed $(R_1,Y_1)$ and $(R_2, Y_2)$ in $\mathcal Q_0$, as well as two elements $\gamma_1, \gamma_2 \in \Gamma$ such that 
	\begin{equation}
	\label{eqn: carrying sc assumption}
		\diam {\gamma_1 f(Y_1)^{+r+3\delta} \cap {\gamma_2 f(Y_2)}^{+r+3\delta}} > \Delta.
	\end{equation}
	We are going to show that $\gamma = \gamma_1^{-1}\gamma_2$ belongs to $\Gamma_0$ and $(R_1, Y_1) = \gamma(R_2, Y_2)$.
	Up to translating everything by $\gamma_1^{-1}$, we can assume without loss of generality that $\gamma_1 = 1$, so that $\gamma = \gamma_2$.
	It follows from \eqref{eqn: carrying sc assumption} that 
	\begin{equation*}
		\diam { f(X_0)^{+\alpha+ 5\delta} \cap \gamma f(X_0)^{+\alpha+ 5\delta}} > D.
	\end{equation*}
	Hence, according to our choice of $D$, we already know $\gamma \in \Gamma_0$.
	It now follows from our choice of $L$ that 
	\begin{equation*}
		 \diam{ Y_1^{+3\delta_0} \cap \gamma Y_2^{+3\delta_0}} > \Delta(\mathcal Q_0, X_0).
	\end{equation*}
	Consequently, $(R_1, Y_1) = \gamma (R_2, Y_2)$, which completes the proof of our claim.
	Let us now control $T(\mathcal Q, X)$.
	Observe that for every $\gamma \in \Gamma_0$, we have
	\begin{equation*}
		\snorm[X_0] \gamma \leq \kappa \snorm[X] \gamma, 
		\quad \text{hence} \quad 
		\norm[X_0] \gamma \leq \kappa \norm[X] \gamma + 8\delta_0,
	\end{equation*}
    (see \eqref{eqn: regular vs stable length}).
	Consequently,
	\begin{equation*}
		T(\mathcal Q_0, X_0) \leq \kappa T(\mathcal Q, X) + 8\delta_0.
	\end{equation*}
	One checks now, using \eqref{eqn: carrying sc assumption - 1} and \eqref{eqn: carrying sc assumption - 2}, that $\mathcal Q$ satisfies the $C'(\lambda, \epsilon)$ strengthened small cancellation condition.
	
	Suppose moreover that $\mathcal Q_0$ is tight.
	In particular, $\mathcal Q_0 / \Gamma_0$ is finite, hence so is $\mathcal Q/\Gamma$.
	Let $(R,Y) \in \mathcal Q_0$.
	We already assumed the action of $R$ on $Y$ is cobounded, hence so is the action of $R$ on $f(Y)^{+d}$.
	It follows from \eqref{eqn: carrying sc assumption} and the quasi-convexity of $Y$ that the stabilizer in $\Gamma$ of $f(Y)^{+d}$ coincides with the stabilizer in $\Gamma_0$ of $Y$, i.e.~$R$.
	Hence, $\mathcal Q$ is tight as well.
\end{proof}

\begin{rema}
\label{rem. carrying sc free}
	We will apply the above statement in the following special case.
	Let $\Gamma_0$ be a finitely generated, non-abelian, free group and $X$ the Cayley graph of $\Gamma$ with respect to a basis. 
	Let $\lambda \in (0,1)$.
	Consider a finite set $R \subset \Gamma_0$ satisfying the classical $C''(\lambda)$ condition\footnote{Recall that this requires that all pieces are short compared to the \emph{shortest} relation in $R$, regardless of whether they arise in that relation or not. Although the notations are not the same, the condition already appears in \cite{Delzant:2008tu} and \cite{Dahmani:2017ef}.}.
	For every $h \in R$, denote by $Y_h$ the axis of $h$ in $X$.
	Then the collection
	\begin{equation*}
		\mathcal Q = \set{ \left( \gamma \group h \gamma^{-1}, \gamma Y_h\right)}{ h \in R,\ \gamma \in \Gamma_0}
	\end{equation*}
	satisfies our $C'(\lambda, \epsilon)$ strengthened small cancellation condition (for the action of $\Gamma_0$ on $X$), where $\epsilon$ is the inverse of the length of the shortest relation in $R$.
	Note also that $\mathcal Q / \Gamma_0$ is finite.
	Hence, $\mathcal Q$ is tight whenever no element in $R$ is a proper power.
\end{rema}

In order to apply \autoref{res: carrying sc assumption}, in the form of \autoref{rem. carrying sc free}, we need a tool to find a wealth of quasi-convex malnormal free subgroups. Recall that an element $\gamma$ in a non-elementary torsion-free hyperbolic group $\Gamma$ is \emph{primitive} if it is not a proper power, which is equivalent to the fact that $\group \gamma$ is not properly contained in an elementary subgroup of $\Gamma$. We record the following result here, which is well-known to the experts.

\begin{prop}
\label{res: finding malnormal free}
    Let $\Gamma$ be a torsion-free, non-elementary hyperbolic group.
    Let $\gamma \in \Gamma$ be a primitive element and $\Lambda \subset \Gamma$ a non-elementary subgroup.
    For every $n \in \N$, there is a free subgroup $\Lambda_1 \subset \Lambda$ of rank $n$, such that the subgroup $\group{\gamma, \Lambda_1}$ is quasi-convex, malnormal and isomorphic to $\group \gamma \ast \Lambda_1$.
\end{prop}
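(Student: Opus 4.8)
\textbf{Proof plan for Proposition~\ref{res: finding malnormal free}.}

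The plan is to build $\Gamma_1$ as a free group generated by $n$ ``long random'' conjugates of high powers of independent loxodromic elements, and to verify quasi-convexity, the free product decomposition, and malnormality by hand using ping-pong on the boundary $\partial \Gamma$ together with the standard acylindricity/bounded-intersection machinery for quasi-axes in a torsion-free hyperbolic group. First I would recall that since $\Gamma_0$ is non-elementary it contains a non-abelian free subgroup, hence two elements $a, b \in \Gamma_0$ generating a quasi-convex copy of $F_2$ with $\group a$, $\group b$ malnormal in $\group{a,b}$ and $\group{a,b}$ quasi-convex in $\Gamma$; passing to sufficiently high powers and to suitable conjugates, I can extract $n$ loxodromic elements $g_1, \dots, g_n \in \Gamma_0$ whose axes $Y_{g_1}, \dots, Y_{g_n}$ pairwise fringe-overlap in a uniformly bounded set, and whose attracting/repelling fixed point pairs $\{g_i^{\pm}\}$ are pairwise disjoint and also disjoint from $\{\gamma^{\pm}\}$ (this last disjointness is where primitivity of $\gamma$ — equivalently, maximality of $\group\gamma$'s elementary closure being $\group\gamma$ — is used, so that $\gamma$ cannot share a fixed point with a conjugate of itself unless the conjugating element normalizes $\group\gamma$).

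Next I would apply the quantitative small-cancellation/ping-pong criterion for families of loxodromics in hyperbolic groups — for instance in the form used throughout \cite{Delzant:2008tu} or the ``very rotating family'' constructions, or simply the classical argument of Arzhantseva--Olshanskii / Gitik: if the powers $N$ are taken large enough compared to the hyperbolicity constant and the pairwise overlaps of the axes, then $\group{\gamma, g_1^N, \dots, g_n^N}$ is isomorphic to $\group\gamma \ast \group{g_1^N} \ast \cdots \ast \group{g_n^N} \cong \group\gamma \ast F_n$, is quasi-convex (the natural piecewise-geodesic/quasi-geodesic paths representing elements of the free product are uniform local quasi-geodesics, hence global quasi-geodesics by stability), and is malnormal. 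For malnormality: suppose $\eta \group{\gamma, \Gamma_1} \eta^{-1} \cap \group{\gamma,\Gamma_1}$ contains a nontrivial element $w$; then $\eta$ conjugates an infinite-order element of the quasi-convex subgroup $\Lambda = \group{\gamma,\Gamma_1}$ to another one, so $\eta$ coarsely preserves a point of the limit set $\Lambda \Lambda \subset \partial\Gamma$, and a standard argument (combining quasi-convexity of $\Lambda$ with the bounded-overlap of the generating axes and acylindricity, exactly as in \autoref{res: margulis lemma}) forces $\eta \in \Lambda$; set $\Gamma_1 = \group{g_1^N, \dots, g_n^N}$.

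The main obstacle I anticipate is \textbf{arranging the simultaneous disjointness and bounded-overlap conditions on the $2n+2$ fixed points $\{\gamma^{\pm}, g_1^{\pm}, \dots, g_n^{\pm}\}$ while keeping all the $g_i$ inside the prescribed non-elementary subgroup $\Gamma_0$}. Since $\gamma$ is given and need not lie in $\Gamma_0$, one cannot simply conjugate $\gamma$ to move its fixed points; instead one must perturb the $g_i$ within $\Gamma_0$. Here I would use that the $\Gamma_0$-orbit of any loxodromic's axis has infinitely many distinct translates (because $\Gamma_0$ is non-elementary, hence contains two loxodromics with disjoint axes and one can iterate a ping-pong element), so that for a generic choice of conjugating elements $h_1, \dots, h_n \in \Gamma_0$ the axes $h_i Y_a h_i^{-1}$ avoid the (finite) bad configuration determined by $\gamma$ and by each other; a pigeonhole / general-position argument then selects the required $h_i$, and replacing $a$ by a high power $a^N$ makes the translation length dominate the overlaps. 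Once the fixed-point configuration is in general position, the ping-pong verification and the quasi-convexity/malnormality statements are routine and I would cite the appropriate lemmas rather than reprove them. Finally, primitivity of $\gamma$ is precisely what guarantees that $\group\gamma$ is a maximal elementary subgroup, so that the factor $\group\gamma$ in the free product is malnormal in $\Gamma$ restricted to $\Lambda$, which is needed for the malnormality of $\Lambda$ itself; without primitivity one could only get a finite-index overgroup.
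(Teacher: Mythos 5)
Your proposal is mathematically sound in outline, but it takes a genuinely different route from the paper. The paper's entire proof is a one-line observation — that primitivity of $\gamma$ makes $\group{\gamma}$ quasi-convex and malnormal in $\Gamma$ (exactly the point you correctly identify as the role of primitivity) — followed by a direct citation of Abbott--Hull \cite[Theorem~6.2]{abbott:hull}, which provides, from a random-walk/genericity viewpoint, precisely such a free group $\Gamma_1 \subset \Gamma_0$ of any finite rank together with the quasi-convexity, free product decomposition, and malnormality of $\group{\gamma} \ast \Gamma_1$. Your plan essentially reproves (a deterministic version of) that theorem from scratch via ping-pong on high powers of loxodromics with axes in general position, plus a bounded-overlap/acylindricity argument for malnormality. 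What your approach buys is self-containment and explicit control over the generators; what it costs is the bulk of the work, since the malnormality verification for the whole free product $\Lambda = \group{\gamma,\Gamma_1}$ — not just each cyclic factor — is the technically delicate part, and your sketch of it (``a standard argument forces $\eta \in \Lambda$'') is exactly the content of the cited theorem rather than something that follows cheaply from \autoref{res: margulis lemma}. If you went this route you would in effect be re-deriving Abbott--Hull, and it would be cleaner to cite them as the paper does and reserve the hands-on ping-pong for a setting where no off-the-shelf statement is available.
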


\begin{proof}
    The hypothesis on $\gamma$ implies that $\group \gamma$ is quasi-convex and malnormal in $\Gamma$. 
    Then the result follows from \cite[Theorem 6.2]{abbott:hull}.
    In fact, the group $\Lambda_1$ can be chosen to be generated by  ``random elements'' of $\Lambda$.
\end{proof}

\subsection{Torsion-free Tarski monster relations}

The main type of relations we are interested in is those that impose that a given element is absorbed by a given non-elementary subgroup. The theorem below achieves these while preserving good lifting properties.

\begin{theo}
\label{res: induction step toward tarski monsters}
	Let $\Gamma$ be a non-elementary, torsion-free, hyperbolic group.
    For $i \in \intvald1n$, let $G_i$ be a finitely generated group, $H_i$ be any group, and $\jmath_i \colon H_i \to G_i$ and $\iota_i \colon H_i \to \Gamma \ast \Z$ be morphisms.
	Let $\Lambda$ be a non-elementary subgroup of $\Gamma$.
	Let $S$ be a finite subset of $\Gamma$.
	Then for every finite subset $U$ of $\Gamma \ast \Z$, there is a non-elementary, torsion-free, hyperbolic quotient $\bar \Gamma$ of $\Gamma$ with the following properties.
	\begin{enumerate}
		\item \label{eqn: induction step toward tarski monsters - onto}
		Under the projection $\pi \colon \Gamma \onto \bar \Gamma$, the image of $S$ is contained in the image of $\Lambda$, which is a non-elementary subgroup of $\bar \Gamma$.
		\item \label{eqn: induction step toward tarski monsters - embedding}
		The map $\Gamma \ast \Z \onto \bar \Gamma \ast \Z$ induced by $\pi$ is injective when restricted to $U$.
		\item \label{eqn: induction step toward tarski monsters - lifting bis}
		For every $i \in \intvald1n$, the map
		\begin{equation*}
			\begin{array}{ccc}
				\hom[{\jmath_i}, {\iota_i}] {G_i}{\Gamma\ast \Z} & \to & \hom[\jmath_i, \pi \circ \iota_i]{G_i}{\bar \Gamma\ast \Z} \\
				\varphi & \mapsto & \pi \circ \varphi,
			\end{array}
		\end{equation*}
		induced by $\pi$, is onto. 
	\end{enumerate}
\end{theo}

\begin{proof}
	By induction, it suffices to prove the statement when $S$ is reduced to a single element, say $\gamma$.
	Since $\Gamma$ is torsion-free and hyperbolic, every element can be written as a power of a primitive element.
	Hence, without loss of generality, we can assume that $\gamma$ is primitive.
	According to \autoref{res: finding malnormal free}, there are $x,y \in \Lambda$ such that $\Lambda_1 = \{\gamma, x,y\}$ is a quasi-convex, malnormal, free subgroup of rank three of $\Gamma \ast \Z$, contained in $\Gamma$.
	Consider now a relation in $\Lambda_1$ of the form
	\begin{equation*}
		w = \gamma xy^pxy^{p+1} \cdots xy^{p+q}.
	\end{equation*}
    Then $w$ is not a proper power in $\Lambda_1$, and if $p$ and $q$ are sufficiently large, then it can be made arbitrarily long and satisfy an arbitrary classical small cancellation condition in $\Lambda_1$.
	Hence, the associated relation family $\mathcal Q$ of $\Gamma \ast \Z$ given by \autoref{res: carrying sc assumption} satisfies the tight $C'(\lambda, \epsilon)$ strengthened small cancellation condition, where $\lambda, \epsilon \in (0,1)$ can be chosen as small as desired.
    If $\lambda$ and $\epsilon$ are sufficiently small, then \ref{eqn: induction step toward tarski monsters - embedding} follows from \autoref{res: recap sc}\ref{enu: recap sc - injectivity}; and \ref{eqn: induction step toward tarski monsters - lifting bis} follows from \autoref{res: lifting quotient}.

	Note that the corresponding quotient is simply $\bar \Gamma = \Gamma / \normal w$ so that the image of $\gamma$ in $\bar \Gamma$ belongs to the image of $\Lambda$. Finally, up to adding to the set $U$ a non-trivial commutator in $\Lambda$, we see that $\pi(\Lambda)$ is non-abelian. Since $\bar \Gamma$ is torsion-free, $\pi(\Lambda)$ is therefore non-elementary, completing the proof of \ref{eqn: induction step toward tarski monsters - onto}.
\end{proof}

\begin{rema}
\label{rem: induction step toward tarski monsters}
    If $\iota_i$ takes values in $\Gamma \subset \Gamma \ast \Z$, then by \autoref{rem: lifting free product}, the map
    \[\begin{array}{ccc}
				\hom[{\jmath_i}, {\iota_i}] {G_i}{\Gamma} & \to & \hom[\jmath_i, \pi \circ \iota_i]{G_i}{\bar \Gamma} \\
				\varphi & \mapsto & \pi \circ \varphi,
	\end{array}\]
    is onto.
\end{rema}

\subsection{Construction of the group}

\begin{defi}
	A \emph{torsion-free Tarski monster} is a torsion-free, finitely generated non-abelian group all of whose proper, non-trivial subgroups are cyclic.
\end{defi}

We will impose an additional property on these groups.
Given a morphism $\eta \colon G_1 \to G_2$ and a generating set $S$ of $G_1$, we denote by $r_S(\eta)$ the largest ball for the word metric of $G$ (with respect to $S$) on which the map $\eta$ is one-to-one.

\begin{defi}
	A finitely generated group $\Gamma$ is \emph{lacunary hyperbolic} if it is the direct limit of a sequence of groups 
	\begin{equation*}
		\Gamma_0 \overset{\eta_0}\onto \Gamma_1 \overset{\eta_1}\onto \Gamma_2 \onto \cdots \onto \Gamma_k \overset{\eta_k}\onto \Gamma_{k+1} \onto \cdots
	\end{equation*}
	with the following properties: 
	there is a finite generating set $S_k$ of $\Gamma_k$ for every $k$ such that the corresponding Cayley graph of $\Gamma_k$ is $\delta_k$-hyperbolic, $S_{k+1} = \eta_k(S_k)$, and 
	the ratio $\delta_k / r_{S_k}(\eta_k)$ converges to zero as $k$ tends to infinity.
\end{defi}

The original definition says that a group is lacunary hyperbolic if one of its asymptotic cones is an $\R$-tree; the definition we give here is equivalent \cite[Theorem 1.1]{lacunary}, and is the only one that we will use.

\begin{nota}
	For a sequence of sets $(A_k)_{k \in \N}$, we let
	\begin{equation*}
    		\liminf\limits_{k \to \infty} A_k = \bigcup \limits_{k \geq 1} \bigcap\limits_{j \geq k} A_j
		 \quad \text{ and } \quad 
		 \limsup\limits_{k \to \infty} A_k = \bigcap\limits_{k \geq 1} \bigcup\limits_{j \geq k} A_j.
	\end{equation*}
	In words: $a \in \liminf A_k$ if $a \in A_k$ holds eventually, while $a \in \limsup A_k$ if $a \in A_k$ holds infinitely often. Clearly $\liminf A_k \subset \limsup A_k$. In case the inclusion is an equality, we simply write $\lim\limits_{k \to \infty} A_k$.
\end{nota}

\begin{nota}
Given a group $\Gamma$ we write $\mathfrak M(\Gamma)$ for the collection of all tuples $(H, G, \jmath, \iota)$ where $H$ and $G$ are finitely presented groups while $\jmath \colon H \to G$ and $\iota \colon H \to \Gamma$ are morphisms.
\end{nota}

Note that $\mathfrak M(\Gamma)$ is countable. In fact, in the statement of \autoref{res: tarski}, we could enlarge $\mathfrak M(\Gamma)$ so that it contains more tuples $(H, G, \jmath, \iota)$ with $G$ finitely generated and $H$ arbitrary, as in \autoref{res: induction step toward tarski monsters}, as long as it remains countable.

\medskip

We now have everything in place for our main result.

\begin{theo}[\autoref{intro:thm:precise}]
\label{res: tarski}
	Let $\Gamma$ be a non-elementary, torsion-free, hyperbolic group.
	Then there is a sequence of non-elementary, torsion-free, hyperbolic groups
	\begin{equation*}
		\Gamma = \Gamma_0 \onto \Gamma_1 \onto \Gamma_2 \onto \cdots \onto \Gamma_k \onto \Gamma_{k+1} \onto \cdots
	\end{equation*}
	with the following properties.
	Let $\Gamma_\infty$ be the direct limit of the above sequence.
	For every $k \in \N \cup \{\infty\}$, denote by $\pi_k \colon \Gamma \onto \Gamma_k$ and $\zeta_k \colon \Gamma_k \onto \Gamma_\infty$ the corresponding projections.
	\begin{enumerate}
		\item \label{enu: tarski - tarski}
		The group $\Gamma_\infty$ is a lacunary hyperbolic, simple, torsion-free Tarski monster.
		\item \label{enu: tarski - morphism}
		For every $(H,G,\jmath, \iota)$ in $\mathfrak M(\Gamma)$, there is $k_0 \in \N$, such that for every $k \geq k_0$ the map
		\begin{equation*}
			\begin{array}{ccc}
				\hom[\jmath, \pi_k \circ \iota]G{\Gamma_k} & \to & \hom[\jmath, \pi_\infty \circ \iota]G{\Gamma_\infty} \\
				\varphi & \mapsto & \zeta_k \circ \varphi
			\end{array}
		\end{equation*}
		is onto.
		\item \label{enu: tarski - theory}
		For every tuple $\vec \gamma$ of elements in $\Gamma$ it holds
		\begin{equation*}
			{\Th}_{\forall \exists}(\Gamma_\infty, \pi_\infty(\vec \gamma)) 
			= \lim\limits_{k \to \infty} {\Th}_{\forall \exists}(\Gamma_k, \pi_k(\vec \gamma)).
		\end{equation*}
		\item \label{enu: tarski - embedding}
		The group $\Gamma_\infty$ $\exists\forall\exists$-elementarily embeds in $\Gamma_\infty \ast \Z$.
	\end{enumerate}
\end{theo}

\begin{rema}
	Item \ref{enu: tarski - embedding} and its consequences were not in the first version of the article.
	This fact results from discussions we had with Simon André.
\end{rema}

\begin{proof}
	Let $\Lambda$ be a group (in practice, it will be either $\Gamma$ or $\Gamma \ast \Z$).
	Besides the collection $\mathfrak M(\Lambda)$ which has already been defined, we also consider the following data.
	\begin{itemize}
		\item $\mathfrak F(\Lambda)$ is the set of all tuples $(H,C, \mathcal G, \sigma)$, where $(H, C, \mathcal G)$ belongs to $\mathfrak R$ while $\sigma \colon C \to \Lambda$ is a morphism.
		The set $\mathfrak F(\Lambda)$ captures all possible $\forall \exists$-formulas with coefficients in $\Lambda$.
		\item $\mathfrak T(\Lambda)$ is the collection of all tuples $(H,C, \mathcal G, \iota, \sigma)$ where $(H, C, \mathcal G)$ belongs to $\mathfrak R$ while $\iota \colon H \to \Lambda$ and $\sigma \colon C \to \Lambda$ are morphisms.
		In other words, $\mathfrak T(\Lambda)$ encodes all possible $\forall \exists$-formulas with coefficients in $\Lambda$, together with an assignment in $\Lambda$ of the $\forall$-variables.
		It will become handy to track formulas that could fail in a quotient of $\Lambda$.
	\end{itemize}
  	We also fix a generating set $S$ for $\Gamma$.
	
	\paragraph{Induction.}
	We are going to build the sequence $(\Gamma_k)$ by induction on $k \in \N$.
	We will later prove that it satisfies all the required properties.
	The sets $\mathfrak M(\Gamma)$, $\mathfrak M(\Gamma \ast \Z)$, $\mathfrak F(\Gamma)$, $\mathfrak F(\Gamma\ast \Z)$, $\mathfrak T(\Gamma)$, and $\mathfrak T(\Gamma\ast \Z)$ are countable, hence we can endow them with an order that is order-isomorphic to $\N$.
	We also enumerate all the pairs of elements in $\Gamma$:
	\begin{equation*}
		(\gamma_0, \gamma'_0), (\gamma_1, \gamma'_1), (\gamma_2, \gamma'_2), \dots
	\end{equation*}
	starting with $(\gamma_0, \gamma'_0) = (1, 1)$.

	We now build by induction a sequence of non-elementary, torsion-free, hyperbolic quotients $\pi_k \colon \Gamma \onto \Gamma_k$ satisfying, among other things, the following property: for every $k \in \N$, the images of $\gamma_k$ and $\gamma'_k$ in $\Gamma_k$ generate either a cyclic subgroup or the entire quotient $\Gamma_k$.
	Along the way, we will also produce the following data.
		\begin{itemize}
			\item An increasing exhaustion $(\mathfrak M_k(\Gamma))$ of $\mathfrak M(\Gamma)$ by finite sets. 
			These will capture morphisms to $\Gamma_k$ on which we have some control.
			\item An increasing sequence  $(\mathfrak F^-_k(\Gamma))$ of finite subsets of $\mathfrak F(\Gamma)$. 
			Any tuple $(H,C, \mathcal G, \sigma)$ in  $\mathfrak F(\Gamma)$ that belongs to $\mathfrak F_k^-(\Gamma)$ will not be positively realized in $(\Gamma_k, \pi_k \circ \sigma)$.
			\item Two non-decreasing sequences $(\mathfrak T_k^-(\Gamma))$ and $(\mathfrak T_k^+(\Gamma))$ of finite subsets of $\mathfrak T(\Gamma)$. 
			A tuple $(H,C, \mathcal G, \iota, \sigma)$ in $\mathfrak T_k^-(\Gamma) \cup \mathfrak T_k^+(\Gamma)$ will belong to $\mathfrak T_k^-(\Gamma)$ or $\mathfrak T_k^+(\Gamma)$ depending whether $(H,C, \mathcal G)$ is negatively or positively realized in $(\Gamma_k, \pi_k \circ \iota, \pi_k \circ \sigma)$.

			\item A collection $\Phi_k(\Gamma) = (\varphi_t)$ indexed by $t \in \mathfrak T_k^+(\Gamma)$ such if $t = (H,C, \mathcal G, \iota, \sigma)$, then $\varphi_t$ is a witness for $(H, C, \mathcal G)$ in $(\Gamma_k, \pi_k \circ \iota, \pi_k \circ \sigma)$.
		\end{itemize}
		Similarly we will define at each step finite sets  $\mathfrak M_k(\Gamma \ast \Z)$, $\mathfrak F_k^-(\Gamma \ast \Z)$, and $\mathfrak T_k^\pm(\Gamma\ast \Z)$ which will be to $\Gamma_k \ast \Z$ what the above sets are to $\Gamma_k$.
		We also keep track of witnesses with a collection $\Phi_k(\Gamma \ast \Z)$ indexed by $t \in \mathfrak T_k^+(\Gamma\ast \Z)$.

	\subparagraph{Basis of the induction.}
	We begin the induction by setting $\Gamma_0 = \Gamma$.
	In addition, we let
	\begin{equation*}
		\mathfrak M_0(\Gamma) = \mathfrak F^-_0(\Gamma)  = \mathfrak T^\pm_0(\Gamma) = \Phi_0(\Gamma) = \emptyset
	\end{equation*}
	and
	\begin{equation*}
		\mathfrak M_0(\Gamma \ast \Z) = \mathfrak F^-_0(\Gamma \ast \Z)= \mathfrak T^\pm_0(\Gamma \ast \Z)  = \Phi_0(\Gamma \ast \Z) = \emptyset.
	\end{equation*}
	Note that the pair $(\gamma_0, \gamma'_0) = (1, 1)$ generates a cyclic subgroup of $\Gamma_0$.
	
	\subparagraph{Induction step.}
	Consider now $k \in \N$,  for which the quotient $\pi_k \colon \Gamma \onto \Gamma_k$ as well as
	\begin{equation*}
		\mathfrak M_k(\Gamma), \  \mathfrak F^-_k(\Gamma), \ \mathfrak T^\pm_k(\Gamma),  \ \Phi_k(\Gamma),
	\end{equation*}
	and
	\begin{equation*}
		\mathfrak M_k(\Gamma \ast \Z),  \ \mathfrak F^-_k(\Gamma \ast \Z), \  \mathfrak T^\pm_k(\Gamma \ast \Z),  \ \Phi_k(\Gamma \ast \Z),
	\end{equation*} 
	have already been defined. 
	We write $S_k$ for the image in $\Gamma_k$ of the generating set $S$ of $\Gamma$.
	The Cayley graph of $\Gamma_k$ with respect to the generating set $S_k$ is $\delta_k$-hyperbolic, for some $\delta_k \in \R^*_+$.
	
	\medskip
	First, we build a finite subset $U_k \subset (\Gamma_k \ast \Z) \setminus \{1\}$ that will allow us to preserve witnesses at the next step.
	Let $t = (H,C,\mathcal G, \iota, \sigma)$ in $\mathfrak T^+_k(\Gamma)$.
	Let $\varphi_t$ be the witness of $(H,C,\mathcal G)$ in $(\Gamma_k, \pi_k \circ \iota, \pi_k \circ \sigma)$ coming from $\Phi_k(\Gamma)$. 
	By definition of witnesses, there is $(G, V, \jmath) \in \mathcal G$ such that $\varphi_t$ belongs to $\hom[\jmath, \pi_k \circ (\iota \ast \sigma)] G{\Gamma_k}$ and satisfies $V \cap \ker \varphi_t = \emptyset$.
	We make sure that $U_k$ contains every element of $\varphi_t(V)$. 
	We proceed in the same way with $\mathfrak T^+_k(\Gamma \ast \Z)$, increasing if necessary the set $U_k$, to be able to track later all witnesses coming from $\Phi_k(\Gamma \ast \Z)$.
	
	Second, we extend $\mathfrak M_k(\Gamma)$ as follows.
	For every $(H, C, \mathcal G, \iota, \sigma) \in \mathfrak T_k(\Gamma)$, for every $(G, V, \jmath) \in \mathcal G$, we add $(H \ast C,G, \jmath, \iota \ast \sigma)$ to $\mathfrak M_k(\Gamma)$.
	We proceed in the same way with $\mathfrak M_k(\Gamma \ast \Z)$.
	
	\medskip
	We now distinguish two cases.
	If the images in $\Gamma_k$ of $\gamma_{k+1}$ and $\gamma'_{k+1}$ generate a cyclic subgroup, we simply let $\Gamma_{k+1} = \Gamma_k$.
	Otherwise, $\langle \gamma_{k+1}, \gamma'_{k+1} \rangle$ is non-elementary, so it follows from \autoref{res: induction step toward tarski monsters} (and \autoref{rem: induction step toward tarski monsters}) that there is a non-elementary, torsion-free, hyperbolic quotient $\eta_k \colon \Gamma_k \onto \Gamma_{k+1}$ with the following properties.
	Let $\pi_{k+1} = \eta_k \circ \pi_k$.
	\begin{enumerate}[label=({\arabic*)}]
		\item \label{enu: tarski proof - gen} 
		The group $\Gamma_{k+1}$ is generated by the images of $\gamma_{k+1}$ and $\gamma'_{k+1}$.
		\item \label{enu: tarski proof - kernel}
		No element of $U_k$ belongs to the kernel of the projection $\Gamma_k \ast \Z \onto \Gamma_{k+1} \ast \Z$ induced by $\eta_k$.
        		\item \label{enu: injectivity radius} 
		The ratio between $\delta_k$ and $r_{S_k}(\eta_k)$ is at most $2^{-k}$.
        		\item \label{enu: tarski proof - morphism} 
		For every $(G, H, \jmath, \iota) \in \mathfrak M_k(\Gamma)$ the natural map 
		\begin{equation*}
			\hom[\jmath, \pi_k \circ \iota]G{\Gamma_k} \to \hom[\jmath, \pi_{k+1} \circ \iota]G{\Gamma_{k+1}} 
		\end{equation*}		
		induced by $\eta_k$ is onto.
		\item \label{enu: tarski proof - morphism bis} 
		For every $(G, H, \jmath, \iota) \in \mathfrak M_k(\Gamma \ast \Z)$ the natural map 
		\begin{equation*}
			\hom[\jmath, \pi_k \circ \iota]G{\Gamma_k \ast \Z} \to \hom[\jmath, \pi_{k+1} \circ \iota]G{\Gamma_{k+1}\ast \Z} 
		\end{equation*}		
		induced by $\eta_k$ is onto.
	\end{enumerate}
	Now that the group $\Gamma_{k+1}$ is built, we define the remaining data for the index $k+1$.
	We first explain the construction for the group $\Gamma$, and then perform the same operations for $\Gamma \ast \Z$.
	
	\begin{itemize}
		\item 
		First we define $\mathfrak M_{k+1}(\Gamma)$ by adding to $\mathfrak M_k(\Gamma)$ the first element of $\mathfrak M(\Gamma)$ that is not already in $\mathfrak M_k(\Gamma)$.
		\item 
		Let us focus on $\mathfrak F^-_{k+1}(\Gamma)$.
		Along the way, we will define an auxiliary set $\mathfrak T^-_{k+1/2}(\Gamma)$ that will be used after to define $\mathfrak T^-_{k+1}(\Gamma)$ and will help us to keep track of sentences that do not hold true in $\Gamma_{k+1}$.
		
		Consider the first tuple $t = (H, C, \mathcal G, \sigma)$ in $\mathfrak F(\Gamma) \setminus \mathfrak F^-_k(\Gamma)$ such that $(H,C,\mathcal G)$ is not positively realized in $(\Gamma_{k+1}, \pi_{k+1} \circ \sigma)$.
		Since the projection $\pi_{k+1} \colon \Gamma \onto \Gamma_{k+1}$ is onto, there is a morphism $\iota \colon H \to \Gamma$ such that $(H, C, \mathcal G)$ is negatively realized in $(\Gamma_{k+1}, \pi_{k+1} \circ \iota, \pi_{k+1} \circ \sigma)$.
		We let 
		\begin{equation*}
			\mathfrak F^-_{k+1}(\Gamma) = \mathfrak F^-_k(\Gamma)  \cup \{ t\}.
			\quad \text{and}\quad 
			\mathfrak T^-_{k + 1/2}(\Gamma) = \mathfrak T^-_k(\Gamma) \cup \{(H,C, \mathcal G, \iota, \sigma)\}.
		\end{equation*}
		\item
		We now move to $\mathfrak T^+_{k+1}(\Gamma)$ and $\mathfrak T^-_{k+1}(\Gamma)$.
		Let $t = (H, C, \mathcal G, \iota, \sigma)$ be a tuple of $\mathfrak T^+_k(\Gamma)$.
		We denote by $\varphi'_t$ the morphism obtained by post-composing the corresponding witness $\varphi_t$  by $\eta_k$.
		It follows from our choice of $U_k$ that $\varphi'_t$ is still a witness of $(H,C,\mathcal G)$ in $(\Gamma_{k+1}, \pi_{k+1} \circ \iota,  \pi_{k+1} \circ \sigma)$.		
		We denote by $\Phi'_k(\Gamma)$ the collection indexed by $\mathfrak T^+_k(\Gamma)$ of all new witnesses $\varphi'_t$ obtained in this way.

		Consider now the first tuple $t = (H, C, \mathcal G, \iota, \sigma)$ in $\mathfrak T(\Gamma)$ that is not already contained in 
		\begin{equation*}
			\mathfrak T^-_{k+1/2}(\Gamma) \cup \mathfrak T^+_k(\Gamma).
		\end{equation*}
		If $(H, C, \mathcal G)$ is negatively realized in $(\Gamma_{k+1}, \pi_{k+1}\circ \iota, \pi_{k+1} \circ \sigma)$ we simply let 
		\begin{equation*}
			\mathfrak T^-_{k+1}(\Gamma) = \mathfrak T^-_{k+1/2}(\Gamma) \cup \{t\}, \quad
			\mathfrak T^+_{k+1}(\Gamma) = \mathfrak T^+_k(\Gamma),
			\quad \text{and} \quad
			\Phi_{k+1}(\Gamma) = \Phi'_k(\Gamma).
		\end{equation*}
		Otherwise, $(H, C, \mathcal G)$ is positively realized in $(\Gamma_{k+1}, \pi_{k+1} \circ \iota, \pi_{k+1} \circ \sigma)$.
		In particular, there exists a witness $\varphi_t$ of  $(H, C, \mathcal G)$ in $(\Gamma_{k+1}, \pi_{k+1} \circ \iota, \pi_{k+1} \circ \sigma)$.
		In this case, we let 
		\begin{equation*}
			\mathfrak T^-_{k+1}(\Gamma) = \mathfrak T^-_{k+1/2}(\Gamma),
			\quad \text{and} \quad
			\mathfrak T^+_{k+1}(\Gamma) = \mathfrak T^+_k(\Gamma) \cup \{t\}.
		\end{equation*}
		Moreover, we define $\Phi_{k+1}(\Gamma)$ by adding the new witness $\varphi_t$ to $\Phi'_k(\Gamma)$.
	\end{itemize}
	We now perform the exact same operations with $\Gamma \ast \Z$ in order to define
	\begin{equation*}
		\mathfrak M_{k+1}(\Gamma \ast \Z), \quad
		\mathfrak F^-_{k+1}(\Gamma \ast \Z), \quad
		\mathfrak T^\pm_{k+1}(\Gamma \ast \Z), 
		\quad\text{and}\quad
		\Phi_{k+1}(\Gamma \ast \Z).
	\end{equation*}
	This completes the induction process.
	
	\begin{rema}
		Note that if $(H,C,\mathcal G) \in \mathfrak R$ is positively realized in some $(\Gamma_k, \pi_k\circ \sigma)$, we do not ``throw away'' the tuple $(H,C,\mathcal G,\sigma)$ from $\mathfrak F(\Gamma)$ but rather put in ``on hold'' in case $(H,C,\mathcal G)$ would not be positively realized in $(\Gamma_{k'}, \pi_{k'}\circ \sigma)$ for some larger value of $k'$.
		Consequently, if for infinitely many $k \in \N$, the tuple $(H,C,\mathcal G,\sigma)$ is not positively realized $(\Gamma_k, \pi_k\circ \sigma)$, then $(H,C,\mathcal G,\sigma)$ will end up in $\mathfrak F^-_k(\Gamma)$ for all but finitely many $k \in \N$.
	\end{rema}
	
	We now claim that the direct limit $\Gamma_\infty$ of the sequence $(\Gamma_k)$ satisfies the desired properties.

	 \paragraph{Lacunary hyperbolicity.} By construction, the ratio $\delta_k / r_{S_k}(\eta_k)$ converges to zero.
	  So $\Gamma_\infty$ is lacunary hyperbolic, proving the first part of \ref{enu: tarski - tarski}.

	\paragraph{Morphisms.}
	Let $\jmath \colon H \to G$ be a homomorphism between finitely presented groups.
	Let $\iota \colon H \to \Gamma$ be another morphism.
	According to our construction, there is $k_0 \in \N$ such that for every integer $k \geq k_0$, the tuple $(H, G, \jmath, \iota)$ belongs to $\mathfrak M_k(\Gamma)$.
	A proof by induction using \ref{enu: tarski proof - morphism} shows that for every $k \geq k_0$, the natural map 
	\begin{equation*}
		\hom[\jmath, \pi_{k_0} \circ \iota]G{\Gamma_{k_0}} \to \hom[\jmath, \pi_k \circ \iota]G{\Gamma_k} 
	\end{equation*}	
	is onto.
	Consider now a morphism $\varphi \in  \hom[\jmath, \pi_\infty \circ \iota]G{\Gamma_\infty}$.
	Since $G$ is finitely presented and $\Gamma_\infty$ is the direct limit of the sequence $(\Gamma_k)$, the morphism $\varphi$ is the image of some element in $\hom[\jmath, \pi_k \circ \iota]G{\Gamma_k}$.
	The above discussion tells us now that $\varphi$ lifts to a morphism in $\hom[\jmath, \pi_{k_0} \circ \iota]G{\Gamma_{k_0}}$.
	This completes the proof of \ref{enu: tarski - morphism}.
	
	\paragraph{$\forall\exists$-theory.}
	We start with the following observation.
	\begin{clai}
	\label{cla: tarski}
		Let $(H,C, \mathcal G, \sigma)$ in $\mathfrak F(\Gamma)$.
		The following are equivalent
		\begin{enumerate}[label=({\alph*)}]
			\item \label{enu: tarski3 - limit}
			$(H,C, \mathcal G)$ is positively realized in $(\Gamma_\infty, \pi_\infty \circ \sigma)$.
			\item \label{enu: tarski3 - limit inf}
            		$(H,C, \mathcal G)$ is positively realized in $(\Gamma_k, \pi_k \circ \sigma)$, for all but finitely many $k \in \N$.
			\item \label{enu: tarski3 - limit sup}
            		$(H,C, \mathcal G)$ is positively realized in $(\Gamma_k, \pi_k \circ \sigma)$, for infinitely many $k \in \N$.
		\end{enumerate}
	\end{clai}
		
	\begin{proof}[Proof of the claim]
		The implication \ref{enu: tarski3 - limit inf} $\Rightarrow$ \ref{enu: tarski3 - limit sup} is straightforward.
		Let us show \ref{enu: tarski3 - limit sup} $\Rightarrow$ \ref{enu: tarski3 - limit}.
		Consider a morphism $H \to \Gamma_\infty$.
		Since $\pi_\infty$ is onto and $H$ is free, we can write it as $\pi_\infty \circ \iota$ where $\iota \colon H \to \Gamma$ is a morphism.
		According to \ref{enu: tarski3 - limit sup}, the tuple $(H, C, \mathcal G)$ is positively realized in $(\Gamma_k, \pi_k \circ \iota, \pi_k \circ \sigma)$ for infinitely many $k \in \N$.
		It follows from our construction that there is $k_0 \in \N$ such that $t = (H,C, \mathcal G, \iota, \sigma)$ belongs to $\mathfrak T^+_{k_0}(\Gamma)$.
		Moreover, the corresponding morphism $\varphi_t$ in $\Phi_{k_0}(\Gamma)$ is a witness for $(H,C, \mathcal G)$ in $(\Gamma_{k_0}, \pi_{k_0} \circ \iota, \pi_{k_0} \circ \sigma)$.
		At each step, the finite subset $U_k \subset \Gamma_k \ast \Z$ was precisely designed in such a way that for every $k \geq k_0$, the morphism $\varphi_t$ post-composed by the natural projection $\Gamma_{k_0} \onto \Gamma_k$ is still a witness  for $(H, C, \mathcal G)$ in $(\Gamma_k, \pi_k \circ \iota, \pi_k \circ \sigma)$. 
		Recall that $\Gamma_\infty$ is the direct limit of the sequence $(\Gamma_k)$.
		Hence, an element $\gamma \in \Gamma$ is trivial in $\Gamma_\infty$, if and only if $\gamma$ is trivial in $\Gamma_k$ for sufficiently large $k \in \N$.
		Consequently, $\zeta_{k_0} \circ \varphi_t$ is a witness of $(H, C, \mathcal G)$ in $(\Gamma_\infty, \pi_\infty \circ \iota, \pi_\infty \circ \sigma)$. 
		Therefore, $(H, C, \mathcal G)$ is positively realized in $(\Gamma_\infty, \pi_\infty \circ \iota, \pi_\infty \circ \sigma)$.
		The argument works for every morphism $H \to \Gamma_\infty$, hence proving \ref{enu: tarski3 - limit}. 
		
		We complete the proof with \ref{enu: tarski3 - limit} $\Rightarrow$  \ref{enu: tarski3 - limit inf}.
		We proceed by contraposition.
		Suppose that $(H,C, \mathcal G)$ is not positively realized in $(\Gamma_k, \pi_k \circ \iota, \pi_k \circ \sigma)$ for infinitely many $k \in \N$.
		It follows from the way we went through the elements of $\mathfrak F(\Gamma)$, that there is $k_0 \in \N$ such that $(H,C, \mathcal G, \sigma)$ belongs to $\mathfrak F_{k_0}^-(\Gamma)$.
		Without loss of generality, we can assume that $k_0$ is the smallest integer with this property.
		Remember that as we added $(H,C, \mathcal G, \sigma)$ to $\mathfrak F^-_{k_0}(\Gamma)$ we also chose a suitable morphism $\iota \colon H \to \Gamma$ and added $(H,C, \mathcal G, \iota, \sigma)$ to $\mathfrak T^-_{k_0}(\Gamma)$.
		Since the sequence $(\mathfrak T^-_k(\Gamma))$ is non-decreasing, $(H,C, \mathcal G, \iota, \sigma)$ belongs to $\mathfrak T^-_k(\Gamma)$, for every integer $k \geq k_0$.
		Consider now $(G,V, \jmath)$ in $\mathcal G$ and a morphism $\varphi \in \hom[\jmath, \pi_\infty\circ (\iota \ast \sigma)] G{\Gamma_\infty}$.
		According to our previous study of morphisms, there is $k \geq k_0$ such that $\varphi$ lifts to a morphism $\tilde \varphi \in \hom[\jmath, \pi_k\circ (\iota \ast \sigma)] G{\Gamma_k}$.
		However, since $(H,C,\mathcal G)$ is negatively realized in $(\Gamma_k, \pi_k \circ \iota, \pi_k \circ \sigma)$, the intersection $V \cap \ker \tilde \varphi$, and thus $V \cap \ker \varphi$, is non-empty.
		This exactly proves that $(H, C, \mathcal G)$ is negatively realized in $(\Gamma_\infty, \pi_\infty \circ \iota, \pi_\infty \circ \sigma)$, whence the result.
	\end{proof}

	Recall that the tuples in $\mathfrak F(\Gamma)$ are encoding $\forall\exists$-formulas with coefficients in $\Gamma$.
    Hence, the above statement translates to the following assertion
	\begin{equation*}
		{\Th}_{\forall\exists}(\Gamma_\infty, \pi_\infty(\vec \gamma))
		= \bigcap_{K \in \N} \bigcup_{k \geq K} {\Th}_{\forall\exists}(\Gamma_k, \pi_k(\vec \gamma))
		=\bigcup_{K \in \N} \bigcap_{k \geq K} {\Th}_{\forall\exists}(\Gamma_k, \pi_k(\vec \gamma)),
	\end{equation*}
	for all tuples $\vec \gamma$ of elements of $\Gamma$: this proves \ref{enu: tarski - theory}.
    As always in this construction, \autoref{cla: tarski} also holds when replacing $\Gamma$ by $\Gamma \ast \Z$, and hence we also get
    \[{\Th}_{\forall\exists}(\Gamma_\infty \ast \Z, \pi_\infty(\vec \gamma)) = \lim\limits_{k \to \infty}{\Th}_{\forall\exists}(\Gamma_k \ast \Z, \pi_k(\vec \gamma)),\]
    for all tuples $\vec \gamma$ of elements of $\Gamma$.

    \paragraph{Elementary embedding.}
    As observed in \cite[Lemma 7.1]{positiveAH}, it suffices to prove that $\Gamma_\infty$ $\forall\exists$-elementarily embeds in $\Gamma_\infty \ast \Z$.
    We know that each $\Gamma_k$ is elementarily embedded into $\Gamma_k \ast \Z$ \cite{positivehyp1} (see also \cite{positiveAH}). This, together with the previous paragraph, gives:
    \begin{align*}
        {\Th}_{\forall\exists}(\Gamma_\infty, \pi_\infty(\vec \gamma)) &= \lim\limits_{k\to\infty} {\Th}_{\forall\exists}(\Gamma_k, \pi_k(\vec \gamma)) \\
        &\subset \lim\limits_{k\to\infty}{\Th}_{\forall\exists}(\Gamma_k \ast \Z, \pi_k(\vec \gamma)) = {\Th}_{\forall\exists}(\Gamma_\infty \ast \Z, \pi_\infty(\vec \gamma)),
    \end{align*}
    for all tuples $\vec \gamma$ of elements of $\Gamma$. Since $\pi_\infty$ is surjective, this shows that $\Gamma_\infty$ is $\forall \exists$-elementarily embedded in $\Gamma_\infty \ast \Z$, and we obtain \ref{enu: tarski - embedding}.

    \paragraph{Simple torsion-free Tarski monster.}
	By construction, every $2$-generated subgroup $\Gamma_\infty$ is either cyclic or coincides with $\Gamma_\infty$.
	So every proper subgroup $\Lambda$ of $\Gamma_\infty$ is abelian.
	Suppose that $\Lambda$ is non-trivial.
	Let $\gamma \in \Gamma$ be the pre-image of a non-trivial element in $\Lambda$.
	Consider the group
	\begin{equation*}
		G = \group{x,y \mid [x,y]}.
	\end{equation*}
	Denote by $H$ the subgroup of $G$ generated by $y$ and by $\jmath \colon H \to G$ the corresponding embedding.
	Let $\iota \colon H \to \Gamma$ be the morphism sending $y$ to $\gamma$.
	According to \ref{enu: tarski - morphism}, there is $k \in \N$ such that the natural map 
	\begin{equation*}
		\hom[\jmath, \pi_k \circ \iota]G{\Gamma_k}  \to  \hom[\jmath, \pi_\infty \circ \iota]G{\Gamma_\infty}
	\end{equation*}
	is onto.
	This means that the projection $\Gamma_k \onto \Gamma_\infty$ maps the centralizer of $\pi_k(\gamma)$ in $\Gamma_k$ onto the centralizer of $\pi_\infty(\gamma)$ in $\Gamma_\infty$.
	However, since $\Gamma_k$ is torsion-free hyperbolic, the centralizer in $\Gamma_k$ of any non-trivial element is cyclic.
	Hence, $\Lambda$ is cyclic.

    The group $\Gamma_\infty$ is non-abelian and torsion-free because every $\Gamma_k$ is. 
    It remains to show that $\Gamma_\infty$ is simple. Suppose that $N < \Gamma_\infty$ is a proper, non-trivial, normal subgroup. 
    According to the previous discussion, $N$ must be cyclic. 
    Because $\Gamma_\infty$ has no index-$2$ subgroups, the action by conjugation of $\Gamma_\infty$ on $N$ is trivial, and therefore $N$ is central. We have just seen that centralizers in $\Gamma_\infty$ are cyclic, and so $\Gamma_\infty$ is itself cyclic. 
    This contradicts the fact that $\Gamma_\infty$ is non-abelian and concludes the proof of \ref{enu: tarski - tarski}.
\end{proof}

\subsection{Further properties}

We start with the following direct consequence of \autoref{res: tarski}.

\begin{coro}
\label{res: 2 quant theory free product}
    Let $\Gamma_\infty$ be a torsion-free Tarski monster as in \autoref{res: tarski}. Then $\Th_{\forall \exists}(\Gamma_\infty) = \Th_{\forall \exists}(\Gamma_\infty \ast F_n)$, where $F_n$ denotes the free group of rank $n \geq 0$.
\end{coro}

\begin{proof}
    By \autoref{res: tarski} the embedding $\Gamma_\infty \to \Gamma_\infty \ast \Z$ is $\exists \forall \exists$-elementary. In particular, if a $\exists \forall$-sentence holds in $\Gamma_\infty$, then it holds in $\Gamma_\infty \ast \Z$, and similarly if a $\forall \exists$-sentence holds in $\Gamma_\infty$ then it holds in $\Gamma_\infty \ast \Z$. This shows that $\Th_{\forall \exists}(\Gamma_\infty) = \Th_{\forall \exists}(\Gamma_\infty \ast \Z)$.

    Next, we apply \cite{positiveAH}: if $\Gamma$ is an acylindrically hyperbolic group with no non-trivial finite normal subgroups, then $\Th_{\forall \exists}(\Gamma) = \Th_{\forall \exists}(\Gamma \ast \Z)$. This shows that, for all $n \geq 1$, we have $\Th_{\forall \exists} (\Gamma_\infty \ast F_n) = \Th_{\forall \exists}(\Gamma_\infty \ast F_{n+1})$, and we conclude by induction.
\end{proof}

This answers \cite[Question 10.2]{positiveAH}: there exists a finitely generated group $\Gamma$ that is not acylindrically hyperbolic, with the property that $\Th_{\forall \exists}(\Gamma) = \Th_{\forall \exists}(\Gamma \ast \Z)$. Without the finite generation assumption, sharper examples were previously known, see \cite[Section 5]{andre:AH}. It remains an open problem whether acylindrical hyperbolicity is preserved under elementary equivalence of finitely generated groups: this appears in \cite[Question 10.3]{positiveAH} and \cite[Question 1.1]{andre:AH} where it is attributed to Osin.

\medskip

We now specialize to the \emph{positive theory}. 
Recall that a sentence is positive if it does not include negations. 
The positive theory of a group $\Gamma$ is the collection of positive sentences that are true in $\Gamma$, and is denoted by $\Th^+(\Gamma)$.
All non-abelian free groups have the same positive theory \cite{positivefree, makanin}, and the positive theory of a group is included in that of its quotients. 
In view of this, we say that $\Gamma$ has \emph{trivial positive theory} if $\Th^+(\Gamma)$ equals the positive theory of a non-abelian free group.

\begin{coro}
\label{res: tarski positive}
    Let $\Gamma_\infty$ be a torsion-free Tarski monster as in \autoref{res: tarski}. Then $\Th^+(\Gamma_\infty)$ is trivial.
\end{coro}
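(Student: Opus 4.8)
The plan is to reduce the positive theory to the $\exists\forall$-theory, where we already have control thanks to \autoref{res: tarski}\ref{enu: tarski - theory}. Recall that all the $\Gamma_k$ are torsion-free, non-elementary, hyperbolic, hence have trivial positive theory by \cite{positivehyp1, positivehyp2}. So the first step is to observe that any positive $\exists\forall$-sentence $\Sigma$ holds in every $\Gamma_k$, and therefore, being in $\Th_{\exists\forall}(\Gamma_k)$ for all $k$, it lies in $\limsup_k \Th_{\exists\forall}(\Gamma_k) = \Th_{\exists\forall}(\Gamma_\infty)$ by \autoref{res: tarski}\ref{enu: tarski - theory}. Dually, the negation of a positive $\forall\exists$-sentence is a (non-positive) $\exists\forall$-sentence; but it is more direct to phrase everything in terms of the $\exists\forall$-fragment: any positive $\exists\forall$-sentence that holds in a non-abelian free group holds in every $\Gamma_k$ (since positive sentences pass to quotients and the $\Gamma_k$ have trivial positive theory), hence in $\Gamma_\infty$. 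This shows that the positive $\exists\forall$-theory of $\Gamma_\infty$ coincides with that of a non-abelian free group.

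The second step is to upgrade from the positive $\exists\forall$-fragment to the full positive theory. Here I would invoke the quantifier reduction for positive sentences: by \cite[Theorem F]{positivetrees}, a positive sentence is equivalent, modulo the positive theory of a non-abelian free group, to a positive $\exists\forall$-sentence (more precisely, the positive theory of any group is determined by its positive $\exists\forall$-theory together with Makanin's theorem). Concretely, if $\Sigma$ is an arbitrary positive sentence true in $\Gamma_\infty$, then by quantifier reduction there is a positive $\exists\forall$-sentence $\Sigma'$ such that $\Sigma \leftrightarrow \Sigma'$ holds in all groups with trivial positive theory (equivalently, $\Sigma$ and $\Sigma'$ are equivalent over the universal-positive theory of the free group, which $\Gamma_\infty$ satisfies since it is a quotient of a free group). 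Combining with the first step, $\Sigma'$ holds in a non-abelian free group, hence $\Sigma$ does too; conversely any positive sentence true in a non-abelian free group is true in the quotient $\Gamma_\infty$. Therefore $\Th^+(\Gamma_\infty)$ equals the positive theory of a non-abelian free group, i.e.\ it is trivial.

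I expect the only genuinely delicate point to be the precise invocation of the quantifier-reduction result \cite[Theorem F]{positivetrees}: one must check that it applies in the form ``every positive sentence is, over the positive theory of the free group, equivalent to a positive $\exists\forall$-sentence,'' and that $\Gamma_\infty$ satisfies enough of the free group's positive theory (namely its positive universal theory, which is automatic as $\Gamma_\infty$ is a quotient of a free group) for the reduction to transfer. Everything else is a routine combination of: positivity passing to quotients, trivial positive theory of torsion-free hyperbolic groups, and the $\limsup$-control of the $\exists\forall$-theory from \autoref{res: tarski}. I would write the proof in three or four sentences, citing \autoref{res: tarski}\ref{enu: tarski - theory}, \cite{positivehyp1, positivehyp2}, and \cite[Theorem F]{positivetrees}.

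\begin{proof}
    All the groups $\Gamma_k$ are torsion-free, non-elementary, and hyperbolic, hence have trivial positive theory \cite{positivehyp1, positivehyp2}. Let $\Sigma$ be a positive $\exists\forall$-sentence. If $\Sigma$ holds in a non-abelian free group, then it holds in every $\Gamma_k$, since positive sentences pass to quotients; conversely if $\Sigma$ holds in some $\Gamma_k$, it holds in all of them, as they all have the same positive theory. In either case, if $\Sigma$ holds in a non-abelian free group then $\Sigma \in \Th_{\exists\forall}(\Gamma_k)$ for every $k$, so
    \begin{equation*}
        \Sigma \in \limsup_{k \to \infty} \Th_{\exists\forall}(\Gamma_k) = \Th_{\exists\forall}(\Gamma_\infty)
    \end{equation*}
    by \autoref{res: tarski}\ref{enu: tarski - theory}. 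Conversely, any positive $\exists\forall$-sentence true in $\Gamma_\infty$ is true in $\Gamma_0 = \Gamma$, being in $\liminf_k \Th_{\exists\forall}(\Gamma_k)$, hence in a non-abelian free subgroup, since $\Gamma$ has trivial positive theory and positive $\exists\forall$-sentences are preserved under passage to subgroups via the interpretation in terms of morphisms (alternatively, $\Sigma$ holds in $\Gamma$ iff it holds in a non-abelian free group as both have trivial positive theory). Thus the positive $\exists\forall$-theory of $\Gamma_\infty$ coincides with that of a non-abelian free group.

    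It remains to pass from the positive $\exists\forall$-fragment to the full positive theory. Since $\Gamma_\infty$ is a quotient of a non-abelian free group, it satisfies the positive universal theory of the free group, and by Makanin's theorem \cite{makanin} this is the positive universal theory common to all non-abelian free groups. By the quantifier reduction for positive sentences \cite[Theorem F]{positivetrees}, every positive sentence is equivalent, over the positive theory of a non-abelian free group, to a positive $\exists\forall$-sentence. Hence for an arbitrary positive sentence $\Sigma$ we have $\Sigma \in \Th^+(\Gamma_\infty)$ if and only if the associated positive $\exists\forall$-sentence lies in $\Th_{\exists\forall}(\Gamma_\infty)$, which by the previous paragraph happens if and only if $\Sigma$ holds in a non-abelian free group. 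Therefore $\Th^+(\Gamma_\infty)$ equals the positive theory of a non-abelian free group, that is, it is trivial.
\end{proof}
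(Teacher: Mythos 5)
Your proof is correct and follows essentially the same route as the paper: specialize the two-quantifier control from \autoref{res: tarski}\ref{enu: tarski - theory} to positive sentences (you work with $\exists\forall$, the paper with the equivalent $\forall\exists$), then invoke quantifier reduction \cite[Theorem F]{positivetrees} to conclude. Two small slips are worth flagging but do not affect the argument: being in $\liminf_k \Th_{\exists\forall}(\Gamma_k)$ only guarantees truth in all but finitely many $\Gamma_k$, not in $\Gamma_0$ specifically (though any such $\Gamma_k$ suffices since they all have trivial positive theory), and positive $\exists\forall$-sentences are not in general preserved under passage to subgroups — the correct justification is the one you give in the parenthetical.
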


\begin{proof}
    By \cite[Theorem F]{positivetrees}, it suffices to show that $\Th^+_{\forall \exists}(\Gamma_\infty)$ is trivial. By \autoref{res: 2 quant theory free product}, we have $\Th^+_{\forall \exists}(\Gamma_\infty) = \Th^+_{\forall \exists}(\Gamma_\infty \ast F_2)$, and since positive sentences pass to quotients we see that $\Th^+_{\forall \exists}(\Gamma_\infty) \subset \Th^+_{\forall \exists}(F_2)$, and we conclude.
\end{proof}

Groups with trivial positive theory have an important property relating to word lengths. Let $w \in F_n$ be a word, $\Gamma$ a group, and let 
\begin{equation*}
	w(\Gamma) = \group{ w(\vec{\gamma})\  \colon \vec \gamma \in \Gamma^n }
\end{equation*}
denote the corresponding \emph{verbal subgroup}. 
We can define the \emph{$w$-length} of an element in $w(\Gamma)$ to be the word length in the generating set $\set{w(\vec{\gamma})}{\vec \gamma \in \Gamma^n}$; by convention we also say that the $w$-length of an element in $\Gamma \setminus w(\Gamma)$ is infinite.
If $\Gamma = w(\Gamma)$, and the $w$-length is bounded from above by $k \geq 1$, then $\Gamma$ satisfies a positive $\forall\exists$-sentence
\begin{equation}
\label{eqn: bound w length}
	\forall x, \ \exists \vec{y}_1, \dots, \vec{y}_k, \quad x = w(\vec{y}_1) \cdots w(\vec{y}_k).
\end{equation}
There are two degenerate cases. 
On the one hand, $w$ could represent the trivial element of $F_n$, in which case $w(\Gamma) = \{1\}$. 
On the other hand, $w$ could be of the form $w = c x_1^{i_1} \cdots x_n^{i_n}$, where $c \in [F_n, F_n]$ and $\mathrm{gcd}(i_1, \ldots, i_n) = 1$. 
In the latter case, $w(\Gamma) = \Gamma$ and $w$-length is bounded by $1$ for every group $\Gamma$. 
If either of these two cases occurs, we say that $w$ is \emph{silly}, after Segal, who showed that if $w$ is not silly, then for every $k \geq 1$, the positive sentence above is non-trivial \cite[Section 3.1]{segal}.
If $\Gamma$ is such that the $w$-length is unbounded on $w(\Gamma)$ for all non-silly words $w$, we say that $\Gamma$ is \emph{verbally parabolic}.

\begin{rema}
    The positive sentence (\ref{eqn: bound w length}) says that there is a bound of $k$ on the $w$-length on $w(\Gamma)$ \emph{and} that $w(\Gamma) = \Gamma$. One could also express a bound of $k$ on $w$-length on $\Gamma$, without requiring that $w(\Gamma) = \Gamma$, by a single positive sentence:
\begin{equation*}
	\forall \vec{x}_1, \ldots, \vec{x}_{k+1}, \ \exists \vec{y}_1, \ldots, \vec{y}_k, \quad w(\vec{x}_1) \cdots w(\vec{x}_{k+1}) = w(\vec{y}_1) \cdots w(\vec{y}_k).
\end{equation*}
However, if $\Gamma$ is simple and does not satisfy a law, then automatically $w(\Gamma) = \Gamma$ for every non-trivial word $w$. So in the context of the next result, these two properties are equivalent.
\end{rema}

\begin{coro}
\label{res: tarski verbally parabolic}
    Let $\Gamma_\infty$ be a torsion-free Tarski monster as in \autoref{res: tarski}. Then $\Gamma_\infty$ is verbally parabolic.
\end{coro}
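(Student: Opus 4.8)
The plan is to deduce verbal parabolicity from the triviality of the positive theory established in \autoref{res: tarski positive}, together with the fact that $\Gamma_\infty$ is simple and non-abelian, so that $w(\Gamma_\infty) = \Gamma_\infty$ for every non-silly word $w$. Suppose for contradiction that $w$ is a non-silly word and that the $w$-length is bounded on $w(\Gamma_\infty) = \Gamma_\infty$ by some $k \geq 1$. Then the positive $\forall\exists$-sentence (\ref{eqn: bound w length}), namely
\[
	\forall x, \ \exists \vec{y}_1, \dots, \vec{y}_k, \quad x = w(\vec{y}_1) \cdots w(\vec{y}_k),
\]
holds in $\Gamma_\infty$. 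Since $\Gamma_\infty$ has trivial positive theory, this sentence must then hold in every non-abelian free group $F$. But this would mean that $F = w(F)$ and the $w$-length on $F$ is bounded by $k$.

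The contradiction then comes from the classical fact that non-abelian free groups are verbally parabolic: for any non-silly word $w$, the verbal subgroup $w(F)$ is a proper subgroup of $F$ of infinite index (if $w$ is not of the form $c x_1^{i_1} \cdots x_n^{i_n}$ with $\gcd(i_j) = 1$), or else $w(F) = F$ but the $w$-length is unbounded. In either case the sentence above fails in $F$: either because there is an $x$ outside $w(F)$, or because for every $k$ there is an element of $w(F)$ of $w$-length exceeding $k$. This is exactly the statement that for non-silly $w$, the positive sentence (\ref{eqn: bound w length}) is non-trivial, as recorded in the discussion preceding the corollary. Hence our assumption was false, and the $w$-length on $\Gamma_\infty = w(\Gamma_\infty)$ is unbounded for every non-silly $w$, i.e. $\Gamma_\infty$ is verbally parabolic.

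The only point requiring a little care is making sure that $w(\Gamma_\infty) = \Gamma_\infty$ so that the notion of $w$-length being ``unbounded on $w(\Gamma_\infty)$'' is the relevant one: this follows because $\Gamma_\infty$ is simple (so $w(\Gamma_\infty)$, being normal, is either trivial or all of $\Gamma_\infty$) and non-abelian and does not satisfy a law (it contains, for instance, elements of infinite order generating free abelian but the whole group is a Tarski monster with trivial positive theory, hence no law), so $w(\Gamma_\infty) \neq \{1\}$ for non-trivial $w$, forcing $w(\Gamma_\infty) = \Gamma_\infty$. Actually it is cleaner to note directly that triviality of the positive theory already implies $\Gamma_\infty$ satisfies no law, hence $w(\Gamma_\infty) \neq \{1\}$, hence by simplicity $w(\Gamma_\infty) = \Gamma_\infty$. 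I expect no real obstacle here; the statement is essentially a formal consequence of \autoref{res: tarski positive} and the remark preceding the corollary, and the proof is short.

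\begin{proof}
    By \autoref{res: tarski positive}, the group $\Gamma_\infty$ has trivial positive theory; in particular it satisfies no law, since a non-abelian free group does not. Let $w \in F_n$ be a non-silly word. As $w$ is non-trivial in $F_n$, the verbal subgroup $w(\Gamma_\infty)$ is non-trivial (because $\Gamma_\infty$ satisfies no law), and being normal in the simple group $\Gamma_\infty$ it must equal $\Gamma_\infty$. Suppose, for contradiction, that the $w$-length on $\Gamma_\infty = w(\Gamma_\infty)$ is bounded by some $k \geq 1$. Then the positive $\forall \exists$-sentence (\ref{eqn: bound w length}) holds in $\Gamma_\infty$. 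Since $\Gamma_\infty$ has trivial positive theory, this sentence also holds in every non-abelian free group. But, as recalled before the statement, for a non-silly word $w$ the sentence (\ref{eqn: bound w length}) is non-trivial: either $w(F) \neq F$ for $F$ free, or $w(F) = F$ but the $w$-length on $F$ is unbounded, and in both cases (\ref{eqn: bound w length}) fails in $F$ for every $k$. This contradiction shows that the $w$-length on $w(\Gamma_\infty) = \Gamma_\infty$ is unbounded. As $w$ was an arbitrary non-silly word, $\Gamma_\infty$ is verbally parabolic.
\end{proof}
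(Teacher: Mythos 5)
Your proof is correct and takes essentially the same approach as the paper: deduce unboundedness of $w$-length from the triviality of the positive theory (\autoref{res: tarski positive}) via the positive sentence (\ref{eqn: bound w length}), and deduce $w(\Gamma_\infty) = \Gamma_\infty$ from simplicity together with the absence of a law. The paper's proof is just a more compressed version of the same argument.
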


\begin{proof}
    By \autoref{res: tarski positive}, for every non-silly word $w$, the $w$-length on $\Gamma_\infty$ is unbounded. 
    Moreover, $w(\Gamma_\infty) = \Gamma_\infty$.
    Indeed, $w(\Gamma_\infty)$ is a normal subgroup, and is thus non-trivial since $\Gamma_\infty$ does not satisfy a law.
    Thus, $\Gamma_\infty$ is verbally parabolic.
\end{proof}

These corollaries answer two questions from \cite{positivetrees}.

\cite[Question 9.11]{positivetrees} asks for the existence of a finitely generated non-amenable group without non-abelian free subgroups with trivial positive theory. 
If we run our construction starting with a group $\Gamma$ with property (T), \autoref{res: tarski positive} yields a torsion-free Tarski monster that is non-amenable and has trivial positive theory. Clearly, torsion-free Tarski monsters have no non-abelian free subgroups.

\medskip

The second part of \cite[Question 9.11]{positivetrees} suggests Golod--Shafarevich groups \cite{GSgroups} as a possible source of finitely generated non-amenable groups without free subgroups with trivial positive theory. However, most Golod--Shafarevich groups have non-trivial positive theory.

More generally, we make the following observation.
Let $\Gamma$ be a torsion group.
Given $\gamma \in \Gamma$ we denote its order by $o(\gamma)$.
We write $\mathcal O(\Gamma)$ for the set of all the orders of its elements, i.e.
\begin{equation*}
	\mathcal O(\Gamma) = \set{m \in \N}{ \exists \gamma \in \Gamma, \ m = o(\gamma)}.
\end{equation*}

\begin{lemm}
	Let $\Gamma$ be a torsion group.
	If $\Gamma$ has trivial positive theory, then for every integer $n \in \N \setminus\{0\}$, there are infinitely many numbers in $\mathcal O(\Gamma)$ which are divisible by $n$.
\end{lemm}

\begin{proof}
	Fix an integer $n \in \N \setminus\{0\}$ and assume that only finitely many numbers in $\mathcal O(\Gamma)$ are divisible by $n$ (in particular, $n \geq 2$).
	Denote them by $m_1, \dots, m_\ell$.
	We claim that $\Gamma$ satisfies the positive $\forall\exists$-sentence $\Sigma$ below:
	\begin{equation*}
		\forall x, \exists y, \quad
		\left( \bigvee_{i = 1}^\ell x^{m_i} = 1 \right) \vee \left( \bigvee_{\substack{d | n \\ d \neq n}} y^n = x^d \right).
	\end{equation*}
	Consider indeed an element $\gamma \in \Gamma$ and denote its order by $m$.
	Suppose that $m$ is distinct from each $m_i$.
	By definition, $n$ does not divide $m$, hence their greatest common divisor $d$ is distinct from $n$.
	Moreover, there are $p,q \in \Z$ such that $pn + qm = d$.
	Hence the element $\mu = \gamma^p$ satisfies $\mu^n = \gamma^d$.
	Therefore $\Sigma$ holds in $\Gamma$.
	We now claim that $\Sigma$ does not hold in the free group $F_2 = \langle a, b \rangle$. 
	Indeed, $a$ has infinite order and the equation $y^n = a^d$ has no solution when $0 <d < n$.
	Hence $\Sigma$ is non-trivial.
\end{proof}
 
\begin{rema}
\label{rema:pthroot}

This leaves open the possibility that there exists a torsion group $\Gamma$ with torsion of all possible orders, with trivial positive theory. However, the stronger statement that $\Th_{\forall \exists}(\Gamma) = \Th_{\forall \exists} (\Gamma \ast \Z)$ cannot hold for a torsion group. Indeed, a torsion group satisfies
    \[\forall x \exists y, \quad (y^2 = x) \vee (x^2 = y^2 \wedge x \neq y).\]
    That is, either $x$ generates a cyclic group of odd order, and then it admits a square root, or $x$ generates a cyclic group of even order, and then the square endomorphism of $\langle x \rangle$ is not injective.
    On the other hand, the displayed sentence is false for $\Gamma \ast \Z$, as witnessed by $x$ being the generator of $\Z$.
\end{rema}
    
\cite[Question 9.16]{positivetrees} asks for the existence of an infinite simple verbally parabolic group without non-abelian free subgroups. \autoref{res: tarski verbally parabolic} answers this in a strong way: the group is not only verbally parabolic but has trivial positive theory. It is an open question whether these two properties coincide \cite[Question 9.15]{positivetrees}.

Examples of simple groups with trivial positive theory are given in \cite{positivesimple}; however, they arise as amalgamated products of free groups and therefore contain non-abelian free subgroups. Our examples are also distinct in that they can be chosen to have property (T).

\medskip

Next, we consider mixed identities. Recall that $w \in \Gamma \ast F_n$ is a mixed identity for $\Gamma$ if $w(\vec{\gamma}) = 1$ for all $\vec{\gamma} \in \Gamma^n$. A group is \emph{mixed identity free (MIF)} if it has no mixed identities, apart from the trivial ones, that is, the identity elements of $\Gamma \ast F_n$ for all $n \geq 1$.

\begin{coro}[\autoref{intro:cor:MIF}]
\label{res: tarski MIF}
    Let $\Gamma_\infty$ be a torsion-free Tarski monster as in \autoref{res: tarski}. Then $\Gamma_\infty$ is MIF, but does not admit a $2$-transitive action.
\end{coro}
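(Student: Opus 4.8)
The plan is to establish the two claims separately: that $\Gamma_\infty$ is MIF, and that it admits no $2$-transitive action. For the first, the key point is that mixed identity freeness is inherited well along our construction. Concretely, a mixed identity $w \in \Gamma_\infty \ast F_n$ involves only finitely many coefficients from $\Gamma_\infty$, hence comes from a word $w \in \Gamma_k \ast F_n$ for some finite stage $k$ (after lifting the coefficients). Torsion-free non-elementary hyperbolic groups are MIF \cite{hull:osin}, so $\Gamma_k$ has no non-trivial mixed identities; but this does not immediately imply that the \emph{image} word is a non-trivial mixed identity for $\Gamma_\infty$. The cleanest route is to phrase ``$w$ is a non-trivial mixed identity'' via the morphism framework of \autoref{sec: first order morphisms}: writing $w = w(\vec c, \vec x)$ with coefficients $\vec c$, the statement that $w$ is not a mixed identity is witnessed by a morphism from the appropriate finitely presented group $G = \langle \vec c, \vec x \mid \text{relations among the } c_i\rangle$ realizing the coefficients and not killing $w$. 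Then \autoref{res: tarski}\ref{enu: tarski - morphism} says exactly that such a witness lifts from some $\Gamma_k$ to $\Gamma_\infty$: since $\Gamma_k$ is MIF, there is a morphism $G \to \Gamma_k$ avoiding $w$, and by surjectivity of the map on $\hom$-sets this descends to a morphism $G \to \Gamma_\infty$ avoiding the image of $w$. Hence $\Gamma_\infty$ is MIF. (Alternatively, one may invoke that being MIF for a simple group is implied by, e.g., having a faithful highly transitive action, but that is false here; the direct argument above via \autoref{res: tarski}\ref{enu: tarski - morphism} is the right one.)

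For the second claim, suppose $\Gamma_\infty$ acts faithfully and $2$-transitively on a set $\Omega$. Fix a point $\omega \in \Omega$ and let $P = \mathrm{Stab}(\omega)$ be its stabilizer. Since the action is $2$-transitive (hence primitive and transitive on $\Omega$ with $|\Omega| \geq 3$), $P$ is a proper subgroup of $\Gamma_\infty$ which is \emph{not normal} and is \emph{maximal}. But $\Gamma_\infty$ is a torsion-free Tarski monster: every proper non-trivial subgroup is infinite cyclic. If $P$ is trivial then the action is simply transitive and not $2$-transitive (as $|\Omega|\geq 3$), contradiction; so $P \cong \Z$. An infinite cyclic subgroup of $\Gamma_\infty$ is contained in its own centralizer, which is also a proper subgroup, hence cyclic — but a maximal proper subgroup that is infinite cyclic cannot exist here, because in a Tarski monster $\Z \cong \langle t \rangle < \langle t^{1/2}\rangle$ whenever $t$ is a proper power, or more robustly: take any $g \notin P$; then $\langle P, g\rangle = \Gamma_\infty$ by maximality, yet $\langle P, g \rangle$ is a $2$-generated subgroup, so it is either cyclic or all of $\Gamma_\infty$ — cyclic is impossible since $\Gamma_\infty$ is non-abelian, so this is consistent and does not yet give a contradiction. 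The actual contradiction comes from the structure of the point-stabilizer: a $2$-transitive action has $P$ acting transitively on $\Omega \setminus \{\omega\}$, so $P$ has a subgroup of index $|\Omega| - 1 \geq 2$ with trivial core; since $P \cong \Z$, the only finite-index subgroups of $\Z$ are again $\cong \Z$ and \emph{normal}, so the point-stabilizer-in-$P$ of a second point is normal in $P$ of finite index $\geq 2$ — but then the intersection over $\Omega\setminus\{\omega\}$ of all these conjugate subgroups, which is the core and hence trivial by faithfulness of the action of $P$ on $\Omega \setminus \{\omega\}$ (it is transitive and faithful since the whole action is faithful), forces $P$ to embed into a finite symmetric group, contradicting $P \cong \Z$. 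Thus no $2$-transitive action exists.

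\textbf{Main obstacle.} The genuinely delicate part is the first claim, and specifically making precise the reduction of ``mixed identity'' to the morphism-lifting statement of \autoref{res: tarski}\ref{enu: tarski - morphism}: one must correctly set up the finitely presented group $G$ encoding the coefficients $\vec c \in \Gamma_\infty^m$ together with all relations they satisfy (which requires choosing lifts $\tilde{\vec c} \in \Gamma$ and taking $G$ to be the free group on $\vec c, \vec x$, with $H$ the free group on $\vec c$ mapping via $\jmath$ to $G$ and via $\iota$ to $\Gamma$ by $c_i \mapsto \tilde c_i$), and then checking that ``$w$ is not a mixed identity in $\Gamma_k$'' translates to the non-emptiness-with-poison condition that the lifting map in \autoref{res: tarski}\ref{enu: tarski - morphism} transports to $\Gamma_\infty$. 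The second claim, by contrast, is essentially elementary group theory once one recalls that every proper subgroup of $\Gamma_\infty$ is infinite cyclic and that $\Z$ has no finite-index subgroup of index $\geq 2$ with trivial core in a faithful transitive action on a set of size $\geq 2$.
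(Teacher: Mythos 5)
Both halves of your argument have gaps. For MIF, you invoke \autoref{res: tarski}\ref{enu: tarski - morphism}, the surjectivity of the map $\hom[\jmath,\pi_k\circ\iota]G{\Gamma_k}\to\hom[\jmath,\pi_\infty\circ\iota]G{\Gamma_\infty}$, but that map goes the wrong way. Surjectivity lets you lift a morphism $G\to\Gamma_\infty$ to one $G\to\Gamma_k$; you instead need to push a poison $\psi\colon G\to\Gamma_k$ (supplied by MIF of $\Gamma_k$) down to $\Gamma_\infty$. Your claim that ``by surjectivity of the map on hom-sets this descends to a morphism $G\to\Gamma_\infty$ avoiding the image of $w$'' is unjustified: post-composition $\zeta_k\circ\psi$ is always defined, but $\zeta_k$ may well kill $\psi(w)$ --- and if $w$ really were a mixed identity, it would, so all that MIF of $\Gamma_k$ tells you is that $\ker\zeta_k\neq 1$, which is obvious. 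The tool that works, and the one the paper uses, is \autoref{res: tarski}\ref{enu: tarski - theory}: a non-trivial mixed identity, written (after $F_n$-conjugation) in reduced form $w = w_0\gamma_1 w_1\cdots\gamma_m w_m$ with $\gamma_i\neq 1$ and $w_i\neq 1$, translates into the $\exists\forall$-sentence asserting the existence of non-trivial constants $y_1,\dots,y_m$ with $w_0(\vec x)\,y_1 w_1(\vec x)\cdots y_m w_m(\vec x)\equiv 1$; since $\Th_{\exists\forall}(\Gamma_\infty)=\lim\Th_{\exists\forall}(\Gamma_k)$, the same sentence holds in some $\Gamma_k$, yielding a non-trivial mixed identity for the MIF hyperbolic group $\Gamma_k$.

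For the absence of a $2$-transitive action, your contradiction is reached only when $\Omega$ is finite, and this case cannot occur. You correctly show that $P = \mathrm{Stab}(\omega) \cong \Z$ acts transitively and faithfully on $\Omega\setminus\{\omega\}$, and that a finite-index point stabilizer in $P$ would be a non-trivial normal subgroup of $\Z$ equal to the kernel of this action, contradicting faithfulness. But this shows precisely that the index is infinite, so the two-point stabilizer is trivial, and your argument as written never terminates. What you have actually established is that two-point stabilizers of $\Gamma_\infty$ on $\Omega$ are trivial; the missing step is to observe that $2$-transitivity gives some $g \in \Gamma_\infty$ swapping two points $\omega, \omega'$, so that $g^2 \in \mathrm{Stab}(\omega) \cap \mathrm{Stab}(\omega') = \{1\}$ and $g$ is a non-trivial involution, contradicting torsion-freeness of $\Gamma_\infty$. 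The paper does not prove this from scratch; it cites \cite[Example 6.9]{hull:osin} and \cite{mazurov}.
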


\begin{proof}
    By \cite[Remark 5.1]{hull:osin}, it suffices to show that $\Gamma_\infty$ has no non-trivial mixed identities with one variable. So let $w \in \Gamma_\infty \ast \Z$ be a mixed identity of $\Gamma_\infty$. Then $\Gamma_\infty$ satisfies the formula with constants $\forall x \,\, w(x) = 1$. By \autoref{res: tarski}, the same formula holds in $\Gamma_\infty \ast \Z$. Replacing $x$ with the generator of the free factor $\Z$, we see that $w$ must be trivial in $\Gamma_\infty \ast \Z$.

    Finally, torsion-free Tarski monsters do not admit $2$-transitive actions \cite[Example 6.9]{hull:osin}. More generally, no group can act $2$-transitively on a set with all stabilizers infinite cyclic \cite{mazurov}.
\end{proof}

\subsection{Stable boundedness of norms}
\label{subsec: boundedness}

We end by showing that \autoref{res: tarski} can be strengthened to include a control on conjugacy-invariant norms on $\Gamma_\infty$.
Because the proof of \autoref{res: tarski} is already quite long and complicated, we treat this argument separately for the reader's convenience.

\medskip

A \emph{norm} on a group $\Gamma$ is a map $\ell \colon \Gamma \to \R_+ \cup \{\infty\}$ such that $\ell(\gamma) = 0$ if and only if $\gamma = 1$; $\ell(\gamma^{-1}) = \ell(\gamma)$; and $\ell(\gamma_1\gamma_2) \leq \ell(\gamma_1) + \ell(\gamma_2)$.
A norm is \emph{trivial} if it only takes the values $\{0, \infty \}$. A norm is \emph{conjugacy-invariant} if $\ell(\gamma\gamma'\gamma^{-1}) = \ell(\gamma')$ for all $\gamma, \gamma' \in \Gamma$. 

Given a norm $\ell$, we can \emph{stabilize} it as follows:
\begin{equation*}
	s\ell(\gamma) = \liminf_{n \to \infty} \frac{\ell(\gamma^n)}{n}.
\end{equation*}
Note that if $\ell(\gamma) < \infty$, then by subadditivity, the sequence $\ell(\gamma^n) / n$ actually converges to its infimum.
Keep in mind that $s\ell$ need not be a norm (both the positivity and the triangle inequality could fail).
We say that $\ell$ is \emph{stably bounded} if $s\ell$ is identically $0$. That is, $\ell$ is stably bounded if for all $\gamma \in \Gamma$ there exists $k \geq 1$ such that $\ell(\gamma^k) < \infty$, and moreover, $\ell(\gamma^n)$ grows sublinearly in $n$ whenever $\ell(\gamma) < \infty$.

\medskip
In the construction below, we will only treat one particular example. 
Given an element $\alpha \in \Gamma$, we write $\ell_\alpha \colon \Gamma \to \N \cup \{ \infty \}$ for the word length on the alphabet $\{ \gamma \alpha^{\pm 1}\gamma^{-1} : \gamma \in \Gamma \}$. 
This is a conjugacy-invariant norm on $\Gamma$, and $s\ell_\alpha$ denotes the corresponding stabilization. 
Note that $\ell_\alpha$ takes values in $\N$ if and only if $\Gamma$ is normally generated by $\alpha$. 
The next lemma shows that the stable boundedness of such norms is representative of the stable boundedness of all conjugacy-invariant norms.

\begin{lemm}
\label{res: criterion stably bounded}
	Let $\alpha \in \Gamma$ and let $\ell$ be a conjugacy-invariant norm on $\Gamma$ such that $\ell(\alpha) < \infty$.
	If $\ell_\alpha$ is stably bounded, then so is $\ell$.  

    In particular, if $\Gamma$ is simple and there exists $\alpha \in \Gamma$ such that $\ell_\alpha$ is stably bounded, then every non-trivial, conjugacy-invariant norm on $\Gamma$ is stably bounded.
\end{lemm}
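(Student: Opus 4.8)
The first statement is an elementary monotonicity argument. Let $\ell$ be a conjugacy-invariant norm with $\ell(\alpha) < \infty$. The plan is to show directly that $\ell \leq \ell(\alpha) \cdot \ell_\alpha$ pointwise. Indeed, if $\ell_\alpha(\gamma) = \infty$ there is nothing to prove, and otherwise we may write $\gamma = \prod_{i=1}^k \gamma_i \alpha^{\pm 1} \gamma_i^{-1}$ with $k = \ell_\alpha(\gamma)$; applying the triangle inequality and conjugacy-invariance gives $\ell(\gamma) \leq \sum_i \ell(\gamma_i \alpha^{\pm 1} \gamma_i^{-1}) = k\,\ell(\alpha) = \ell(\alpha)\,\ell_\alpha(\gamma)$. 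Passing to powers and dividing by $n$, we get $s\ell(\gamma) \leq \ell(\alpha)\, s\ell_\alpha(\gamma)$ for every $\gamma$, so if $s\ell_\alpha \equiv 0$ then $s\ell \equiv 0$, i.e. $\ell$ is stably bounded.

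For the ``in particular'' clause, assume $\Gamma$ is simple and that $\ell_\alpha$ is stably bounded for some $\alpha \in \Gamma$ (in particular $\alpha \neq 1$, since a nontrivial norm does not vanish at $1 \neq \alpha$, and $\ell_\alpha$ finite everywhere forces $\Gamma$ to be normally generated by $\alpha$, which is automatic by simplicity as $\alpha \neq 1$). Let $\ell'$ be any nontrivial conjugacy-invariant norm on $\Gamma$. The key observation is that $\{\gamma \in \Gamma : \ell'(\gamma) < \infty\}$ is a normal subgroup of $\Gamma$: it is closed under products by subadditivity, under inverses by symmetry, and under conjugation by conjugacy-invariance. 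Since $\ell'$ is nontrivial, this subgroup is nontrivial, hence equal to $\Gamma$ by simplicity; in particular $\ell'(\alpha) < \infty$. Now the first part of the lemma applies with $\ell = \ell'$, giving that $\ell'$ is stably bounded. The only subtlety worth flagging is the edge case $\alpha = 1$, which cannot occur because then $\ell_\alpha$ would be the trivial norm taking only values $0$ and... actually $\ell_1 \equiv 0$, and while that is stably bounded, a nontrivial norm $\ell'$ still has $\ell'(\alpha) = \ell'(1) = 0 < \infty$, so the argument goes through regardless; but in our construction $\alpha$ will be a specific nontrivial element, so this degenerate situation does not arise.

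I do not anticipate any real obstacle here; both parts are short formal arguments. The genuine work lies entirely in the subsequent construction of a torsion-free Tarski monster $\Gamma_\infty$ together with a specific $\alpha$ for which $\ell_\alpha$ is stably bounded — that is where the small cancellation and lifting machinery of the previous sections will be needed, since one must arrange, at every finite stage $\Gamma_k$, that high powers of (the image of) $\alpha$ become products of a controlled number of conjugates of $\alpha^{\pm 1}$, and then pass this control to the limit. The present lemma is merely the reduction step that lets us focus on this single norm $\ell_\alpha$ rather than all conjugacy-invariant norms simultaneously.
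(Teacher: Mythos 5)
Your proof is correct and takes essentially the same approach as the paper: bound $\ell$ on a product of conjugates of $\alpha^{\pm 1}$ by the triangle inequality and conjugacy-invariance, then pass to powers. Your organization is slightly cleaner — you establish the pointwise inequality $\ell \leq \ell(\alpha)\,\ell_\alpha$ first and then stabilize both sides, whereas the paper argues for each $\gamma$ and $\sigma > 0$ directly with the $p/q < \sigma$ witnesses of $s\ell_\alpha(\gamma) = 0$ — but this is a cosmetic reorganization of the same calculation, and the ``in particular'' clause is argued identically. One small slip in your aside: $\ell_1$ is not identically $0$; rather $\ell_1(\gamma) = \infty$ for all $\gamma \neq 1$ (the normal closure of $1$ is trivial), so $\ell_1$ is the trivial norm, and the degenerate case $\alpha = 1$ is excluded in practice because $\ell_1$ is not stably bounded on a nontrivial torsion-free group. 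This does not affect the substance of your argument.
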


\begin{proof}
	Let $\gamma \in \Gamma$. 
	Since $s \ell_\alpha(\gamma) = 0$, for all $\sigma > 0$ there exist $p, q \geq 1$ with $p/q< \sigma$ and $\kappa_1, \ldots, \kappa_p \in \Gamma$ such that
	\begin{equation*}
		\gamma^q = \prod\limits_{i = 1}^p \kappa_i \alpha^{\pm 1} \kappa_i^{-1}.
	\end{equation*}
	Therefore
	\begin{equation*}
		\ell(\gamma^q) \leq \sum\limits_{i = 1}^p \ell(\kappa_i \alpha^{\pm 1} \kappa_i^{-1}) = p \cdot \ell(\alpha)
	\end{equation*}
	which implies
	\begin{equation*}
		s \ell(\gamma) \leq \frac pq\ell(\alpha) < \sigma \ell(\alpha).
	\end{equation*}
	This inequality holds for every $\sigma >0$, hence $s \ell(\gamma) = 0$.
	Since $\gamma$ was arbitrary, we conclude that $\ell$ is stably bounded.
	
	Now suppose that $\Gamma$ is simple, and let $\ell$ be a conjugacy-invariant norm on $\Gamma$. 
	Observe that the set of all elements $\gamma \in \Gamma$ such that $\ell(\gamma) < \infty$ is a normal subgroup of $\Gamma$, which is not trivial since $\ell$ is non-trivial.
	Thus, it coincides with $\Gamma$, hence contains $\alpha$.
	So the above discussion implies that $\ell$ is stably bounded.
\end{proof}

So, from now on, we focus on the norm $\ell_\alpha$. The following result is an analogue of \autoref{res: induction step toward tarski monsters}.

\begin{theo}
\label{res: induction step toward boundedness}
    Let $\Gamma$ be a non-elementary, torsion-free, hyperbolic group.
    For $i \in \intvald1n$, let $G_i$ be a finitely generated group, $H_i$ be any group, and $\jmath_i \colon H_i \to G_i$ and $\iota_i \colon H_i \to \Gamma \ast \Z$ be morphisms.
	Let $\Lambda$ be a non-elementary subgroup of $\Gamma$.
	Let $\alpha, \gamma \in \Gamma$ be arbitrary non-trivial elements. For every finite subset $U$ of $\Gamma$, and every $\sigma > 0$, there exists a non-elementary, torsion-free, hyperbolic quotient $\pi \colon \Gamma \onto \bar \Gamma$ with the following properties.
	\begin{enumerate}
		\item Denoting the projection $\pi \colon \Gamma \onto \bar \Gamma$, we have $s\ell_{\pi(\alpha)}(\pi(\gamma)) < \sigma$.
		\item The map $\Gamma \ast \Z \onto \bar \Gamma \ast \Z$ induced by $\pi$ is injective when restricted to $U$.
		\item For every $i \in \intvald1n$, the map
		\begin{equation*}
			\begin{array}{ccc}
				\hom[\jmath_i, \iota_i] {G_i}{\Gamma\ast \Z} & \to & \hom[\jmath_i, \pi \circ \iota_i]{G_i}{\bar \Gamma\ast \Z} \\
				\varphi & \mapsto & \pi \circ \varphi,
			\end{array}
		\end{equation*}
		induced by $\pi$, is onto. 
	\end{enumerate}
\end{theo}

\begin{proof}
    Let $\gamma_0 \in \Gamma$ be a primitive element such that $\gamma = \gamma_0^k$. 
    Then $s\ell_{\pi(\alpha)}(\pi(\gamma)) = k s\ell_{\pi(\alpha)}(\pi(\gamma_0))$, so up to replacing $\sigma$ by $\sigma/k$, we may assume from now on that $\gamma$ is primitive.
    Let $\Lambda$ denote the normal closure of $\alpha$ in $\Gamma$, which is a non-elementary subgroup. 
    By \autoref{res: finding malnormal free}, there exist $\gamma_1, x, y \in \Lambda$ such that $\Lambda_1 = \langle \gamma, \gamma_1, x, y \rangle$ is a quasi-convex, malnormal, free subgroup of rank four. Note that $\ell_\alpha(\gamma_1) < \infty$, because $\gamma_1$ belongs to the normal closure of $\alpha$.

    Consider now an element of $\Lambda_1$ of the form
    \begin{equation*}
    	w = \gamma^{-q} \prod\limits_{i = 1}^p \kappa_i \gamma_1 \kappa_i^{-1},
    \end{equation*}
    where $p, q \geq 1$ are chosen so that $p\ell_\alpha(\gamma_1)/q < \sigma$, and $\kappa_1, \ldots, \kappa_p$ are chosen in $\langle x, y \rangle$. 
    Then $w$ is not a proper power.
    Moreover, if $p$ is sufficiently large, and the $\kappa_i$ are sufficiently long (compared to $q$) and with sufficiently small overlaps, then $w$ can be made arbitrarily long and satisfies an arbitrary classical  $C'(\lambda_0)$ small cancellation condition (where $\lambda_0$ is of the order of $1/p$).
    Hence, the associated relation family $\mathcal Q$ of $\Gamma$ given by \autoref{res: carrying sc assumption} satisfies the tight $C'(\lambda, \epsilon)$ strengthened small cancellation condition, where $\lambda, \epsilon \in (0,1)$ can be chosen as small as desired.
    
    Consider now the quotient $\bar \Gamma = \Gamma / \normal w$ and denote by $\pi \colon \Gamma \onto \bar \Gamma$ the corresponding projection.
    The last two items follow from \autoref{res: recap sc}\ref{enu: recap sc - injectivity}, \autoref{res: lifting quotient}.
    By construction, $\pi(\gamma^q)$ is equal to a product of $p$ conjugates of $\pi(\gamma_1)$, and so to a product of $p \ell_\alpha(\gamma_1)$ conjugates of $\pi(\alpha)$ and its inverse. Therefore,
    \begin{equation*}
	    \ell_{\pi(\alpha)}(\pi(\gamma^q)) \leq p \ell_\alpha(\gamma_1) \quad \Rightarrow \quad s\ell_{\pi(\alpha)}(\pi(\gamma)) \leq \frac{p}{q} \ell_\alpha(\gamma_1) < \sigma,
    \end{equation*}
    which gives the first item.
\end{proof}

The analogue of \autoref{rem: induction step toward tarski monsters} also holds in this case.

\begin{theo}[\autoref{intro:thm:precise}]
\label{res: tarski bounded}
    The group $\Gamma_\infty$ from \autoref{res: tarski} can be moreover chosen so that every non-trivial conjugacy-invariant norm on $\Gamma_\infty$ is stably bounded.
\end{theo}

\begin{proof}
    We proceed as in the proof of \autoref{res: tarski}, and keep the same notation.
    Moreover, we fix an element $\alpha \in \Gamma \setminus\{1\}$ and enumerate $\Gamma \setminus \{1\} = \{ \gamma_1, \gamma_2, \ldots \}$. 
    We apply the same induction procedure, but moreover, ensure that
    \begin{equation*}
    	    s\ell_{\pi_k(\alpha)}\pi_k(\gamma_i) < \frac{1}{k}, \quad \forall i \in \{ 1, \ldots, k\}.
    \end{equation*}
    For this, it suffices to construct $\Gamma_{k+1}$ as the composition of a quotient obtained via \autoref{res: induction step toward tarski monsters}, with the same parameters as in the proof of \autoref{res: tarski}, followed by $k$ successive quotients obtained via \autoref{res: induction step toward boundedness}, applied with the image of the same $U$, and the pair of elements $\{\alpha, \gamma_i\}$, for all $i  \in \{1, \ldots, k\}$.

    The resulting torsion-free Tarski monster satisfies
    \begin{equation*}
	s\ell_{\pi_\infty(\alpha)}(\pi_\infty(\gamma_i)) \leq s\ell_{\pi_k(\alpha)}(\pi_k(\gamma_i)) \leq \frac{1}{k}    
    \end{equation*}
    whenever $k \geq i$, and therefore $\ell_{\pi_\infty(\alpha)}$ is stably bounded. 
    Since $\Gamma_\infty$ is moreover simple, we conclude from \autoref{res: criterion stably bounded} that every non-trivial, conjugacy-invariant norm on $\Gamma_\infty$ is stably bounded.
\end{proof}

In \cite{muranov}, Muranov constructed a simple group where the commutator length is stably bounded but unbounded --- he also shows that the square length is unbounded.
\autoref{res: tarski bounded} generalizes this to all words. 
Namely, if $w$ is not silly, then the $w$-length over $\Gamma_\infty$ is unbounded (and only takes finite values, see \autoref{res: tarski verbally parabolic}) but stably bounded.

\medskip

Still, the case of commutator length is especially important, so let us discuss it in more detail. We denote it by $\cl$, and its stabilization by $\scl$. Note that $\cl$ is a non-trivial norm on any non-abelian group. There is a rich theory connecting $\scl$ with various parts of geometry \cite{calegari}.

Most importantly, $\scl$ is connected to \emph{quasimorphisms}. Recall that a quasimorphism is a function $f \colon \Gamma \to \R$ with the property that
\begin{equation*}
	\sup\limits_{\gamma_1, \gamma_2 \in \Gamma} \abs{f(\gamma_1) + f(\gamma_2) - f(\gamma_1\gamma_2)} < \infty.
\end{equation*}
A quasimorphism is moreover \emph{homogeneous} if it restricts to a homomorphism on every cyclic subgroup. The connection is made by the \emph{Bavard duality}, which in a specialized form states that $\scl$ vanishes identically on $[\Gamma, \Gamma]$ if and only if every homogeneous quasimorphism on $\Gamma$ is a homomorphism \cite{bavard}. Quasimorphisms are important to the study of groups acting on hyperbolic spaces. In particular, if the only homogeneous quasimorphism on a group is the trivial homomorphism, then $\Gamma$ has no focal or oriented lineal actions on a hyperbolic space: see, for instance, \cite[Section 4.1]{manning}.

Quasimorphisms are connected to \emph{bounded cohomology}. This is a functional analytic analogue of ordinary cohomology, which is central in geometric topology and rigidity theory \cite{frigerio, monod:book}. There is a natural \emph{comparison map} from bounded to ordinary cohomology (with trivial real coefficients) $c^n \colon H^n_b(\Gamma) \to H^n(\Gamma)$. The kernel of $c^n$ is called the \emph{exact bounded cohomology}, denoted $EH^n_b(\Gamma)$. The connection with quasimorphisms happens in low degree: $EH^2_b(\Gamma)$ is isomorphic to the space of homogeneous quasimorphisms modulo homomorphisms \cite[Corollary 2.11]{frigerio}.

Putting all these remarks together, we obtain the following.

\begin{coro}
\label{res: bounded cohomology}
    Let $\Gamma_\infty$ be a group as in \autoref{res: tarski bounded}. Then the stable commutator length on $\Gamma_\infty$ vanishes identically, the only homogeneous quasimorphism is the trivial homomorphism, and the comparison map $c^2 \colon H^2_b(\Gamma_\infty) \to H^2(\Gamma_\infty)$ is injective. Moreover, $\Gamma_\infty$ cannot act on a hyperbolic space with a loxodromic element.
\end{coro}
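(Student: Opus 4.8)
\textbf{Proof of \autoref{res: bounded cohomology}.}
The plan is to combine \autoref{res: tarski bounded} with the classical dualities relating stable commutator length, quasimorphisms, bounded cohomology, and actions on hyperbolic spaces. First, since $\cl$ is a non-trivial conjugacy-invariant norm on the non-abelian group $\Gamma_\infty$ (it is non-trivial because $\Gamma_\infty$ is simple and non-abelian, hence perfect, so $\cl$ is finite-valued and positive on $[\Gamma_\infty,\Gamma_\infty]\setminus\{1\} = \Gamma_\infty\setminus\{1\}$), \autoref{res: tarski bounded} applies and gives that $s\cl = \scl$ vanishes identically on $\Gamma_\infty$. Next, by Bavard duality \cite{bavard, calegari}, the vanishing of $\scl$ on $[\Gamma_\infty,\Gamma_\infty] = \Gamma_\infty$ is equivalent to the statement that every homogeneous quasimorphism on $\Gamma_\infty$ is a homomorphism. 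Since $\Gamma_\infty$ is perfect, the only homomorphism $\Gamma_\infty \to \R$ is trivial, so the space of homogeneous quasimorphisms modulo homomorphisms is zero.

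Then, using the isomorphism between $EH^2_b(\Gamma_\infty)$ and the space of homogeneous quasimorphisms modulo homomorphisms \cite[Corollary~2.11]{frigerio}, we conclude that $EH^2_b(\Gamma_\infty) = 0$, which is precisely the statement that the comparison map $c^2 \colon H^2_b(\Gamma_\infty) \to H^2(\Gamma_\infty)$ is injective. Finally, for the statement about hyperbolic actions: an action on a hyperbolic space with a loxodromic element is, by the standard classification (see, e.g., \cite{NL} or \cite{manning}), of general, focal, lineal (oriented or not), or quasi-parabolic type. The action cannot be of general type since $\Gamma_\infty$ has no non-abelian free subgroups (it is a Tarski monster). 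It cannot be focal or oriented lineal, since these would produce a non-trivial homogeneous quasimorphism \cite[Section~4.1]{manning}, contradicting the previous paragraph. The remaining lineal case (non-oriented) forces a homomorphism onto a subgroup of $\mathrm{Isom}(\R)$ with infinite image, hence an index-$2$ subgroup or a homomorphism onto $\Z$; but $\Gamma_\infty$ is simple, so it has no proper finite-index subgroups and no non-trivial abelian quotient. Thus $\Gamma_\infty$ admits no action on a hyperbolic space with a loxodromic element.

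The only slightly delicate point is the ruling out of all lineal actions, since a non-orientable lineal action need not give a quasimorphism; here one uses instead that such an action yields either a homomorphism to $\Z$ (in the orientable case, already excluded) or an index-two subgroup acting orientably-lineally, both of which are impossible for the simple perfect group $\Gamma_\infty$. Everything else is a direct citation of known dualities once $\scl \equiv 0$ is in hand, which is exactly what \autoref{res: tarski bounded} provides. \qed
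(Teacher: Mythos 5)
Your proof is correct and follows essentially the same route as the paper's: apply \autoref{res: tarski bounded} to conclude $\scl \equiv 0$, invoke Bavard duality plus perfectness for the quasimorphism statement, use the identification of $EH^2_b$ with homogeneous quasimorphisms modulo homomorphisms for the injectivity of $c^2$, and run through the Gromov classification of hyperbolic actions with a loxodromic element, ruling out general type via absence of free subgroups, focal and oriented lineal via quasimorphisms, and non-oriented lineal via simplicity. The only slight quirk is that your list of action types names ``focal'' and ``quasi-parabolic'' as if they were distinct, when in fact they are synonyms; since you only treat four cases afterward (general, focal, oriented lineal, non-oriented lineal) this costs you nothing, but it would read more cleanly to drop the duplicate.
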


\begin{proof}
    Because $\Gamma_\infty$ is non-abelian, $\cl$ is a non-trivial norm, and therefore by \autoref{res: tarski bounded} it is stably bounded, that is, $\scl$ vanishes identically. By the discussion above, every homogeneous quasimorphism is a homomorphism, and because $\Gamma_\infty$ is perfect, it must be the trivial homomorphism. Also, by the discussion above, $EH^2_b(\Gamma_\infty) = 0$, that is, the comparison map $c^2$ is injective. 
    
    Actions on a hyperbolic space with a loxodromic element are of one of the following types: non-oriented lineal, oriented lineal, focal, and general type \cite[Section 8.2]{Gromov:1987tk}. The first type is excluded because $\Gamma_\infty$ has no index-$2$ subgroup, the second and third because it has no homogeneous quasimorphisms besides the trivial homomorphism, and the fourth because it has no non-abelian free subgroup.
\end{proof}

\begin{rema}
    Let $\Gamma$ be a torsion-free hyperbolic group and let $\gamma \in \Gamma$. The construction above shows that there exist tight $C'(\lambda, \epsilon)$ strengthened small cancellation quotients $\pi \colon \Gamma \to \bar \Gamma$ such that $\scl(\pi(\gamma)) < \scl(\gamma)$, for arbitrarily small $\lambda, \epsilon \in (0, 1)$.
    
    On the other hand, if $\lambda$ and $\epsilon$ are small enough, then $\cl(\pi(\gamma)) = \cl(\gamma)$. Indeed, suppose that $\cl(\pi(\gamma)) \leq k$. 
    Let 
    \begin{equation*}
    	G = \group{x_1, y_1, \ldots, x_k, y_k}
    \end{equation*}
    be a free group, and let $H$ the subgroup of $G$ generated by $z = [x_1, y_1] \cdots [x_k, y_k]$. 
    Consider the morphism $\iota \colon H \to \Gamma$ sending $z$ to $\gamma$.
    The inequality $\cl(\pi(\gamma)) \leq k$ is witnessed by the fact that $ \hom[\id, \pi \circ \iota]{G}{\bar \Gamma}$ is non-empty.
    When $\lambda$ and $\epsilon$ are small enough, it follows from \autoref{res: lifting quotient} that $\hom[\id, \iota]{G}{\Gamma}$ is non-empty as well.
    Hence, $\cl(\gamma) \leq k$.
\end{rema}

\autoref{res: tarski bounded} also gives a partial answer to \cite[Question 9.4]{positivetrees}, which asks for a (finitely generated) group with trivial positive theory and finite-dimensional second bounded cohomology. Above, we constructed a finitely generated group with trivial positive theory and vanishing \emph{exact} second bounded cohomology. \cite[Question 9.4]{positivetrees} was motivated by the observation that for groups acting on trees, a natural weak acylindricity condition implies at once that the second bounded cohomology is infinite-dimensional \cite{IPS} and the positive theory is trivial \cite[Theorem 6.4]{positivetrees}. This persists in other classes of groups with features of negative curvature, for example, the positive theory of acylindrically hyperbolic groups is trivial \cite{positiveAH}, and their second bounded cohomology is infinite-dimensional \cite{bestvinafujiwara}. Let us stress that in all of these examples, the non-trivial classes constructed in second bounded cohomology are all exact, so our construction is just as surprising in this direction.

In principle, one could attempt to construct the groups $\Gamma_\infty$ ensuring moreover that $H^2(\Gamma_\infty)$ is finite dimensional, which would give a full answer to \cite[Question 9.4]{positivetrees}. However, the second cohomology of limits of small cancellation quotients constructed as in \autoref{res: tarski} tends to be infinite-dimensional \cite[Remark 4.5]{dimensions:simple}.

\medskip

Let us end by exhibiting another peculiar behavior of commutator length on the group $\Gamma_\infty$.

\begin{coro}[\autoref{intro:cor:ab}]
\label{res: torsion in ab of product}
Let $\Gamma_\infty$ be a group as in \autoref{res: tarski bounded}. Then there exist elements $\gamma_k \in \Gamma_\infty$ such that $\cl(\gamma_k) \to \infty$ but $\cl(\gamma_k^2)$ is bounded. Therefore, the direct power $\Gamma_\infty^{\N}$ has $2$-torsion in its abelianization.
\end{coro}

\begin{proof}
There exist elements $g_k$ in the free group of rank $2$ such that $\cl(g_k) \to \infty$ but $\cl(g_k^2)$ is bounded. As observed by Kharlampovich--Myasnikov \cite[Theorem 3]{KM:genus}, this follows from their implicit function theorem \cite{KM:implicit} (or Sela's version of Merzlyakov's Theorem \cite{sela:merzlyakov}). The proof is non-constructive (although the bound on $\cl(g_k^2)$ can be made explicit) and such elements have only been found explicitly for low values of $\cl$ \cite{torsion:ab:product}.

Hence, there exists $m \geq 1$ such that for every $k \geq 1$ the free group of rank $2$ satisfies the sentence $\Sigma_k$ below:
\[\exists g, \exists x_1, y_1, \ldots, x_m, y_m, \forall a_1, b_1, \ldots a_k, b_k : g \neq \prod\limits_{i = 1}^k [a_i, b_i] \bigwedge g^2 = \prod\limits_{j = 1}^m [x_j, y_j].\]
Because $\Gamma_\infty \ast F_2$ retracts onto $F_2$, it also satisfies $\Sigma_k$. By \autoref{res: 2 quant theory free product}, we see that $\Gamma_\infty$ also satisfies $\Sigma_k$, and the witnesses of $\exists g$ in $\Sigma_k$ give the desired sequence $(\gamma_k)$.
The element $(\gamma_k) \in \prod_k \Gamma_\infty$ is not in the commutator subgroup, but its square is.
\end{proof}

\begin{rema}
    The conclusion of \autoref{res: torsion in ab of product} holds for all acylindrically hyperbolic groups. Indeed, the only property of $\Gamma_\infty$ that we used above is that it has the same two-quantifier theory as a group that surjects onto $F_2$, a property that all acylindrically hyperbolic groups have \cite{positiveAH}.
\end{rema}

\makebiblio

\vspace{0.5cm}

\normalsize

\noindent{\textsc{Université Bourgogne Europe, CNRS, IMB UMR 5584, 21000 Dijon, France}} \\
\noindent{\textit{E-mail address:} \texttt{remi.coulon@cnrs.fr}} \\

\noindent{\textsc{Department of Pure Mathematics and Mathematical Statistics, University of Cambridge, UK}} \\
\noindent{\textit{E-mail address:} \texttt{ff373@cam.ac.uk}} \\

\noindent{\textsc{Natural Sciences Division, New College of Florida, Sarasota, Florida 34243, USA}} \\
\noindent{\textit{E-mail address:} \texttt{mho@ncf.edu}} \\

\end{document}